\documentclass[10pt,a4paper,oneside]{amsart}
\usepackage[a4paper]{geometry} % Changing page shape
\newcounter{commentcounter}

\usepackage{amsmath} % Lots of maths functionality
\usepackage{amssymb} % Maths symbols
\usepackage{amsthm} % Maths environments: \begin{proof}, etc.
\usepackage{stmaryrd} % [[ brackets
\usepackage[english]{babel} % Language and hyphenation.
\usepackage[font=small,justification=centering]{caption} % More flexibility for captioning figures
\usepackage[nodayofweek]{datetime}
\usepackage[shortlabels]{enumitem} % Change enumeration labelling with \begin{enumerate}[a)] etc.
\usepackage[T1]{fontenc} % For font encoding, to allow accents, copy & paste, inequality signs, etc. to all work nicely.
\usepackage[utf8]{inputenc} % To be loaded after fontenc, also for encoding.
\usepackage{ifthen} % For Dani's \begin{com} environment and \numberedtheorem
\usepackage{mathabx} % Contains \pnest symbol and more. Clashes with accents for \ring
\usepackage{mathtools} % Uses amsmath, fixes quirks and adds functionality.
\usepackage[dvipsnames,table]{xcolor} % Allow links to have colour. Needs to be before hyperref and tikz-cd
\usepackage[pdftex,  colorlinks=true, pagebackref=true]{hyperref} % Makes references and citations into links
    \hypersetup{urlcolor=RoyalBlue, linkcolor=RoyalBlue,  citecolor=black}
\usepackage{setspace} % Allows \onehalfspacing etc. for changing gaps between lines
\usepackage{tikz-cd} % Commutative diagrams
\usepackage{xfrac} % Nicer quotients 
\usepackage[capitalize]{cleveref} % use \Cref{} for instead of X~\ref{} 
\usepackage{multirow} \usepackage{multicol}\usepackage{tabularx}
\renewcommand*{\backref}[1]{}
\renewcommand*{\backrefalt}[4]
{
    \ifcase #1
        No citation in the text.
    \or
        Cited on Page #2.
    \else
        Cited on Pages #2.
    \fi
}

\makeatletter
\def\@tocline#1#2#3#4#5#6#7{\relax
  \ifnum #1>\c@tocdepth % then omit
  \else
    \par \addpenalty\@secpenalty\addvspace{#2}%
    \begingroup \hyphenpenalty\@M
    \@ifempty{#4}{%
      \@tempdima\csname r@tocindent\number#1\endcsname\relax
    }{%
      \@tempdima#4\relax
    }%
    \parindent\z@ \leftskip#3\relax \advance\leftskip\@tempdima\relax
    \rightskip\@pnumwidth plus4em \parfillskip-\@pnumwidth
    #5\leavevmode\hskip-\@tempdima
      \ifcase #1
       \or\or \hskip 1em \or \hskip 2em \else \hskip 3em \fi%
      #6\nobreak\relax
    \dotfill\hbox to\@pnumwidth{\@tocpagenum{#7}}\par% <---- \dotfill -> \hfill
    \nobreak
    \endgroup
  \fi}
\makeatother
\makeatletter
\@namedef{subjclassname@1991}{Mathematical subject classification 1991}
\@namedef{subjclassname@2000}{Mathematical subject classification 2000}
\@namedef{subjclassname@2010}{Mathematical subject classification 2010}
\@namedef{subjclassname@2020}{Mathematical subject classification 2020}
\makeatother

\newtheorem{thm}{Theorem}[section]
\newtheorem{lemma}[thm]{Lemma}
\newtheorem{corollary}[thm]{Corollary}
\newtheorem{prop}[thm]{Proposition}
\newtheorem{conjecture}[thm]{Conjecture}

\newtheorem{thmx}{Theorem}

\theoremstyle{definition}
\newtheorem{defn}[thm]{Definition}
\newtheorem{remark}[thm]{Remark}

\newtheorem{example}[thm]{Example}

\newtheorem{notation}[thm]{Notation}

\theoremstyle{plain}

    \newtheoremstyle{TheoremNum}
        {8.0pt plus 2.0pt minus 4.0pt}{8.0pt plus 2.0pt minus 4.0pt} %%% space between body and thm
        {\itshape} %%% Thm body font
        {-0.15cm} %%% Indent amount (empty = no indent)
        {\bfseries} %%% Thm head font
        {.} %%% Punctuation after thm head
        { }  %%% Space after thm head
        {\thmname{#1}\thmnote{ \bfseries #3}}%%% Thm head spec
    \theoremstyle{TheoremNum}
    \newtheorem{duplicate}{}

\newcommand*{\claimproofname}{My proof}

\DeclareMathOperator{\Aut}{\mathrm{Aut}}
\DeclareMathOperator{\Out}{\mathrm{Out}}

\DeclareMathOperator{\im}{\mathrm{im}}

\DeclareMathOperator{\lk}{lk}

\DeclareMathOperator{\sym}{Sym}
\DeclareMathOperator{\alt}{Alt}

\DeclareMathOperator{\Stab}{Stab}

\newcommand{\cala}{{\mathcal{A}}}
\newcommand{\calb}{{\mathcal{B}}}
\newcommand{\calc}{{\mathcal{C}}}
\newcommand{\cald}{{\mathcal{D}}}
\newcommand{\cale}{{\mathcal{E}}}
\newcommand{\calf}{{\mathcal{F}}}
\newcommand{\calg}{{\mathcal{G}}}
\newcommand{\calh}{{\mathcal{H}}}
\newcommand{\cali}{{\mathcal{I}}}

\newcommand{\calx}{{\mathcal{X}}}

\newcommand{\ab}{{\rm ab}}

\newcommand{\GL}{\mathrm{GL}}
\newcommand{\PGL}{\mathrm{PGL}}

\newcommand{\PSL}{\mathrm{PSL}}

\newcommand{\PGU}{\mathrm{PGU}}

\newcommand{\SU}{\mathrm{SU}}
\newcommand{\PSU}{\mathrm{PSU}}
\newcommand{\Sp}{\mathrm{Sp}}
\newcommand{\PSp}{\mathrm{PSp}}
\newcommand{\POmega}{\mathrm{P}\Omega}

\newcommand{\PSO}{\mathrm{PSO}}
\newcommand{\PGO}{\mathrm{PGO}}
\newcommand{\AGL}{\mathrm{AGL}}

\newcommand{\He}{\mathrm{He}}
\newcommand{\GO}{\mathrm{GO}}

\DeclareMathOperator{\soc}{Soc}

\newcommand{\rightQ}[2]{\left.\raisebox{.2em}{$#1$}\middle/\raisebox{-.2em}{$#2$}\right.}

\DeclareMathOperator{\id}{id}

\newcommand{\st}{\mathrm{st}}
\newcommand{\EE}{\mathbb{E}}
\DeclareMathOperator{\cd}{\mathrm{cd}}

\def\Z{\mathbb{Z}}

\newcommand{\QQ}{\mathbb{Q}}
\newcommand{\FF}{\mathbb{F}}

\usepackage{tikz}
\usetikzlibrary{arrows,quotes}
\tikzstyle{blackNode}=[fill=black, draw=black, shape=circle]

\renewcommand{\bar}{\overline}

\title{Finite quotients of spherical Artin groups}

\author{Sam Hughes}
\address[S.~Hughes]{Rheinische Friedrich-Wilhelms-Universit\"at Bonn, Mathematical Institute, Endenicher Allee 60, 53115 Bonn, Germany}
\email{sam.hughes.maths@gmail.com; hughes@math.uni-bonn.de}

\author{Thomas Ng}
\address[T.~Ng]{Department of Mathematics and Statistics, Haverford College, Haverford, PA. USA}
\email{thomas.ng.math@gmail.com}

\author{Kaitlin Ragosta}
\address[K.~Ragosta]{Euskal Herriko Unibertsitatea}
\email{kaitlin.ragosta@ehu.eus}

\author{Nancy Scherich}
\address[N.~Scherich]{Elon University}
\email{nscherich@elon.edu}

\author{Yvon Verberne}
\address[Y.~Verberne]{Department of Mathematics, Middlesex College (MC) 255, Western University, London, Ontario, Canada}
\email{yverber@uwo.ca}

\date{\today}
\subjclass[2020]{}

\begin{document}
\begin{abstract}
We show the smallest non-abelian quotients of spherical and affine Artin groups are isomorphic to the smallest non-abelian quotients of the corresponding Coxeter groups.  We deduce irreducible spherical Artin groups are determined by their finite quotient groups.
\end{abstract}
\maketitle

\section{Introduction}\label{sec:intro}
Artin introduced braid groups in \cite{Artin1947}, and later these were generalised by Tits \cite{Tits66} to the class of Artin groups.  For any Coxeter graph $S$, Tits calls an Artin group $A_S$ the extended Coxeter group of the Coxeter group $W_S$ and equips $A_S$ with a natural epimorphism to $W_S$.  The systematic study of Artin groups began with the works \cite{Brieskorn1971, Deligne1972, BrieskornSaito72, Brieskorn1973} and \cite{Salvetti1987} with much research focusing on the \emph{spherical Artin groups}: the Artin groups $A_S$ where $W_S$ is a finite group.  The Coxeter diagrams of finite Coxeter groups were classified by Coxeter \cite{Coxeter1935} and are given in \Cref{fig:FiniteCoxeterGroups}.  For notational clarity, the classical $n$-strand braid group is denoted by $\cala_{n-1}$ to be distinguished from the Artin group $\calb_n$, the dihedral Artin group is denoted $\cali_2(n)$, and the dihedral Coxeter group is denoted $Dh_n$ to distinguish it from the Artin group $\cald_n$.  
We often denote the canonical Coxeter quotient of an Artin group $G$ by $W(G)$. 

Much is known about the spherical Artin groups.  The $K(\pi,1)$ conjecture holds \cite{CharneyDavis95,BradyWatt02}, the groups are linear (hence residually finite \cite{Malcev1940}); see \cite{Kramer1912,Bigelow01} for the braid group and \cite{CohenWales02,Digne03} for the other groups; and they are classified up to isomorphism \cite{Paris2004}.  Moreover, the geometry of these groups has been extensively studied but remains mysterious \cite{Tits66, Deligne1972, Charney95, CumplidoGebhardtGonzalesMenesesWiest17, HuangOsajda21, CalvezWiest24, Ragosta25}.

The braid group arises naturally in many different fields of mathematics, as a result the representation theory of braid groups has been extensively studied from different perspectives; see for instance  \cite{JONES,Formanek1996, AdemCohenCohen2003, Marin2013, PalmerSoulie2025}, or \cite{Margalit2019} for a survey. One important avenue has been to use local representations of the braid group to give  topological models for quantum computation \cite{TQC, Marin2013}; see \cite{DelaneyRowellWang2016} for a survey.  Even the simplest representations---those factoring through finite quotient groups---have been intensively studied \cite{Coxeter1959,BirmanWajnryb1986,Wajnryb1991,EtingofRowellWitherspoon2008,LarsenRowell2008, Caplinger2023, KordekLiPartin2021, ChudnovskyKordekLiPartin20, SV2, BloomquistPatztS2024}.  This culminated with Kolay's \cite{Kolay23} positive resolution of a longstanding question of Margalit asking if the symmetric group $\sym(n)$ is the smallest non-abelian quotient of the braid group $\cala_{n-1}$ for $n\geq 5$.  Similar results were earlier established for $\Aut(F_N)$ \cite{BaumeisterKielakPierro2019}, mapping class groups \cite{KielakPierro2020}, and surface braid groups \cite{Tan2024}.
%We use the classification of spherical Artin groups listed in \Cref{fig:FiniteCoxeterGroups} and the classification of affine Artin groups listed in \Cref{fig:affineCoxeterGraphs}. 

Our first main theorem is \Cref{thmx:spherical} which gives an analogous result to Kolay's Theorem for all spherical Artin groups.  The techniques used in proving \Cref{thmx:spherical} are similar in spirit to \cite{BaumeisterKielakPierro2019,KielakPierro2020}, but the implementation and specific tricks we require are rather different.   We also highlight that many of our results in the body of the paper give much more structural information about quotients of spherical Artin groups than the theorem below (we refer the reader to \Cref{sec:proof_strat} for more information).

\begin{duplicate}[\Cref{thmx:spherical}]
    Let $G$ be an irreducible spherical Artin group. The following conclusions hold:
    \begin{enumerate}
        \item If $G$ is isomorphic to $\cala_{n-1}$, $\calb_n$, or $\cald_n$, for $n\geq 5$, then the smallest non-abelian quotient of $G$ is $\sym(n)$ and it is unique up to automorphisms of the image. ($\cala_{n-1}$ is due to Kolay \cite{Kolay23})
        \item If $G$ is isomorphic to one of $\cala_2$ $\cala_3$, $\calb_3$, $\calb_4$, $\cald_4$, $\calf_4$, or $\cali_2(m)$ with $4|m$, then the smallest non-abelian quotient of $G$ is $\sym(3)$.
        \item If $G$ is isomorphic to $\cali_2(m)$ with $4\nmid m$ then the smallest non-abelian quotient of $G$ is the dihedral group $Dh_p$, where $p$ is the smallest odd prime divisor of $m$, and it is unique up to automorphisms of the image.
        \item If $G$ is isomorphic to $\calh_3$, then the smallest non-abelian quotient of $G$ is $\alt(5)$.  Up to automorphisms of the image, there are two such homomorphisms.
        \item If $G$ is isomorphic to $\calh_4$, $\cale_6$, $\cale_7$, or $\cale_8$, then the smallest non-abelian quotient of $G$ is isomorphic to $W(G)/Z(W(G))$ and is unique up to automorphisms of the image.
    \end{enumerate}
\end{duplicate}

\begin{figure}[htb]
	\begin{center}
	\captionsetup{justification=centering}
		\begin{tikzpicture}[scale=0.8]
			\draw[fill=black]  (0,0) circle (2pt);
			\draw[fill=black]  (1,0) circle (2pt);
            \draw[fill=black]  (2,0) circle (2pt);
			\draw (0,0)--(1,0);
            \draw[dashed] (1,0)--(2,0);
            \draw[fill=black] (3,0) circle (2pt);
            \draw (2,0)--(3,0);
			\node at (-1.7,0) {$\underset{(n\geq 1)}{\cala_n}$};
            \node at (0, -0.4) {$s_n$};
            \node at (1, -0.4) {$s_{n-1}$};
            \node at (2, -0.4) {$s_{2}$};
            \node at (3, -0.4) {$s_{1}$};

            \draw[fill=black]  (0,-1.5) circle (2pt);
			\draw[fill=black]  (1,-1.5) circle (2pt);
            \draw[fill=black]  (2,-1.5) circle (2pt);
			\draw (0,-1.5)--(1,-1.5);
            \draw[dashed] (1,-1.5)--(2,-1.5);
            \draw[fill=black] (3,-1.5) circle (2pt);
            \draw (2,-1.5)--(3,-1.5);
            \node at (2.5, -1.3) {$4$};
            \node at (-0.9,-1.5) {$\underset{n\geq 3}{\calb_n}$};
            \node at (0, -1.9) {$s_n$};
            \node at (1, -1.9) {$s_{n-1}$};
            \node at (2, -1.9) {$s_{2}$};
            \node at (3, -1.9) {$s_{1}$};

            \draw[fill=black]  (0,-3.5) circle (2pt);
			\draw[fill=black]  (1,-3.5) circle (2pt);
            \draw[fill=black]  (2,-3.5) circle (2pt);
			\draw (0,-3.5)--(1,-3.5);
            \draw[dashed] (1,-3.5)--(2,-3.5);
            \draw[fill=black] (3,-2.8) circle (2pt);
            \draw[fill=black] (3,-4.2) circle (2pt);
            \draw (2,-3.5)--(3, -2.8);
            \draw (2,-3.5)--(3, -4.2);
            \node at (-0.9,-3.5) {$\underset{n\geq 4}{\cald_n}$};
            \node at (0, -3.9) {$s_n$};
            \node at (1, -3.9) {$s_{n-1}$};
            \node at (2, -3.9) {$s_{3}$};
            \node at (3.4, -2.8) {$s_{2}$};
            \node at (3.4, -4.2) {$s_{1}$};

            \draw (0,-5.5)--(4,-5.5);
            \draw (2,-4.5)--(2,-5.5);
            \draw[fill=black] (0,-5.5) circle (2pt); 
            \draw[fill=black] (1,-5.5) circle (2pt);
            \draw[fill=black] (2,-5.5) circle (2pt);
            \draw[fill=black] (3,-5.5) circle (2pt);
            \draw[fill=black] (4,-5.5) circle (2pt);
            \draw[fill=black] (2,-4.5) circle (2pt);
            \node at (-0.5,-5.5) {$\cale_6$};
            \node at (0, -5.9) {$s_1$};
            \node at (1, -5.9) {$s_{3}$};
            \node at (2, -5.9) {$s_{4}$};
            \node at (3, -5.9) {$s_{5}$};
            \node at (4, -5.9) {$s_{6}$};
            \node at (1.6, -4.5) {$s_{2}$};

            \draw (0,-7.5)--(5,-7.5);
            \draw (2,-6.5)--(2,-7.5);
            \draw[fill=black] (0,-7.5) circle (2pt); 
            \draw[fill=black] (1,-7.5) circle (2pt);
            \draw[fill=black] (2,-7.5) circle (2pt);
            \draw[fill=black] (3,-7.5) circle (2pt);
            \draw[fill=black] (4,-7.5) circle (2pt);
            \draw[fill=black] (5,-7.5) circle (2pt);
            \draw[fill=black] (2,-6.5) circle (2pt);
            \node at (-0.5,-7.5) {$\cale_7$};
            \node at (0, -7.9) {$s_1$};
            \node at (1, -7.9) {$s_{3}$};
            \node at (2, -7.9) {$s_{4}$};
            \node at (3, -7.9) {$s_{5}$};
            \node at (4, -7.9) {$s_{6}$};
            \node at (5, -7.9) {$s_{7}$};
            \node at (1.6, -6.5) {$s_{2}$};

            \draw[fill=black] (8,0) circle (2pt);
            \draw[fill=black] (9,0) circle (2pt);
            \draw[fill=black] (10,0) circle (2pt);
            \draw[fill=black] (11,0) circle (2pt);
            \draw (8,0)--(11,0);
            \node at (9.5, 0.2) {$4$};
            \node at (7.5,0) {$\calf_4$};
            \node at (8, -0.4) {$s_{4}$};
            \node at (9, -0.4) {$s_{3}$};
            \node at (10, -0.4) {$s_{2}$};
            \node at (11, -0.4) {$s_{1}$};

            %\draw[fill=black] (8,-1.5) circle (2pt);
            %\draw[fill=black] (9,-1.5) circle (2pt);
            %\draw (8,-1.5)--(9,-1.5);
            %\node at (8.5,-1.3)  {$6$};
            %\node at (7.5,-1.5) {$G_2$};

            \draw[fill=black] (8,-1.5) circle (2pt);
            \draw[fill=black] (9,-1.5) circle (2pt);
            \draw[fill=black] (10,-1.5) circle (2pt);
            \draw (8,-1.5)--(10,-1.5);
            \node at (9.5, -1.3){$5$};
            \node at (7.5,-1.5) {$\calh_3$};
            \node at (10.1, -1.8) {$s_1$};
            \node at (9, -1.8) {$s_2$};
            \node at (8, -1.8) {$s_3$};

            \draw[fill=black] (8,-3) circle (2pt);
            \draw[fill=black] (9,-3) circle (2pt);
            \draw[fill=black] (10,-3) circle (2pt);
            \draw[fill=black] (11,-3) circle (2pt);
            \draw (8,-3)--(11, -3);
            \node at (10.5, -2.8){$5$};
            \node at (7.5,-3) {$\calh_4$};
            \node at (11.1, -3.3) {$s_1$};
            \node at (10.1, -3.3) {$s_2$};
            \node at (9, -3.3) {$s_3$};
            \node at (8, -3.3) {$s_4$};

            \draw[fill=black] (8,-4.5) circle (2pt);
            \draw[fill=black] (9,-4.5) circle (2pt);
            \draw (8,-4.5)--(9,-4.5);
            \node at (8.5, -4.3){$m$};
            \node at (7.3,-4.5) {$\underset{m\geq 4}{\cali_2(m)}$};
            \node at (8, -4.9) {$s_{2}$};
            \node at (9, -4.9) {$s_{1}$};
            %\node at (7.3, -7){ $m\geq3$};

            \draw (8,-7)--(14,-7);
            \draw (10,-6)--(10,-7);
            \draw[fill=black] (8,-7) circle (2pt); 
            \draw[fill=black] (9,-7) circle (2pt);
            \draw[fill=black] (10,-7) circle (2pt);
            \draw[fill=black] (11,-7) circle (2pt);
            \draw[fill=black] (12,-7) circle (2pt);
            \draw[fill=black] (13,-7) circle (2pt);
            \draw[fill=black] (14,-7) circle (2pt);
            \draw[fill=black] (10,-6) circle (2pt);
            \node at (7.5,-7) {$\cale_8$};
            \node at (8.1,-7.4) {$s_1$};
            \node at (9,-7.4) {$s_3$};
            \node at (9.6,-6) {$s_2$};
            \node at (10,-7.4) {$s_4$};
            \node at (11,-7.4) {$s_5$};
            \node at (12,-7.4) {$s_6$};
            \node at (13,-7.4) {$s_7$};
            \node at (14,-7.4) {$s_8$};
				
		\end{tikzpicture}
	\caption{Irreducible Coxeter graphs of spherical type $X_n$, where $X_n$ has $n$ vertices.}
    \label{fig:FiniteCoxeterGroups}
	\end{center}
\end{figure}

The \emph{affine Artin groups}, i.e., Artin groups where the corresponding Coxeter group is affine, are much more mysterious.  The Coxeter diagrams of affine Coxeter groups were classified by Coxeter \cite{Coxeter34} and are given in \Cref{fig:affineCoxeterGraphs}.  The $K(\pi,1)$ conjecture for these groups was resolved recently \cite{PaoliniSalvetti21}.  It appears to be unknown if every affine Artin group is  linear or even residually finite.

\begin{duplicate}[\Cref{thmx:affine}] 
        Let $G$ be an irreducible affine Artin group.  The following conclusions hold:
    \begin{enumerate}
        \item If $G$ is isomorphic to $\widetilde\cala_{n-1}$, $\widetilde\calb_n$, $\widetilde\calc_n$ or $\widetilde\cald_n$, for $n\geq 5$, then the smallest non-abelian quotient of $G$ is $\sym(n)$.
        \item If $G$ is isomorphic to one of $\widetilde\cala_1$, $\widetilde\cala_2$ $\widetilde\cala_3$, $\widetilde\calb_2$, $\widetilde\calb_3$, $\widetilde\calb_4$, $\widetilde\cald_4$, $\widetilde\calf_4$, or $\widetilde\calg_2$, then the smallest non-abelian quotient of $G$ is $\sym(3)$.
        \item If $G$ is isomorphic to $\widetilde\cale_n$ for $n=6,7,8$ then the smallest non-abelian quotient of $G$ is isomorphic to $W(\cale_n)/Z(W(\cale_n))$.
    \end{enumerate}
\end{duplicate}

Our result thus far suggest the following natural conjecture:

\begin{conjecture}
    The smallest non-abelian quotient of an Artin group $A_\Gamma$ is isomorphic to the smallest non-abelian quotient of the Coxeter group $W_\Gamma$.
\end{conjecture}

\begin{figure}[h]
	\begin{center}
	\captionsetup{justification=centering}
		\begin{tikzpicture}[scale=0.8, transform shape]
            % A_1
			\draw[fill=black]  (0,0) circle (2pt);
			\draw[fill=black]  (1,0) circle (2pt);
			\draw (0,0)--(1,0);
			\node at (0.5,0.2){$\infty$};
			\node at (-0.9, 0) {$\widetilde A_1$};

            % A_n
            \draw[fill=black] (0,-1.5) circle (2pt);
            \draw[fill=black] (1,-1.5) circle (2pt);
            \draw (0,-1.5)--(1,-1.5);
            \draw[fill=black] (2,-1.5) circle (2pt);
            \draw[fill=black] (3,-1.5) circle (2pt);
            \draw[dashed] (1,-1.5)--(2, -1.5);
            \draw (2,-1.5)--(3,-1.5);
            \draw[fill=black] (1.5, -0.8) circle (2pt);
            \draw (0,-1.5)--(1.5, -0.8);
            \draw (3,-1.5)--(1.5, -0.8);       
            \node at (-0.9, -1.5) {$\underset{n\geq 2}{\widetilde A_n}$};	

            % B_n
            \draw[fill=black] (0,-3) circle (2pt);
            \draw[fill=black] (1,-3) circle (2pt);
            \node at (0.5,-2.8) {$4$};
            \draw (0,-3)--(1,-3);
            \draw (1,-3)--(2,-3);
            \draw[dashed] (2,-3)--(3,-3);
            \draw[fill=black] (3,-3) circle (2pt);
            \draw[fill=black] (2,-3) circle (2pt);
            \draw[fill=black] (4, -2.3) circle (2pt);
            \draw[fill=black] (4, -3.7) circle (2pt);
            \draw (3, -3)--(4,-2.3);
            \draw (3,-3)--(4, -3.7);           
            \node at (-0.9, -3) {$\underset{n\geq 3}{\widetilde B_n}$};	

            %C_n
            \draw[fill=black] (0,-4.5) circle (2pt);
            \draw[fill=black] (1,-4.5) circle (2pt);
            \draw (0,-4.5)--(1, -4.5);
            \node at (0.5, -4.3) {$4$};
            \draw[fill=black] (2, -4.5) circle (2pt);
            \draw (1, -4.5)--(2,-4.5);
            \draw[dashed] (2,-4.5)--(3,-4.5);
            \draw[fill=black] (3,-4.5) circle (2pt);
            \draw[fill=black] (4,-4.5) circle (2pt);
            \draw (3,-4.5)--(4,-4.5);
            \node at (3.5, -4.3) {$4$};
            \node at (-0.9, -4.5) {$\underset{n\geq 2}{\widetilde C_n}$};

            %D_n
            \draw[fill=black] (0,-5.3) circle (2pt);
            \draw[fill=black] (0,-6.7) circle (2pt);
            \draw[fill=black] (1,-6) circle (2pt);
            \draw (0,-5.3)--(1,-6);
            \draw (0,-6.7)--(1,-6);
            \draw[fill=black] (2,-6) circle (2pt);
            \draw (1,-6)--(2,-6);
            \draw[dashed] (2,-6)--(3,-6);
            \draw[fill=black] (3,-6) circle (2pt);
            \draw[fill=black] (4, -5.3) circle (2pt);
            \draw[fill=black] (4, -6.7) circle (2pt);
            \draw (3,-6)--(4, -5.3);
            \draw (3,-6)--(4, -6.7);       
            \node at (-0.9, -6) {$\underset{n\geq 4}{\widetilde D_n}$};

            %E_6
            \draw[fill=black] (6,0) circle (2pt);
            \draw[fill=black] (7,0) circle (2pt);
            \draw[fill=black] (8,0) circle (2pt);
            \draw[fill=black] (9,0) circle (2pt);
            \draw[fill=black] (10,0) circle (2pt);
            \draw[fill=black] (8,1) circle (2pt);
            \draw[fill=black] (8,2) circle (2pt);
            \draw (6,0)--(10,0);
            \draw (8,0)--(8,2);
            \node at (5.5,0) {$\widetilde E_6$};

            %E_7
            \draw[fill=black] (6, -1.5) circle (2pt);
            \draw[fill=black] (7, -1.5) circle (2pt);
            \draw[fill=black] (8, -1.5) circle (2pt);
            \draw[fill=black] (9, -1.5) circle (2pt);
            \draw[fill=black] (10, -1.5) circle (2pt);
            \draw[fill=black] (11, -1.5) circle (2pt);
            \draw[fill=black] (12, -1.5) circle (2pt);
            \draw[fill=black] (9, -0.5) circle (2pt);
            \draw (6,-1.5)--(12, -1.5);
            \draw (9, -1.5)--(9, -0.5);
            \node at (5.5,-1.5) {$\widetilde E_7$};

            %E_8
            \draw[fill=black] (6, -3) circle (2pt);
            \draw[fill=black] (7, -3) circle (2pt);
            \draw[fill=black] (8, -3) circle (2pt);
            \draw[fill=black] (9, -3) circle (2pt);
            \draw[fill=black] (10, -3) circle (2pt);
            \draw[fill=black] (11, -3) circle (2pt);
            \draw[fill=black] (12, -3) circle (2pt);
            \draw[fill=black] (13, -3) circle (2pt);
            \draw[fill=black] (8, -2) circle (2pt);
            \draw (6,-3)--(13, -3);
            \draw (8, -3)--(8, -2);
            \node at (5.5,-3) {$\widetilde E_8$};

            %F_4
            \draw[fill=black] (6, -4.5) circle (2pt);
            \draw[fill=black] (7, -4.5) circle (2pt);
            \draw[fill=black] (8, -4.5) circle (2pt);
            \draw[fill=black] (9, -4.5) circle (2pt);
            \draw[fill=black] (10, -4.5) circle (2pt);
            \draw (6, -4.5)--(10, -4.5);
            \node at (8.5, -4.3) {$4$};
            \node at (5.5,-4.5) {$\widetilde F_4$};

            %G_2
            \draw[fill=black] (6, -6) circle (2pt);
            \draw[fill=black] (7, -6) circle (2pt);
            \draw[fill=black] (8, -6) circle (2pt);
            \draw (6,-6)--(8, -6);
            \node at (7.5, -5.8) {$6$};
            \node at (5.5,-6) {$\widetilde G_2$};   
		\end{tikzpicture}
	\caption{Coxeter graphs of affine type $\widetilde X_n$, where $\widetilde X_n$ has $n+1$ vertices.
 }\label{fig:affineCoxeterGraphs}
	\end{center}
\end{figure}

Profinite rigidity, the study of groups via their finite quotients, has attracted a lot of interest in recent years.  See, for example, the ICM lecture notes of Alan Reid \cite{Reid2018} or the book of Wilkes \cite{Wilkes2024book}.  One of the main goals is to distinguish finitely generated residually finite groups up to isomorphism by their finite quotients.  There are only a few examples where such a result has been achieved: some hyperbolic $3$-manifold groups \cite{BridsonMcReynoldsReidSpitler2020}, some hyperbolic triangle groups \cite{BridsonMcReynoldsReidSpitler2021}, crystallographic groups in dimension at most 4 \cite{PiwekPopvicWilkes2021}, affine Coxeter groups \cite{CorsonHughesMoellerVarghese2025,PaoliniSklinos2024}, and certain lamplighter groups \cite{Blachar2025,NikolovWykowski2026}.  There are also many negative results, see for example \cite{BridsonGrunewald2004,Pyber2004,Bridson2016,Bridson2024,CorsonHughesMoellerVarghese2025}.  The relative problem of determining certain groups up to isomorphism within a fixed class of groups is much more widely studied, with most attention being paid to geometrically meaningful groups such as 3-manifold groups \cite{BridsonWilton2017,WiltonZalesskii2017,Wilkes2017,WiltonZalesskii2019,Liu2023,Xu2025,Xu2026}, free-by-cyclic groups \cite{HughesKudlinska2025, BridsonPiwek2025, AndrewHillenAlonzoPfaff2025, Nyberg-Brodda2026fbyz}, Coxeter groups \cite{CorsonHughesMoellerVarghese2025, CorsonHughesMoellerVarghese2026, HughesMoellerVarghese2025}, and right-angled Artin groups \cite{KrophollerWilkes2016,CorsonHughesMoellerVarghese2026,Liu26}.  Both the braid group \cite{Nyberg-Brodda2026braid} and certain right-angled Artin groups \cite{CorsonHughesMoellerVarghese2026} are not profinitely rigid amongst all finitely generated residually finite groups.  

Beyond the intrinsic appeal of profinite rigidity theorems, the profinite completions of Artin groups and specifically of braid groups are closely related to Grothendieck's vision of anabelian geometry \cite{Grothendieck1997} and Drinfeld's construction of the group Grothendieck--Teichm\"uller group $\widehat{GT}$ \cite{Drinfeld1991}.  The latter group is intimately related with the Galois group $\mathrm{Gal}(\bar\QQ/\QQ)$ and conjecturally isomorphic to it.  In \cite[Theorem~B]{MinamideNakamura2022}, it is shown for $n\geq 4$ that $\Aut(\widehat\cala_n/Z(\widehat\cala_n))$ is isomorphic to $\widehat{GT}$.  For more information and recent developments we refer the reader to the surveys \cite{Schneps1997,Lochak2012}.  We remark that in \cite[Question~3]{Marin2012} it was asked whether the braid groups of complex reflection groups, a class which includes spherical Artin groups, can be distinguished by their profinite completions.  Our final result makes some new progress in this direction.  In the sequel we will denote by $\widehat G$ the profinite completion of a group $G$.

\begin{duplicate}[\Cref{thmx:profinite}]
    Let $G$ and $H$ be irreducible spherical Artin groups.  If $\widehat G\cong \widehat H$, then $G\cong H$.
\end{duplicate}
% \smallskip

We would like to recognize the concurrent and independent work \emph{Smallest quotients and profinite rigidity of irreducible spherical type Artin groups} by Gavazzi--Haladijan--Paris who proved versions of \Cref{thmx:spherical} and  \Cref{thmx:profinite}. Our approaches for \Cref{thmx:spherical} in the cases of $\calb_n, \cald_n, \calf_4$ and $\cali_2(m)$ are similar.  Our approaches to \Cref{thmx:spherical} for the cases of $\cale_6,\cale_7,\cale_8, \calh_3$ and $\calh_4$ are very different.  Our approaches for \Cref{thmx:profinite} share the same broad strokes but the specific implementations are different.

%We would like to recognize the independent and concurrent work \emph{Smallest quotients and profinite rigidity of irreducible spherical type Artin groups} by Gavazzi--Haladijan--Paris who proved versions of \Cref{thmx:spherical} and  \Cref{thmx:profinite}. Our approaches for \Cref{thmx:spherical} for the cases of $\calb_n, \cald_n, \calf_4$ and $\cali_2(n)$ are similar, but our approaches to \Cref{thmx:spherical} for the cases of $\cale_6,\cale_7,\cale_8, \calh_3$ and $\calh_4$ are very different.  Our approaches for \Cref{thmx:profinite} share common broad strokes but the implementations are different.

\subsection{Proof strategy}\label{sec:proof_strat}
We first describe the proof strategy of \Cref{thmx:spherical} in the infinite families $\calb_n$, $\cald_n$ and $\cali_2(n)$.  The case of $\calb_n$ and $\cald_n$ are applications of Kolay's Theorem \cite{Kolay23} and some general facts about spherical Artin groups.  The remaining case of $\cali_2(n)$ involves consideration of the centre and the Orbit-Stabiliser Theorem.

The proof strategy of \Cref{thmx:spherical} in the case of $\cale_n$ for $n=6,7,8$ is very different.  
The first key observation is that the commutator subgroup of $\cale'_n$ is perfect \cite{Zinde1975} so every finite quotient is perfect-by-abelian.  Using this and facts about the socle of a finite group we constrain the shapes of possible finite quotient groups that can occur (\Cref{lem:min_quotient_En}).  Combining this structure description with the classification of finite simple groups of order less than $|W(\cale_n)/Z(\cale_n)|$ gives a list of candidate groups to rule out.  In the case of $\cale_8$, there are around 200 simple groups, the majority of which are isomorphic to $\PSL_2(q)$ for some natural number $q$.

The second key observation is that we can analyse the structure of totally symmetric sets in $\cale_n$ and $\cale_n'$; see \Cref{lem:En_comm_tss} and \Cref{lem:E7_comm_TSS}.  This analysis allows us to exclude all quotients with socle isomorphic to a product of $\PSL_2(q)$ for $\cale_7'$, $\cale_7$, and $\cale_8$.  We also introduce a new inductive method for excluding quotients isomorphic to subgroups of $\GL_n(q)$ using work of De Franceschi--Liebeck--O'Brien \cite{DeFranceschiLiebeckOBrien2025}; see \Cref{lem:induct_on_Anq}.

The third key observation is that in a finite quotient there must be elements of certain orders; see Lemmas \ref{lem:E6_elemOrders} and \Cref{lem:E7_elemOrders}.  The proof of this uses work of Soroko \cite{Soroko2021}, who identified generators of these torsion elements in $\cale_n/Z(\cale)$.  Our proof is essentially a direct computation but we introduce some new notation for working with equations in Artin groups which greatly simplifies the exposition.  

With these reductions in hand, we are left with a tractable list of finite groups.  Our general strategy is then to look at possible images for the generator $s_n$ of $\cale_n$ and  inductively study their centralisers.  In $\cale_6$, this amounts to the theorem of Kolay \cite{Kolay23} since $\cala_4\cong\langle s_1,s_3, s_2,s_4\rangle \leqslant \cale_6$ centralises $s_6$.   In $\cale_8$, we apply our results for $\cale_6$, since $\cale_6 \cong \langle s_1 , s_2, s_3, s_4, s_5 , s_6 \rangle$ centralizes $s_8$. %In each of the possible quotient groups we show  the centraliser must have incompatible structural properties with a quotient of $\cale_n$.

The verification for $\calh_3$ is by direct computation. The strategy for $\calh_4$ is similar to $\cale_n$ but considerably easier.  The remaining cases all admit $\sym(3)$ as a quotient, so there is nothing to check.

% \smallskip

\subsection{Paper outline}\label{sec:paper_outline}
In \Cref{sec:prelims}, we recall some notions and notation from finite group theory, prove \Cref{lem:min_quotient_En} about the shapes of quotients, recall some generalities about totally symmetric sets, and state some results about parabolic subgroups in Artin groups.

In \Cref{sec:non_excpetional}, we handle the cases of $\calb_n$, $\cald_n$, and $\cali_2(n)$.

In \Cref{sec:Artin_props}, we recall a number of properties of spherical Artin groups.  We construct totally symmetric sets in $\cale_n$ and $\cale_n'$ for all $n$; see \Cref{lem:En_comm_tss}.  We prove a general result about collapsing sets in Artin groups; see \Cref{lem:artin-gen-collapsing}.  We also state the element order lemmas mentioned in \Cref{sec:proof_strat}; the computations which comprise the majority of their proofs are provided in \Cref{apx:computations}.

In \Cref{sec:f4} we handle the cases of $\calf_4$, $\calh_3$, and  $\calh_4$.  
In \Cref{sec:e6} we handle the case of $\cale_6$.  
In \Cref{sec:e7} we handle the case of $\cale_7$.  
In \Cref{sec:e8} we handle the case of $\cale_8$.  
In \Cref{sec:spherical} we prove \Cref{thmx:spherical}.
In \Cref{thmx:affine} we prove \Cref{thmx:affine}.
In \Cref{sec:profinite} we prove \Cref{thmx:profinite}.

\tableofcontents

\subsection*{Acknowledgements}
SH thanks Nick Gill for helpful correspondence, Dawid Kielak for helpful conversations, and Alexandre Martin for patiently listening to him explain his ideas during the early stages of this project. KR thanks Carolyn Abbott for helpful conversations and mentorship during the early stages of this project.
Versions of \Cref{thmx:spherical} and \Cref{thmx:profinite} were independently proven via different methods in work of Gavazzi--Haladijan--Paris titled \emph{Smallest quotients and profinite rigidity of irreducible spherical type Artin groups} posted concurrently with this article.

\smallskip

\footnotesize{SH was supported by the European Research Council (ERC)
under the European Union’s Horizon 2020 research and innovation programme (Grant
agreement no. 850930), by a Humboldt Research Fellowship at Universit\"at Bonn, and by the Deutsche Forschungsgemeinschaft (DFG, German Research Foundation) under Germany's Excellence Strategy - EXC-2047/1 - 390685813. 
TN was partially supported by ISF grant 660/20, an AMS--Simons travel grant, and a Zuckerman Fellowship at the Technion.
KR was supported by NSF grant DMS-2139752, NSF grant DMS-2106906, Basque Government grant IT1913-26, and PID2025-171325NB-I00 grant funded by MICIU/AEI/10.13039/501100011033 and FEDER/EU. 
NS was partially supported by the National Science Foundation grant DMS-2532699.
YV was partially supported by an NSERC-PDF Fellowship and by an NSERC Discovery grant RGPIN 05587.}
\normalsize

\subsection*{AI declaration} The authors did not use AI in the preparation of this article.

%%%%%%%%%%%%%%%%%%%%%%%%%%%%%%%%%%%%%%%%%%%%%%%%%%%%%%%%%%
\section{Preliminaries}\label{sec:prelims}

\subsection{Finite groups}\label{subsec:finite-groups}
We first establish some standard notation from finite group theory.  

\begin{notation}\label{StandardFiniteGroupsNotation}
A finite cyclic group of order $k$ is denoted by $k$. A direct product of $l$ copies of the cyclic group of order $k$ is denoted by $k^l$. 

A short exact sequence 
\[\begin{tikzcd}
    1 \ar[r] & N \ar[r] & G \ar[r] & Q \ar[r] & 1
\end{tikzcd}\]
of finite groups with unspecified extension will be denoted $N.Q$. If the extension is split, we write $N:Q$. If $N$ is abelian and the extension is central, we write $N\cdot Q$.  The wreath product $N\wr Q$ with head $Q$ and base $N$ is the semidirect product $\left(\prod_{Q} N\right):Q$ where $Q$ acts by permuting the copies of $N$.

Specific finite groups that will show up repeatedly throughout this article include $\sym(n)$ and $\alt(n)$, the symmetric and alternating groups on $n$ objects respectively, and $Dh_n$, the dihedral group of order $2n$.
\end{notation}

\subsubsection{Groups of Lie type}
Let $p$ be a prime, let $q=p^k$, and let $\FF$ be an algebraically closed field of characteristic $p$.  The simple algebraic groups are classified by the Dynkin diagrams $\cala_n$ for $n\geq 1$, $\calb_n$ for $n\geq 3$, $\calc_n$ for $n\geq 2$, $\cald_n$ for $n\geq 4$, $\cale_6$, $\cale_7$, $\cale_8$, $\calf_4$, and $\calg_2$.  To each Dynkin diagram, we associate a finite number of simple algebraic groups; two such groups associated to the same diagram are called \emph{versions}. All versions become isomorphic upon taking their quotients by their respective centres. There is a version (the \emph{simply-connected} one) which maps onto every other version with a finite central kernel and there is a version (the \emph{adjoint} one) which is a
quotient of every other version with a finite central kernel.

Any \emph{finite group of Lie type} $G$ is obtained as the fixed point set of a Steinberg endomorphism of a connected simple algebraic group $\mathbf G$ defined over an algebraically closed field of characteristic $p$.  The groups of Lie type fall into two families: the types $\mathtt A_n$, $^2\mathtt A_n$, $\mathtt B_n$, $\mathtt C_n$, $\mathtt D_n$, $^2\mathtt D_n$ are the \emph{classical types}; the types $^2\mathtt B_2$, $^3\mathtt D_4$, $\mathtt E_6$, $^2\mathtt E_6$, $\mathtt E_7$. $\mathtt E_8$, $\mathtt F_4$, $^2\mathtt F_4$, $\mathtt G_2$, and $^2\mathtt G_2$ are the \emph{exceptional types}.  Note that groups of type $^2\mathtt B_2$ and $^2\mathtt F_4$ are only defined over fields of order $2^{2m+1}$  and groups of type $^2\mathtt G_2$ are defined over fields of order $3^{2m+1}$.

Each type of group has a finite number of \emph{versions}, the smallest one is again called the \emph{adjoint}.  The adjoint version is a simple group with the following exceptions:
\begin{align*}
    \mathtt A_1(2)\cong \sym(3), && \mathtt A_1(3) \cong \alt(4), && \mathtt C_2(2) \cong \sym(6), && ^2\mathtt A_2(2)\cong 3^2: Q_8 \\
    \mathtt G_2(2) \cong ^2\mathtt A_2(3):2, && ^2\mathtt B_2(2)\cong 5:4, && ^2\mathtt G_2(3)\cong \mathtt A_1(8):3, && \text{and }^2\mathtt F_4(2),
\end{align*}
where $Q_8$ is the quaternion group, and the commutator subgroup of $^2\mathtt F_4(2)$, which we denote by $^2\mathtt F_4'(2)$ is a simple group known as the \emph{Tits group}.  As is often the convention, we will treat $^2\mathtt F_4'(2)$ as an exceptional group of Lie type.  We also record a number of exceptional isomorphisms between the groups: $\mathtt A_1(4)\cong \mathtt A_1(5) \cong \alt(5)$, $\mathtt A_1(7)\cong \mathtt A_2(2)$, $\mathtt A_1(9)\cong \alt(6)$, $\mathtt A_3(2)\cong \alt(8)$, $^2\mathtt A_3(2) \cong \mathtt C_2(3)$,
and $\mathtt B_n(2^\ell) \cong \mathtt C_n(2^\ell)$ for all $\ell\geq 1$ and $n\geq 3$.

The adjoint classical groups admit the following descriptions: $\mathtt A_n(q)\cong\PSL_{n+1}(q)$, $\mathtt B_n(q)\cong \POmega_{2n+1}(q)$, $\mathtt C_n(q)\cong \PSp_{2n}(q)$, $\mathtt D_n(q)\cong \POmega^+_{2n}(q)$, $^2\mathtt A_n(q^2)\cong \PSU_{n+1}(q)$, and $^2\mathtt D_n(q^2)\cong \POmega_{2n}^-(q)$.  When working with a specific group, we will tend to use the right-hand-side notation, e.g, $\PSL_{n+1}(q)$.  However, when referring to a family of groups, we will tend to use the Chevalley notation, e.g., $\mathtt A_n(q)$. 

\subsubsection{The classification of finite simple groups}

\begin{thm}
Every finite simple group is, up to isomorphism, one of the following groups: 
\begin{itemize}
    \item a cyclic group of prime order;
    \item an alternating group of degree at least 5;
    \item a finite simple group of Lie type;
    \item one the 26 sporadic groups.
\end{itemize} 
Note here we view the Tits group $^2\mathtt F_4'(2)$ as a group of Lie type.
\end{thm}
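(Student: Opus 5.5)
This statement is the Classification of Finite Simple Groups. It is an input to the paper, not something proved inside it, so the plan is to invoke the literature rather than reprove it. Concretely, I would cite the completed classification, which consists of:
\begin{itemize}
    \item the original programme surveyed by Gorenstein, together with the quasithin case settled by Aschbacher and Smith;
    \item the second-generation treatment of Gorenstein, Lyons and Solomon, with a standard overview in the book of Aschbacher, Lyons, Smith and Solomon.
\end{itemize}
I would then add only the remarks needed to match this particular formulation. The main one is that the Tits group ${}^2\mathtt F_4'(2)$, which the classical literature sometimes lists with the $26$ sporadic groups, is here placed among the groups of Lie type. With that convention the count of sporadics is $26$ rather than $27$, and the statement is exactly the usual one.

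To justify that the citation is the right one, here is the architecture of the proof. Let $G$ be a minimal counterexample. If $G$ has odd order it is solvable by Feit--Thompson, so it is cyclic of prime order. Otherwise $G$ has involutions, and one studies their centralisers $C_G(t)$, as in Brauer--Fowler. The argument then splits according to the $2$-local structure:
\begin{itemize}
    \item \emph{small $2$-rank}: groups of $2$-rank at most $2$ (Gorenstein--Walter, Alperin--Brauer--Gorenstein, Lyons);
    \item \emph{component type}: some involution centraliser has a quasisimple component, handled by Aschbacher's component theorems, the $B$-conjecture, standard form problems and the classical involution theorem;
    \item \emph{characteristic $2$ type}: handled via Thompson's $N$-group methods, the Gorenstein--Lyons trichotomy and the quasithin theorem.
\end{itemize}
Each branch ends in an identification theorem, for example Curtis--Tits or Steinberg presentations recognising Lie-type groups from $(B,N)$-pairs, or uniqueness theorems for individual sporadic groups. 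These recognise $G$ as alternating, of Lie type (including the Tits group), or sporadic.

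The main obstacle is not a single step. The proof spans tens of thousands of pages, and the quasithin case and the uniqueness theorems for the sporadic groups are the most delicate parts. None of this can reasonably be reproduced in an article on Artin groups, so the theorem should be stated with a citation and used as a black box.

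How it is used later is the only part the paper controls. It yields explicit lists of simple groups of order below a bound such as $|W(\cale_n)/Z(W(\cale_n))|$, read off from the ATLAS and the standard order formulas for Lie-type groups. The exceptional small-rank isomorphisms recorded above must be taken into account to avoid double counting.
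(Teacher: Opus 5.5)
Your proposal is correct and takes the same approach as the paper. The paper states the Classification of Finite Simple Groups as a recalled background result without proof, counts the Tits group ${}^2\mathtt F_4'(2)$ among the groups of Lie type as you do, and uses the theorem only to read off lists of small simple groups from the ATLAS; your outline of the proof's architecture is extra context the paper does not supply.
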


Of the 26 sporadic groups, the relevant ones for our purposes will be the Mathieu groups $M_{11}$, $M_{12}$, $M_{22}$, $M_{23}$, and $M_{24}$, the Janko groups $J_1$, $J_2$, and $J_3$, and the Higman--Sims group $HS$.  Constructions of the sporadic groups as well as all of the relevant facts we will need are contained in the `ATLAS of finite simple groups' \cite{ATLAS}.  From here on we will often refer to \cite{ATLAS} as \emph{the ATLAS}.

\subsubsection{The socle}
A key tool in our studies will be an important characteristic subgroup of a group $G$.

The \emph{socle}, denoted $\soc(G)$, of a group $G$ is the subgroup generated by all minimal normal subgroups of $G$.  
We briefly recall a number of elementary facts, see \cite[Theorem~4.3A]{DixonMortimer1996}.  If the group $G$ is finite, then the socle is isomorphic to the direct product of the minimal normal subgroups of $G$.  Moreover, the socle of a finite group $G$ is isomorphic to a direct product of finite simple groups and a finite abelian group.

\subsection{The shape of a smallest quotient of perfect-by-cyclic group}
The following lemma describes the structure of a smallest non-cyclic quotient of a perfect-by-cyclic group.  This will be of central importance in our study of the exceptional Artin groups. We begin with a definition.

\begin{defn}
    A group $G$ is \emph{almost simple} if there is a non-abelian simple group $S$ such that $S \leq A \leq \Aut(S)$.
\end{defn}

\begin{lemma}\label{lem:min_quotient_En}
    If $G$ is a group such that $G^\ab$ is cyclic and $[G,G]$ is perfect, then a smallest non-cyclic quotient $Q$ of $G$ fits into an exact sequence $1\to S^n\to Q \to k \to 1$ where $S$ is a finite simple group and $k$ is cyclic.  Moreover, $\soc(Q)=S^n$ and
    \begin{enumerate}
    	\item if $n=1$, then $Q$ is almost simple;%\yv{Should we define the term ``almost simple"}
    	\item if $n>1$, then $k$ divides $|\Out(S^n)|$ and $G$ is a subgroup of $S^n\wr k$.
    \end{enumerate}
\end{lemma}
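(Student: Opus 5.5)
Let $Q$ be a smallest non-cyclic quotient of $G$ and let $\pi\colon G\to Q$ be the quotient map. Since $G^\ab$ is cyclic, $Q^\ab$ is cyclic as well. Put $D=\pi([G,G])=[Q,Q]$. It is perfect, being the image of a perfect group, and $D\neq 1$ because $Q$ is not cyclic, hence not abelian.

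\emph{Step 1: $D$ has a unique minimal normal subgroup of $Q$ below it.} Let $M\trianglelefteq Q$ be a minimal normal subgroup contained in $D$. By minimality of $|Q|$, the proper quotient $Q/M$ is cyclic. Then $D=[Q,Q]\leq M$, so $M=D$. Hence $D$ is itself minimal normal in $Q$. A minimal normal subgroup of a finite group is characteristically simple, so $D\cong S^n$ for some simple $S$. As $D$ is perfect, $S$ is non-abelian. This gives the exact sequence $1\to S^n\to Q\to Q/D=Q^\ab\cong k\to 1$ with $k$ cyclic.

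\emph{Step 2: $\soc(Q)=S^n$.} Suppose $N$ is any minimal normal subgroup of $Q$ different from $D$. Then $N\cap D=1$. The argument of Step 1 applied to $N$ shows $Q/N$ is cyclic, so $D\leq N$, a contradiction. So $D$ is the unique minimal normal subgroup and $\soc(Q)=D\cong S^n$.

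\emph{Step 3: the two cases.} Since $D$ is the unique minimal normal subgroup and is non-abelian, $C_Q(D)$ is normal and meets $D$ trivially (because $Z(S^n)=1$). Hence $C_Q(D)=1$. Therefore conjugation embeds $Q$ into $\Aut(S^n)=\Aut(S)\wr\sym(n)$, with $S^n$ mapping onto the inner automorphisms.
\begin{enumerate}
\item If $n=1$, this reads $S\leq Q\leq\Aut(S)$, so $Q$ is almost simple.
\item If $n>1$, the quotient $k=Q/S^n$ embeds in $\Out(S^n)=\Out(S)\wr\sym(n)$, so $k$ is isomorphic to a subgroup of $\Out(S^n)$. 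In particular, $k$ divides $|\Out(S^n)|$ by Lagrange.
\end{enumerate}

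For the wreath statement in case (2) (where ``$G$'' should presumably read ``$Q$''), take a generator $t$ of $k$ and a lift $x\in Q$. The image of $x$ in $\sym(n)$ permutes the factors, and it does so transitively. Indeed, otherwise a $\langle x\rangle$-orbit of factors would give a proper nontrivial normal subgroup of $Q$ inside $S^n$, since $Q=S^n\langle x\rangle$. Such a subgroup contradicts the minimality of $D$. So $x$ acts as an $n$-cycle composed with a tuple of automorphisms $(\alpha_1,\dots,\alpha_n)\in\Aut(S)^n$. Conjugating inside $\Aut(S)\wr\sym(n)$ by a suitable element of $\Aut(S)^n$, we can normalise this so that $\alpha_1=\dots=\alpha_{n-1}=1$ and $\alpha_n=\alpha$. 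Then $x^n$ acts diagonally via $\alpha$.

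\emph{Main obstacle.} The main obstacle is to realise $Q$ inside $S^n\wr k$ from this normal form, i.e.\ with the base group consisting only of inner automorphisms. When $\alpha$ is inner, one can multiply $x$ by an element of $S^n$ to make $x$ act as a pure cycle. Then $Q$ is an extension of $S^n$ by $\langle x\rangle$ in which $x$ permutes coordinates, and $Q$ embeds in $S^n\wr k$ by sending $S^n$ to the base group and $x$ to the generator of $k$, acting as a cycle of length $n$ on the $k$ coordinate blocks. Here I would use that $k$ is a multiple of $n$ and that the blocks are indexed by $k$.

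When $\alpha$ is not inner, I would instead use minimality once more. The subgroup $S^n\langle x^n\rangle$ is an extension of $S^n$ by $\langle x^n\rangle$. I expect to derive a contradiction from a smaller non-cyclic quotient, or else to absorb $\alpha$ into the standard embedding $Q\hookrightarrow S\wr_{\text{ind}}$ via induction from the almost simple group $S\langle\alpha\rangle$. This is the step where the precise intended meaning of ``subgroup of $S^n\wr k$'' has to be pinned down, and I would first try the induced-module style embedding $Q\leq (S\langle\alpha\rangle)\wr k$.
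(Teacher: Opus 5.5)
\textbf{Gap: the wreath-product clause of (2) is not proved.}

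Your Steps 1--3 are correct. They are essentially the paper's reasoning in a cleaner form. Since every proper quotient is cyclic, $D=[Q,Q]$ lies in every nontrivial normal subgroup. Then $C_Q(D)=1$ gives $k\hookrightarrow\Out(S^n)$.

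The gap is the last clause of (2), $Q\leq S^n\wr k$. You are right that $G$ there should read $Q$. You leave this open, and neither of your suggested routes closes it.

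\emph{The outer-twist case genuinely occurs.} In $\sym(5)\wr 2=\Aut(\alt(5)^2)$, let $\omega$ swap the two factors and let $\tau$ be a transposition. Put $x=(\tau,1)\omega$. Then $x^2=(\tau,\tau)$ is outer on each coordinate, and $Q=\alt(5)^2\langle x\rangle$ has the following properties:
\begin{itemize}
\item $n=2$ and $k=4$;
\item $C_Q(\alt(5)^2)=1$, so $\alt(5)^2$ is the unique minimal normal subgroup;
\item $Q$ itself satisfies the hypotheses of the lemma, and every proper quotient of $Q$ is cyclic.
\end{itemize}
So minimality cannot yield a contradiction. Your other route, an embedding into $(S\langle\alpha\rangle)\wr k$, lands in a group whose base contains outer automorphisms, not in $S^n\wr k$.

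Your inner case is also muddled. In $S^n\wr k=(S^n)^k:k$, a generator of $k$ acts as a $k$-cycle on $k$ blocks, not as an $n$-cycle. What actually happens there is that $x'^n$ centralises $D$, so $x'^n=1$. This forces $k=n$ and $Q\cong S\wr C_n$.

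\textbf{The missing idea.} It is the Kaloujnine--Krasner universal embedding theorem, which is exactly what the paper cites: every extension $1\to N\to E\xrightarrow{\pi}K\to 1$ embeds in the regular wreath product $N\wr K=N^K:K$. Concretely, fix a set-theoretic section $t$ of $\pi$ and send
\[
e\mapsto(f_e,\pi(e)),\qquad f_e(c)=t(c)^{-1}\,e\,t(\pi(e)^{-1}c)\in N.
\]
This is an injective homomorphism when $K$ acts on $N^K$ by $({}^af)(c)=f(a^{-1}c)$. Taking $N=S^n$ and $K=k$ gives $Q\leq S^n\wr k$ at once, with no normal-form analysis and no distinction between inner and outer $\alpha$.
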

\begin{proof}
    Any finite quotient $Q$ of $G$ fits into an exact sequence $1\to N\to Q \to C_k \to 1$ with $N$ perfect.  
    The result follows if we can show that $\soc(Q)=N$ and that $N$ is a product of simple groups.  If $\soc(Q)$ is not equal to $N$, then there is a normal subgroup $L\triangleleft Q$ such that the quotient $Q/L$ is non-abelian, contradicting minimality of $Q$.  Thus, we may suppose $\soc(N)=Q$.  
    The socle of any finite group is the product of finitely many simple groups and an abelian group.  Indeed, a minimal normal subgroup is characteristically simple and so since $Q$ is finite any minimal normal subgroup is a direct product of isomorphic simple groups.  
    
    If $Q$ had a non-trivial abelian normal subgroup $A$, we would be able to take a further smaller quotient $Q/A$.  Thus $\soc(Q)=\prod_{i=1}^\ell S_i$ with each $S_i$ simple.  
    Next, we show that each of the $S_i$ are isomorphic.  Indeed, if they were not, then we would have a proper subset $I$ of $[\ell]$ such that $L=\prod_{i\in I} S_i$ is a normal subgroup of $Q$ with $Q/L$ non-abelian.  This contradicts minimality of $Q$.  Thus, $S_i\cong S_j\cong S$ for all $i,j$ and $\soc(Q)=S^n$ as required.  
    
    It follows immediately that if $n=1$, then $Q$ is almost simple.  Otherwise, $Q$ would admit a further non-abelian quotient with kernel isomorphic to the kernel of map $k\to \Out(S^n)$.  If $n>1$, then by the Kaloujinine--Krasner Universal Embedding Theorem \cite{KrasnerKaloujnine1951}, every extension of $S^n$ and $k$ is a subgroup of the wreath product $S^n\wr k$.  Clearly $n|k$; otherwise, $k$ would fix some copies of $S$.  It remains to show that $k$ divides $|\Out(S^n)|$.  Note that $\sym(n)\leqslant \Out(S^n)$, so in particular, $n$ divides $|\Out(S^n)|$. Suppose that $k$ does not divide $n$. The kernel of the homomorphism $k\to \Out(S^n)$ must be trivial because if it were not, $Q$ would admit non-trivial central elements and hence a smaller non-abelian quotient.  Hence, $k$ divides $|\Out(S^n)|$ as required.
\end{proof}

\subsection{Bounding the of size non-cyclic quotients using totally symmetric sets.}\label{sec:TTS}
In this section, we introduce one of the tools we will use to bound the size of non-cyclic quotients.
For a group $G$, a subset $T = \{ t_1, t_2, \dots \} \subseteq G$ is called a \emph{totally symmetric set} (TSS) if for any permutation $\sigma \in \sym(T)$ there exists $g_\sigma \in G$ such that $g_\sigma t_i g_\sigma^{-1} = t_{\sigma(i)}$. 
This notion was first introduced by Kordek and Margalit \cite{KordekMargalit22} who worked under an additional pairwise commuting hypothesis.
This definition was later generalized by Caplinger--Salter to the version which appears in this paper; we refer to totally symmetric sets consisting of pairwise commuting elements as \emph{commuting totally symmetric sets}.
One can immediately observe that homomorphic images of totally symmetric sets remain totally symmetric. 

A key property of TSS's is the following ``collision implies collapse'' result. A version of this result was first proven by Kordek--Margalit with the additional pairwise commuting hypothesis {\cite[Lemma~2.1]{KordekMargalit22}}, and the version which appears in this paper was proven by Caplinger \cite[Lemma~4]{Caplinger2023}:

\begin{lemma}
\label{lem:TSS_collapse}
    Let $G$ be a group and $T \subseteq G$ be any totally symmetric set.  For any homomorphism $f: G \to H$, either 
    $f$ restricts to an injection on $T$ or $|f(T)| = 1$.  
\end{lemma}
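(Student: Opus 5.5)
We argue by contrapositive. Suppose $f|_T$ is not injective, so there are distinct $t_i,t_j\in T$ with $f(t_i)=f(t_j)$. We aim to show $f(t_k)=f(t_l)$ for every pair $t_k,t_l\in T$, so that $|f(T)|=1$. The only tool needed is total symmetry: every permutation of $T$ is induced by conjugation by some element of $G$, and $f$ turns conjugation by $g$ into conjugation by $f(g)$.

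\textbf{Step 1 (transport the collision).} Take any two distinct elements $t_k\neq t_l$ of $T$. Choose a permutation $\sigma\in\sym(T)$ with $\sigma(t_i)=t_k$ and $\sigma(t_j)=t_l$. Such a $\sigma$ exists because $t_i\neq t_j$ and $t_k\neq t_l$, so we can extend the partial injection $\{t_i\mapsto t_k,\ t_j\mapsto t_l\}$ to a bijection of $T$. This works whether $T$ is finite or infinite.

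\textbf{Step 2 (apply the homomorphism).} By total symmetry there is $g_\sigma\in G$ with $g_\sigma t_i g_\sigma^{-1}=t_k$ and $g_\sigma t_j g_\sigma^{-1}=t_l$. Applying $f$ gives
\[
f(t_k)=f(g_\sigma)f(t_i)f(g_\sigma)^{-1}=f(g_\sigma)f(t_j)f(g_\sigma)^{-1}=f(t_l).
\]
Since $t_k,t_l$ were arbitrary, $f$ is constant on $T$, so $|f(T)|=1$. Hence either $f$ is injective on $T$ or $|f(T)|=1$.

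There is no real obstacle here. The only point needing care is the existence of $\sigma$ in Step 1 mapping one ordered pair of distinct elements to another. Since the definition of a totally symmetric set allows arbitrary permutations, this is immediate. Note also that no commuting hypothesis is used, which is why the Caplinger version of the statement holds.
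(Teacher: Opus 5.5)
Your proof is correct and complete: any collision $f(t_i)=f(t_j)$ is carried to an arbitrary pair $t_k\neq t_l$ by an element $g_\sigma$ realizing a permutation that sends $(t_i,t_j)$ to $(t_k,t_l)$, and such a $\sigma$ exists because $\sym(T)$ is $2$-transitive on $T$. The paper does not prove this lemma itself but cites \cite[Lemma~4]{Caplinger2023}; your argument is essentially that standard one and, as you note, never uses commutativity.
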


This ``collision implies collapse" property is weaker than the property of being a totally symmetric set, and we will sometimes require only that a given set satisfies the former.

\begin{defn}\label{D:collapsingSet}
    Let $G$ be a group.  A subset $X$ is a \emph{collapsing set} if for any quotient map $\varphi:G \to Q$, either $\varphi$ restricts to an injection on $X$ or $|\varphi(X)| = 1$.
\end{defn}

The following is a convenient rephrasing of \cite[Lemma~2.2]{KordekMargalit22} in light of \Cref{lem:TSS_collapse}.
\begin{lemma}\label{L:braidGenCollapse}
    The standard generators of braid groups $\cala_n$ for $n \geq 4$ form a collapsing set. 
\end{lemma}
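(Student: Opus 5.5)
The plan is to reduce \Cref{L:braidGenCollapse} to \Cref{lem:TSS_collapse} by exhibiting the standard generators $\{s_1,\dots,s_n\}$ of $\cala_n$ as a subset of, or closely related to, a totally symmetric set, and then to upgrade "collapse on a totally symmetric subset" to "collapse on all generators" using the braid and commutation relations. Concretely, let $\varphi\colon \cala_n\to Q$ be a quotient map. Suppose $\varphi$ is not injective on the generating set, so $\varphi(s_i)=\varphi(s_j)$ for some $i<j$. We must show that all $\varphi(s_k)$ coincide.

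First, split into two cases. If $|i-j|=1$, the braid relation $s_is_{i+1}s_i=s_{i+1}s_is_{i+1}$ together with $\varphi(s_i)=\varphi(s_{i+1})$ gives $\varphi(s_i)^3=\varphi(s_i)^3$, which is useless. Instead, rewrite it as $\varphi(s_{i+1})=\varphi(s_i)$ and conjugate. Since all generators are conjugate to one another via elements carrying adjacent pairs to adjacent pairs (for instance $\delta=s_1\cdots s_n$ satisfies $\delta s_k\delta^{-1}=s_{k+1}$), equality on one adjacent pair propagates to equality on every adjacent pair. Hence all images are equal. If $|i-j|\ge 2$, we use the commuting totally symmetric set of Kordek--Margalit: the odd-indexed generators $\{s_1,s_3,s_5,\dots\}$, and more generally any set of pairwise non-adjacent generators, form a totally symmetric set in $\cala_n$. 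Indeed, any permutation of such strands is realised by a braid moving disjoint arcs to one another. After conjugating by a suitable power of $\delta$, we may assume $s_i,s_j$ lie in such a set $T$ of pairwise distant generators. \Cref{lem:TSS_collapse} then forces $\varphi$ to be constant on $T$. Since $n\ge 4$, $T$ can be chosen to contain two generators $s_a,s_b$ with $|a-b|=2$, say $s_1,s_3$.

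Next, from $\varphi(s_1)=\varphi(s_3)$ we deduce equality of an adjacent pair. The element $\varphi(s_2)$ braids with $\varphi(s_1)$, and it also braids with $\varphi(s_3)=\varphi(s_1)$; combining this with the fact that $s_4$ commutes with $s_1$ but braids with $s_3$, we get that $\varphi(s_4)$ both commutes and braids with $\varphi(s_1)$. From $xy=yx$ and $xyx=yxy$ one gets $x=y$. Hence $\varphi(s_4)=\varphi(s_3)$, an adjacent collision, and the first case finishes the argument. This is exactly where $n\ge 4$ is used: we need a generator $s_4$ adjacent to one element of the collision and distant from the other.

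The main obstacle is making the totally symmetric set claim precise for every non-adjacent pair. We need to check that for arbitrary pairwise non-adjacent generators, every permutation is induced by conjugation. I would handle this by reducing, via $\delta$-conjugation and the standard fact that the braid group acts transitively on ordered tuples of disjoint arcs in the punctured disc, to the Kordek--Margalit statement. Alternatively, one can avoid general sets: it suffices to treat a collision $s_i=s_j$ with $|i-j|\ge2$. Here $\{s_i,s_j\}$ is a two-element TSS (swapped by a braid exchanging the two arcs), and collapse on it is automatic. One then runs the commuting/braiding trick above with a generator adjacent to exactly one of $s_i,s_j$, which again exists because $n\ge 4$.
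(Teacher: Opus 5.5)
Your proposal is correct, but only through the argument in your final paragraph. The totally-symmetric-set detour in the main line should be dropped, because one of its steps is false.

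The false step is the claim that $T$ can be chosen to contain a pair $s_a,s_b$ with $|a-b|=2$. It fails, for instance, for the collision $\varphi(s_1)=\varphi(s_4)$ in $\cala_4$ or $\cala_5$. Every other generator is adjacent to $s_1$ or $s_4$, so the only pairwise non-adjacent set containing both is $\{s_1,s_4\}$. Conjugating by powers of $\delta$ shifts all indices uniformly, so it cannot turn a gap of $3$ into a gap of $2$. In any case, applying \Cref{lem:TSS_collapse} to a set that already contains the colliding pair gives nothing about generators outside $T$.

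Your fallback closes this gap:
\begin{itemize}
\item If $j-i\ge 3$, use $s_{i+1}$.
\item If $j=i+2$, use $s_{i-1}$ or $s_{j+1}$; one of them exists unless $(i,j,n)=(1,3,3)$.
\end{itemize}
The chosen generator's image both commutes and braids with the common image $\varphi(s_i)=\varphi(s_j)$, so it equals it by \Cref{L:gcdRelation}. This gives an adjacent collision, which your $\delta$-conjugation spreads to all generators. Your check that $\delta s_k\delta^{-1}=s_{k+1}$ is fine.

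The paper gives no argument of its own for this lemma; it cites \cite[Lemma~2.2]{KordekMargalit22} together with \Cref{lem:TSS_collapse}. Your route is self-contained and purely relation-chasing, and it shows why $n\ge 4$ is sharp. For $n=3$, the map $\cala_3\to\cala_2$ with $s_1,s_3\mapsto s_1$ and $s_2\mapsto s_2$ makes two generators collide without collapsing the set. Your argument is in fact the path case of the paper's own proof of \Cref{lem:artin-gen-collapsing}: a path on at least four vertices satisfies all four hypotheses there, and the "vertex in exactly one link" claim in that proof is exactly your existence step. So the statement could equally be deduced from that lemma, with no circularity.
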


\subsection{Artin groups}

One obtains a presentation for an Artin group $A_{\Gamma}$ from a labelled defining graph $\Gamma$ as follows. For each vertex $v_i$ of $\Gamma$, let $s_i$ be a generator. For each pair $i$ and $j$, $m_{ij}$ is the label of the edge connecting vertices $v_i$ and $v_j$ in $\Gamma$, if it exists. If $v_i$ and $v_j$ are connected via an unlabelled edge, $m_{ij} = 3$. If $v_i$ and $v_j$ are not adjacent, $m_{ij} = 2$. The group is then given via the relations below.
\begin{align*}
        A_{\Gamma} =  \langle s_1,...s_n | \LaTeXunderbrace{s_i s_j s_i...}_{m_{ij}} = \LaTeXunderbrace{s_j s_i s_j ...}_{m_{ij}} \rangle
\end{align*}
    Relations of the type in the presentation above will often be referred to as \emph{Artin relations}. We will use the notation $[x,y]_\ell$ to denote the alternating word $xyxy \dots $ of length $\ell$.
    So, an Artin relation can be rewritten as $[x,y]_\ell=[y,x]_\ell$ where $m_{i,j}=m_{j,i}=\ell$.
    
    In general, Artin groups may have generators which do not admit any relation, but no such groups will arise in this paper. An Artin group is called \emph{reducible} if it can be decomposed as the direct product of two Artin groups where neither factor is trivial.  The irreducible finite-type Artin groups are exactly those Artin groups whose defining graphs appear in Figure \ref{fig:FiniteCoxeterGroups} \cite{Coxeter34}.

    Throughout the paper, we will use the labellings of Artin generators given in Figure \ref{fig:FiniteCoxeterGroups} unless otherwise specified.

\subsubsection{Parabolic subgroups control finite quotients}

In addition to totally symmetric sets, parity of the length of Artin relations furnish additional means of bounding finite quotients.
We record the following elementary facts:

A first fact is that pairs of generators with an odd relation are conjugate.
The next fact, recorded in the following lemma, is a straightforward application of Bezout's identity.
\begin{lemma}
\label{L:gcdRelation}
    Let $x$ and $y$ be elements of any group $G$. If $x$ and $y$ satisfy Artin relations of length $m$ and $n$ then they satisfy the Artin relation of length $\gcd(m,n)$.
\end{lemma}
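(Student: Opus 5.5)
We reduce to the case where $\ell = \gcd(m,n)$ arises from a single Euclidean step, and then induct. Write the Artin relation of length $m$ as
\[
[x,y]_m=[y,x]_m .
\]
Let $\Delta_k$ denote this common element when the relation holds. The relation of length $m$ is equivalent to a statement about the element $u=xy$ (or rather about the pair $(x,y)$). For even $m=2a$ it says $(xy)^a=(yx)^a$, i.e.\ $(xy)^a$ commutes with $x$. For odd $m=2a+1$ it says $(xy)^a x=(yx)^a y$, i.e.\ $x(yx)^a\,y^{-1}=(xy)^a$. We will instead use a uniform reformulation, namely that conjugation by $[x,y]_m$ permutes $\{x,y\}$ in a prescribed way (swapping them if $m$ is odd, fixing both if $m$ is even). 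This needs an identity for each parity, which is routine.

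The key step is to show that the relations of lengths $m$ and $n$, with $m>n$, imply the relation of length $m-n$. Since $\gcd(m,n)=\gcd(m-n,n)$, iterating this subtractive Euclidean algorithm on the pair, and stopping when the two lengths coincide, yields the relation of length $\gcd(m,n)$ by induction on $m+n$. A relation of length $0$ never occurs, because we stop once the lengths are equal.

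To prove the subtraction step, we write
\[
[x,y]_m=[x,y]_n\cdot[z,w]_{m-n},
\]
where $(z,w)=(x,y)$ if $n$ is even and $(z,w)=(y,x)$ if $n$ is odd, since the alternating word continues from where the prefix stops. The same decomposition holds for $[y,x]_m$, with the roles swapped. The length-$n$ relation gives $[x,y]_n=[y,x]_n$, and both sides of the length-$m$ relation begin with this common prefix. Cancelling it on the left gives
\[
[z,w]_{m-n}=[w,z]_{m-n},
\]
which is the Artin relation of length $m-n$ for the pair $\{x,y\}$, in whichever order. Since Artin relations are symmetric in $x$ and $y$, this is exactly the relation of length $m-n$.

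The only point needing care is checking that the continuation word after the common length-$n$ prefix is precisely the alternating word starting with the correct letter on each side. This is a parity bookkeeping step rather than a real obstacle. The reformulation in the first paragraph turns out to be unnecessary; direct left cancellation suffices.
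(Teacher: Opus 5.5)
Your proof is correct. It shares its core move with the paper's proof but organises the argument differently.

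\textbf{The paper's route.} The paper writes $\gcd(m,n)=am+bn$ by B\'ezout with $a>0>b$. It then upgrades the hypotheses to Artin relations of lengths $am$ and $|b|n$, using, as an unproved ``easy'' fact, that a relation of length $m$ implies the relations of all multiples of $m$. Finally it performs a single left-cancellation of the common alternating prefix of length $|b|n$ from the two words of length $am$.

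\textbf{Your route.} You run the subtractive Euclidean algorithm, and each step is exactly one such prefix cancellation applied directly to relations already in hand. The mechanism is the same, but your version is more self-contained. It needs neither a B\'ezout representation nor the auxiliary multiples fact. The only parity bookkeeping is the check that the continuation after a prefix of length $n$ begins with $x$ or $y$ according to the parity of $n$, and you carry this out correctly on both sides. In exchange, the paper's route does the reduction in one shot rather than by induction.

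\textbf{A slip in your first paragraph.} The reformulation there, which you rightly discard, contains an error. For $m=2a+1$ the relation $(xy)^a x=(yx)^a y$ is equivalent to $x(yx)^a y^{-1}=(yx)^a$, not $(xy)^a$; the latter would force $x=y$ already when $a=1$. Since you never use that paragraph, it is best simply deleted.
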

\begin{proof}
    By Bezout's indentity there are integers $a$ and $b$ such that $k = \gcd(m,n) = am + bn$. 
    Up to reordering, assume that $a > 0 > b$.
    Let $[x,y]_\ell$ denote the alternating word $xyxy \dots $ of length $\ell$. 
    It is easy to see that Artin relations of a fixed length imply those for all multiples and that:
    \[
    [x,y]_{bn}^{-1}[x,y]_{am} = \begin{cases}
        [x,y]_k &   \text{if } bm \text{ is odd}\\
        [y,x]_k &   \text{if } bm \text{ is even}
    \end{cases}
    \qquad 
    [y,x]_{bn}^{-1}[y,x]_{am} = \begin{cases}
        [y,x]_k &   \text{if } bm \text{ is odd}\\
        [x,y]_k &   \text{if } bm \text{ is even.}
    \end{cases}
    \]
    An immediate consequence is that $[x,y]_k = [y,x]_k$.
\end{proof}

\begin{lemma}\label{L:divisbleEdge}
    Suppose that $A_\Gamma$ is an Artin group and $x,y$ are a pair of standard generators whose corresponding vertices in the defining graph are joined by an edge labelled $m$.  If $d$ is any divisor of $m$ then there exists a quotient $A_\Gamma \to A_{\bar{\Gamma}}$ where $\bar{\Gamma}$ is obtained by replacing the edge labelled by $m$ with an edge labelled by $d$.
\end{lemma}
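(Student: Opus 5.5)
The plan is to build the quotient explicitly from presentations. Write $A_\Gamma$ by the standard Artin presentation on generators $s_1,\dots,s_n$, and let $x=s_i$, $y=s_j$ be the two generators joined by the edge labelled $m$. The group $A_{\bar\Gamma}$ has the same generators and the same relations, except that the relation $[x,y]_m=[y,x]_m$ is replaced by $[x,y]_d=[y,x]_d$. I will define $\varphi\colon A_\Gamma\to A_{\bar\Gamma}$ on generators by $s_k\mapsto s_k$. By von Dyck's theorem, $\varphi$ is a well-defined homomorphism as soon as every defining relation of $A_\Gamma$ holds in $A_{\bar\Gamma}$ for the images of the generators. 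It is surjective automatically, because the image contains every generator of $A_{\bar\Gamma}$.

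Every relation of $A_\Gamma$ other than the one for the pair $\{x,y\}$ is literally a relation of $A_{\bar\Gamma}$, so only one relation needs checking: $[x,y]_m=[y,x]_m$ must hold in $A_{\bar\Gamma}$, given that $[x,y]_d=[y,x]_d$ holds there and $d\mid m$. The proof of \Cref{L:gcdRelation} already notes that an Artin relation of length $d$ implies the Artin relations of all lengths that are multiples of $d$, and I will prove that claim directly. Write $m=td$ and split on the parity of $d$.

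If $d$ is even, then $[x,y]_d=(xy)^{d/2}$ and $[y,x]_d=(yx)^{d/2}$, so the relation says that $(xy)^{d/2}=(yx)^{d/2}$. Raising both sides to the power $t$ gives $(xy)^{m/2}=(yx)^{m/2}$, which is exactly $[x,y]_m=[y,x]_m$.

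If $d$ is odd, set $\Delta=[x,y]_d=[y,x]_d$. This element satisfies $\Delta x=y\Delta$ and $\Delta y=x\Delta$. To see the first, note that $\Delta x=[y,x]_d\,x=[y,x]_{d+1}$ (the word $[y,x]_d$ ends in $y$ since $d$ is odd), while $y\Delta=y[x,y]_d=[y,x]_{d+1}$. The second holds by symmetry. Concatenating alternating words then gives
\[ [x,y]_{td}=[x,y]_d\,[y,x]_d\,[x,y]_d\cdots = \Delta^t=[y,x]_{td}, \]
where the factors alternate because each block has odd length. So the relation of length $m$ holds in this case as well.

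The only point needing care is this parity bookkeeping in the odd case, and the concatenation identity above handles it. This completes the plan: $\varphi$ is a well-defined surjection $A_\Gamma\to A_{\bar\Gamma}$. When $d=2$, the edge label $2$ means the vertices become non-adjacent in $\bar\Gamma$, and the same argument applies without change.
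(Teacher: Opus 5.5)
Your proof is correct and follows the paper's argument: the paper likewise notes that once the length-$d$ Artin relation is imposed, the length-$m$ relation becomes redundant, because Artin relations of a given length imply those of every multiple (as observed in the proof of \Cref{L:gcdRelation}). You phrase this through von Dyck's theorem and prove the multiples claim explicitly with the parity split; the identities $\Delta x=y\Delta$ and $\Delta y=x\Delta$ are not needed, since your block decomposition only uses that $[x,y]_d$ and $[y,x]_d$ both equal $\Delta$.
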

\begin{proof}
    This can be seen immediately from the presentation. By adding the relation $[x,y]_d$ the prior relation $[x,y]_m$ becomes redundant using the same reasoning in \Cref{L:gcdRelation}.
\end{proof}

Recall that if $\Gamma$ is the defining graph for an Artin group $A_\Gamma$ and $\Lambda$ is a proper subgraph of $\Gamma$, then the inclusion $\Lambda \subset \Gamma$ induces an embedding of $A_\Lambda$ whose image is called a \emph{standard parabolic subgroup} of $A_\Gamma$.  
The third fact allows us to show that in many cases where the standard parabolic subgroup of an Artin group is a braid group, one can find a retraction map from an Artin group to this braid group.

\begin{lemma}
\label{lem:parabolic-retraction}
    Let $A_\Gamma$ be an Artin group and suppose that the standard parabolic subgroup corresponding to $\Lambda \subset \Gamma$ is a braid group. 
    If every $x \in \Gamma \smallsetminus \Lambda$ has even labelled relations with all generators in $\Lambda$, then there is a retraction $A_\Gamma \twoheadrightarrow A_\Lambda$.
\end{lemma}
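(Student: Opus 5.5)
We define the retraction on the standard generators and then check the Artin relations. Since $A_\Lambda$ is a braid group, it is irreducible, so $\Lambda$ is a path whose consecutive vertices are joined by edges labelled $3$. In particular all generators of $\Lambda$ are pairwise conjugate in $A_\Lambda$, and any relation of the form $[u,v]_{2k}=[v,u]_{2k}$ holds when $u=v$.

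The map will be $\rho\colon A_\Gamma\to A_\Lambda$ with $\rho(s)=s$ for $s\in\Lambda$. For $x\in\Gamma\smallsetminus\Lambda$ we need to choose $\rho(x)$; the natural choice is $\rho(x)=s_0$ for one fixed generator $s_0\in\Lambda$.

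We then check every defining relation.
\begin{enumerate}
\item Relations between two vertices of $\Lambda$ hold because $A_\Lambda\leqslant A_\Gamma$ is the standard parabolic subgroup and $\rho$ is the identity on it.
\item For $x\notin\Lambda$ and $s\in\Lambda$, the relation has even length $2k$ by hypothesis. It becomes $[s_0,s]_{2k}=[s,s_0]_{2k}$. If $s=s_0$ this is trivial. Otherwise we need $s_0$ and $s$ to satisfy an even Artin relation in $A_\Lambda$.
\end{enumerate}

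This second case is the main obstacle. Two generators of a braid group at distance $\ge 2$ commute, so they satisfy every even-length relation. Adjacent ones satisfy only the relation of length $3$, and an even-length relation between them fails in general. Indeed, \Cref{L:gcdRelation} shows that a length-$3$ relation together with an even one gives the length-$1$ relation $s_0=s$, which is false. So the naive choice only works when $x$ has relations with all of $\Lambda$, and I would not rely on it in general.

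The way out is to use that $x$ need not map to a generator. I would instead send every $x\notin\Lambda$ to the identity, $\rho(x)=1$. Then each relation in case (2) becomes $[1,s]_{2k}=s^k=[s,1]_{2k}$, which holds. For two vertices $x,y\notin\Lambda$, any Artin relation between them reads $1=1$.

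Hence $\rho$ is a well-defined homomorphism that restricts to the identity on $A_\Lambda$, so it is a retraction $A_\Gamma\twoheadrightarrow A_\Lambda$. With this choice the only work is recording that the evenness hypothesis is exactly what makes the lengths balance. A length-$m$ relation with $m$ odd would become $s^{(m+1)/2}=s^{(m-1)/2}$, forcing $s=1$; this is why the hypothesis is needed.
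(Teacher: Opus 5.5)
Your final argument is the paper's proof: fix the generators of $\Lambda$, send every generator outside $\Lambda$ to the identity, and note that each even relation $[x,s]_{2k}=[s,x]_{2k}$ becomes $s^k=s^k$, while all other relations are trivially preserved. The preliminary attempt with $\rho(x)=s_0$, and the remark about the shape of $\Lambda$, can simply be dropped; as in the paper, the braid-group hypothesis on $A_\Lambda$ is never actually used.
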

\begin{proof}
    It is a general fact that given a group $G = \langle S \mid R \rangle$ and another group $H$, a map $\varphi$ which assigns an element of $H$ to each generator $s \in S$ extends to a homomorphism $\varphi: G \rightarrow H$ if and only if $\varphi(r) = 1$ for all $r \in R$, where $\varphi(r)$ is the natural extension $\varphi(s_i)\cdots\varphi(s_n)$ when $r = s_i \cdots s_n$.
    
    Set $\varphi(s) = s$ if $s \in V(\Lambda)$ and $\varphi(s) = 1$ if $s \in V(\Gamma) - V(\Lambda)$. Taking $R$ to be the standard Artin relations on $A_{\Gamma}$, it is clear that any $r \in R$ satisfies the above condition if the two Artin generators which appear in it are either both in $\Lambda$ or both in $\Gamma - \Lambda$. Any Artin relation involving $s \in \Lambda$ and $x \in \Gamma - \Lambda$ must be even, so the relation $r$ is of the form $(sx)^{m/2} (s^{-1}x^{-1})^{m/2}$. Applying $\varphi$, we obtain precisely $\varphi(s)^{m/2} \varphi(s)^{-m/2} = 1$. Thus the map $\varphi$ extends to a homomorphism $A_{\Gamma} \rightarrow A_{\Lambda}$ in which every generator of $A_{\Lambda} \leq A_{\Gamma}$ is fixed.
    
    %(Sketch) Even relations can be replaced with commutation relations in a quotient. 
    %Commutating relations are encoded by disjointness.
    %Thus, we can assume the Coxeter diagram is disconnected.  Note that $\Lambda$ is preserved under these adjustments. 
    %Connected components of Coxeter diagrams are factors in a direct product decomposition.  
    %The retraction is obtained by projecting onto the factor corresponding to $A_\Lambda$.
\end{proof}

\section{Smallest quotients of non-exceptional Artin groups}\label{sec:non_excpetional}

In this subsection, we will determine the smallest quotients of the non-exceptional Artin groups $\cala_n, \cali_2(n), \calb_n$ and $\cald_n$, in this order.

\subsection{Smallest quotient of $\cala_n$}\label{sec:A_n_results}
Since $\cala_n \cong Br_{n+1}$ this case is resolved by Kolay. 
\begin{thm}~{\cite[Claim~7 and Theorem~1]{Kolay23}}\label{kolay}
    The smallest non-abelian quotient of $\cala_2 \cong Br_{3}$ is $Sym(3)$.
    
    The smallest non-abelian quotient of $\cala_n \cong Br_{n+1}$ is $\sym(n+1)$.  Moreover, the quotient map is unique up to an automorphism of $\sym(n+1)$.
\end{thm}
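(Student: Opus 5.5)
The statement is Kolay's theorem, and the paper imports it directly from \cite{Kolay23}. Still, here is how I would prove it. The plan treats $\cala_2\cong Br_3$ by hand and then handles $n\geq 4$ with totally symmetric sets. The small cases $n=3$ (i.e.\ $Br_4$, where $\sym(4)$ surjects onto $\sym(3)$) and $n\geq 4$ both rest on the same mechanism: collapse of the standard generators.

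For $Br_3$, the abelianisation is $\Z$, so any non-abelian quotient $Q$ is generated by two conjugate elements $a=\bar s_1$ and $b=\bar s_2$ satisfying $aba=bab$. The non-abelian groups of order less than $6$ are $Q_8$ and $Dh_4$, and neither is generated by two conjugate elements satisfying the braid relation. In $Dh_4$ (order $8$), two generators cannot be conjugate and braid-related. In $Q_8$, take $i$ and $j$: then $iji=j$ while $jij=-i$, and the other sign choices fail similarly. On the other hand, $\sym(3)$ is a quotient, via $s_i\mapsto(i\ i{+}1)$. This case is a short finite check.

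For $Br_{n+1}$ with $n\geq 4$, let $\varphi:Br_{n+1}\to Q$ be non-abelian. By \Cref{L:braidGenCollapse} (the Kordek--Margalit collapse), either all the $\varphi(s_i)$ coincide, in which case $Q$ is cyclic, or they are pairwise distinct. The odd-index generators $\{s_1,s_3,\dots\}$ form a commuting totally symmetric set of size $\lfloor (n+1)/2\rfloor$. In $Q$ their images are therefore distinct commuting conjugates, and they generate an abelian subgroup on which a subgroup of $Q$ induces the full $\sym(\lfloor (n+1)/2\rfloor)$. This already gives a lower bound on $|Q|$ of the form $\lfloor (n+1)/2\rfloor!$ times something, but that is not enough by itself. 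I would follow Kolay's strategy and induct on $n$. The centraliser $C_Q(\varphi(s_{n}))$ contains $\varphi(\langle s_1,\dots,s_{n-2}\rangle)\cong$ a quotient of $Br_{n-1}$, and this image is non-abelian because the generators are distinct. By induction its order is at least $(n-1)!$. Then count the conjugacy class of $\varphi(s_n)$ via orbit-stabiliser. The class contains the pairwise distinct images of the elements $s_{i,j}$ (the conjugates of $s_1$ corresponding to transpositions). The totally symmetric structure forces these to be distinct, giving at least $\binom{n+1}{2}$ of them. Since $\binom{n+1}{2}(n-1)!\geq(n+1)!/2$, the index of the centraliser is at least $\binom{n+1}{2}$, and $|Q|\geq\binom{n+1}{2}\,|C_Q(\varphi(s_n))|$.

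The main obstacle is closing the factor-of-$2$ gap between $(n+1)!/2$ and $(n+1)!$. The centraliser is bigger than just the image of $Br_{n-1}$: it also contains $\varphi(s_n)$ itself and the commuting part of $\langle s_{n-2},\dots\rangle$. Even so, one must rule out quotients of order exactly $(n+1)!/2$, such as $\alt(n+1)$, and in general show that a group of order below $(n+1)!$ forces all generators to collide. I would first handle this with a parity argument. All the $\varphi(s_i)$ are conjugate and $Q^{\ab}$ is a cyclic quotient of $\Z$, so if $Q$ were perfect-like of index-$2$ size, then $\varphi(s_1)$ would have to lie in $[Q,Q]$ while generating. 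The braid relation then forces $\varphi(s_1)^2\in\varphi(Z)$ considerations, and I expect case analysis around small $n$ ($n=4,5$) here. For uniqueness at equality, equality in the counting forces $C_Q(\varphi(s_n))\cong\sym(n-1)\times2$ and a faithful action of $Q$ on the $n+1$ points corresponding to the classes of the $\varphi(s_{i,j})$, from which $Q\cong\sym(n+1)$ with $\varphi$ being the standard map up to automorphism.
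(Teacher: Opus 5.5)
The paper gives no proof of this statement. It is quoted from Kolay and used as a black box, so your sketch has to stand on its own. It does not, because the decisive inequality is never established. Your orbit--stabiliser count only yields $|Q|\geq\binom{n+1}{2}(n-1)!=(n+1)!/2$. The ``parity argument'' you offer for the missing factor $2$ is not an argument: at best it gestures at perfect quotients such as $\alt(n+1)$, and it says nothing about the other groups of order in $[(n+1)!/2,(n+1)!)$.

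A genuinely new input is needed. One option is to show that $\varphi(s_n)\notin H:=\varphi(\langle s_1,\dots,s_{n-2}\rangle)$, so that $H\langle\varphi(s_n)\rangle\leqslant C_Q(\varphi(s_n))$ has order at least $2|H|$; this is the coset trick behind the ``moreover'' clause of \Cref{prop:braidParabolic}. However, $\varphi(s_n)\in H$ only says $\varphi(s_n)\in Z(H)$. That is contradictory only if you control $H$, for example if you know $H\cong\sym(n-1)$, and $|H|\geq 2(n-1)!$ in all other cases. So the inductive statement must be strengthened accordingly. Since the lower bound is not proved, the uniqueness paragraph has nothing to rest on: ``equality in the counting'' is equality in a bound you do not have, and the faithful action on $n+1$ points is never constructed.

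The induction also fails at its base. You invoke $|\varphi(Br_{n-1})|\geq(n-1)!$ for non-abelian images, but this is false for $Br_4$. Indeed $\cala_3\cong Br_4$ surjects onto $\sym(3)$ (\Cref{rk:a3-exceptional}), so for $n=5$ you only get $|C_Q(\varphi(s_5))|\geq 6$, not $24$. What rescues this is that your generator images are pairwise distinct, whereas the $\sym(3)$ quotient of $Br_4$ identifies $s_1$ and $s_3$. So the inductive hypothesis must be phrased for quotients in which the standard generators stay distinct, and its small cases ($Br_3$, $Br_4$) must be proved directly.

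Two smaller points remain. First, the claim that the $\binom{n+1}{2}$ half-twists $s_{i,j}$ have pairwise distinct images needs a real argument. Two such half-twists can have crossing arcs and are then neither commuting nor braid-related, so neither \Cref{L:braidGenCollapse} nor \Cref{L:gcdRelation} applies directly. Second, in the $Br_3$ case there are no non-abelian groups of order less than $6$ at all ($Q_8$ and $Dh_4$ have order $8$). That case is just the observation that $\sym(3)$ is the smallest non-abelian group and $s_i\mapsto(i\ i{+}1)$ is onto.
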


\begin{remark}\label{rk:a3-exceptional}
    The group $\cala_3 \cong Br_{4}$ admits a quotient to $\sym(3)$ because its associated Coxeter group is $\sym(4)$, which has an exceptional homomorphism to $\sym(3)$.
\end{remark}

%\begin{thm}~{\cite[Claim~7 and Theorem~1]{Kolay23}}\label{kolay}
%    For $n = 3$, the smallest non-abelian quotient is $Sym(3)$.
    
%    For $n\geq 4$ the smallest non-abelian quotient of $\cala_n$ is $\sym(n+1)$.  Moreover the quotient map is unique up to an automorphism of $\sym(n+1)$.
%\end{thm}

When $n = 4$, one can also classify the homomorphisms from $\cala_4$ to $\sym(n+2)$. This result is used in later sections to examine the possible images of $\cala_4$ parabolic subgroups in quotients of other Artin groups.

%\thomas{Added \cref{L:braidGenCollapse} above that we can cite instead of the later result that mentions $D_4$ and $E_6$}

\begin{lemma}\label{lem:A4toSym6}
    Any homomorphism $\cala_4\to\sym(6)$ must either have cyclic image or be equal to a copy of $\sym(5)$.  In the latter case, the homomorphism factors through the canonical Coxeter quotient, $W(\cala_4)$.
\end{lemma}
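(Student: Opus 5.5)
The plan is to combine the collapsing property of the standard generators with a case analysis on the cycle type of the image of a single generator in $\sym(6)$. Let $\varphi\colon\cala_4\to\sym(6)$ be a homomorphism and write $\sigma_i=\varphi(s_i)$. By \Cref{L:braidGenCollapse} the generators $s_1,\dots,s_4$ form a collapsing set. So either all $\sigma_i$ coincide, in which case the image is cyclic, or the $\sigma_i$ are pairwise distinct. We assume the latter from now on. The $\sigma_i$ are then pairwise distinct and pairwise conjugate, since adjacent generators satisfy an odd Artin relation. They satisfy $\sigma_i\sigma_j=\sigma_j\sigma_i$ for $|i-j|\ge 2$ and $\sigma_i\sigma_{i+1}\sigma_i=\sigma_{i+1}\sigma_i\sigma_{i+1}$. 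Adjacent $\sigma_i$ cannot commute: combined with the braid relation, commuting would force $\sigma_i=\sigma_{i+1}$. In particular $\sigma_1$ commutes with the two distinct, non-commuting conjugates $\sigma_3,\sigma_4$, so $\langle\sigma_3,\sigma_4\rangle$ is a non-abelian subgroup of $C_{\sym(6)}(\sigma_1)$ generated by conjugates of $\sigma_1$ satisfying the braid relation.

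Next we run through the conjugacy classes of non-identity elements of $\sym(6)$ as candidates for $\sigma_1$. In each case we compute $C_{\sym(6)}(\sigma_1)$ and ask whether it contains two distinct, non-commuting conjugates of $\sigma_1$. For example, if $\sigma_1=(1\,2\,3)$ then $C(\sigma_1)=\langle(1\,2\,3)\rangle\times\sym(\{4,5,6\})$. Every $3$-cycle in this centraliser lies in the abelian group $\langle(1\,2\,3),(4\,5\,6)\rangle$, which is a contradiction. The same kind of argument should dispose of the double transpositions, $3$-cycles, products of two $3$-cycles, $(3)(2)$-cycles, $4$-cycles, $(4)(2)$-cycles, $5$-cycles and $6$-cycles. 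Sometimes we need the finer constraint that $\sigma_3,\sigma_4$ must also be conjugate to $\sigma_2$, which itself commutes with $\sigma_4$. Where the centraliser is too small, an order count suffices: for a $5$-cycle the centraliser is cyclic of order $5$, and for a $6$-cycle it is cyclic of order $6$. To halve the work we use the outer automorphism of $\sym(6)$, which swaps transpositions with triple transpositions, $3$-cycles with products of two $3$-cycles, and $6$-cycles with $(3)(2)$-cycles.

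The surviving cases are that $\sigma_1$ is a transposition or a triple transposition. After composing with the outer automorphism if necessary, we may assume all $\sigma_i$ are transpositions. Then each $\sigma_i^2=1$, so $\varphi$ factors through $W(\cala_4)=\sym(5)$ by adding the relations $s_i^2=1$. The induced map $\sym(5)\to\sym(6)$ is non-abelian, and hence injective, because the only proper non-trivial normal subgroup of $\sym(5)$ is $\alt(5)$ and the quotient by it is abelian. So the image is a copy of $\sym(5)$. Alternatively, this last step follows from \Cref{kolay}: the image is a non-abelian quotient of $\cala_4$ generated by involutions.

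The main obstacle is the cycle-type case analysis, and in particular the $4$-cycle and $(4)(2)$ cases, where the centralisers ($4\times 2$-like and $\langle(1\,2\,3\,4)\rangle\times\sym(2)$) are small but not obviously free of suitable elements. There I would first check directly that all conjugates of $\sigma_1$ lying in $C(\sigma_1)$ commute with one another. If that fails, I would use that $\sigma_3\sigma_4$ must then have order $3$ (or $6$) inside a centraliser whose order is a power of $2$.
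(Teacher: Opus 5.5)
Your proposal is correct and is essentially the paper's argument: an involutive image of a generator makes $\varphi$ factor through $W(\cala_4)=\sym(5)$, and a non-involutive image is ruled out because its centraliser in $\sym(6)$ would have to contain the image of a commuting copy of $\cala_2$, which the collapsing property forces to be non-abelian. One correction: double transpositions are \emph{not} excluded by your non-commuting test, since for instance $(1\,3)(2\,4)$ and $(1\,2)(5\,6)$ are non-commuting conjugates of $(1\,2)(3\,4)$ inside $C_{\sym(6)}((1\,2)(3\,4))$. They need not be excluded, however, because they are involutions and your final factoring step covers them. The $4$-cycle and $4$-cycle-times-transposition cases you flagged as the main obstacle are immediate, since for the representatives $(1\,2\,3\,4)$ and $(1\,2\,3\,4)(5\,6)$ both centralisers equal the abelian group $\langle(1\,2\,3\,4),(5\,6)\rangle$.
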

\begin{proof}
 We claim any homomorphism $\alpha:\cala_4\to\sym(6)$ must either have cyclic image or be equal to a copy of $\sym(5)$.  If the generator $\alpha(s_4)$ is trivial, then all of $\alpha(\cala_4)$ is trivial because all Artin generators are conjugate. If $\alpha(s_4)$ has order 2, then all Artin generators must have order 2 for the same reason. In this case, $\alpha$ factors through the canonical Coxeter quotient $W(\cala_4)$, so $\alpha(\cala_4)$ must be a subgroup of $W(\cala_4) = \sym(5)$. The image cannot be a proper subgroup by \cref{kolay}.
 
 Suppose the order of $\alpha(s_4)$ is larger than 2, and let $\cala_2$ denote the standard parabolic subgroup $\langle s_1 , s_2 \rangle$ which commutes with $s_4$. We examine the possible images of $\alpha(s_4)$. A non-trivial permutation in $\sym(6)$ with order not equal to 2 has one of the following 7 cycle types: $2^13^1$, $2^14^1$, $3^1$, $3^2$, $4^1$, $5^1$, $6^1$. The centraliser of such an element is isomorphic to either $6$, $2\times 4$, $3\times\sym(3)$, $3\wr2$, $2\times4$, $5$, and $6$ respectively, written in the finite groups notation given at the beginning of Section \ref{subsec:finite-groups}.  

 If the image $\alpha(\cala_2)$ is cyclic, then $\alpha(\cala_4)$ must also be cyclic by \cref{L:braidGenCollapse}, so it remains only to consider the possibility that the centraliser of $\alpha(s_4)$ is $\sym(3)$ or a subgroup of $3\wr 2$. If $\cala_2$ maps to $\sym(3)$, then this factors through the canonical Coxeter quotient $\cala_2\to\sym(3)$. This forces all Artin generators to have order 2, so in fact, $\alpha$ itself factors through the canonical Coxeter quotient $W(\cala_4)$.
 
 The group $3\wr2$ is isomorphic $3\times\sym(3)$. If the image $\alpha(\cala_2)$ is abelian, then $\alpha(s_1)$ and $\alpha(s_2)$ commute in the quotient, but they also satisfy a braid relation, so $\alpha(s_1) = \alpha(s_2)$ by \cref{L:gcdRelation}. The entire image $\alpha(\cala_4)$ is then cyclic by \cref{L:braidGenCollapse}. 
 
 If $\alpha(\cala_2)$ is a non-abelian subgroup of $3\times\sym(3)$, then in fact $\alpha(\cala_2) \cong \sym(3)$. Indeed, the previous paragraph shows that $\alpha(s_1)$ and $\alpha(s_2)$ must be in the same factor of the direct product if $\alpha(\cala_2)$ is non-cyclic, and these elements generate $\alpha(\cala_2)$. As before, $\alpha(\cala_2)$ must factor though the canonical Coxeter quotient by \cref{kolay}, implying that $\alpha(\cala_4)$ also factors through $W(\cala_4)$. 
 
 %The the only possible non-cyclic images for a copy of $\cala_2$ centralising an Artin generator of $\cala_4$ are $\sym(3)$ or a subgroup of $3\wr 2$.  
 %\yv{Why does it suffice to only consider ``possible non-cyclic images for a copy of $\cala_2$ centralising an Artin generator of $\cala_4$"? There's something here that I'm missing.}
%But $3\wr2$ is isomorphic $3\times\sym(3)$ and so any non-cyclic image of $\cala_2$ must surject onto $\sym(3)$.  
\end{proof}

We will bootstrap off the control of finite quotients obtained in \Cref{kolay} to study quotients of other Artin groups using the following results.

\begin{lemma}\label{L:braidCollapse}
    Let $n\geq 2$. If $\cala_n \to C$ is any map to an abelian group then the generators collapse
\end{lemma}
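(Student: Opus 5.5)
The statement asks that, for $n\geq 2$, every homomorphism $\varphi\colon \cala_n \to C$ to an abelian group $C$ sends all standard generators $s_1,\dots,s_n$ to the same element. My plan is a direct argument from the presentation, using that the Coxeter graph of $\cala_n$ is a path in which consecutive generators satisfy an odd Artin relation. I will not invoke \Cref{L:braidGenCollapse} or any totally symmetric set technology, since the abelian case is elementary and also covers $n=2,3$, where \Cref{L:braidGenCollapse} does not apply.

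First step: fix an adjacent pair $s_i, s_{i+1}$ with $1\leq i\leq n-1$. These satisfy the braid relation $s_is_{i+1}s_i = s_{i+1}s_is_{i+1}$. Applying $\varphi$ and using that $C$ is abelian, this becomes $\varphi(s_i)^2\varphi(s_{i+1}) = \varphi(s_i)\varphi(s_{i+1})^2$. Cancelling gives $\varphi(s_i)=\varphi(s_{i+1})$.

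Alternatively, the same conclusion follows from \Cref{L:gcdRelation}. In $C$ the elements $\varphi(s_i)$ and $\varphi(s_{i+1})$ commute, so they satisfy the Artin relation of length $2$; they also satisfy the one of length $3$. Hence they satisfy the relation of length $\gcd(2,3)=1$, which reads $\varphi(s_i)=\varphi(s_{i+1})$. Equivalently, one can note that $s_i$ and $s_{i+1}$ are conjugate because their relation is odd, and conjugate elements have equal image in an abelian group.

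Second step: since the defining graph of $\cala_n$ is connected (a path $s_n - s_{n-1} - \cdots - s_1$), I chain these equalities along consecutive edges. This gives $\varphi(s_1)=\varphi(s_2)=\cdots=\varphi(s_n)$, so the image of the generating set is a single element; this is the claimed collapse. The hypothesis $n\geq 2$ just guarantees there is at least one edge, so the statement is non-vacuous.

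I do not expect a genuine obstacle; the only care needed is the cancellation in the first step, which is valid because $C$ is a group. I will also record that the map $\cala_n\to C$ therefore factors through the abelianisation $\cala_n^{\ab}\cong\Z$, with all $s_i$ mapping to the same generator. This is the form I expect to use later when bootstrapping to other Artin groups.
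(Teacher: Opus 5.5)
Your proof is correct. It differs from the paper's in how the collapse is propagated from one pair of generators to all of them.

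The paper's proof is the one-liner ``immediate from \Cref{L:braidGenCollapse} and \Cref{L:gcdRelation} since $\gcd(2,3)=1$''. The gcd lemma forces one adjacent pair to have equal image. The Kordek--Margalit collapsing-set property of the braid generators then forces all the generators together.

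You instead observe that in an abelian target \emph{every} edge of the path already carries a commuting relation. So the gcd argument, or your direct cancellation, applies edge by edge, and connectivity of the path finishes the job.

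Your route has two advantages. It is entirely elementary. It also covers $n=2,3$ honestly, whereas \Cref{L:braidGenCollapse} is stated only for $n\geq 4$, so the paper's citation does not literally handle the small cases of the stated range $n\geq 2$.

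The collapsing-set machinery is only really needed when a single collision occurs in a possibly non-abelian image, as in \Cref{lem:A4toSym6}. For this lemma, the paper's route buys nothing beyond brevity.
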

\begin{proof}
    This is immediate from \Cref{L:braidGenCollapse} and \Cref{L:gcdRelation} since $\gcd(2,3) =1$.
\end{proof}

\begin{prop}\label{prop:braidParabolic}
    Let $A_\Gamma$ be an Artin group with $A_\Lambda$ an (irreducible) standard parabolic subgroup isomorphic to $\cala_n = Br_{n+1}$ for $n \geq 4$.\\
    If $\varphi: A_\Gamma \to Q$ is any quotient onto a finite group whose restriction to $A_\Lambda$ is noncyclic then 
    \[
    |Q| \geq n!,
    \]
    Moreover if there is a generator $t \in \Gamma$ such that $\varphi(t) \notin \varphi(A_\Lambda)$ then the inequality is strict.
    %Previous statement was not quite right.  
    % Suppose that $G = \langle S \cup T \rangle$ where $|S| \geq 5$.  
    % If $i: Br_n \hookrightarrow G$ is given by $\sigma_i \mapsto s_i$ for each $s_i \in S$.  If $\varphi:G \twoheadrightarrow Q$ is a non-abelian quotient whose restriction to $Br_n$ then either 
    % \[
    % |Q| \geq n!,
    % \]
    % or $\varphi \circ i(Br_n) = \langle\bar{s}\rangle$ is cyclic and 
    % there exists $t \in T \smallsetminus S$ such that $\varphi(t)$ does not commute with $\bar{s}$.
\end{prop}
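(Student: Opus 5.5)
The plan is to reduce everything to Kolay's theorem (\Cref{kolay}) applied to the image of the braid parabolic, and then compare the order of that image with $|Q|$. Set $H=\varphi(A_\Lambda)\leqslant Q$. Then $H$ is a finite quotient of $A_\Lambda\cong\cala_n=Br_{n+1}$, and $|Q|\geq |H|$ by Lagrange's theorem. It therefore suffices to show that $H$ is non-abelian and to bound $|H|$ from below.

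The first step is to upgrade ``noncyclic'' to ``non-abelian''. Suppose $H$ were abelian. Then $\varphi|_{A_\Lambda}\colon A_\Lambda\to H$ is a map from $\cala_n$ to an abelian group, so by \Cref{L:braidCollapse} all standard generators of $A_\Lambda$ have the same image. Since $A_\Lambda$ is generated by its standard generators, $H$ would be cyclic, contrary to the hypothesis. Hence $H$ is a non-abelian finite quotient of $\cala_n$ with $n\geq 4$.

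The second step applies \Cref{kolay}. The smallest non-abelian quotient of $\cala_n$ is $\sym(n+1)$, so $|H|\geq (n+1)!\geq n!$. This gives $|Q|\geq n!$, and in fact the stronger bound $|Q|\geq (n+1)!$. For the strict inequality, suppose some generator $t\in\Gamma$ has $\varphi(t)\notin H$. Then $H$ is a proper subgroup of $Q$, so $[Q:H]\geq 2$ and $|Q|\geq 2|H|>|H|\geq n!$.

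I do not expect a genuine obstacle here. The only point needing care is the passage from noncyclic to non-abelian, which is exactly where \Cref{L:braidCollapse} is used: it relies on the collapsing property of \Cref{L:braidGenCollapse} together with $\gcd(2,3)=1$ from \Cref{L:gcdRelation}. Everything else is Lagrange's theorem.
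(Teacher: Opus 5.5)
Your proposal is correct and follows the paper's proof. Both apply \Cref{kolay} to $\varphi(A_\Lambda)$ and use $|Q|\geq|\varphi(A_\Lambda)|$, and both get strictness from the distinct cosets $\varphi(A_\Lambda)$ and $\varphi(t)\varphi(A_\Lambda)$ (the paper phrases this via Orbit--Stabiliser, which is the same coset count as your Lagrange argument); your explicit noncyclic-to-non-abelian step via \Cref{L:braidCollapse} and the sharper bound $(n+1)!$ are details the paper's proof leaves implicit.
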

\begin{proof}
    Consider the restriction of $\varphi$ to the parabolic braid subgroup.  By \cite{Kolay23}, the order $|\varphi(H)| \geq n!$, so $|Q| \geq |\varphi(H)|$.
    The moreover statement follows by the Orbit-Stabiliser Theorem applied to cosets of $\varphi(H)$ by powers of $\varphi(t)$. 
\end{proof}

% ------------------------------------------
\subsection{Smallest quotient of $\cali_2(n)$}

We begin this subsection by proving that when $n$ is divisible by $4$, then $\cali_2(n)$ has an exceptional quotient that does not factor through the associated Coxeter group, including the case $\cali_2(4) = \calb_2$.

\begin{lemma}
    \label{L:I24exotic}
    Suppose that $n$ is divisible by $4$.  The smallest nonabelian quotient of $\cali_2(n)$ is $\sym(3)$.
\end{lemma}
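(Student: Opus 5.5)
Two things must be shown: that $\sym(3)$ is a quotient of $\cali_2(n)$ when $4\mid n$, and that no non-abelian group of smaller order is a quotient. The second part is immediate. Every group of order at most $5$ is abelian, and $\sym(3)$ has order $6$. So a non-abelian quotient of order at most $6$ can only be $\sym(3)$, and it suffices to exhibit a surjection $\cali_2(n)\twoheadrightarrow\sym(3)$.

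For existence, first reduce to the case $n=4$. Since $4\mid n$, \Cref{L:divisbleEdge} gives a quotient $\cali_2(n)\twoheadrightarrow\cali_2(4)=\calb_2$, obtained by replacing the edge label $n$ with $4$. It therefore suffices to construct a surjection $\calb_2=\langle s_1,s_2\mid s_1s_2s_1s_2=s_2s_1s_2s_1\rangle\twoheadrightarrow\sym(3)$.

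We define the map by choosing images of generators that make the length-$4$ Artin relation hold automatically. Send $s_1\mapsto a=(1\,2)$ and $s_2\mapsto b=(1\,2\,3)$, a transposition and a $3$-cycle. These generate $\sym(3)$. For the relation, note that $(ab)^2=(ba)^2$ means $ab$ and $ba$ have the same square. Here $ab$ and $ba$ are both transpositions, since each is an odd permutation of order dividing $2$ or $3$ in $\sym(3)$ and odd elements are transpositions. Hence $(ab)^2=1=(ba)^2$, and the relation $[a,b]_4=[b,a]_4$ holds. Alternatively one can check this directly: $ab=(1\,2)(1\,2\,3)$ is a transposition, so its square is trivial, and likewise for $ba$.

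The map therefore extends to a homomorphism whose image contains a transposition and a $3$-cycle, so it is onto $\sym(3)$. Composing with $\cali_2(n)\twoheadrightarrow\calb_2$ gives the required quotient.

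The only real content is choosing generator images of different orders, so that the map does not factor through the Coxeter group $Dh_n$. There is no serious obstacle. The one point to check is that the parity argument showing $ab$ is a transposition is stated correctly.
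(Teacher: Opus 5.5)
Your proposal is correct and is essentially the paper's proof. Both reduce to $\cali_2(4)$ via \Cref{L:divisbleEdge}, send one generator to a transposition and the other to a $3$-cycle so that $xy$ and $yx$ both map to involutions (the paper uses $x\mapsto(1\,2\,3)$, $y\mapsto(2\,3)$), and conclude because $\sym(3)$ is the smallest non-abelian group. Your parity step is sound in its essential form: $ab$ and $ba$ are odd permutations, and the odd elements of $\sym(3)$ are exactly its transpositions.
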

\begin{proof}
    By \Cref{L:divisbleEdge}, there exists a quotient $\cali_2(n) \to \cali_2(4)$.
    Consider the presentation $I_2(4) = \langle x,y \mid (xy)^2 = (yx)^2 \rangle$. 
    The assignment $x \mapsto (123)$ and $y \mapsto (23)$ defines the desired homomorphism.  To see this, it suffices to check that the relation is satisfied.  
    Observe that $xy \mapsto (12)$ and $yx \mapsto (13)$ both map to involutions. 
    Thus, the relation $(xy)^2 = (yx)^2$ is also satisfied in $\sym(3)$.

    The smallest nonabelian finite group is $Sym(3)$, so it must be the smallest nonabelian quotient of $\cali_2(n)$.
\end{proof}
Before considering values of $n\neq 4$, we note the following simple fact which will be used to see how the centre of $\cali_2(n)$ behaves under quotients.

\begin{lemma}
    \label{L:central-by-cyclic}
    A central extension of a cyclic group is abelian.
\end{lemma}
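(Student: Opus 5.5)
The plan is to prove the statement in the form: if $Z\leq Z(G)$ and $G/Z$ is cyclic, then $G$ is abelian. Choose $g\in G$ whose image generates $G/Z$. Then every element of $G$ can be written as $g^a z$ with $a\in\Z$ and $z\in Z$.

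Take two arbitrary elements $x=g^a z$ and $y=g^b w$ with $z,w\in Z$. Since $z$ and $w$ are central, we get
\[
xy=g^a z g^b w = g^{a+b} zw = g^{b+a} wz = g^b w g^a z = yx.
\]
The only thing used here is that powers of $g$ commute with each other and that $z,w$ commute with everything. Hence $G$ is abelian.

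I expect no real obstacle. The one point to get right is the interpretation: the extension $1\to Z\to G\to C\to 1$ is central, meaning the kernel $Z$ lies in $Z(G)$, and the quotient $C$ is cyclic. The later application will be to a finite quotient $Q$ of $\cali_2(n)$ in which the image of the central element $\Delta$ (or $\Delta^2$) generates a central subgroup with cyclic cokernel. There the lemma forces $Q$ to be abelian, and this is how the centre will control the smallest non-abelian quotients.
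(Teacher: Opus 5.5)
Your proof is correct and is essentially the paper's argument. Both lift a generator of the cyclic quotient, write every element as a power of that lift times a central element, and then commute the factors.

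In the application to $\cali_2(n)$, the lemma is used contrapositively: since $Q$ is non-abelian, $Q/\langle \bar r^m\rangle$ cannot be cyclic. This does not affect the proof of the lemma itself.
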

\begin{proof}
    Suppose that 
    \[
    1 \to A \to G \to C \to 1,
    \]
    is a short exact sequence where $A$ is central in $G$ and $C$ is cyclic.
    Lift a generator of $C$ to $z \in G$. 
    Every pair of elements $g,h \in G$ can thus be rewritten as $g = az^k$ and $h = bz^j$ where $a,b \in A$.  
    Since $A$ is central we see that $gh = abz^{j+k} = hg$.
    \end{proof}
    
    Recall that if $x,y$ are the stardard Artin generators of $\cali_2(n)$, then the centre is generated by either $[x,y]_n = (xy)^{\frac n2}$ when $n$ is even or $[x,y]_n^2 = ((xy)^{\frac{n-1}{2}}x)^2$ when $n$ is odd \cite{BrieskornSaito72}.
    In either case, these elements have a nontrivial root involving the element $xy$, which gives rise to convenient alternate presentations.

\begin{lemma}
    \label{L:I2-centralPresentation}
    The Artin group $\cali_2(n)$ admits one of the following presentations:
    \[
    \langle x, r \mid x^{-1}r^mx = r^m \rangle   \quad \text{ for } n = 2m, \qquad \langle r, s \mid s^2 = r^n  \rangle \quad \text{ for } n = 2m+1. 
    % \begin{cases*}
    %     \langle x, r \mid x^{-1}r^mx = r^m \rangle   & n = 2m\\
    %     \langle r, s \mid s^2 = r^n  \rangle    & n = 2m+1 
    % \end{cases*}
    \] 
\end{lemma}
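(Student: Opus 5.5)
We prove both presentations by Tietze transformations starting from $\cali_2(n)=\langle x,y \mid [x,y]_n=[y,x]_n\rangle$. Throughout we use the substitution $r=xy$, which is reversible via $y=x^{-1}r$. Hence $\{x,r\}$ and $\{x,y\}$ generate the same group, and it only remains to rewrite the single Artin relation.

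\emph{Even case $n=2m$.} Here $[x,y]_n=(xy)^m=r^m$ and $[y,x]_n=(yx)^m$. Since $yx=x^{-1}(xy)x=x^{-1}rx$, we get $(yx)^m=x^{-1}r^mx$. The Artin relation therefore becomes $x^{-1}r^mx=r^m$. Eliminating $y$ gives the presentation $\langle x,r\mid x^{-1}r^mx=r^m\rangle$.

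\emph{Odd case $n=2m+1$.} Introduce $s=[x,y]_n=(xy)^mx=r^mx$. Then $x=r^{-m}s$, so $\{r,s\}$ generates the group, with $y=x^{-1}r=s^{-1}r^{m+1}$. The relation $[x,y]_n=[y,x]_n$ reads $(xy)^mx=y(xy)^m$, that is, $r^mx=x^{-1}r^{m+1}$. Substituting $x=r^{-m}s$ turns the left side into $s$ and the right side into $s^{-1}r^mr^{m+1}=s^{-1}r^{2m+1}$. Thus the relation is equivalent to $s^2=r^n$. The two substitutions are mutually inverse, so Tietze transformations give the presentation $\langle r,s\mid s^2=r^n\rangle$.

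The only point needing care is confirming that each substitution is invertible, so that no relation is lost or added. This holds because both substitutions are explicit Nielsen-type changes of generators: $y=x^{-1}r$ in the even case, and $x=r^{-m}s$ with $y=s^{-1}r^{m+1}$ in the odd case.
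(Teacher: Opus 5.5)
Your proof is correct and follows the paper's route: substitute $r=xy$ (so $y=x^{-1}r$), rewrite $(yx)^m$ as $x^{-1}r^mx$ in the even case, and in the odd case reduce the relation to $r^mx=x^{-1}r^{m+1}$ and set $s=r^mx$ to obtain $s^2=r^n$. Your choice $s=(xy)^mx$ is the one the paper's computation actually uses (its text writes $s=(xy)^m$, a slip), and your check that the substitutions are invertible makes the Tietze-move justification more explicit than in the paper.
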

\begin{proof}
    Suppose first that $n = 2m$ is even. 
    Let $r = xy$, implying $y = x^{-1}r$.
    The following rewriting of the Artin relation gives the desired presentation:
    \begin{align*}
        [x,y]_{2m}  &=  [y,x]_{2m}\\
        (xy)^m      &=  (yx)^m = y(xy)^{m-1}x\\
        r^m         &=  x^{-1}r^{m} x.
    \end{align*}

    Now suppose $n = 2m+1$ is odd. Let $r = xy$ and let $s = (xy)^m$.
    The following rewriting of the Artin relation gives the desired presentation:
    \begin{align*}
        [x,y]_{2m+1}    &=  [y,x]_{2m+1}\\
        (xy)^m x        &=  yx(yx)^{m-1}y = y(xy)^m \\
        r^m x           &=  x^{-1}r^{m+1}\\
        (r^m x)(r^m x)  &=  r^m r^{m+1}\\
        s^2             &=  r^n. \qedhere
    \end{align*}
\end{proof}
 
The presentations in \Cref{L:I2-centralPresentation} makes it clear to see that the quotient of $I_2(n)$ by its centre is virtually free which is also proved in \cite[Theorem~6.2]{McCammond10}.  
Indeed, 
\[
\rightQ{\cali_2(n)}{Z(\cali_2(n))} = \begin{cases*}
    \langle x, r \mid r^m = 1 \rangle &\text{ for even } n,\\
    \langle r,s \mid s^2 = r^m = 1 \rangle &\text{ for odd } n.
\end{cases*}
\]

As we will see, similar analysis can be used to control the size of finite quotients.

\begin{thm}
    \label{thm:I2}
    If $n$ is not divisible by $4$ then the smallest nonabelian quotient of $I_2(n)$ is the dihedral group $Dh_p$ where $p$ is the smallest odd prime divisor of $n$.
    Otherwise the smallest nonabelian quotient is $\sym(3)$
\end{thm}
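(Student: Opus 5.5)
The case $4\mid n$ is exactly \Cref{L:I24exotic}, so assume $4\nmid n$. Then either $n$ is odd, or $n=2m$ with $m$ odd. Since $n\geq 3$ in this case, $n$ has an odd prime divisor; let $p$ be the smallest one. The argument has two halves: show $Dh_p$ is a quotient, and show every non-abelian finite quotient $Q$ has $|Q|\geq 2p$, with equality only for $Dh_p$.

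\emph{Existence.} Since $p\mid n$, \Cref{L:divisbleEdge} gives a surjection $\cali_2(n)\to\cali_2(p)$. Composing with the canonical Coxeter quotient gives a map onto $Dh_p$, sending $x,y$ to two reflections whose product has order $p$. This image is non-abelian since $p\geq 3$.

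\emph{Lower bound.} Let $\varphi\colon\cali_2(n)\to Q$ be onto a finite non-abelian group, and write $z$ for the generator of the centre of $\cali_2(n)$.

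First suppose $n=2m+1$ is odd. Use the presentation $\langle r,s\mid s^2=r^n\rangle$ of \Cref{L:I2-centralPresentation}, where $z=s^2=r^n$. The image $\bar z=\varphi(z)$ is central in $Q$. By \Cref{L:central-by-cyclic}, $Q/\langle\bar z\rangle$ is non-cyclic, and it is a quotient of $\langle r,s\mid s^2=r^n=1\rangle\cong 2*n$. A finite non-cyclic quotient of $2*n$ is generated by an involution $\bar s$ and an element $\bar r$ of order $d\mid n$. If $d=1$ the quotient is cyclic, so $d>1$, hence $d\geq p$ because every prime divisor of $n$ is at least $p$. If $\bar s=1$ the quotient is again cyclic, so $\bar s$ has order exactly $2$. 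Then $|Q/\langle \bar z\rangle|$ is divisible by $\mathrm{lcm}(2,d)=2d\geq 2p$, so $|Q|\geq 2p$.

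Now suppose $n=2m$ with $m$ odd, so the smallest odd prime divisor of $n$ equals that of $m$. Use $\langle x,r\mid x^{-1}r^mx=r^m\rangle$, so that $Q/\langle\varphi(r^m)\rangle$ is a non-cyclic quotient of $\mathbb Z*\mathbb Z/m$. Here $\bar r$ has order $d\mid m$ with $d>1$, since otherwise the quotient is generated by $\bar x$ alone and is cyclic. As $m$ is odd, $d\geq p$. The element $\bar x$ is nontrivial, since otherwise the quotient is cyclic. So the group is generated by a nontrivial $\bar x$ and by $\bar r$ of order $d\geq p$, which together generate a non-cyclic group. This forces order at least $2d\geq 2p$: a group of order less than $2d$ containing a cyclic subgroup $\langle\bar r\rangle$ of order $d$ has $\langle \bar r\rangle$ of index $1$, so it equals $\langle\bar r\rangle$ and is cyclic.

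The same index argument works in the odd case and gives a cleaner uniform statement: any finite non-cyclic group containing an element $\bar r$ of order $d$ has order at least $2d$.

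\emph{Equality.} If $|Q|=2p$, then $Q$ is the non-abelian group of order $2p$, which is $Dh_p$. Uniqueness up to automorphisms of the image is not part of this statement, so it is not needed here.

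\emph{Main obstacle.} The delicate point is the passage to $Q/\langle\bar z\rangle$. The bound $2p$ for that quotient bounds $|Q|$ from below because $|Q|\geq|Q/\langle\bar z\rangle|$. The real work is checking that $\bar r$ cannot collapse to order less than $p$ without making the quotient cyclic. This uses that every divisor $d>1$ of $n$ (respectively of $m$) is at least $p$, which needs $n$ odd, respectively $m$ odd. This is exactly where the hypothesis $4\nmid n$ enters: if $4\mid n$ then $m$ is even, $d=2$ is allowed, and one gets the smaller quotient $\sym(3)$ instead.
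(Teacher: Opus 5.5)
Your proposal is correct and follows essentially the paper's argument: the central presentations of \Cref{L:I2-centralPresentation}, passing to the quotient by the image of the central element, \Cref{L:central-by-cyclic} to force that quotient to be non-cyclic, and then the subgroup $\langle\bar r\rangle$ of order at least $p$ having index at least $2$. The only differences are cosmetic. The paper phrases the last step as the disjoint cosets $H$ and $\bar xH$ (resp.\ $\bar sH$), where you use an index count (and an lcm count in the odd case). You are also slightly more explicit than the paper in exhibiting the $Dh_p$ quotient via \Cref{L:divisbleEdge} and in noting that the only non-abelian group of order $2p$ is $Dh_p$.
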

\begin{proof}
    The case when $4$ divides $n$ is handled in \Cref{L:I24exotic}.
    
    Suppose that $\varphi: \cali_2(n) \twoheadrightarrow Q$ is any nonabelian finite quotient.
    Since $n$ is not divisible by $4$, we will consider the case where there exists an odd $m \geq 3$ such that $n = 2m$ and the case where $n = 2m+1$ in turn.
    Let $p$ be the smallest odd prime divisor of $n$ (hence $p>2)$.

    %   EVEN CASE
    Suppose there exists an odd $m \geq 3$ such that $n = 2m$, that is, $n \cong 2 \pmod{4}$. 
    By Lemma \ref{L:I2-centralPresentation}, we have the following presentation:
    \[ \cali_2(2m) = \langle x,r \mid x r^m = r^m x \rangle. \]
    Let $\bar{r} = \varphi(r)$ and $\bar{x} = \varphi(x)$.   
    The presentation implies that $Q$ is a central extension of $\bar{Q} := \rightQ{Q}{\langle \bar{r}^m \rangle}$. 
    By \Cref{L:central-by-cyclic}, $\bar{Q}$ cannot be cyclic, implying that neither $\bar{r}$ nor $\bar{x}$ are trivial. 
    Consider the proper subgroup $H = \langle \bar{r} \rangle$ of $\bar{Q}$.  
    By construction, $\bar{r}^m = 1_{\bar{Q}}$, so $|H| $ divides $m$.
    In particular, $|H| \geq p$ because $m$ is odd.   
    Since $\bar{x} \notin H$, the cosets $H$ and $\bar{x}H$ are disjoint. 
    By the Orbit-Stabiliser Theorem, $|Q| \geq |\bar{Q}| \geq |H \sqcup \bar{x}H| \geq 2p$.

    %   ODD CASE
    We now suppose $n = 2m+1$ for $m \geq 1$.
    By \Cref{L:I2-centralPresentation} we have the following presentation
    \[
    \cali_2(2m+1) = \langle r, s \mid s^2 = r^{2m+1} \rangle.
    \]
    Let $\bar{r}  = \varphi(r)$ and $\bar{s} = \varphi(s)$. 
    The presentation implies that $Q$ is a central extension of $\bar{Q} = \rightQ{Q}{\langle \bar{r}^{2m+1} \rangle}$.
    By \Cref{L:central-by-cyclic}, $\bar{Q}$ cannot be cyclic implying that neither $\bar{r} \neq 1_{\bar{Q}} \neq \bar{s}$.
    Consider the proper subgroup $H = \langle \bar{r} \rangle$.
    By construction, $\bar{r}^{2m+1} = 1_{\bar{Q}}$, so $|H|$ divides $2m+1$. 
    In particular, $H$ and $\bar{s}H$ are disjoint cosets where $|H| = |\bar{s}H| \geq p$.
    By the Orbit-Stabiliser Theorem, $|Q| \geq |\bar{Q}| \geq |H \sqcup \bar{s}H| \geq 2p$.
    
    %   UNNIQUENESS OF SMALLEST QUOTIENT
    We conclude by noting that each of the previous cases are constructive.  
    Indeed, if $Q$ is smallest among all nonabelian finite quotients, then the subgroup $H$ must have order $|H| = p$, otherwise the natural dihedral quotient obtained from the composition $\cali_2(n) \twoheadrightarrow \cali_2(p) \twoheadrightarrow Dh_p$ would be smaller. 
    Similarly, the element $\bar{x}$ is necessarily 2-torsion. 
    Thus, we may conclude that there is a unique quotient of $\cali_2(2m)$ to a group of order $2p$ up to an automorphism.
\end{proof}

% \thomas{merged the statements of even and odd cases (commented out here)}
% \begin{prop}
%     \label{P:I2evenA}
%     If $n \cong 2 \pmod{4}$, then the smallest nonabelian quotient of $I_2(n)$ is $D_p$ where $p$ is the smallest odd prime divisor of $n$.
% \end{prop}
% \begin{proof}
%     Let $m \geq 3$ be odd such that $n = 2m$.  Suppose that $\varphi: \cali_2(2m) \twoheadrightarrow Q$ is any nonabelian finite quotient.
% \end{proof}
    
% The case for $n$ odd is handled similarly using the other presentation.  

% \begin{prop}
%     \label{P:I2odd}
% If $n$ is odd then the smallest nonabelian quotient of $I_2(n)$ is $D_p$ where $p$ is the smallest prime divisor of $n$.
% \end{prop}
% \begin{proof}
%     Let $m \geq 1$ so that $n = 2m+1$. Suppose that $\varphi: I_2(2m+1) \twoheadrightarrow Q$ is any nonabelian finite quotient.

%     As before, the smallest quotient is achieved when $|H| = p$ and when $Q = \bar{Q} = D_p$.
% \end{proof}

%\begin{proof}
 %   Case \ref{item:I2div4} is proved in \Cref{L:I24exotic} and Case \ref{item:I2generalCase} is proved in \Cref{P:I2generalCase}.
%\end{proof}

% ------------------------------------------
\subsection{Smallest quotient of $\calb_n$}

Note that $\calb_n$ is generated by a canonical maximal parabolic braid subgroup $P(\Lambda) \cong \cala_{n-1}$ and one additional element $s_1$ having even Artin relations with every generator of $\Lambda$.
By \Cref{L:divisbleEdge}, there exists a retraction onto $P(\Lambda)$ coming from the first two maps in the following chain of quotients
\[
\calb_n \to \Z \times \cala_{n-1} \to \cala_{n-1}.
\]
We will see that this is often the source of smallest quotient for this family.

Before we establish bounds on finite quotients we observe the following facts.

\begin{lemma}\label{L:CnExotic}
    If $n = 2,3,4$, then the smallest nonabelian quotient of $\calb_n$ is $\sym(3)$. %and factors through the Coxeter group.  
\end{lemma}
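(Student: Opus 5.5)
Since $\sym(3)$ is the smallest non-abelian finite group, it suffices to exhibit, for each $n\in\{2,3,4\}$, a surjection $\calb_n\twoheadrightarrow\sym(3)$. The argument is therefore a construction case by case, and I would reduce to cases already handled.

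For $n=2$ we have $\calb_2=\cali_2(4)$, and \Cref{L:I24exotic} gives the surjection directly.

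For $n=3,4$ I would use the retraction $\calb_n\to\cala_{n-1}$ onto the maximal parabolic braid subgroup $\langle s_2,\dots,s_n\rangle$. This retraction exists by \Cref{lem:parabolic-retraction}: the only generator outside the parabolic is $s_1$, and its Artin relations with the parabolic generators have lengths $4$ and $2$, both even. Concretely, it sends $s_1\mapsto 1$ and fixes the other generators.
\begin{itemize}
\item For $n=3$ the target is $\cala_2$, and we compose with the canonical Coxeter quotient $\cala_2\to W(\cala_2)=\sym(3)$.
\item For $n=4$ the target is $\cala_3$, and we compose with $\cala_3\to\sym(4)\to\sym(3)$, where the second map is the exceptional surjection of \Cref{rk:a3-exceptional} (the quotient of $\sym(4)$ by the Klein four-group).
\end{itemize}
Each composite is surjective, because retractions are surjective and so are the Coxeter and exceptional quotient maps.

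Alternatively, one can write the maps explicitly. For $n=4$ the Coxeter generators of $\sym(4)$ map to transpositions in $\sym(3)$, so we may take $s_2,s_4\mapsto(12)$, $s_3\mapsto(23)$ and $s_1\mapsto 1$, and verify the braid relations by hand.

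No step is a real obstacle. The only point to check carefully is that \Cref{lem:parabolic-retraction} applies, since the $4$-labelled edge joins $s_1$ to $s_2$ and the relations must be even. With that confirmed, minimality of $\sym(3)$ among non-abelian groups finishes the proof.
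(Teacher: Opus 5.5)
Your proof is correct and is essentially the paper's argument. For $n=2$ you both use \Cref{L:I24exotic}. For $n=4$ you both use the exceptional quotient of the parabolic $\cala_3$, pulled back along the retraction $\calb_n\to\cala_{n-1}$; the paper gets this retraction from \Cref{L:divisbleEdge} and leaves it implicit, while you justify it via \Cref{lem:parabolic-retraction}. For $n=3$ the paper instead projects $W(\calb_3)\cong 2\wr\sym(3)$ onto its head $\sym(3)$, but since $s_1$ lies in the base group $2^3$ this is the same homomorphism as your retraction followed by $\cala_2\to\sym(3)$.
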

\begin{proof}
    We handle each value of $n$ separately. 
    When $n = 2$, $\calb_2 = \cali_2(4)$ which has unique smallest quotient $\sym(3)$ from \Cref{L:I24exotic}.
    When $n=3$, the Coxeter group $2 \wr \sym(3)$ has a natural map to $\sym(3)$.  
    When $n=4$, the parabolic subgroup $P(\Lambda) \cong \cala_3$ admits an exotic quotients to $\sym(3)$ as mentioned in \cref{rk:a3-exceptional}.
\end{proof}

\begin{lemma}\label{L:cyclicAbelian}
    Let $n \geq 3$.  For any quotient map $\varphi: \calb_n \to Q$, if $\varphi(P(\Lambda))$ is cyclic then $Q$ is abelian. 
\end{lemma}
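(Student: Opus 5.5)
If $\varphi(P(\Lambda))$ is cyclic, I will show that $Q=\langle \varphi(s_1),\dots,\varphi(s_n)\rangle$ is generated by two commuting elements, and so is abelian. The standard generators $s_2,\dots,s_n$ of $P(\Lambda)\cong\cala_{n-1}$ are pairwise joined by paths of edges labelled $3$, so they are pairwise conjugate in $\calb_n$. Their images are therefore conjugate in $Q$. Since these images all lie in the cyclic group $\varphi(P(\Lambda))$, and conjugate elements of an abelian subgroup need not be equal a priori, I will argue more directly as follows.

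Consider adjacent generators $s_i,s_{i+1}$ with $2\le i\le n-1$. Their images commute, because they lie in the abelian group $\varphi(P(\Lambda))$, so they satisfy the Artin relation of length $2$. They also satisfy the braid relation of length $3$. By \Cref{L:gcdRelation} they satisfy the relation of length $\gcd(2,3)=1$, that is, $\varphi(s_i)=\varphi(s_{i+1})$. This is exactly the argument of \Cref{L:braidCollapse}, which I will invoke directly. Hence $\varphi(s_2)=\cdots=\varphi(s_n)=:a$.

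Let $b=\varphi(s_1)$. Then $Q=\langle a,b\rangle$, and the remaining relation to use is the length-$4$ Artin relation between $s_1$ and $s_2$, giving $abab=baba$. On its own this does not force $a$ and $b$ to commute; the map $\cali_2(4)\to\sym(3)$ of \Cref{L:I24exotic} is a counterexample. Here the hypothesis $n\ge 3$ is needed. The generator $s_1$ commutes with $s_3$, since they are not adjacent in the $\calb_n$ diagram, so $b$ commutes with $\varphi(s_3)=a$. Therefore $Q=\langle a,b\rangle$ with $ab=ba$, and $Q$ is abelian (in fact cyclic-by-cyclic abelian).

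The only step needing care is choosing a generator of $\Lambda$ that commutes with $s_1$ in $\calb_n$. In the labelling of \Cref{fig:FiniteCoxeterGroups}, $s_1$ is adjacent only to $s_2$, and $s_3$ exists precisely because $n\ge 3$. I will also note why the hypothesis $n\geq 3$ is sharp: for $n=2$ the argument fails, as the exotic $\sym(3)$ quotient shows.
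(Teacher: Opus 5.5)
Your proposal is correct and follows the paper's proof. Both arguments collapse the images of $s_2,\dots,s_n$ to a single element $a$: the images commute and also satisfy a length-$3$ relation, so $\gcd(2,3)=1$ forces them to be equal. Both then use the commutation of $s_1$ with $s_3$, which is available because $n\ge 3$, to see that $Q=\langle a,\varphi(s_1)\rangle$ is generated by two commuting elements. Your direct appeal to \Cref{L:gcdRelation} is if anything slightly cleaner than the paper's citation of \Cref{L:braidCollapse}, whose proof also invokes the collapsing-set lemma for $n\geq 4$.
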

\begin{proof}
    By \Cref{L:braidCollapse}, each of the generators $s_2, \dots, s_n$ of $P(\Lambda)$ are mapped to a single element in $Q$, call it $\bar{t}$.  Hence, $Q$ is generated by $\bar{t}$ and $\bar{s} = \varphi(s_1)$. 
    Since $n \geq 3$, we have relation $[\bar{s}, \bar{t}] = 1$.
    Thus, $Q$ is abelian.
\end{proof}

\begin{thm}\label{thm:Cn}
    Let $n \geq 5$.  If $\varphi:\calb_n \to Q$ is any quotient to a nonabelian finite group, then one of the following must hold:
    \begin{enumerate}
        \item The quotient map factors through the Coxeter group and $|Q| \geq n!$.  
        \item The quotient map does not factor through the Coxeter group and $|Q| > n!$.
    \end{enumerate}
\end{thm}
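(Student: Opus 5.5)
We reduce everything to the maximal parabolic braid subgroup $P(\Lambda)=\langle s_2,\dots,s_n\rangle\cong\cala_{n-1}$, with $n-1\geq 4$. Let $\varphi:\calb_n\to Q$ be a surjection onto a nonabelian finite group. By \Cref{L:cyclicAbelian}, $\varphi(P(\Lambda))$ is not cyclic, since otherwise $Q$ would be abelian. Applying \Cref{kolay} (Kolay's theorem) to $P(\Lambda)\cong Br_n$ gives $|\varphi(P(\Lambda))|\geq n!$. Since $\varphi(P(\Lambda))\leqslant Q$, we get $|Q|\geq n!$ in every case, and in particular in case (1).

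For case (2), assume $\varphi$ does not factor through $W(\calb_n)$. Our plan is to show that in this case $\varphi(s_1)\notin\varphi(P(\Lambda))$. We would then argue as in the moreover part of \Cref{prop:braidParabolic}: $\varphi(P(\Lambda))$ and $\varphi(s_1)\varphi(P(\Lambda))$ are distinct cosets, so $|Q|\geq 2|\varphi(P(\Lambda))|>n!$.

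To prove $\varphi(s_1)\notin\varphi(P(\Lambda))$, suppose instead that $\varphi(s_1)\in\varphi(P(\Lambda))$, so that $Q=\varphi(P(\Lambda))$. First we show that the generators $s_2,\dots,s_n$ map to involutions.
\begin{itemize}
\item[$\bullet$] If $|Q|=n!$, then $\varphi|_{P(\Lambda)}$ is a smallest non-abelian quotient of $Br_n$, so by the uniqueness statement in \Cref{kolay} it is the standard map to $\sym(n)$ up to automorphism. Since $n\neq 6$, every automorphism of $\sym(n)$ is inner, so each $\varphi(s_i)$ with $i\geq 2$ is a transposition.
\item[$\bullet$] If $|Q|>n!$, we are already done in case (2), since we only need the strict inequality.
\end{itemize}
So we may assume $Q\cong\sym(n)$ with the $\varphi(s_i)$, $i\geq 2$, transpositions, say $\varphi(s_i)=(i-1\ i)$ after relabelling. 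Here $\varphi(s_1)$ commutes with $\varphi(s_3),\dots,\varphi(s_n)$, that is, with $\langle(23),(34),\dots,(n-1\ n)\rangle\cong\sym(\{2,\dots,n\})$. Its centraliser in $\sym(n)$ is trivial for $n\geq 4$, since it fixes the points $2,\dots,n$ and hence also $1$. Hence $\varphi(s_1)=1$. Then every generator maps to an element of order dividing $2$, and the remaining relation $(\varphi(s_1)\varphi(s_2))^2=(\varphi(s_2)\varphi(s_1))^2$ holds trivially. The Coxeter relations of $W(\calb_n)$, namely $s_i^2=1$ together with the braid relations, are therefore satisfied, so $\varphi$ factors through $W(\calb_n)$. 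This contradicts the hypothesis of case (2).

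The main obstacle is the middle step: pinning down that an image of size exactly $n!$ forces $\varphi(s_i)$ to be transpositions. This relies on the uniqueness clause of Kolay's theorem together with the fact that $\sym(n)$ has no outer automorphisms for $n\neq 6$. The case $n=6$ is the delicate one, because there an outer automorphism sends transpositions to triple transpositions. Even so, the images are still involutions, and the centraliser of the image of $\sym(\{2,\dots,6\})$ in $\sym(6)$ is still trivial, because its image under the outer automorphism is a transitive $\sym(5)$ on six points. So the same argument should go through for $n=6$ as well.
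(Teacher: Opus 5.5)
Your proof is correct and follows the paper's route: \Cref{L:cyclicAbelian} together with Kolay's theorem (\Cref{kolay}) applied to the parabolic subgroup $P(\Lambda)\cong\cala_{n-1}$, and the coset count of \Cref{prop:braidParabolic} giving strictness once $\varphi(s_1)\notin\varphi(P(\Lambda))$. Your extra step makes explicit what the paper leaves implicit, namely why $|Q|=n!$ forces $\varphi$ to factor through $W(\calb_n)$: Kolay's uniqueness makes $\varphi(\langle s_3,\dots,s_n\rangle)$ an automorphic image of a point stabiliser $\sym(n-1)$, which has trivial centraliser, so $\varphi(s_1)=1$. For $n=6$ the cleanest justification is that automorphisms of $\sym(6)$ carry centralisers to centralisers; transitivity of the image $\sym(5)$ alone would not force a trivial centraliser.
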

\begin{proof}
    By \Cref{L:cyclicAbelian} and \Cref{prop:braidParabolic}, the image of the parabolic subgroup cannot be cyclic and has order $|Q| \geq n!$ with equality only if $\varphi$ factors through the Coxeter quotient.
    In particular, if $\varphi$ does not factor through the Coxeter quotient then $|Q| > n!$.
    % Hence, suppose $\varphi$ does not factor through the Coxeter group. 
    % In particular, at least one generator does not map to an element of order 2.  
    % If such a generator is in $P(\Lambda)$ then $|\bar{P(\Lambda)}| > n!$.   
    % Otherwise, $\bar{P(\Lambda)} \cong Sym(n)$ and $\bar{s}:= \varphi(s_1) \notin \bar{P(\Lambda)}$.  Moreover, $|\varphi(s_1)| \geq 3$. By the Orbit-Stabiliser theorem $|Q| \geq |\bar{P(\Lambda)} \sqcup \bar{s}\bar{P(\Lambda)} \sqcup \bar{s}^2 \bar{P(\Lambda)}| = 3n^! > n!$.
\end{proof}

\subsection{Smallest quotient of $\cald_n$}
Recall that $\cald_n$ is only defined for $n\geq 4$ and that the Coxeter group for $\cald_n$ is the finite group $2^{n-1}.S_n$.

The Coxeter group $W(\cald_4)\cong 2^3. S_4$ is isomorphic to $2^{1+4} : S_3$, so $\cald_4$ has $\sym(3)$ as a quotient. 
% For $n \leq 3$, $\cald_n \cong Br_{n+1}$. 

%  OLD BOUND THAT IS NOT STRONG ENOUGH TO PROVE WHAT WE WANT
 % \begin{thm}
 % Let $n\geq 6$. 
  
 %  $\eta (\cald_n) \geq \lfloor \frac{n-1}{2} \rfloor !$
 % \end{thm}

\begin{thm}\label{thm:Dn}
    Let $n \geq 5$. If $\varphi: \cald_n \rightarrow Q$ is a nonabelian quotient map, then either $|Q| > n!$ or $\varphi$ factors through the natural quotient map from $\cald_n$ to its Coxeter group and $|Q| \geq n!$.
\end{thm}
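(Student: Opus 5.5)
The plan is to reduce to Kolay's theorem via the braid parabolic, as in \Cref{thm:Cn}, with one extra step to control the branch generator $s_2$. In the labelling of \Cref{fig:FiniteCoxeterGroups}, let $\Lambda$ be the subgraph on $s_1, s_3, s_4, \dots, s_n$. This is a path, so $P(\Lambda)\cong \cala_{n-1}\cong Br_n$. Since $n\geq 5$, we have $n-1\geq 4$, so \Cref{kolay}, \Cref{L:braidGenCollapse} and \Cref{prop:braidParabolic} all apply to $P(\Lambda)$. The only generator outside $\Lambda$ is $s_2$. It commutes with $s_1, s_4, \dots, s_n$ and satisfies a braid relation with $s_3$.

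\emph{Step 1: the image of $P(\Lambda)$ is not cyclic.} This is the analogue of \Cref{L:cyclicAbelian}. Suppose $\varphi(P(\Lambda))$ is cyclic. By \Cref{L:braidCollapse}, all of $s_1, s_3, \dots, s_n$ map to a single element $\bar t$. Then $\varphi(s_2)$ commutes with $\bar t=\varphi(s_1)$, and it also satisfies the length-$3$ Artin relation with $\bar t=\varphi(s_3)$. By \Cref{L:gcdRelation} with $\gcd(2,3)=1$, we get $\varphi(s_2)=\bar t$. Hence $Q$ is cyclic, contradicting that $Q$ is non-abelian. Moreover, any abelian image of $P(\Lambda)$ is cyclic by \Cref{L:braidCollapse}, so $\varphi(P(\Lambda))$ is a non-abelian quotient of $Br_n$.

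\emph{Step 2: apply \Cref{kolay}.} By \Cref{kolay}, $|Q|\geq|\varphi(P(\Lambda))|\geq n!$. Equality $|\varphi(P(\Lambda))|=n!$ forces $\varphi|_{P(\Lambda)}$ to be, up to an automorphism of $\sym(n)$, the canonical Coxeter map $Br_n\to\sym(n)$. In particular, each of $s_1, s_3, \dots, s_n$ maps to an involution; this survives the exotic automorphism of $\sym(6)$, since involutions go to involutions. If $|Q|>n!$ we are in the first alternative and are done.

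\emph{Step 3: the equality case $|Q|=n!$.} Then $Q=\varphi(P(\Lambda))$ and the generators of $\Lambda$ map to involutions. Since $s_2$ and $s_3$ have an odd Artin relation, they are conjugate in $\cald_n$, so $\varphi(s_2)$ is also an involution. All generators of $\cald_n$ then map to elements of order dividing $2$. Since $W(\cald_n)$ is obtained from $\cald_n$ by adding exactly the relations $s_i^2=1$, $\varphi$ factors through $W(\cald_n)$, which gives the second alternative.

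The main obstacle is the equality analysis in Step 3. One must make sure that Kolay's uniqueness statement really forces involution images, including for $n=6$ where $\sym(6)$ has an outer automorphism. One must also check that nothing is lost when $\varphi(s_2)$ lies inside $\varphi(P(\Lambda))$. Both points are handled by the conjugacy argument above, together with the observation that if $\varphi(s_2)\notin\varphi(P(\Lambda))$ then $|Q|>n!$ already, by the moreover clause of \Cref{prop:braidParabolic}.
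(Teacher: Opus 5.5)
Your proof is correct and takes essentially the paper's route: restrict to an $\cala_{n-1}$ braid parabolic, use Kolay's theorem for the bound $n!$ and its uniqueness clause, and use conjugacy of the generators to spread the involution property to all of $\cald_n$. The differences are cosmetic. The paper rules out the cyclic case with both parabolics $P(\Gamma\smallsetminus\{s_1\})$ and $P(\Gamma\smallsetminus\{s_2\})$, which share $s_n$, whereas you apply \Cref{L:gcdRelation} to $s_2$ against $s_1$ and $s_3$; and the paper proves the factoring alternative contrapositively rather than by your direct analysis of the case $|Q|=n!$.
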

\begin{proof}
Notice that $\cald_n$ has two standard parabolic subgroups $P(\Gamma \smallsetminus \{s_1\})$ and $P(\Gamma \smallsetminus \{s_2\})$ both isomorphic to $\cala_{n-1} \cong Br_{n}$. Applying Proposition \ref{prop:braidParabolic}, either $|Q| \geq n!$ or both braid subgroups have cyclic image in $Q$. 
The cyclic case is impossible because each of the parabolic subgroups share the generator $s_n$, so by \Cref{L:braidCollapse} this would force $Q$ to be cyclic contradicting it being nonabelian. 

Suppose now that $\varphi$ does not factor through the Coxeter group.
All generators are conjugate in $\cald_n$, so if $\varphi$ does not factor through the Coxeter group on $\cald_n$, then it also does not factor through the Coxeter group when restricted to either of them. Thus $|Q| \geq |\varphi(\Gamma\smallsetminus \{s_i\})| > n!$ for at least one value of $i = 1,2$.
\end{proof}

% The requirement $n \geq 5$ allows us to apply results of Kolay for $Br_{n}$ that hold when $n = 3$ or $n \geq 5$. 

The lower bound in \Cref{thm:Dn} is realized by composing the quotient map which identifies $x_n$ and $x_{n-1}$ and the natural quotient map from $\cala_{n-1}$ to $S_n$. One can check that the first map is indeed a homomorphism via the same method used in the proof of Lemma \ref{lem:parabolic-retraction}.

%\section{Some lemmas about spherical Artin groups}
% \begin{lemma}
%     Let $S$ be a finite simple group.  If $Q$ is a group of shape $S^n.k$ with no proper non-cyclic quotients, then any $\alt(m)$ subgroup with $m\geq 4$ is contained in a copy of $S$.
% \end{lemma}

% \begin{lemma}
%     Let $S$ be a finite simple group.  If $Q$ is a group of shape $S^n.k$ with no proper non-cyclic quotients, then any totally symmetric subset of $Q$ of cardinality at least $3$ is contained in a copy of $S$.
% \end{lemma}
\section{Properties of Artin groups}\label{sec:Artin_props}

This section is devoted to structural properties of spherical Artin groups which will be used in later sections. We begin by recalling the following standard fact.
%\katie{These appear to be standard facts; it seems like many papers state them without citation. I'll keep looking to see if I can find one, but it might just be fine to leave them}

\begin{lemma}
    The abelianization of an Artin group $A_{\Gamma}$ is free abelian, and the rank is equal to the number of connected components of the graph $\Gamma^{odd}$ obtained by removing all even-labelled edges from $\Gamma$.
\end{lemma}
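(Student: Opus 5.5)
The plan is to compute the abelianization directly from the standard presentation. Abelianizing $A_\Gamma$ means adding all commutation relations. The result is the abelian group generated by the images $\bar s_1,\dots,\bar s_n$, subject to the abelianized Artin relations. So the proof reduces to determining what each Artin relation becomes once all generators commute.

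First, fix an edge joining $s_i$ and $s_j$ with label $m=m_{ij}$. In additive notation, the word $[s_i,s_j]_m$ becomes $\lceil m/2\rceil \bar s_i+\lfloor m/2\rfloor \bar s_j$, and $[s_j,s_i]_m$ becomes $\lfloor m/2\rfloor \bar s_i+\lceil m/2\rceil \bar s_j$. The two cases are then:
\begin{itemize}
\item if $m$ is even, the two sides are identical and the relation imposes nothing;
\item if $m$ is odd, subtracting gives exactly $\bar s_i=\bar s_j$.
\end{itemize}
The same holds for non-adjacent vertices ($m=2$), which impose nothing, and for $m=\infty$, where there is no relation.

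Therefore
\[
A_\Gamma^{\ab}\cong \Z^{V(\Gamma)}\big/\langle \bar s_i-\bar s_j : m_{ij}\text{ odd}\rangle .
\]
The quotient of a free abelian group on a set $V$ by the relations $e_i=e_j$, one for each edge of a graph on $V$, is free abelian on the set of connected components of that graph. To see this, map each $e_v$ to the basis element indexed by the component of $v$; this is surjective and kills every relation. Conversely, sending the basis element of a component to $e_v$ for any chosen vertex $v$ in it gives a well-defined inverse, because any two vertices in a component are linked by a chain of odd edges and hence have equal images. Applying this with the graph of odd-labelled edges, which is exactly $\Gamma^{odd}$, gives $A_\Gamma^{\ab}\cong\Z^{c}$ with $c$ the number of components of $\Gamma^{odd}$.

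There is no real obstacle. The only point requiring care is the parity bookkeeping of the alternating words: checking that for odd $m$ the count of $s_i$ exceeds the count of $s_j$ by exactly one on one side and the reverse holds on the other, so the relation reduces to $\bar s_i=\bar s_j$ and not to a nontrivial multiple.
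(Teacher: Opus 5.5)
Your proof is correct. The paper gives no proof of this lemma; it just recalls it as a standard fact, and your direct abelianization of the presentation is exactly that standard argument.

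One point you handle more carefully than the statement does. You note that an edge labelled $\infty$ imposes no relation, so the graph you actually quotient by keeps only edges with finite odd labels. The phrase ``removing all even-labelled edges'' therefore has to be read with $\infty$ counted among the discarded labels. Otherwise $\widetilde\cala_1$ (a free group of rank $2$ whose graph is a single $\infty$-edge) would have connected $\Gamma^{odd}$ but abelianization $\Z^2$.
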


In the case where $\Gamma^{odd}$ is connected, this yields the following concise description of the commutator subgroup of $A_{\Gamma}$.
\begin{lemma}
    If $A_{\Gamma}^{ab} \cong \mathbb{Z}$, the map which sends each Artin generator to $1 \in \mathbb{Z}$ defines a homomorphism $\ell$ called the \emph{length homomorphism}, and the commutator subgroup of $A_{\Gamma}$ is precisely $\ker(\ell)$.
\end{lemma}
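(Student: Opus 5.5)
First I would check that $\ell$ is well defined. Each Artin relation $[s_i,s_j]_{m}=[s_j,s_i]_{m}$ has the same number of letters on both sides. So assigning every generator to $1\in\mathbb{Z}$ respects all defining relations. It therefore extends to a homomorphism $\ell\colon A_\Gamma\to\mathbb{Z}$, by the same generators-and-relations principle used in the proof of \Cref{lem:parabolic-retraction}. The map is surjective because $\ell(s_1)=1$.

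Next I would identify $\ell$ with the abelianisation map. By the preceding lemma, $A_\Gamma^{ab}$ is free abelian of rank equal to the number of components of $\Gamma^{odd}$, so the hypothesis says $\Gamma^{odd}$ is connected. If $m_{ij}$ is odd, then $s_i$ and $s_j$ are conjugate, hence have the same image in $A_\Gamma^{ab}$. Connectedness of $\Gamma^{odd}$ then shows that all generators have a common image $\bar s$ in $A_\Gamma^{ab}$. Thus $A_\Gamma^{ab}=\langle \bar s\rangle$ is cyclic, and it is infinite since it surjects onto $\mathbb{Z}$ via $\ell$.

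Now $\ell$ factors as $A_\Gamma\to A_\Gamma^{ab}\xrightarrow{\ \bar\ell\ }\mathbb{Z}$ with $\bar\ell(\bar s)=1$. So $\bar\ell$ sends a generator of the infinite cyclic group $A_\Gamma^{ab}$ to a generator of $\mathbb{Z}$, and is therefore an isomorphism. Hence $\ker(\ell)=\ker(A_\Gamma\to A_\Gamma^{ab})=[A_\Gamma,A_\Gamma]$. Equivalently, $[A_\Gamma,A_\Gamma]\le\ker\ell$ because $\mathbb{Z}$ is abelian, and the reverse inclusion holds because $\ell$ induces an isomorphism on the abelianisation.

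The only step needing care is the identification of all generators in $A_\Gamma^{ab}$. The argument uses just two facts: odd Artin relations make the two generators conjugate, and the connectivity hypothesis read off from the previous lemma. Everything else is formal.
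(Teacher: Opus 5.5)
Your proof is correct and is essentially the argument the paper has in mind; the paper gives no separate proof and presents this as an immediate consequence of the preceding abelianisation lemma. As a minor simplification, the connectivity step can be skipped: $\overline{\ell}\colon A_\Gamma^{ab}\to\mathbb{Z}$ is a surjection from a group isomorphic to $\mathbb{Z}$ onto $\mathbb{Z}$, hence already an isomorphism because $\mathbb{Z}$ is Hopfian.
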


\begin{thm}~\cite{Zinde1975}~\cite{Muholland2006}~\cite{Orekov2012}\label{thm:comm-perfect}
    If $G$ is isomorphic to one of $\cale_6$, $\cale_7$, $\cale_8$, $\calh_3$, $\calh_4$, or $\cala_n$ with $n \geq 4$, then the commutator subgroup $G'$ is perfect. 
\end{thm}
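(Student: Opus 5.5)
We need to prove that for $G$ one of $\cale_6,\cale_7,\cale_8,\calh_3,\calh_4$, or $\cala_n$ with $n\ge 4$, the commutator subgroup $G'$ is perfect, i.e.\ $G''=G'$. The plan is to get an explicit finite presentation of $G'=\ker(\ell)$ by Reidemeister--Schreier and read off its abelianisation.

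\textbf{Step 1: a generating set for $G'$.} In each of these diagrams every edge has an odd label ($3$ or $5$), so $\Gamma^{odd}=\Gamma$ is connected and $G^{\ab}\cong\Z$. Hence $G'=\ker(\ell)$ by the preceding lemma. Since $\Gamma$ is a tree, I fix a root generator $s=s_1$ and use the transversal $\{s^k:k\in\Z\}$. Reidemeister--Schreier then gives generators
\[
x_{i,k}=s^k s_i s^{-k-1}\qquad (k\in\Z,\ i\neq 1).
\]
The relators are all conjugates $s^k r s^{-k}$ of the Artin relators $r$, rewritten in these generators.

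\textbf{Step 2: reduce to finitely many generators.} I would follow Gorin--Lin for braid groups and Zinde for the $\cale$-types. For a neighbour $s_j$ of $s$ with $m=3$, the braid relation $ss_js=s_jss_j$ rewritten at each level gives $x_{j,k+2}=x_{j,k+1}x_{j,k}^{-1}\cdots$ type recursions. Using these, every $x_{j,k}$ becomes a word in $x_{j,0}$ and $x_{j,1}$. For generators $s_i$ commuting with $s$, the relations give $x_{i,k}=s^k(s_is^{-1})s^{-k}$, which is again expressible through the finitely many $u_i=s_is^{-1}$ and conjugation by $s$. Iterating along the tree $\Gamma$, I expect a finite generating set of the form $\{u_i=s_is^{-1}\}\cup\{s u_j s^{-1}\}$ for the neighbours $j$. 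The $\calh$ types need the analogous rewriting of $m=5$ relations, which yields longer recursions but is handled the same way.

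\textbf{Step 3: abelianise.} I would write down the rewritten relators modulo commutators. The key relation comes from an $A_2$-subdiagram $s_i - s_j$ with $m=3$. For $a=s_is_j^{-1}$ and $b=s_j a s_j^{-1}$, the braid relation gives a relation of the form $b\,a\,b^{-1}=\dots$ together with $a b^{-1}a=\dots$. In the abelianisation these force $a=b^2$ and $b=a^{-1}b\cdots$, i.e.\ the Gorin--Lin relations $b=a^{-1}b^{2}$-style identities, which collapse $\langle a,b\rangle^{\ab}$.

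For $\cala_n$ with $n\ge4$, the extra commuting generators two steps away make each $u_i$ conjugate to $u_{i+1}$ inside $G'$. Combined with the $A_2$ relation, this gives $u_i=u_iu_i$ in $(G')^{\ab}$, so $(G')^{\ab}=0$. This is also the reason the argument fails for $n\le3$, where $\cala_3'$ is not perfect. For $\cale_n$ and $\calh_n$, these types contain $\cala_4$ as a standard parabolic, or have a long enough chain for $\calh_4$. The images in $(G')^{\ab}$ of the commutator-subgroup generators lying in that parabolic therefore vanish, and the remaining generators are conjugate within $G'$ to these, via a path of odd edges using elements $s_is_j^{-1}\in G'$. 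So everything dies.

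$\calh_3$ does not contain $\cala_4$ and must be checked directly: the rewritten $m=5$ relation provides the needed torsion-free cancellation, which I would verify by computing the relation module explicitly.

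\textbf{Main obstacle.} The hard part is Step 2/3 for $\calh_3$ (and to a lesser extent $\calh_4$). There the $m=5$ rewriting produces unwieldy words, and no $\cala_4$ is available to bootstrap from. I would first try a computer-free hand computation of the relation matrix of $(G')^{\ab}$ in the Reidemeister--Schreier generators $a=s_2s_1^{-1}$, $b=s_1as_1^{-1}$, $c=s_3s_2^{-1}$, and so on. If that gets messy, I would fall back on citing Orekov and Mulholland, which is what the statement's citations indicate.
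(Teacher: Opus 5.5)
The paper gives no proof of this statement; it is quoted from \cite{Zinde1975,Muholland2006,Orekov2012}. Your fallback of citing is therefore exactly what the paper does. As an independent argument, however, your Step~3 has a genuine gap, and it is the only step that matters: finite generation from Step~2 is not needed for perfectness.

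First, the claim that the rewritten relations of an $A_2$ subdiagram ``collapse $\langle a,b\rangle^{\ab}$'' is false. $\cala_2'$ is free on $x_{j,0},x_{j,1}$, and the braid relation only yields $x_{j,k+2}=x_{j,k}^{-1}x_{j,k+1}$; it records how conjugation by the root acts and imposes nothing on $\langle a,b\rangle^{\ab}$.

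Second, the $\cala_n$ step does not produce cancellation as stated. Your trick conjugates $u_i=s_is_1^{-1}$ to $u_{i+1}$ by $s_is_{i+1}s_1^{-2}\in G'$, which needs $s_1$ to commute with $s_i$ and $s_{i+1}$, i.e.\ $i\ge 3$. For such pairs the braid relation becomes $u_iu_{i+1}u_i=u_{i+1}u_iu_{i+1}$, which abelianises to $[u_i]=[u_{i+1}]$ and nothing more. So ``$u_i=u_iu_i$'' does not follow; what is missing is control of the root's neighbour. Here is the repair. Let $t$ be the automorphism of $(G')^{\ab}$ induced by conjugation by $s_1$, so that $(G')^{\ab}$ is generated as a $\Z[t^{\pm1}]$-module by the classes $[u_i]$.
\begin{itemize}
\item Since $s_4$ commutes with $s_1$ and $s_2$, the elements $s_1s_4^{-1}$ and $s_2s_4^{-1}$ are conjugate in $G'$ (by $s_1s_2s_4^{-2}$), so $[u_2]=0$.
\item $t$ fixes $[u_3]=[u_4]=\cdots$.
\item The $s_2$--$s_3$ braid relation abelianises to $[u_2]+t[u_3]+t^2[u_2]=[u_3]+t[u_2]+t^2[u_3]$, i.e.\ $[u_3]=2[u_3]$, so $[u_3]=0$.
\end{itemize}
This is exactly where $n\ge 4$ enters. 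The same argument handles $\calh_4$ directly, since $s_4$ commutes with $s_1,s_2$, which are conjugate by $(s_1s_2)^2$. Your appeal to ``a long enough chain'' does not supply this: $\calh_4$ has no $\cala_4$ parabolic, and its $\cala_3$ parabolic has non-perfect commutator subgroup. Your reduction of $\cale_n$ to its $\cala_4$ parabolic is fine once $\cala_4$ is settled.

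The case left genuinely open is $\calh_3$, which you defer to an unperformed computation or to citation. There, no generator commutes with both ends of any edge, so no conjugacy of this kind exists, and the cancellation must come from the $m=5$ relation.

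Fox calculus settles it, and treats all the other cases uniformly. $(G')^{\ab}$ is the $\Z[t^{\pm1}]$-module generated by $d_i=[s_is_1^{-1}]$, with $d_1=0$ and one relation $p_{ij}(t)(d_i-d_j)=0$ for each pair $i<j$. Here $p_{ij}$ is $1-t$, $\Phi_6=1-t+t^2$ or $\Phi_{10}=1-t+t^2-t^3+t^4$ according as $m_{ij}=2,3,5$.

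For $\calh_3$ the relations are $\Phi_{10}d_2=0$, $\Phi_6(d_2-d_3)=0$ and $(1-t)d_3=0$. The last gives $\Phi_6d_3=\Phi_6(1)d_3=d_3$, hence $d_3=\Phi_6d_2$ and $(1-t)\Phi_6d_2=0$. Now $\Phi_{10}(1)=1$ and the resultant of $\Phi_6$ and $\Phi_{10}$ is $1$. So $\Phi_{10}$ and $(1-t)\Phi_6$ generate the unit ideal of $\Z[t^{\pm1}]$, which forces $d_2=0$ and then $d_3=0$.
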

We make the following observation, which will be used throughout.
\begin{lemma}\label{lem:a4-solvable-cyclic}
    If $G$ is isomorphic to one of $\cale_6$, $\cale_7$, $\cale_8$, $\calh_3$, $\calh_4$, or $\cala_n$ with $n \geq 4$, every homomorphism $G\to H$ where $H$ is a solvable group must have cyclic image.
\end{lemma}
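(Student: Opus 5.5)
The plan is to use the structure of $G$ as an extension of a perfect group by $\Z$. Let $\varphi\colon G\to H$ be a homomorphism with $H$ solvable, and write $G'=[G,G]$. By \Cref{thm:comm-perfect}, $G'$ is perfect. The image $\varphi(G')$ is then a perfect subgroup of the solvable group $H$. I will show this forces $\varphi(G')=1$: the derived series of $\varphi(G')$ is constant, since $\varphi(G')^{(k)}=\varphi(G')$ for all $k$. On the other hand, $\varphi(G')^{(k)}\leqslant H^{(k)}$, and $H^{(k)}=1$ for $k$ large. So $\varphi(G')$ is trivial.

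It follows that $\varphi$ factors through the abelianisation $G\to G^{\ab}$. For every group in the list, the Coxeter graph is connected with all edges labelled by odd numbers (labels $3$ and $5$), so $\Gamma^{odd}=\Gamma$ is connected. By the lemma on abelianisations of Artin groups, $G^{\ab}\cong\Z$, the map being the length homomorphism. Hence $\varphi(G)$ is a quotient of $\Z$, and so it is cyclic. Concretely, all standard generators are conjugate via odd relations, so they all map to a single element $\bar s$, and the image is $\langle \bar s\rangle$.

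There is no real obstacle here; the substantive input is entirely \Cref{thm:comm-perfect}. The only point that needs checking is that each listed graph ($\cale_6,\cale_7,\cale_8,\calh_3,\calh_4,\cala_n$) has no even labels, so that the abelianisation is indeed $\Z$. This holds by inspection of \Cref{fig:FiniteCoxeterGroups}. Note also that the argument does not require $H$ to be finite: perfect subgroups of any solvable group are trivial.
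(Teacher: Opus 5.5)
Your proposal is correct and follows the paper's proof, which simply observes that $G^{\ab}$ is cyclic and $G'$ is perfect. You have filled in the routine details: a perfect subgroup of a solvable group is trivial, so $\varphi$ kills $G'$ and factors through $G^{\ab}\cong\Z$, since every listed graph is connected with only odd labels.
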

\begin{proof}
    The abelianization of $G$ is cyclic, and the commutator subgroup of $G$ is perfect.
\end{proof}

Two Artin generators are conjugate if they are connected by an odd-labelled edge path in $\Gamma$. This allows us to make the following observation.

\begin{remark}
     If $\Gamma$ is connected and odd labelled, then all Artin generators have the same order in any quotient $Q$ of $A_{\Gamma}$. In particular, if the image of any Artin generator is trivial, $Q$ is trivial.
\end{remark}

\subsection{Totally symmetric sets in $\cale_n$}

In this section, we construct totally symmetric sets in Artin groups of type $\cale_n$ for all $n$. The results in this section are stronger than what is needed to determine the smallest quotients of $\cale_6$, $\cale_7$, and $\cale_8$, but they are interesting in their own right and may prove useful for readers interested in finite quotients of $\cale_n$ for $n >8$.

\begin{lemma}\label{lem:En_comm_tss}
    In an Artin group of type $\cale_n$ where $n$ is even, the sets 
    \[\{s_1 , s_4, s_{4+2}, \dots, s_{n}\}\quad \text{and} \quad \{s_2, s_3, s_{3+2}, \dots ,s_{n-1}\}\]
    are commutative totally symmetric sets of size $\frac{1}{2}n$. In an Artin group of type $\cale_n$ where $n$ is odd, the sets 
    \[\{s_1 , s_4, s_{4+2}, \dots, s_{n-1}\}\quad \text{and}\quad \{s_2, s_3, s_{3+2}, \dots, s_{n-2}\}\]
    are commutative totally symmetric sets of size $\frac{1}{2}(n-1)$.
\end{lemma}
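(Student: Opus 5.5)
The proof is entirely combinatorial, using the labelling of \Cref{fig:FiniteCoxeterGroups}: $s_1 - s_3 - s_4 - s_5 - \cdots - s_n$ is a path and $s_2$ is attached to $s_4$.

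\emph{Commutation.} In each proposed set no two vertices are adjacent. The set $\{s_1,s_4,s_6,\dots\}$ consists of every other vertex of the long path. In $\{s_2,s_3,s_5,\dots\}$, the vertex $s_2$ meets only $s_4$, the vertex $s_3$ meets only $s_1,s_4$, and $s_{2k+1}$ meets only $s_{2k},s_{2k+2}$. So every pair satisfies an Artin relation of length $2$ and commutes.

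\emph{Total symmetry.} It suffices to realise a generating set of transpositions of $\sym(T)$ by conjugations that fix every other element of $T$. The basic tool is the Garside element $\Delta_\Lambda$ of a spherical standard parabolic subgroup $A_\Lambda$. Conjugation by $\Delta_\Lambda$ permutes the generators in $\Lambda$ according to the opposition involution of $\Lambda$: reversal for $\cala_k$, and swapping the two short leaves for $\cald_k$ with $k$ odd. It commutes with every generator not adjacent to $\Lambda$.

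\begin{itemize}
\item For the first set, take the $\cala_3$ paths $s_1-s_3-s_4$ and $s_{2k}-s_{2k+1}-s_{2k+2}$. In each, conjugation by $\Delta$ swaps the endpoints. Every other element of the set is non-adjacent to the path, so it is fixed. These adjacent transpositions generate $\sym(T)$.
\item For the second set, the $\cala_3$ paths $s_{2k+1}-s_{2k+2}-s_{2k+3}$ with $k\ge 2$ give the transpositions $(s_{2k+1}\,s_{2k+3})$. The vertex $s_2$ is non-adjacent to these paths, so it is fixed.
\item The $\cald_5$ parabolic subgroup on $\{s_1,s_2,s_3,s_4,s_5\}$ has $s_4$ as branch vertex, long arm $s_3 - s_1$, and short leaves $s_2,s_5$. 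Its $\Delta$ swaps $s_2\leftrightarrow s_5$, fixes $s_3$, and commutes with $s_7,s_9,\dots$. This gives $(s_2\,s_5)$.
\item It remains to bring $s_3$ into the orbit with a transposition fixing everything else. For $n=6$ (and likewise $n=7$, where $s_7$ is not in the set), the Garside element of $\cale_6=\langle s_1,\dots,s_6\rangle$ induces the diagram flip $s_1\leftrightarrow s_6$, $s_3\leftrightarrow s_5$, fixing $s_2$ and $s_4$. This gives $(s_3\,s_5)$.
\end{itemize}

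\emph{Main obstacle.} I expect the case of the second set for $n\ge 8$ to be the hard step. There $s_7\in T$ is adjacent to $s_6$, so neither $\Delta_{\cale_6}$ nor the $\cald_5$ element on $\{s_2,s_3,s_4,s_5,s_6\}$ (which swaps $s_2\leftrightarrow s_3$) commutes with $s_7$.

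My first attempt is to take $g=\Delta_{\cale_6}$ and correct it by an element of the centraliser of $\{s_2,s_3,s_5\}$ that returns $g s_7 g^{-1}$ to $s_7$ while fixing $s_9,s_{11},\dots$. Equivalently, I would look for a ribbon in the sense of Paris and Godelle, that is, a chain of Garside elements of nested parabolic subgroups, carrying $\{s_2,s_5,s_7,\dots\}$ to $\{s_3,s_5,s_7,\dots\}$ pointwise-compatibly.

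Failing that, I would use the $\cale_{n}$ Garside element or the Garside elements of the $\cald$-type subdiagrams containing $s_2,s_3,s_4$ and the long arm. I would also use the fact that for $\cald_k$ the parity of $k$ decides whether the leaves are swapped. The aim is to produce directly a single element inducing $(s_2\,s_3)$ and fixing the rest.

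Once one transposition involving $s_3$ is obtained, together with $(s_2\,s_5)$ and the $(s_{2k+1}\,s_{2k+3})$ these transpositions generate $\sym(T)$, which completes the proof. The sizes $\tfrac12 n$ and $\tfrac12(n-1)$ are read off by counting.
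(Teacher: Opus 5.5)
Your treatment of commutation, of the first set, and of the second set for $n=6,7$ is correct. For $n=6,7$ you use $\Delta_{\cale_6}$ to obtain $(s_3\,s_5)$; the paper instead uses the $\cald_5$ on $\{s_2,\dots,s_6\}$ to obtain $(s_2\,s_3)$. Both work.

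The gap is the second set for $n\ge 8$. There you never produce a permutation that moves $s_3$, so total symmetry is not established, in particular not for $\cale_8$. Two of your candidate fixes do not help:
\begin{itemize}
\item The ribbon/centraliser correction of $\Delta_{\cale_6}$ is an unnecessary detour.
\item $\Delta_{\cale_n}$ is useless. For $n=7,8$ the longest element of $W(\cale_n)$ is $-1$, so $\Delta_{\cale_n}$ is central and induces the identity. For $n\ge 9$ the group is not spherical and there is no Garside element.
\end{itemize}

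The missing step is the option you list last, and it takes one line; it is exactly the paper's argument. The obstacle with $s_7$ arises only because $\{s_2,\dots,s_6\}$ is too small: $s_7$ is adjacent to it but not inside it.

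Instead take the parabolic subgroup on $\{s_2,s_3,s_4,\dots,s_m\}$, with $m=n$ if $n$ is even and $m=n-1$ if $n$ is odd. With $s_1$ removed, $s_4$ is a branch vertex with two arms of length one, $s_2$ and $s_3$, and a long arm $s_5-\cdots-s_m$ of length at least $2$. So this subgroup is of type $\cald_{m-1}$, and $m-1$ is odd. Hence conjugation by its Garside element swaps $s_2\leftrightarrow s_3$ and fixes $s_4,\dots,s_m$.

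Every element of $T\setminus\{s_2,s_3\}$, namely $s_5,s_7,\dots$ up to $s_{n-1}$ or $s_{n-2}$, lies on this long arm, so it is fixed. When $n$ is odd, the only generator adjacent to the subdiagram, $s_n$, is not in $T$. Together with $(s_2\,s_5)$ and the $(s_{2k+1}\,s_{2k+3})$, this transposition generates $\sym(T)$ for every $n$, with no case split between $n\le 7$ and $n\ge 8$.
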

\begin{proof}
    It is immediate that all elements of the given sets pairwise commute. The sets $\{s_1 , s_4, s_{4+2}, \dots, s_{n}\}$ for even $n$ and $\{s_1 , s_4, s_{4+2}, \dots, s_{n-1}\}$ for odd $n$ are contained entirely in the standard parabolic subgroup $\langle s_1 , s_3 , s_4 , s_5 , \dots, s_n \rangle$. This subgroup is isomorphic to an Artin group of type $\cala_{n-1}$. It was shown in \cite{ChudnovskyKordekLiPartin20,KordekMargalit22} that sets of this type in braid groups are commutative totally symmetric sets, and the elements of $A_{n-1}$ which induce the desired permutations clearly still induce the desired permutations when included into $\cale_n$. 

    The sets $\{s_2, s_3, s_{3+2}, \dots, s_{n-1}\}$ for even $n$ and $\{s_2, s_3, s_{3+2}, \dots ,s_{n-2}\}$ are not contained in a braid-type parabolic subgroup, so a slightly different strategy is required. First, observe that we can apply the analogous fact for braid groups to realize any permutation supported in $\{s_{3+2}, \dots ,s_{n-1}\}$ or $\{s_{3+2}, \dots, s_{n-2}\}$ as conjugation \emph{by an element of the parabolic subgroup} $\langle s_{3+2}, \dots , s_n \rangle$. This subgroup commutes with both $s_2$ and $s_3$, so this element induces the desired permutation on the entire proposed set. Since the symmetric group is generated by the adjacent transpositions, it now suffices to show that we can realize the transpositions $(s_2, s_5)$ and $(s_2, s_3)$.

    We begin with the transposition $(s_2 , s_5)$. Notice that there is a standard parabolic subgroup of type $D_5$, $\langle s_1, s_2 , s_3, s_4, s_5 \rangle$ which commutes with $\{s_7, \dots , s_n\}$. Let $\Delta_{D_5}^L$ denote the inclusion into $A_{\Gamma}$ of the Garside element of the Artin group $D_5$ with the obvious generating set. Conjugation by $\Delta_{D_5}^L$ permutes $s_2$ and $s_5$ while fixing all other Artin generators in the $D_5$ subgroup, including $s_3$, and while fixing anything which commutes with the subgroup, including $\{s_{3+4}, \dots \}$. 

    A similar argument holds for any pair of Artin generators which can be realized as the ``prong'' vertices of a parabolic subgroup of type $D_{2k+1}$ for any $k$. In particular, vertices $s_2$ and $s_3$ form the ``prongs'' of the type $D_{n-1}$ subgroup $\langle s_2, s_3 , s_4, \dots, s_n\rangle$ when $n$ is even, or of the type $D_{n-2}$ subgroup $\langle s_2, s_3 , s_4, \dots, s_{n-1}\rangle$ when $n$ is odd. In either case, this subgroup contains all of the generators in the proposed totally symmetric set, so conjugation by the inclusion of the Garside element permutes $s_2$ and $s_3$ while leaving the remainder of the set fixed. In particular, this element commutes with the entire braid-type parabolic subgroup $\langle s_{3+2} , \dots \rangle$.
\end{proof}

In the $\cale_7$ case, we also make use of totally symmetric sets in the commutator subgroup $\cale_7'$. The following was proven by Zinde \cite{Zinde1975}, see also \cite{Muholland2006,Orekov2012}.

\begin{thm}
 \label{thm:commutator-type-E-presentation}
   For $n=6,7,$ or $8$ the commutator subgroup $\cale_n'$ of the
   Artin group $\cale_n$ is a finitely presented perfect group.  $\cale_n'$
   is the group generated by
    \begin{eqnarray*}
     p_0=a_{3}a_{1}^{-1}, \quad
     p_1=a_{1}a_{3}a_{1}^{-2}, \quad
     q_{\ell}=a_{\ell}a_{1}^{-1} \ \text{for $\ell=2,4,\dots,n$} , \quad
     b=a_{3}a_{1}^{-1}a_{4}a_{3}^{-1},
   \end{eqnarray*}
   with defining relations
    \begin{gather*}
      b=p_0q_4p_{0}^{-1}, \quad
      p_0bp_{0}^{-1}=b^{2}q_4^{-1}b,\\
      p_1q_4p_{1}^{-1}=q_4^{-1}b, \quad
      p_1bp_{1}^{-1}=(q_4^{-1}b)^{3}q_4^{-2}b,\\
      p_{0}q_{j}=q_{j}p_{1}, \quad p_1q_j=q_{j}p_{0}^{-1}p_{1} \quad (5\le
      j \le n \text{ or } j=2), \\
      q_{i}q_{i+1}q_{i}=q_{i+1}q_{i}q_{i+1} \quad (4\le i \le n-1), \\
      q_{2}q_{4}q_{2}=q_{4}q_{2}q_{4},  \\
      q_{i}q_{j}=q_{j}q_{i} \quad (4\le i <j \le n, |i-j|\ge 2), \\
      q_{i}q_{2}=q_{2}q_{i}\quad (5\le i \le n).
   \end{gather*}
   
%   \begin{gather*}
%      b=p_0q_3p_{0}^{-1}, \quad
%      p_0bp_{0}^{-1}=b^{2}q_3^{-1}b,\\
%      p_1q_3p_{1}^{-1}=q_3^{-1}b, \quad
%      p_1bp_{1}^{-1}=(q_3^{-1}b)^{3}q_3^{-2}b,\\
%      p_{0}q_{j}=q_{j}p_{1}, \quad p_1q_j=q_{j}p_{0}^{-1}p_{1} \quad (4\le
%      j \le n), \\
%      q_{i}q_{i+1}q_{i}=q_{i+1}q_{i}q_{i+1} \quad (3\le i \le n-2), \\
%      q_{n}q_{3}q_{n}=q_{3}q_{n}q_{3},  \\
%      q_{i}q_{j}=q_{j}q_{i} \quad (3\le i <j \le n-1, |i-j|\ge 2), \\
%      q_{i}q_{n}=q_{n}q_{i}\quad (4\le i \le n-1).
%   \end{gather*}
\end{thm}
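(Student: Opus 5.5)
This is the Zinde / Reidemeister--Schreier presentation of $\cale_n'$ for $n=6,7,8$. I would derive it by Reidemeister--Schreier applied to the kernel of the length homomorphism $\ell\colon\cale_n\to\Z$, and then simplify the resulting infinite presentation to a finite one. (Here $a_i$ denotes the standard generator $s_i$.)

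First I fix the Schreier transversal $\{a_1^k : k\in\Z\}$ for $\ker\ell$. Since every generator has length $1$, the Schreier generators are
\[
x_{i,k}=a_1^k a_i a_1^{-k-1},\qquad i\neq 1,
\]
while the elements $a_1^k a_1 a_1^{-k-1}$ are trivial. The defining relators are the braid and commutation relators $R$ among the $a_i$. For each $R$ and each $k$, rewriting $a_1^kRa_1^{-k}$ gives a relator in the $x_{i,k}$. This yields an infinite presentation of $\cale_n'$ on which conjugation by $a_1$ acts as the shift $k\mapsto k+1$.

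Next I use the relators involving $a_1$ to eliminate almost all generators.
\begin{itemize}
\item For $j\neq 3$ (that is, $j=2,4,\dots,n$), $a_j$ commutes with $a_1$, so $x_{j,k}=x_{j,0}$ for all $k$. Writing $q_j=x_{j,0}=a_ja_1^{-1}$, only the $q_j$ remain.
\item The braid relation $a_1a_3a_1=a_3a_1a_3$ rewrites to $x_{3,k+1}x_{3,k-1}^{\,?}\dots$, a two-step recursion. It expresses every $x_{3,k}$ in terms of $p_0=x_{3,0}=a_3a_1^{-1}$ and $p_1=x_{3,1}=a_1a_3a_1^{-2}$.
\end{itemize}
After these eliminations, the remaining relators from each original relation are as follows.
\begin{itemize}
\item The relations among $a_2,a_4,\dots,a_n$ (which commute with $a_1$) become the braid and commutation relations among the $q_j$ listed in the statement, because $q_iq_{i+1}q_i=a_ia_{i+1}a_ia_1^{-3}$, and similarly for the others.
\item The commutations of $a_3$ with $a_j$, for $j\ge 5$ or $j=2$, become $p_0q_j=q_jp_1$ and $p_1q_j=q_jp_0^{-1}p_1$. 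The second of these uses the recursion for $x_{3,2}$.
\item The braid relation between $a_3$ and $a_4$, shifted over all $k$, must be expressed through $p_0,p_1,q_4$. It introduces $b=p_0q_4p_0^{-1}$, and produces the four relations involving $b$.
\end{itemize}

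The main obstacle is showing that the infinitely many shifted copies of the $a_3a_4$ relator reduce to the four listed ones. I would show that conjugation by $a_1$ induces an automorphism $\phi$ of the subgroup generated by $p_0,p_1,q_j,b$. Concretely, $\phi$ sends $p_0\mapsto p_1$, $p_1\mapsto x_{3,2}$ (given by the recursion), and fixes each $q_j$. I would then verify that $\phi$ maps the finite relation set into its normal closure. With that, the presentation is the HNN-style quotient of the finite group by this automorphism, and all shifted relators with $k\ge0$ follow; the case $k<0$ follows from invertibility of $\phi$ on those relations.

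Finally, perfection: each generator becomes trivial in the abelianisation of the finite presentation. The relation $p_0q_j=q_jp_1$ gives $p_0\equiv p_1$. Then $p_1q_j=q_jp_0^{-1}p_1$ gives $p_1\equiv 1$. The relation $b=p_0q_4p_0^{-1}$ gives $b\equiv q_4$. Then $p_1q_4p_1^{-1}=q_4^{-1}b$ gives $q_4\equiv1$, and the braid relations force all $q_j\equiv q_4$. Finite presentability is immediate from the result of the elimination.

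Since the statement is attributed to Zinde, in the paper I would simply cite \cite{Zinde1975} and \cite{Orekov2012} for the bookkeeping of this last step.
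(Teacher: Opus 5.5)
Your proposal agrees with the paper, which gives no proof of its own and simply cites Zinde (see also Mulholland and Orevkov). Your Reidemeister--Schreier outline, with transversal $\{a_1^k\}$ and the shift $\phi$ induced by conjugation by $a_1$, is the standard derivation behind that citation, and the checks you defer do go through:
\begin{itemize}
\item given $b=p_0q_4p_0^{-1}$, the other three $b$-relations are equivalent to the rewritten $a_3a_4$ relators $x_{3,k}q_4x_{3,k+2}=q_4x_{3,k+1}q_4$ at the levels $k=0,1,-1$;
\item $\phi$ preserves every listed relation;
\item the inverse of $\phi$ is $p_0\mapsto p_0p_1^{-1}$, $p_1\mapsto p_0$, $b\mapsto q_4b$, $q_j\mapsto q_j$.
\end{itemize}

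Two small corrections:
\begin{itemize}
\item The $a_1a_3$ relator rewrites to $x_{3,k+1}=x_{3,k}x_{3,k+2}$, i.e.\ $x_{3,k+2}=x_{3,k}^{-1}x_{3,k+1}$, so $x_{3,2}=p_0^{-1}p_1$.
\item In the perfection check, $p_1q_j=q_jp_0^{-1}p_1$ abelianises to $p_0\equiv 1$ (not $p_1\equiv 1$); then $p_1\equiv p_0\equiv 1$.
\end{itemize}
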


We will also need the following lemma.
\begin{lemma}\label{lem:En_conjugate_commutators}
    Let $n\in\{6,7,8\}$.  If two elements $g,h\in\cale_n'$ are conjugate in $\cale_n$ by an element $x$ and $[g,s_i]=1$ for some $i$, then $g$ and $h$ are conjugate in $\cale_n'$ by $xs_i^\ell$ where $\ell$ is the image of $x$ in $\cale_n^\ab$.
\end{lemma}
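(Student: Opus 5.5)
The plan is a direct computation that combines the length homomorphism with the fact that $g$ commutes with $s_i$. Since $n\in\{6,7,8\}$, the graph $\Gamma^{odd}$ of $\cale_n$ is connected, so $\cale_n^\ab\cong\Z$. By the lemma on the length homomorphism, $\cale_n'=\ker(\ell)$, where $\ell$ sends every Artin generator to $1$. I will write $\ell(x)$ for the image of $x$ in $\cale_n^\ab\cong\Z$.

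First, I will check that the proposed conjugator lies in $\cale_n'$. Since $\ell$ is a homomorphism with $\ell(s_i)=1$, we get $\ell(xs_i^{k})=\ell(x)+k$. Hence $xs_i^{-\ell(x)}\in\ker(\ell)=\cale_n'$. The exponent in the statement should therefore be read with the sign convention that makes the total length vanish, i.e.\ the conjugator is $xs_i^{-\ell(x)}$. Equivalently, it is $xs_i^{\ell}$ with $\ell$ the image of $x^{-1}$; I will state this normalisation explicitly.

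Second, I will verify that this element still conjugates $g$ to $h$. Suppose $h=xgx^{-1}$. Since $[g,s_i]=1$, the power $s_i^{-\ell(x)}$ commutes with $g$, which gives
\[
(xs_i^{-\ell(x)})\,g\,(xs_i^{-\ell(x)})^{-1}=x\,s_i^{-\ell(x)}gs_i^{\ell(x)}\,x^{-1}=xgx^{-1}=h.
\]
If the convention is instead $x^{-1}gx=h$, the same argument works with the conjugator $s_i^{-\ell(x)}x$, or equivalently after replacing $x$ by $x^{-1}$.

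There is no real obstacle here. The only point needing care is the sign and side convention for the exponent $\ell$, so that the conjugator genuinely has length zero. The membership of $g$ and $h$ in $\cale_n'$ is not even needed for the conjugation identity itself. It is only needed so that the conclusion ``conjugate in $\cale_n'$'' makes sense.
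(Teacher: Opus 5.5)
Your proof is correct and is the same computation the paper gives: check that the modified conjugator has zero length, then use $[g,s_i]=1$ to absorb the power of $s_i$. Your sign correction is also right. As literally written, $xs_i^{\ell}$ has length $2\ell(x)$, so the conjugator must be $xs_i^{-\ell(x)}$. The paper's one-line proof asserts $xs_i^{\ell}\in\cale_n'$ without noticing this, and then silently switches to an exponent $k$.
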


\begin{proof}
    Clearly, $xs_i^\ell\in\cale_n'$.  Now, $xs_i^k g s_i^{-k}x^{-1}=xgx^{-1}=h$.
\end{proof}

\begin{lemma}\label{lem:E7_comm_TSS}
    The set $\{s_2 s_1^{-1} , s_5 s_1^{-1} , s_{5+2}s_1^{-1}, \dots \}$ forms a commutative totally symmetric set of size $\frac{n-2}{2}$ in $\cale'_n$ when $n$ is even, and it forms a commutative totally symmetric set of size $\frac{n-1}{2}$ in $\cale'_n$ if $n$ is odd.
\end{lemma}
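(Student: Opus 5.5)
The plan is to reduce everything to \Cref{lem:En_comm_tss}, taking the second family there, $T=\{s_2,s_3,s_5,s_7,\dots\}$. The set in the statement is $T'=\{s_2s_1^{-1},s_5s_1^{-1},s_7s_1^{-1},\dots\}$, where the indices after $s_2$ run through the odd numbers $5,7,\dots$ up to $n-1$ (for $n$ even) or $n-2$ (for $n$ odd). Since $s_1$ commutes with every $s_j$ for $j\geq 4$ and with $s_2$, each element $s_js_1^{-1}$ has length zero and so lies in $\cale_n'$. Pairwise commutation of the elements of $T'$ is then immediate. The $s_j$ appearing pairwise commute because they are pairwise non-adjacent in the graph, and $s_1$ commutes with all of them. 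The size count is also direct: $T'$ is obtained from $T$ by deleting $s_3$ (since $s_3$ does not commute with $s_1$) and then multiplying every remaining element by $s_1^{-1}$. I will reconcile this with the cardinality stated in the lemma.

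For total symmetry, it suffices to realise adjacent transpositions of $T'$ by conjugation inside $\cale_n'$. The idea is that every conjugator $x$ used in \Cref{lem:En_comm_tss} to permute $T\setminus\{s_3\}$ can be chosen to commute with $s_1$. If so, $x(s_js_1^{-1})x^{-1}=(xs_jx^{-1})s_1^{-1}$, so $x$ induces the same permutation on $T'$.

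\begin{itemize}
\item Permutations of $\{s_5,s_7,\dots\}$ come from the braid-type parabolic subgroup $\langle s_5,\dots,s_n\rangle$. This subgroup commutes with $s_1$ and with $s_2$.
\item The transposition $(s_2,s_5)$ comes from the Garside element $\Delta$ of the $\cald_5$ parabolic subgroup $\langle s_1,\dots,s_5\rangle$. This element does not commute with $s_1$ a priori, so I would check directly how conjugation by $\Delta$ acts on the generators of that $\cald_5$. For $\cald_5$ (odd rank), conjugation by $\Delta$ is the graph automorphism swapping the two prong vertices, which here are $s_2$ and $s_5$, and fixing the others. In particular it fixes $s_1$, so $\Delta$ commutes with $s_1$ and works.
\end{itemize}

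Finally, these conjugators must be moved into $\cale_n'$. Here I apply \Cref{lem:En_conjugate_commutators} to each conjugating element $x$ and each element $g$ of $T'$. The lemma needs a generator $s_i$ commuting with $g$; using it uniformly across the whole set requires one $s_i$ commuting with every element of $T'$ (or at least with every element moved by $x$), and with $x$ as well. Then $xs_i^{-\ell}$ lies in $\cale_n'$ and conjugates exactly as $x$ does on $T'$. Candidates are $s_1$ itself, which commutes with all of $T'$ and with the chosen conjugators, or $s_n$ when it is not involved. This last step is the main point requiring care: I must verify that the chosen $s_i$ commutes with both the conjugator and every element of $T'$, so that one correction term fixes all of $T'$ simultaneously. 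With $s_i=s_1$, this is exactly what the previous paragraph establishes.
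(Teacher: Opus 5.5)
Your argument is sound for even $n$, but for odd $n$ you are working with the wrong set, so there is a genuine gap. The stated size $\frac{n-1}{2}$ forces the list $s_2,s_5,s_7,\dots$ to run all the way to $s_n$ when $n$ is odd. For $\cale_7$, for example, the set is $\{s_2s_1^{-1},s_5s_1^{-1},s_7s_1^{-1}\}$. You instead stop at $s_{n-2}$, because you copied the endpoint from the second family of \Cref{lem:En_comm_tss}. Deleting $s_3$ from that family leaves only $\frac{n-3}{2}$ elements, so the promised ``reconciliation'' with the stated cardinality cannot be carried out. As written, you prove a strictly weaker statement for every odd $n$. In $\cale_7$ this gives only a size-$2$ set, whereas \Cref{lem:E7_comm_PSL2q} needs the size-$3$ set containing $s_7s_1^{-1}$.

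The repair is easy once you see why \Cref{lem:E7_comm_PSL2q}'s source family, \Cref{lem:En_comm_tss}, stops at $s_{n-2}$. That endpoint is forced only by the transposition $(s_2,s_3)$, which is realised by the Garside element of the odd-rank parabolic $\langle s_2,s_3,s_4,\dots,s_{n-1}\rangle$, and this element need not fix $s_n$. Once $s_3$ is discarded, that obstruction disappears, and your own conjugators handle the correct set:
\begin{itemize}
\item for $n$ odd, $s_n$ is an odd-indexed generator of the braid group $\langle s_5,\dots,s_n\rangle$, so that subgroup realises every permutation of $\{s_5,s_7,\dots,s_n\}$;
\item $\Delta_{\cald_5}$ still fixes $s_7,\dots,s_n$, since these commute with $\langle s_1,\dots,s_5\rangle$.
\end{itemize}
The remaining steps are fine: membership in $\cale_n'$, pairwise commutativity, and the correction by $s_1^{-\ell}$ via \Cref{lem:En_conjugate_commutators}.

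With that repair your route is valid, but it is longer than the paper's. The paper observes that $s_2-s_4-s_5-\cdots-s_n$ is a path in the diagram. Hence $\langle s_2,s_4,s_5,\dots,s_n\rangle$ is a braid-type parabolic subgroup of type $\cala_{n-2}$, and it commutes with $s_1$. In this subgroup, $s_2,s_5,s_7,\dots$ are exactly the odd-indexed standard generators $\sigma_1,\sigma_3,\sigma_5,\dots$. The braid-group totally symmetric set fact then yields every permutation at once. No $\cald_5$ Garside element is needed, and there is no separate check that it fixes $s_1$. This route also makes the correct endpoint, and hence the cardinality in both parities, automatic.
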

\begin{proof}
    It is clear that all elements in this set commute and that the set has the prescribed size. Observe that $\cale_n$ has only odd labelled edges, and recall that this implies an element $g \in \cale_n$ is in the commutator subgroup exactly when $g$ is in the kernel of the word length map to $\mathbb{Z}$, i.e., when there is a word in the Artin generators representing $g$ such that the sum of the powers of all Artin generators which appear in the word is 0. From this, we deduce that each element in the set $\{s_2 s_1^{-1}, s_5 s_1^{-1}, s_{5+2} s_1^{-1}, \dots \}$ is contained in $\cale'_n$. 

    Notice that $\{s_2 , s_5, s_{5+1}, \dots \}$ is contained in a braid-type standard parabolic subgroup $\langle s_2 , s_4, s_5, s_6, \dots \rangle$, so there is an element \emph{of the braid subgroup} that induces all possible permutations of this collection \cite{ChudnovskyKordekLiPartin20,KordekMargalit22}. Choose some permutation $\sigma$, and let the corresponding conjugating element be $g$. The entire parabolic subgroup commutes with the Artin generator $s_1$, so by \cref{lem:En_conjugate_commutators}, the element $g s_1^l$ for some $l$ induces the desired permutation in $\cale'_n$. 
\end{proof}

We can build slightly larger totally symmetric sets in $\cale_n'$ if we do not require commutativity.
\begin{lemma}
    When $n$ is even, the set $$\{s_2 s_4^{-1}, s_3 s_4^{-1}, s_5 s_4^{-1}, s_{5+2}s_4^{-1}, \dots, s_{n-1}s_4^{-1}\}$$ forms a (non-commutative) totally symmetric set of size $\frac{n}{2}$ in $\cale'_n$. When $n$ is odd, the set $$\{s_2 s_4^{-1}, s_3 s_4^{-1}, s_5 s_4^{-1}, s_{5+2}s_4^{-1}, \dots, s_{n-2}s_4^{-1}\}$$ forms a (non-commutative) totally symmetric set of size $\frac{n-2}{2}$ when $n$ is odd.
\end{lemma}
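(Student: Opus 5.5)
Write $t_j = s_j s_4^{-1}$ and let $T$ be the set in the statement. The plan is to transplant the TSS $\{s_2, s_3, s_5, s_7, \dots\}$ of \Cref{lem:En_comm_tss} into $\cale_n'$, following the pattern of \Cref{lem:E7_comm_TSS}.

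Each $t_j$ has length $0$ under the length homomorphism $\ell$, so $T \subseteq \cale_n'$. The sets are indexed exactly as the second family of \Cref{lem:En_comm_tss}, namely $\{s_2, s_3, s_5, \dots\}$. By \Cref{lem:En_comm_tss}, for every permutation $\sigma$ of this set there is an element $x_\sigma \in \cale_n$ with $x_\sigma s_j x_\sigma^{-1} = s_{\sigma(j)}$.

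It then suffices to find elements, for a generating set of $\sym(T)$, that also fix $s_4$ under conjugation. Then $x s_j s_4^{-1} x^{-1} = s_{\sigma(j)} s_4^{-1}$. We then correct $x$ into $\cale_n'$ by multiplying on the right by $s_4^{-\ell(x)}$. This is legitimate because $s_4$ commutes with $s_4 s_j^{-1}$-type conjugation only in the sense that $s_4^{k} t_j s_4^{-k}$ need not equal $t_j$. So instead we multiply by a power of an element centralising all of $T$, or absorb the correction into a central element. The cleanest choice is the Garside element $\Delta^2$, which is central in $\cale_n$ with $\ell(\Delta^2) = 2N \neq 0$.

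Since $\ell(\Delta^2)$ may not divide $\ell(x)$, I would first try to arrange that each chosen conjugator has even length, or has length that can be cancelled by a conjugator that fixes everything.

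\textbf{Choosing the conjugators.} We check that the elements used in the proof of \Cref{lem:En_comm_tss} fix $s_4$:
\begin{itemize}
\item Permutations of $\{s_5, s_7, \dots\}$ come from the braid parabolic $\langle s_5, \dots, s_n \rangle$, which contains $s_5$, so $s_4$ is \emph{not} fixed. I would instead use the Kordek--Margalit conjugators in the larger braid group $\langle s_4, s_5, \dots, s_n\rangle$. There $s_4, s_5, s_7, \dots$ are the odd-indexed generators, and I would realise permutations of $s_5, s_7, \dots$ fixing $s_4$ by taking elements inside that TSS's symmetry group that fix the first element.
\item The transpositions $(s_2\,s_5)$ and $(s_2\,s_3)$ are induced by $\Delta_{\cald_5}$ and $\Delta_{\cald_{n-1}}$ or $\Delta_{\cald_{n-2}}$. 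In type $\cald_{2k+1}$ the Garside element swaps the two prongs and fixes the other generators. The prongs here are $\{s_2, s_5\}$, respectively $\{s_2, s_3\}$, with $s_4$ as the branch vertex, so $s_4$ is fixed.
\end{itemize}
Finally, to pass into $\cale_n'$ we compensate the length using an element centralising both $s_4$ and the relevant $s_j$. For instance $\Delta_{\cald}^2$ of the same parabolic has nonzero length and is central in that parabolic. Alternatively we use a generator $s_i$ commuting with all the $t_j$ involved, such as $s_{n}$ or $s_1$, adapting \Cref{lem:En_conjugate_commutators}.

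\textbf{Main obstacle.} The delicate step is the length correction. \Cref{lem:En_conjugate_commutators} needs a single $s_i$ commuting with everything, and no generator commutes with all of $T$. So I would handle each generating transposition separately, each time using a generator that commutes with all the moved and fixed elements. For $(s_2\,s_3)$ inside $\cald_{n-1}$ this fails, and there I would instead use parity: the Garside element of $\cald_{n-1}$ has length $(n-1)(n-2)$, and I would multiply by the central $\Delta_{\cale_n}^{2}$ powers or by $s_4$-centralising elements to reach length $0$. Because a permutation need only act on $T$, any residual central factor is harmless.
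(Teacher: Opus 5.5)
Your conjugators for $(t_2\,t_5)$ and $(t_2\,t_3)$ are exactly the paper's: Garside elements of $\cald_{2k+1}$ parabolics with prongs $\{s_2,s_5\}$, resp.\ $\{s_2,s_3\}$, and branch vertex $s_4$. The other two steps, however, have genuine gaps.

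\textbf{Moving $t_5,t_7,\dots$.} This step fails and cannot be repaired. The generators $s_4$ and $s_5$ are joined by an edge, so they are not ``odd-indexed generators'' of $\langle s_4,\dots,s_n\rangle$, and $\{s_4,s_5,s_7,\dots\}$ is not totally symmetric. Worse, $t_5$ and $t_7$ are not even conjugate in $\cale_n$. In $W(\cale_n)$ the image of $t_5$ is $s_5s_4$, of order $3$, while the image of $t_7$ is $s_7s_4$, of order $2$. So for every $n\ge 8$ (whenever $t_7\in T$) the displayed set is not totally symmetric. The statement can only hold for $n\in\{6,7\}$, where $T=\{t_2,t_3,t_5\}$. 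That set has three elements, so the odd-$n$ size $\frac{n-2}{2}$ is also off. The paper's proof has the same blind spot: the permutations it takes from $\langle s_7,\dots,s_n\rangle$, together with $(t_2\,t_3)$ and $(t_2\,t_5)$, never link $t_5$ to $t_7$. It is, however, complete for $n=6,7$.

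\textbf{The length correction.} For $n=6,7$ this is the real content, and you leave it unresolved; for $n=6$ the candidates you name cannot resolve it. The correcting factor must centralise every $t_j$, not merely $s_4$. By that test, $s_4$ fails outright, $s_1$ fails on $t_3$, and $s_6$ fails on $t_5$. The elements $\Delta_{\cale_6}^2$ and $\Delta_{\cald_5}^2$ do centralise $T$, but their lengths $72$ and $40$ generate $8\Z$. Every odd power of $\Delta_{\cald_5}$ has length $\equiv 20\pmod{40}$, so length $0$ is never reached with these elements.

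The missing ingredient is the Garside element $\Delta_{\cald_4}$ of $\langle s_2,s_3,s_4,s_5\rangle$. It is central in that parabolic, hence commutes with $t_2,t_3,t_5$, and it has length $12$. Therefore $\Delta_{\cald_5}^{3}\Delta_{\cald_4}^{-5}$, for either $\cald_5$ parabolic, lies in $\cale_6'$. It acts on $T$ exactly as $\Delta_{\cald_5}$ does, because $\Delta_{\cald_5}^2$ centralises $T$. This is the paper's $\cale_6$ argument.

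For $n=7$, your suggested $\Delta_{\cale_7}^2$ and $\Delta_{\cald_5}^2$ would in fact suffice: their lengths are $126$ and $40$, and $\gcd(126,40)=2$ divides $20$. But no bookkeeping is needed. Contrary to your claim that no generator commutes with all of $T$, $s_7$ (your $s_n$) commutes with $s_2,s_3,s_4,s_5$. So \Cref{lem:En_conjugate_commutators} with $s_7$ turns each conjugator $x$ into $xs_7^{-\ell(x)}\in\cale_7'$, which is how the paper proceeds for $n\ge 7$.
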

\begin{proof}
    It is clear that the sets have the correct size. When $n \geq 7$, we see that every element of the proposed set commutes with the Artin generator $s_7$, so by Lemma \ref{lem:En_conjugate_commutators}, it suffices to construct appropriate conjugating elements in $\cale_7$. As in the previous cases, elements of the form $s_7 s_4^{-1} = s_7$, $s_9 s_4^{-1} = s_9$, etc. can be permuted via conjugation by an element in the braid-type standard parabolic subgroup $\langle s_7 , s_8 , \dots , s_n \rangle$. Such an element commutes with each of $s_2 s_4^{-1}, s_3 s_4^{-1},\text{ and } s_5 s_4^{-1}$, so elements which realize transpositions of this subset within the braid-type subgroup realize the same transpositions on the entire set. Because the symmetric group is generated by adjacent transpositions, it then suffices to realize the transpositions $(s_2 s_4^{-1} , s_3 s_4^{-1})$ and $(s_2 s_4^{-1} , s_5 s_4^{-1})$ by conjugation.

    Conjugation by the inclusion of the Garside element of the $D_5$ Artin group $\langle s_1 , s_2 , s_3 , s_4 , s_5 \rangle$ fixes $s_3, s_4, s_7, s_{7+2}, \dots,$ while permuting $s_2$ and $s_5$, so it induces the transposition $(s_2 s_4^{-1} , s_5 s_4^{-1})$. Similarly, conjugation by the inclusion of the Garside element of either $\langle s_2 , s_3 , s_4 , \dots, s_n \rangle$ or $\langle s_2 , s_3 , s_4 , \dots, s_{n-1} \rangle$ (whichever has an odd number of generators) fixes $ s_4, s_7, s_{7+2}, \dots,$ while permuting $s_2$ and $s_3$, so it induces the transposition $(s_2 s_4^{-1} , s_3 s_4^{-1})$.

    %%%%As in the previous case, the bottom 

    %For the transposition $(s_3 s_4^{-1}, s_5 s_4^{-1})$, we apply precisely the same argument as in the proof of \cref{lem:En_comm_tss}, first conjugating by the inclusion of the Garside element of the braid subgroup $\langle s_7 , s_8 , \cdots s_n \rangle$ to move the elements of our set outside the star of the unique standard parabolic $\cale_6$ subgroup, we conjugate by the inclusion of the Garside element of this $\cale_6$ subgroup to permute $s_3$ and $s_5$ while pointwise fixing $\{s_2, s_5, s_8, s_9, \cdots s_n\}$, and then we conjugate again in the braid group to move the other generators back. 

    Now we consider the case $\cale_6$. Here, there is no Artin generator which commutes with all of the elements in the proposed set, so we must show that the desired conjugations can be realized in $\cale_6'$ more directly. Let $\Delta_{D_4}$ denote the inclusion into $\cale_6$ of the Garside element of $\langle s_3 , s_4 , s_2 , s_5 \rangle$. The Garside element of $D_4$ is central in $D_4$, so in particular, this element commutes with each element in $\{s_3 s_4^{-1}, s_2 s_4^{-1} , s_5 s_4^{-1}\}$. By \cite{BrieskornSaito72}, $\ell (\Delta_{D_5}^L) = \ell (\Delta_{D_5}^R) = 20$ and $\ell(\Delta_{D_4}) = 12$.
    %$\ell(\Delta_{E_6}) = 36$,

    %To realize the transposition $(s_3 s_4^{-1}, s_5 s_4^{-1})$ in $\cale_6'$, we can then conjugate by the element $\Delta_{E_6} \Delta_{D_4}^{-3}$, which has word length 0 and induces exactly the same permutation on all of $s_3$, $s_2$, $s_4$, and $s_5$ as $\Delta_{E_6}$. 
    To realize the transpositions $(s_2 s_4^{-1}, s_3 s_4^{-1})$ and $(s_2 s_4^{-1}, s_5 s_4^{-1})$ in $\cale'_6$, we highlight that the square of the Garside element is always central in a spherical Artin group. The desired conjugacy relations in $\cale_6$ are also realized by $(\Delta_{D_5}^R)^{3}$ and $(\Delta_{D_5}^L)^3$, each of which has length 60. The transpositions can then be realized in $\cale_6'$ by $(\Delta_{D_5}^R)^3 (\Delta_{D_4})^{-5}$ and $(\Delta_{D_5}^L)^3 (\Delta_{D_4})^{-5}$.
\end{proof}

%K: throughout the commented subsection, one % denotes things that we commented in the interest of time and will bring back later, and two %% denotes things that were already commented.
\subsection{Collapsing sets in non-braid Artin groups}

Recall that collapsing sets were defined in \Cref{D:collapsingSet}.  Thus far, we have only used that the standard generators of braid groups on 5 or more strands form a collapsing sets.
In this section we will exhibit collapsing sets in more general Artin groups that will be particularly suited to studying $\calh_4$ and $\cale_n$

\begin{lemma}\label{lem:artin-gen-collapsing}
    Let $A_{\Gamma}$ be an irreducible Artin group satisfying the following conditions: 
    \begin{enumerate}
        \item $|V(\Gamma)| \geq 4$,
        \item $\Gamma$ has no cycles of length less than 5,
        \item All edge labels in $\Gamma$ are odd, and
        \item No two valence $1$ vertices of $\Gamma$ are adjacent to the same vertex.
    \end{enumerate}
    Then the Artin generators form a collapsing set.
\end{lemma}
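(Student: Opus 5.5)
The plan is to reduce everything to a statement about a single pair of generators, and then propagate it along the connected graph $\Gamma$. Let $\varphi:A_\Gamma\to Q$ be a quotient and suppose two generators collide, $\varphi(s)=\varphi(t)$ with $s\neq t$. We must show that all generators then have the same image. Since every edge label is odd, adjacent generators are conjugate. Moreover, \Cref{L:gcdRelation} shows that if $\varphi(x)$ and $\varphi(y)$ commute and satisfy an odd Artin relation, then they are equal, because the gcd of $2$ and an odd number is $1$. So the whole task is to show: \emph{if two generators collide, then some pair of adjacent generators has commuting images}, and then to propagate this equality along edges.

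\textbf{Step 1 (propagation).} Suppose adjacent $x,y$ satisfy $\varphi(x)=\varphi(y)$. Let $z$ be a neighbour of $y$ with $z\neq x$. Because $\Gamma$ has no triangles, $z$ is not adjacent to $x$, so $[\varphi(z),\varphi(x)]=1$, and hence $[\varphi(z),\varphi(y)]=1$. The relation between $z$ and $y$ is odd, so \Cref{L:gcdRelation} gives $\varphi(z)=\varphi(y)$. Since $\Gamma$ is connected (it is irreducible), induction along paths collapses every generator.

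\textbf{Step 2 (from an arbitrary collision to an adjacent one).} Suppose $\varphi(s)=\varphi(t)$ with $s,t$ non-adjacent. Choose a neighbour $u$ of $s$ that is not adjacent to $t$ and is not $t$. Then $\varphi(u)$ commutes with $\varphi(t)=\varphi(s)$, while $u$ and $s$ satisfy an odd relation, so $\varphi(u)=\varphi(s)$, an adjacent collision; now apply Step 1.

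Such a $u$ fails to exist only when every neighbour of $s$ is also adjacent to $t$. In that case:
\begin{itemize}
\item If $s$ and $t$ had two common neighbours, we would get a $4$-cycle, contradicting hypothesis (2).
\item So $s$ has exactly one neighbour $v$, and $v$ is also adjacent to $t$.
\item If $t$ has another neighbour $w$, swap the roles of $s$ and $t$ and use $w$ as the required neighbour of $t$ not adjacent to $s$. Here $w$ is not adjacent to $s$, since $s$ has valence $1$ with unique neighbour $v$.
\item Otherwise both $s$ and $t$ are valence-$1$ vertices attached to $v$, which hypothesis (4) excludes.
\end{itemize}
If $s$ and $t$ are adjacent, Step 1 applies directly.

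\textbf{Where hypothesis (1) enters.} Without it, a degenerate graph could be an edge or a path with three vertices. The latter has two leaves on a common vertex, already excluded by (4), but it is worth tracking whether (1) is genuinely needed. One also has to check the case where $\Gamma$ is a single edge; there a collision makes the images commute, hence equal, which is fine. I expect hypothesis (1) is only included to match intended applications, so I would note that it is not used in an essential way.

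The main obstacle is the case analysis in Step 2: making sure that hypotheses (2) and (4) rule out precisely the configurations in which no auxiliary neighbour exists, including the case where $s$ and $t$ lie at distance two through a vertex of valence greater than $2$.
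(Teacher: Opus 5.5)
Your argument is correct and follows the paper's proof. The paper's first claim is your Step~2 case analysis: the absence of $4$-cycles together with hypothesis (4) produces a neighbour of one colliding generator that is not adjacent to the other. The paper's second claim and its star-growing argument are your Step~1 propagation, which uses \Cref{L:gcdRelation} with $\gcd(2,\text{odd})=1$ and the absence of triangles. The only cosmetic difference is that you first turn a non-adjacent collision into an adjacent one, whereas the paper treats common neighbours directly. You are also right that hypothesis (1) plays no essential role in either version.
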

\begin{proof}
Throughout the proof, we conflate an Artin generator $s_i$ with the corresponding vertex in the defining $\Gamma$ for ease of notation. In what follows, $\lk(s_i)$ denotes the collection of vertices which are adjacent to $s_i$ in $\Gamma$. Since we have no 3-cycles, this is equivalent to the standard definiton. The star $\st(s_i)$ denotes $\lk(s_i) \cup \{v\}$, and for a subgraph $\Lambda$ of $\Gamma$, $\st(\Lambda)$ is the union of $\st(s_i)$ for each $s_i \in \Lambda$.
    We first prove that all of the Artin groups of interest have the following property.

    \smallskip
    
    \noindent\textbf{Claim:} \emph{For any two Artin generators $s_i$ and $s_j$ which are not adjacent, there is at least one Artin generator $s_k$ such that $s_k \in \lk(s_i) \cup \lk(s_j)$ but $s_k \notin \lk(s_i) \cap \lk(s_j)$, where $\lk(s_i)$ is considered in $\Gamma$.}
    %\yv{We're technically abusing notation here letting $s_j$ denote both a vertex and a generator; I imagine this is standard in Artin groups, but I tried to make it more clear.}\katie{I added a sentence to clarify it as well}

    \smallskip
    
    \noindent\textbf{Proof of claim:} Choose a pair $s_i$ and $s_j$ which are not adjacent in $\Gamma$. First, suppose $s_i$ and $s_j$ each have valence 1. The unique vertex $v$ adjacent to $s_i$ is disjoint from $s_j$ by assumption, so we are done. %Otherwise, notice that $v$ must have valence at least 3 because $|V(\Gamma)| \geq 4$. In particular, there is some vertex $w$ adjacent to $v$ such that $\{s_i , s_j , v , w\}$ span a $D_4$ subgroup. By assumption on $\Gamma$, this subgraph must be contained in a $\cale_6$ subgraph. This is a contradiction because in a $\cale_6$ subgroup, the valence 1 vertices are not adjacent to the valence 3 vertex. %any pair of valence 1 vertices in $\cale_6$ are at least distance 3 from one another in the defining graph for $\cale_6$, and their inclusion into $\Gamma$ cannot introduce any shortcuts
    %\yv{Do you mean relations here?} %because $\Gamma$ is a tree.
    %\katie{I meant literally a path in the tree, but I don't think that was the best explanation. I think it's much clearer to replace the $D_4$ subgraph condition with the equivalent condition ``there is no vertex of valence at least 3 which is adjacent to two valence 1 vertices" so I've done that}
    
    Now suppose at least one of $s_i$ and $s_j$ has valence at least two. Without loss of generality, say the link of $s_j$ contains vertices $s_{k}$ and $s_{k'}$. The vertices $s_k$ and $s_{k'}$ cannot both be adjacent to $s_i$ because there would then be a 4-cycle in $\Gamma$ of the form $s_i \rightarrow s_k \rightarrow s_j \rightarrow s_{k'}$, so at least one of $s_k$ and $s_{k'}$ is adjacent to $s_j$ but disjoint from $s_i$. \hfill$\blackdiamond$

    \smallskip
    
    We can now justify the following stronger claim.

    \smallskip

    \noindent\textbf{Claim:} \emph{Let $\varphi: A_{\Gamma} \rightarrow G$ be a group homomorphism. If $\varphi(s_i) = \varphi(s_j)$ for any pair of Artin generators, then every $s_k \in \st(s_i) \cup \st(s_j)$ has $\varphi(s_k) = \varphi(s_i) = \varphi(s_j)$, where $\st(s_i)$ is considered in $\Gamma$.}

    \smallskip
    
    \noindent\textbf{Proof of claim:} %Since $\Gamma$ is connected and has at least four vertices, $\text{Lk}(s_i) \cup \text{Lk}(s_j) \neq \varnothing$.
    If $s_i$ and $s_j$ are adjacent, then every vertex of $\lk(s_j) - \{s_i\}$ is disjoint from $s_i$ and vice versa since $\Gamma$ has no 3-cycles. In particular, for any vertex $s_k$ of $\lk(s_i)$ or $\lk(s_j)$, the image $\varphi(s_k)$ satisfies Artin relations of length both 2 and an odd integer with $\varphi(s_i) = \varphi(s_j)$ by the assumption that edge labels are odd. By Lemma \ref{L:gcdRelation}, this implies that $\varphi(s_k) = \varphi(s_i) = \varphi(s_j)$. 

    Now suppose $s_i$ and $s_j$ are not adjacent. For any vertex $s_k$ of $\lk(s_j)$ which is disjoint from $s_i$ (or vice versa), $s_k$ satisfies an odd length Artin relation with $s_j$. It also satisfies a commuting relation with $s_i$, so by Lemma \ref{L:gcdRelation}, this implies that $\varphi(s_k) = \varphi(s_i) = \varphi(s_j)$. Thus it remains only to consider the case of $s_k \in \lk(s_i) \cap \lk(s_j)$. By the first claim, there is some $s_{k'} \in \lk(s_i)$ or $\lk(s_j)$ which is disjoint from $s_j$ or $s_i$ respectively, so Lemma \ref{L:gcdRelation} shows that $\varphi(s_{k'}) = \varphi(s_i) = \varphi(s_j)$. Suppose without loss of generality that $s_{k'}$ is in $\lk(s_j)$. The vertices $s_k$ and $s_{k'}$ cannot be adjacent because they are both contained in the link of $s_j$ and $\Gamma$ has no 3-cycles. This implies that $s_k$ satisfies an Artin relation of odd length with $s_j$ and of length 2 with $s_{k'}$. We conclude by Lemma \ref{L:gcdRelation} applied to $\varphi(s_k)$ and $\varphi(s_i) = \varphi(s_j) = \varphi(s_{k'})$. \hfill$\blackdiamond$

    Finally, notice that since $\Gamma$ is connected, either $\Lambda = \Gamma$ or $\Lambda \subsetneq \st(\Lambda)$ for any induced subgraph $\Lambda$ of $\Gamma$. To complete the proof, suppose that $\varphi(s_i) = \varphi(s_j)$ for some $s_i$ and $s_j$, and let $\Lambda$ be the induced subgraph whose vertex set is the vertex set of $\st(s_i) \cup \st(s_j)$. By the second claim, every vertex in $\Lambda$ has the same image under $\varphi$. We can now reapply the second claim to show that all vertices in $\st(\lambda)$ also have this image. Because this is a strictly increasing process and $\Gamma$ is finite, we obtain that all vertices in $\Gamma$ have the same image after finitely many repetitions.
\end{proof}
\begin{corollary}
    The Artin generators form a collapsing set in $\calh_4$ and in $\cale_n$ for all $n$.
\end{corollary}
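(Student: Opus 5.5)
The plan is to verify the four hypotheses of \Cref{lem:artin-gen-collapsing} for the defining graphs of $\calh_4$ and of $\cale_n$, and then apply that lemma directly. Both graphs are trees, so condition (2), the absence of cycles of length less than $5$, holds vacuously. Both are connected, so the groups are irreducible.

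For $\calh_4$, the graph is the path $s_4 - s_3 - s_2 - s_1$ with edge labels $3,3,5$. It has $4$ vertices, which gives condition (1). All labels are odd, which gives condition (3). The two valence-one vertices are $s_4$ and $s_1$, whose unique neighbours are $s_3$ and $s_2$ respectively; these differ, which gives condition (4).

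For $\cale_n$, the graph is a tree with one trivalent vertex $s_4$. It carries three arms:
\begin{itemize}
\item the arm $s_2$ of length $1$;
\item the arm $s_3 - s_1$ of length $2$;
\item the arm $s_5 - s_6 - \dots - s_n$ of length $n-4$.
\end{itemize}
All edges are unlabelled, so all labels equal $3$ and are odd, which gives condition (3). For condition (1), note that $|V(\Gamma)| = n \geq 4$; the paper only draws $n\geq 6$, and for smaller $n$ the standard convention ($\cale_5=\cald_5$, $\cale_4=\cala_4$) still has at least $4$ vertices. For condition (4), the leaves are $s_2$, $s_1$ and $s_n$, with neighbours $s_4$, $s_3$ and $s_{n-1}$.

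The only point requiring care is that these three neighbours are pairwise distinct, i.e.\ that no two leaves share a neighbour. Two leaves can share a neighbour only if two arms of length $1$ meet at the branch vertex. The arms have lengths $1$, $2$ and $n-4$, so this happens only when $n-4 = 1$, i.e.\ $n = 5$. In that case $s_5$ and $s_2$ are both leaves adjacent to $s_4$, and the graph is of type $\cald_5$.

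For $n \geq 6$ the third arm has length at least $2$, so condition (4) holds. I expect this $n=5$ exception to be the main subtlety. I would handle it by interpreting ``all $n$'' as $n\geq 6$, matching the figure and all later uses ($\cale_6,\cale_7,\cale_8$). For $n=4$, where $\cale_4$ is a path of four vertices, the statement follows from \Cref{L:braidGenCollapse}.

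With the hypotheses verified, \Cref{lem:artin-gen-collapsing} gives the corollary immediately.
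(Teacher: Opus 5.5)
Your proposal is correct and matches the paper's argument. The paper gives no separate proof: it states the corollary as an immediate consequence of \Cref{lem:artin-gen-collapsing}, obtained by checking that lemma's four hypotheses on the tree-shaped graphs of $\calh_4$ and $\cale_n$, which is what you do. Your $n=5$ caveat is also right: $\cale_5\cong\cald_5$ violates condition (4), and the quotient $\cald_5\to\cala_4\to\sym(5)$ that identifies the two leaves $s_2,s_5$ adjacent to $s_4$ shows its generators genuinely fail to collapse, so ``all $n$'' must be read as $n\geq 6$ (together with $n=4$).
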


While the above proof requires at least 4 vertices, the desired statement is also true in $\calh_3$. 

\begin{lemma}
    The Artin generators in $\calh_3$ form a collapsing set.
\end{lemma}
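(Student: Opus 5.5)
Write $\calh_3=\langle s_1,s_2,s_3\rangle$ with $m_{12}=3$, $m_{23}=5$ and $m_{13}=2$, following the labelling in \Cref{fig:FiniteCoxeterGroups}. We must show that for any homomorphism $\varphi:\calh_3\to Q$, if two of $\varphi(s_1),\varphi(s_2),\varphi(s_3)$ coincide then all three coincide. There are three pairs to check, and each is handled by \Cref{L:gcdRelation}, exactly as in the second claim of the proof of \Cref{lem:artin-gen-collapsing}.

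\emph{Case $\varphi(s_1)=\varphi(s_2)$.} Put $x=\varphi(s_1)=\varphi(s_2)$ and $z=\varphi(s_3)$. Since $s_1$ commutes with $s_3$, the pair $x,z$ satisfies the Artin relation of length $2$. Since $s_2$ and $s_3$ satisfy the relation of length $5$, the pair $x,z$ also satisfies the relation of length $5$. Then \Cref{L:gcdRelation} gives the relation of length $\gcd(2,5)=1$, namely $x=z$.

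\emph{Case $\varphi(s_2)=\varphi(s_3)$.} Put $y=\varphi(s_2)=\varphi(s_3)$ and $x=\varphi(s_1)$. Then $x$ satisfies the braid relation of length $3$ with $y$ (coming from $s_2$) and commutes with $y$ (coming from $s_3$). Since $\gcd(2,3)=1$, \Cref{L:gcdRelation} gives $x=y$.

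\emph{Case $\varphi(s_1)=\varphi(s_3)$.} Here the two generators are not adjacent, so we argue via the middle vertex. Put $x=\varphi(s_1)=\varphi(s_3)$ and $y=\varphi(s_2)$. Then $x,y$ satisfy Artin relations of length $3$ (from $s_1$) and of length $5$ (from $s_3$). Since $\gcd(3,5)=1$, \Cref{L:gcdRelation} gives $x=y$, so all three images coincide.

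In every case a single collision forces total collapse, so the generators form a collapsing set. I do not expect a genuine obstacle. The one point to watch is that \Cref{lem:artin-gen-collapsing} cannot be quoted directly, because $|V(\Gamma)|=3$ and because the $5$-labelled edge together with the commuting pair $\{s_1,s_3\}$ fall outside its setting. The argument therefore relies on the pairwise coprimality of the labels $2,3,5$ appearing in $\calh_3$.
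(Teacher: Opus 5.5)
Your proof is correct and is essentially the paper's argument written out case by case: for each colliding pair, the remaining generator satisfies Artin relations of coprime lengths with the common image, so \Cref{L:gcdRelation} forces total collapse. Two cosmetic points: in \Cref{fig:FiniteCoxeterGroups} the labels are in fact $m_{12}=5$ and $m_{23}=3$, which changes nothing since the argument is symmetric; and \Cref{lem:artin-gen-collapsing} fails to apply because of its conditions (1) and (4) (the two leaves $s_1,s_3$ share the neighbour $s_2$), not because of the label $5$, which is odd and allowed.
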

\begin{proof}
    Observe that for any pair of Artin generators $s_i$ and $s_j$, the third generator $s_k$ satisfies coprime length Artin relations $m_{ik}$ and $m_{jk}$. The result then follows immediately from \cref{L:gcdRelation}.
\end{proof}

We conclude this subsection with some applications which will be useful in the remainder of the paper.

%We now show that restrictions of non-cyclic homomorphisms to standard parabolic subgroups must, in many cases, also be non-cyclic. We will make frequent use this, for example, to show that the image of $\cale_6$ is not cyclic under non-cyclic homomorphisms of $\cale_8$, allowing us to apply the bounds on quotient size we obtain for $\cale_6$ to the $\cale_8$ case. 
\begin{lemma}\label{lem:E8_non_cyclic_E6}
    Let $A_{\Gamma}$ be an odd-labelled Artin group where the Artin generators form a collapsing set, and let $A_X$ be an irreducible standard parabolic subgroup with $|X| > 1$. Let $\alpha \colon A_{\Gamma} \rightarrow Q$ be a homomorphism such that the restriction of $\alpha$ to $A_X$ has cyclic image. The entire homomorphism $\alpha$ has cyclic image.
\end{lemma}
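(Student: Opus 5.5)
The plan is to reduce to the collapsing property of the generators. Since $X$ is irreducible and $|X|>1$, the subgraph $X$ is connected and contains at least one edge. Pick adjacent vertices $s_i,s_j\in X$. Since $A_\Gamma$ is odd-labelled, the edge joining them carries an odd label $m$.

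First I show that $\alpha(s_i)=\alpha(s_j)$. By hypothesis $\alpha(A_X)$ is cyclic, hence abelian, so $\alpha(s_i)$ and $\alpha(s_j)$ commute. Commuting is exactly the Artin relation of length $2$. They also satisfy the Artin relation of length $m$, inherited from $A_\Gamma$. Since $\gcd(2,m)=1$, \Cref{L:gcdRelation} gives the Artin relation of length $1$, which reads $\alpha(s_i)=\alpha(s_j)$.

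Next I apply the collapsing property. The restriction of $\alpha$ to the set of Artin generators is not injective, since $s_i\neq s_j$ have the same image. Regard $\alpha$ as a quotient map onto its image $\alpha(A_\Gamma)$; the definition of a collapsing set is phrased for quotient maps, so this is the harmless point to note. By \Cref{D:collapsingSet}, all Artin generators then have a single common image $g$.

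Finally, $\alpha(A_\Gamma)$ is generated by the images of the generators, so $\alpha(A_\Gamma)=\langle g\rangle$ is cyclic. I expect no real obstacle here. The only care needed is to invoke irreducibility of $X$ to obtain an edge, and to use the odd-labelled hypothesis so that $\gcd(2,m)=1$.
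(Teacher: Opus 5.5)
Your proof is correct and has the same structure as the paper's: show that two Artin generators in $X$ have the same image under $\alpha$, then use the collapsing property of the generators to conclude that $\alpha(A_\Gamma)$ is generated by a single element. The only difference is the first step. The paper uses that the generators of $X$ are pairwise conjugate by elements of $A_X$, so all their images coincide in the abelian group $\alpha(A_X)$; you instead apply \Cref{L:gcdRelation} to a single odd-labelled edge of $X$. This is the same phenomenon, since for odd $m$ the relation $[x,y]_m=[y,x]_m$ says exactly that $[x,y]_{m-1}$ conjugates $x$ to $y$. Your version is slightly leaner, needing only one edge and no preliminary choice of a generator whose image generates $\alpha(A_X)$.
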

\begin{proof}
    Since the Artin generators $\{s_i\} = X$ form a generating set for the standard parabolic subgroup $A_X$, their images generate the cyclic subgroup $\alpha(A_X)$. In particular, there is one generator $s_i$ in this set such that $\alpha(s_i)$ generates $\alpha(A_X)$. Let $\overline{s}$ denote this generator for $\alpha(A_X)$. 

    We claim that every Artin generator in the set $X$ has image $\overline{s}$. To see this, it suffices to recall that because $X$ is irreducible and all edge labels are odd, the generators in this set are pairwise conjugate \emph{by elements of $A_X$}. Let $\alpha(s_j) = \overline{s}^k$ for some $j \neq i$, let $g$ be an element of $A_X$ which conjugates $s_i$ to $s_j$, and let $m$ be an integer such that $\alpha(g) = \overline{s}^m$. In the quotient, this relation becomes $s^m s s^{-m} = s^k$, which implies either $k = 1$ or the entire cyclic subgroup $\alpha(A_X)$ is trivial. The images of the generators coincide in either case.
    The statement then follows from \cref{lem:artin-gen-collapsing}.
\end{proof}

%Lemma~\ref{lem:E8_non_cyclic_E6} implies that we may bound the smallest nonabelian quotient of certain Artin groups, $A_\Gamma$, using the smallest nonabelian quotient of $A_\Lambda$.

%\begin{corollary}\label{cor:parabolic_braid}
%    Let $A_\Gamma$ be an Artin group satisfying the conditions of \cref{lem:E8_non_cyclic_E6}, and suppose that the standard parabolic subgroup corresponding to $\Lambda \subset \Gamma$ is a braid group. 
%    The smallest nonabelian quotient of $A_\Lambda$ lower bounds the order of the smallest nonabelian quotient of $A_\Gamma$
    % $\eta(A_\Gamma) \geq \eta(A_\Lambda)$.
%\end{corollary}
%\nancy{I think this corollary gets used alot but not nec referenced. We should check to make sure we reference it when used.}
%\katie{Thomas: I think you did something with this in the other section? If so, this instance of it can probably be removed.}
%\thomas{The conditions here are curated to $E_8$, so this can stay here.}
%\katie{this wasn't referenced anywhere, so I removed it. I think all references to it were actually to the version which appears earlier}

Lastly, we make the following observation.
\begin{lemma}\label{lem:centraliser-proper-subgp}
    Let $A_{\Gamma}$ be an odd-labelled Artin group where the Artin generators form a collapsing set, and let $q : A_{\Gamma} \rightarrow Q$ be a non-cyclic quotient. For each generator $s_i$, the image of the standard parabolic subgroup generated by $\Gamma - \text{St}(s_i)$ is a proper subgroup of $Q$.
\end{lemma}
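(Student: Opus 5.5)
We argue by contradiction. Fix a generator $s_i$, write $\Lambda$ for the induced subgraph on $\Gamma - \st(s_i)$, and suppose that $q(A_\Lambda)=Q$. Every generator $s_j\in\Lambda$ is non-adjacent to $s_i$, so it commutes with $s_i$. Hence $q(s_i)$ commutes with every element of $q(A_\Lambda)=Q$, that is, $q(s_i)\in Z(Q)$.

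Next we use the collapsing property to show that $q(s_i)\neq q(s_k)$ for any generator $s_k\neq s_i$. If $q(s_i)=q(s_k)$ for some $k\neq i$, then by the definition of a collapsing set (\Cref{D:collapsingSet}) all Artin generators have the same image. Since the generators generate $A_\Gamma$, the quotient $Q$ would then be cyclic, which is a contradiction. So $q$ is injective on the set of Artin generators.

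Now take a neighbour $s_k\in\lk(s_i)$; one exists because $\Gamma$ is connected, since an irreducible Artin group has connected defining graph. The edge joining $s_i$ and $s_k$ has an odd label $m$, so $s_i$ and $s_k$ are conjugate in $A_\Gamma$. In fact $[s_i,s_k]_{m-1}$ conjugates one to the other, which is the ``first fact'' noted before \Cref{L:gcdRelation}. Because $q(s_i)$ is central in $Q$, its only conjugate in $Q$ is itself, so $q(s_k)=q(s_i)$. This contradicts the injectivity established above. Alternatively, $q(s_i)$ commutes with $q(s_k)$ and satisfies an odd Artin relation with it, so \Cref{L:gcdRelation} gives $q(s_i)=q(s_k)$ directly. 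Therefore $q(A_\Lambda)$ is a proper subgroup of $Q$.

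The step needing most care is the degenerate case in which $\Gamma - \st(s_i)$ is empty. Then $A_\Lambda$ is trivial and its image is trivial, which is proper because $Q$ is non-cyclic and hence non-trivial. So this case holds trivially, and the rest of the argument needs only connectivity and odd labels, both of which are part of the hypotheses.
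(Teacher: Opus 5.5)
Your argument is the paper's argument. If $q(A_\Lambda)=Q$, then $q(s_i)$ is central. A neighbour $s_k$ of $s_i$ then satisfies both a commuting relation and an odd Artin relation with $q(s_i)$, so \Cref{L:gcdRelation} (or your conjugacy observation) gives $q(s_i)=q(s_k)$. The collapsing property then forces $Q$ to be cyclic.

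The one step that is not justified is the existence of the neighbour $s_k$. You derive connectivity of $\Gamma$ from irreducibility and say connectivity is ``part of the hypotheses'', but the lemma assumes neither. It only asks that $\Gamma$ be odd-labelled and that the generators form a collapsing set. The paper instead obtains connectivity from the collapsing hypothesis, and you need an argument of that kind. All you need is that $s_i$ is not isolated.

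Suppose $s_i$ is isolated and $|V(\Gamma)|\ge 3$. Send $s_i\mapsto(0,1)$ and every other generator to $(1,0)$. This respects all defining relations, because every edge of $\Gamma$ joins two vertices with the same image and commutation relations hold in any abelian group. It therefore defines an epimorphism $A_\Gamma\to\Z^2$ that identifies two generators but not all of them, contradicting the collapsing hypothesis.

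If instead $|V(\Gamma)|\le 2$, then $\Gamma-\st(s_i)$ has at most one vertex. So $q(A_\Lambda)$ is cyclic, and hence proper in the non-cyclic group $Q$. This case genuinely occurs: for the edgeless graph on two vertices, $A_\Gamma\cong\Z^2$, the generating set is collapsing but $\Gamma$ is disconnected. The paper's remark that collapsing forces connectivity is therefore slightly too strong, although the lemma still holds there.

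With this addition your proof is complete.
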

\begin{proof}
    Suppose that the image of the standard parabolic subgroup generated by $\Gamma - \text{St}(s_i)$ is all of $Q$. This parabolic subgroup commutes with $s_i$ in $A_{\Gamma}$, so we must have that $q(s_i) \in Z(Q)$. If the Artin generating set is collapsing, then the defining graph $\Gamma$ is necessarily connected, so there is some $s_k \in \text{Lk}(s_i)$. Since $\Gamma$ is odd-labelled, the generators $s_k$ and $s_i$ satisfy an odd-length Artin relation. In the quotient, however, we then have that $q(s_k)$ and $q(s_i)$ satisfy both a commuting relation and an Artin relation of odd length. By \cref{L:gcdRelation}, the group $Q$ is cyclic.
\end{proof}

\subsection{Element orders}\label{subsec:element-orders}
In this section, we will show that every non-cyclic finite quotient of an Artin group of type $\cale_6$, $\cale_7$, or $\calh_4$ must have torsion elements of certain orders. 

Before we begin, we give an overview of the proof strategy. Elements of a spherical Artin group $A_{\Gamma}$ whose images in $A_{\Gamma}/Z(A_{\Gamma})$ are torsion of various orders were constructed in \cite{Soroko2021} for Artin groups of type $\cale_6$, $\cale_7$, and $\calh_4$. The precise elements and their orders will be stated later in the section. We aim to show that these torsion elements survive in every non-cyclic finite quotient of $A_{\Gamma}$, and that suitable powers yield torsion elements of the same order.  

To do this, we will consider an element which has order $k$ in $A_{\Gamma}/Z(A_{\Gamma})$, say $\epsilon$ of order 10. We will show that, in our example, $A_{\Gamma}/ \langle \epsilon^2 \rangle$ and $A_{\Gamma} / \langle \epsilon^5 \rangle$ are cyclic. It will then follow that in any non-cyclic quotient of $A_{\Gamma}$ where $\epsilon$ has finite order, its order must be divisible by $\text{lcm}(2,5) = 10$. This will in turn imply that any non-cyclic finite quotient of $A_{\Gamma}$ has an element of order 10 by taking the image of a suitable power of $\epsilon$. 

To show that the relevant quotients are trivial, we will show that in any quotient of $A_{\Gamma}$ where the required powers of $\epsilon$ are trivial, the required powers being $\epsilon^2$ and $\epsilon^5$ in the example, a relation is imposed which forces two Artin generators to have the same image. We will then conclude by \cref{lem:artin-gen-collapsing}.

Primarily, we will do this by computing conjugates of a particular Artin generator by powers of $\epsilon$ and reducing the resulting word via the Artin relations. This process is necessarily fairly computation-heavy. As such, we provide the computations in an appendix. The appendix presents these computations in a compact notation which allows an interested reader to reconstruct each step.

 %Second, we recall a result of Paris, which says that for any two pairs of Artin generators $(s_{i}, s_j )$ and $(s_k , s_l)$ such that each pair satisfies a braid relation and all four generators are contained in a standard parabolic braid subgroup, there is a single element $g$ of $A_{\Gamma}$ such that $g s_i g^{-1} = s_k$ and $g s_j g^{-1} = s_l$. In particular, if any two generators $s_i$ and $s_j$

\begin{lemma}\label{lem:E6_elemOrders}
    Let $G = \cale_6 / Z(\cale_6)$. The following conclusions hold:
    \begin{enumerate}
        \item The image of $\epsilon_{12} = s_4 s_2 s_3 s_1 s_5 s_6$ has order 12.
        \item The image of $\epsilon_9 = s_4 s_2 s_5 s_4 s_3 s_1 s_5 s_6$ has order 9.
        \item The image of $\epsilon_8 = s_4 s_3 s_1 s_5 s_4 s_2 s_3 s_6 s_5$ has order 8.
        \item The groups $\cale_6 / \langle \langle \epsilon_{12} \rangle \rangle$, $\cale_6 / \langle \langle \epsilon_{9} \rangle \rangle$, $\cale_6 / \langle \langle \epsilon_{8} \rangle \rangle$, $\cale_6 / \langle \langle \epsilon_{12}^{4} \rangle \rangle = \cale_6 / \langle \langle \epsilon_{9}^{3} \rangle \rangle$, $\cale_6 / \langle \langle \epsilon_{12}^{3} \rangle \rangle = \cale_6 / \langle \langle \epsilon_{8}^{2} \rangle \rangle$, $\cale_6 / \langle \langle \epsilon_{8}^{4} \rangle \rangle$, $\cale_6 / \langle \langle \epsilon_{12}^{2} \rangle \rangle$, and $\cale_6 / \langle \langle \epsilon_{12}^{6} \rangle \rangle$ are all cyclic.
    \end{enumerate}
    In particular, any finite non-cyclic quotient of $\cale_6$ has elements of order $8$, $9$, and $12$.
\end{lemma}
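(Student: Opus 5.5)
Items (1)--(3) come from Soroko \cite{Soroko2021}, and item (4) is a finite list of word computations; the ``in particular'' then follows by a divisibility argument. For (1)--(3) I would cite Soroko, who shows these elements have torsion images of orders $12$, $9$ and $8$ in $\cale_6/Z(\cale_6)$. Concretely, I would check that suitable powers are central: $\epsilon_{12}$ is a Coxeter element, so $\epsilon_{12}^{12}=\Delta^2$ generates $Z(\cale_6)$. For $\epsilon_9$ and $\epsilon_8$, I would verify with Garside normal forms that $\epsilon_9^{9}$ and $\epsilon_8^{8}$ are powers of $\Delta$ (or of $\Delta^2$). That no smaller power is central follows from Soroko's identification of these elements as periodic elements of the stated orders. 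Alternatively, it follows from item (4), since a central proper power would make some listed quotient non-cyclic.

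For item (4), I would impose the relation $\epsilon=1$ (respectively $\epsilon^d=1$) and derive an identity $s_i=s_j$ for two distinct Artin generators. Then \Cref{lem:artin-gen-collapsing} applies: $\cale_6$ satisfies its hypotheses, so all generators collapse and the quotient is cyclic. The procedure is to pick a generator, say $s_1$ or $s_6$, and compute the conjugate $\epsilon^d s_i \epsilon^{-d}$ by pushing letters through with braid and commutation relations. Since $\epsilon^d$ is trivial in the quotient, the resulting word must reduce to another generator $s_j$. Periodic elements typically act on the generators, up to centre, by a diagram-like permutation. So for each listed power I would compute the action of conjugation by $\epsilon$ on the positive generators (positive words where possible) and find a generator that is not fixed by $\epsilon^d$.

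Before computing, I would remove the redundant cases. The equalities $\langle\langle\epsilon_{12}^4\rangle\rangle=\langle\langle\epsilon_9^3\rangle\rangle$ and $\langle\langle\epsilon_{12}^3\rangle\rangle=\langle\langle\epsilon_8^2\rangle\rangle$ should follow because these powers are conjugate, and likely equal up to central factors, to the order-$3$ and order-$4$ periodic elements. Quotients by a power are quotients of the quotient by the element itself, so cyclicity for $\epsilon^d$ implies cyclicity for $\epsilon$. It therefore suffices to treat the maximal proper divisors: $\epsilon_{12}^6$ and $\epsilon_{12}^4$ for $12$, $\epsilon_9^3$ for $9$, and $\epsilon_8^4$ for $8$. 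The list also contains $\epsilon_{12}^2$, $\epsilon_{12}^3$ and the elements themselves; these follow once $\epsilon_{12}^4$ and $\epsilon_{12}^6$ are done. The main obstacle is these explicit rewriting computations, which are lengthy and error-prone; I would organise them using the paper's compact notation for Artin-relation moves, as in the appendix.

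For the ``in particular'', let $\varphi:\cale_6\to Q$ be finite and non-cyclic. Then $\varphi(\epsilon_{12})$ has some finite order $o$. If $o$ were not a multiple of $12$, then $o$ would divide $12/p$ for some prime $p\in\{2,3\}$. In that case $\varphi$ factors through $\cale_6/\langle\langle\epsilon_{12}^{6}\rangle\rangle$ or $\cale_6/\langle\langle\epsilon_{12}^{4}\rangle\rangle$, which are cyclic, a contradiction. Hence $12\mid o$, and $\varphi(\epsilon_{12})^{o/12}$ has order $12$. The same argument with $\epsilon_9^3$ and with $\epsilon_8^4$ gives elements of orders $9$ and $8$.
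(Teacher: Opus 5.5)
Your skeleton matches the paper's. You cite Soroko for (1)--(3) and for the two equalities of normal closures, reduce (4) to the maximal proper powers, extract a collapsing relation in each case, and finish with \Cref{lem:artin-gen-collapsing}. (The paper computes only $\epsilon_{12}^6$, $\epsilon_9^3$ and $\epsilon_8^4$, and gets $\epsilon_{12}^4$ from $\langle\langle\epsilon_{12}^4\rangle\rangle=\langle\langle\epsilon_9^3\rangle\rangle$.) However, (4) is left as a plan, and that is where the content of the lemma lies.

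The heuristic you use to drive those computations is false. The only elements of $\cale_6$ that conjugate $\{s_1,\dots,s_6\}$ onto itself are the powers of $\Delta$. Such an element induces a graph automorphism, which $\Delta$ realises, and the remainder centralises every $s_i$, so it lies in $Z(\cale_6)=\langle\Delta^2\rangle$. A periodic element therefore does not act by a diagram permutation in general; for instance $\epsilon_{12}s_1\epsilon_{12}^{-1}=s_1^{-1}s_3^{-1}s_4s_3s_1$. So there is no reason for $\epsilon^d s_i\epsilon^{-d}$ to reduce to a single generator.

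The paper's device is different. It uses $\epsilon^d=1$ to equate two \emph{short} conjugates, $\epsilon^{a}s\epsilon^{-a}=\epsilon^{a-d}s\epsilon^{d-a}$ with $a\approx d/2$; for $\epsilon_9^3$ this reads $(s_4s_5)s_6(s_5^{-1}s_4^{-1})=s_6^{-1}s_5s_6$. It then manipulates that identity. The output need not be $s_i=s_j$: here it is $s_4s_5=s_5s_4$, which collapses only via \Cref{L:gcdRelation}.

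Your intuition does work for order $2$. Since $\epsilon_{12}$ is a Coxeter element, $\epsilon_{12}^6$ is conjugate to $\Delta$, and killing $\Delta$ forces $s_1=\Delta s_1\Delta^{-1}=s_6$. There is no such shortcut for the order-$3$ powers ($\epsilon_9^3$, equivalently $\epsilon_{12}^4$), because no power of $\Delta$ has length $24$. That case genuinely needs an explicit computation like the appendix's.

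The ``in particular'' step is also wrong as written. The implication ``$12\nmid o$ implies $o\mid 12/p$'' presupposes $o\mid 12$. But $\varphi(\epsilon_{12})^{12}=\varphi(\Delta^2)$ is only central in $Q$, not trivial. For example, $s_i\mapsto(\bar s_i,1)$ onto $W(\cale_6)\times\Z/5$ gives $\varphi(\Delta^2)=(1,2)$ and $o=60$.

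The fix is to pass to $\bar Q=Q/\langle\varphi(\Delta^2)\rangle$.
\begin{enumerate}
\item If $\bar Q$ were cyclic, $Q$ would be abelian by \Cref{L:central-by-cyclic}, hence cyclic since $\cale_6^{\ab}\cong\Z$. So $\bar Q$ is a non-cyclic quotient of $\cale_6/Z(\cale_6)$.
\item In $\bar Q$ the image of $\epsilon_{12}$ has order dividing $12$, and your divisor argument shows it is exactly $12$.
\item Hence $12\mid o$, and $\varphi(\epsilon_{12})^{o/12}$ has order $12$.
\end{enumerate}
The same argument works for $\epsilon_9$ and $\epsilon_8$, using $\epsilon_9^9,\epsilon_8^8\in Z(\cale_6)$. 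This matters because the lemma is later applied to arbitrary images in groups such as $\Aut(\alt(n)^2)\wr 2$, which need not have trivial centre.
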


\begin{proof}
    The first three conclusions and the equalities in the fourth follow from Theorem 7 of \cite{Soroko2021}. Computations which demonstrate that the quotients $\cale_6 / \langle \langle \epsilon_{12}^{6} \rangle \rangle$, $\cale_6 / \langle \langle \epsilon_{9}^{3} \rangle \rangle$, and $\cale_6 / \langle \langle \epsilon_{8}^{4} \rangle \rangle$ are all cyclic are given in the appendix sections \ref{apx:E6-order12}, \ref{ap:E6-order9}, and \ref{ap:E6-order8}. The other quotients must also be cyclic, as they are further quotients of these three.
\end{proof}

\begin{lemma}\label{lem:E7_elemOrders}
    Let $G=\cale_7/Z(\cale_7)$.  The following conclusions hold:
    \begin{enumerate}
        \item the image of $\epsilon_7=s_4s_2 s_7s_6s_5  s_4s_2  s_3s_1$ in $G$ has order $7$;
        \item the image of $\epsilon_9=s_4s_2 s_3s_1  s_5s_6s_7$ in $G$ has order $9$;
        \item the groups $\cale_7/\langle\langle\epsilon_7\rangle\rangle$, $\cale_7/\langle\langle\epsilon_9\rangle\rangle$, and $\cale_7/\langle\langle\epsilon_9^3\rangle\rangle$ are cyclic.
    \end{enumerate}
    In particular, any finite non-cyclic quotient of $\cale_7$ contains elements of order $7$ and $9$.
\end{lemma}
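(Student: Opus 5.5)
The plan mirrors the proof of \Cref{lem:E6_elemOrders}. Conclusions (1) and (2) about the orders of $\epsilon_7$ and $\epsilon_9$ in $G=\cale_7/Z(\cale_7)$ I would take from Soroko's classification of torsion in quotients of spherical Artin groups by their centres \cite[Theorem~7]{Soroko2021}. These are periodic elements: a suitable power of each is a power of the Garside element $\Delta$ (equivalently of $\Delta^2$, which generates $Z(\cale_7)$). Concretely, the Coxeter element $s_4s_2s_3s_1s_5s_6s_7$ has $h$-th power equal to $\Delta^2$, where $h=18$ is the Coxeter number. Hence $\epsilon_9^{9}$ is $\Delta^{2}$ up to the centre, and similarly $\epsilon_7$ is a regular element with $\epsilon_7^{7}$ central. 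Soroko shows that no smaller powers are central, so I would simply cite these orders.

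For conclusion (3), observe first that $\cale_7/\langle\langle\epsilon_7\rangle\rangle$ and $\cale_7/\langle\langle\epsilon_9\rangle\rangle$ are quotients of $\cale_7/\langle\langle\epsilon_7^{7}\rangle\rangle$ and $\cale_7/\langle\langle\epsilon_9^3\rangle\rangle$. It therefore suffices to treat $\epsilon_9^3$ and one relation for the $7$ case. The cleanest route is to impose the relator, say $\epsilon_9^3=1$, and then compute the conjugate $\epsilon_9^{3}\,s_i\,\epsilon_9^{-3}$ for a well-chosen Artin generator $s_i$. Conjugation by $\epsilon_9$ permutes and twists generators (compare how $\Delta$ induces the diagram automorphism). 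Reducing the resulting word with the Artin relations, I expect to reach an identity $s_i=w\,s_j\,w^{-1}$ in which, after imposing the relator, $w$ collapses and two distinct generators are forced to be equal. By \Cref{lem:artin-gen-collapsing}, which applies to $\cale_7$, all generators then coincide, so the quotient is cyclic. For $\epsilon_7$ I would argue the same way, directly with $\epsilon_7$ trivial: conjugating $s_1$ or $s_7$ by $\epsilon_7$ should yield a word equal to another generator. These computations would be written in the compact appendix notation.

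The ``in particular'' follows formally. Let $\varphi:\cale_7\to Q$ be a finite non-cyclic quotient and let $k$ be the order of $\varphi(\epsilon_7)$. If $7\nmid k$, then since $\epsilon_7^7$ is central I would instead use that $\varphi(\epsilon_7)^{\gcd(k,7)}=\varphi(\epsilon_7)$ is trivial, which makes $Q$ a quotient of $\cale_7/\langle\langle\epsilon_7\rangle\rangle$, a cyclic group, contradiction. Hence $7\mid k$, and $\varphi(\epsilon_7)^{k/7}$ has order $7$. Likewise, if $9\nmid$ the order $k$ of $\varphi(\epsilon_9)$, then $\varphi(\epsilon_9)^{\gcd(k,9)}$ is trivial with $\gcd(k,9)\in\{1,3\}$, so $\varphi(\epsilon_9^3)=1$ and $Q$ is cyclic. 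Thus $9\mid k$, and $\varphi(\epsilon_9)^{k/9}$ has order $9$.

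The main obstacle is the explicit word reduction in (3), namely finding the right generator to conjugate and a reduction path that forces two generators to be equal. I would first try conjugating the end generators $s_7$ and $s_1$, tracking how the Coxeter-like element cycles them.
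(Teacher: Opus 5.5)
Your treatment of (1)--(3) follows the paper. You cite \cite[Theorem~7]{Soroko2021} for the orders, then show that killing $\epsilon_7$, resp.\ $\epsilon_9^3$, identifies two Artin generators, and apply \Cref{lem:artin-gen-collapsing}. The computations you leave open are short, and your guesses work. First, $\epsilon_7 s_7\epsilon_7^{-1}=s_6$; the paper uses $\epsilon_7 s_3\epsilon_7^{-1}=s_1$ instead. Second, writing $\epsilon_9=s_4s_5s_6s_7s_2s_3s_1$, one gets $\epsilon_9 s_6\epsilon_9^{-1}=s_7$ and $\epsilon_9^{-1}s_6\epsilon_9=s_5$. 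So $\epsilon_9^3=1$ forces $s_4s_5s_6s_7s_6^{-1}s_5^{-1}s_4^{-1}=s_5$, which rearranges to $s_7=s_4$. One correction to your heuristics: $w_0=-1$ in $W(\cale_7)$, so $\Delta$ itself is central and generates $Z(\cale_7)$, not $\Delta^2$. Comparing lengths (both equal $63$) gives $\epsilon_7^7=\epsilon_9^9=\Delta$.

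The genuine gap is in the ``in particular''. You use that $\varphi(\epsilon_7)^7=1$ and $\varphi(\epsilon_9)^9=1$ in $Q$. But (1) and (2) only say that $\epsilon_7^7=\epsilon_9^9=\Delta$ is central in $\cale_7$, so its image in $Q$ is central, not trivial. For instance, in $Q=W(\cale_7)$ the element $\Delta$ maps to $w_0=-1\neq 1$, and $\epsilon_7$ maps to an element of order $14$. So if $7\nmid k$, you only learn that $\varphi(\epsilon_7)$, being a power of $\varphi(\epsilon_7)^7$, is central. Likewise, if $9\nmid k$, you only learn that $\varphi(\epsilon_9^3)$ is central. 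Neither conclusion makes $Q$ a quotient of the cyclic groups in (3).

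The missing step is to pass to $\overline{Q}=Q/Z(Q)$. This group is non-cyclic: if $Q/Z(Q)$ were cyclic, $Q$ would be abelian, hence cyclic since $\cale_7^{\ab}\cong\Z$. The images of $\epsilon_7^7$ and $\epsilon_9^9$ vanish in $\overline{Q}$, while (3) gives $\overline{\epsilon_7}\neq 1$ and $\overline{\epsilon_9}^{\,3}\neq 1$ there. Hence $\overline{\epsilon_7}$ and $\overline{\epsilon_9}$ have orders exactly $7$ and $9$. So $7$ and $9$ divide the orders of $\varphi(\epsilon_7)$ and $\varphi(\epsilon_9)$, and suitable powers give the required elements. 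The paper does not spell this step out either, but your version replaces it with a claim that is false in general.
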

\begin{proof}
    The first and second parts are proved in \cite[Theorem~7]{Soroko2021}.  Computations which demonstrate that $\cale_7/\langle\langle\epsilon_7\rangle\rangle$ and $\cale_7/\langle\langle\epsilon_9^3\rangle\rangle$ are cyclic are provided in appendix sections \ref{apx:E7-order7} and \ref{apx:E7-order9}. The quotient $\cale_7/\langle\langle\epsilon_9\rangle\rangle$ must also be cyclic, because it is a further quotient.
\end{proof}

\begin{lemma}\label{lem:H4_elem_orders}
    Let $G = \calh_4 / Z(\calh_4)$. The following conclusions hold:
    \begin{enumerate}
        \item The image of $\epsilon_{10} = s_1 s_2 s_1 s_2 s_3 s_4 $ has order $10$.
        \item The image of $\epsilon_{15} = s_1 s_2 s_3 s_4$ has order $15$.
        \item The groups $\calh_4/\langle\langle\epsilon_{10}\rangle\rangle$, $\calh_4/\langle\langle\epsilon_{10}^2\rangle\rangle$, $\calh_4/\langle\langle\epsilon_{10}^5\rangle\rangle$, $\calh_4/\langle\langle\epsilon_{15}\rangle\rangle$, $\calh_4/\langle\langle\epsilon_{15}^3\rangle\rangle$, and $\calh_4/\langle\langle\epsilon_{15}^5\rangle\rangle$ are all cyclic.
    \end{enumerate}
    In particular, any finite non-cyclic quotient of $\calh_4$ contains elements of order 10 and 15.
\end{lemma}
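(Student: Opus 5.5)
The proof follows the template of \Cref{lem:E6_elemOrders} and \Cref{lem:E7_elemOrders}. Parts (1) and (2) are the $\calh_4$ case of Soroko's classification of torsion in spherical Artin groups modulo their centres \cite[Theorem~7]{Soroko2021}. There $\epsilon_{15}=s_1s_2s_3s_4$ is a Coxeter element, and $\epsilon_{15}^{15}=\Delta^2$ generates $Z(\calh_4)$, since the Coxeter number of $H_4$ is $30$ and $\Delta^2$ is the $h$-th power of a Coxeter element. Likewise $\epsilon_{10}$ is a root of $\Delta^2$ whose tenth power is central. I would quote these facts directly rather than reprove them.

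For part (3), the key observation is that every listed quotient is a further quotient of one of just two groups,
\[
\calh_4/\langle\langle\epsilon_{10}^2\rangle\rangle \quad\text{and}\quad \calh_4/\langle\langle\epsilon_{10}^5\rangle\rangle
\]
for $\epsilon_{10}$, and of
\[
\calh_4/\langle\langle\epsilon_{15}^3\rangle\rangle \quad\text{and}\quad \calh_4/\langle\langle\epsilon_{15}^5\rangle\rangle
\]
for $\epsilon_{15}$. Killing $\epsilon$ also kills every power of $\epsilon$, so these four "maximal" quotients are the only ones that need checking.

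For each of the four, I would show that imposing $\epsilon^k=1$ forces two distinct Artin generators to have equal images. The generators of $\calh_4$ form a collapsing set by \Cref{lem:artin-gen-collapsing}, so any such identification makes the quotient cyclic. Concretely, I would conjugate a chosen generator, say $s_4$ or $s_1$, by $\epsilon^j$ for suitable $j$. I would then rewrite the result using the braid relations and the length-$5$ relation between $s_1$ and $s_2$, aiming to show that in $\calh_4$ itself there is an identity of the form $\epsilon^{k}\, s_a\, \epsilon^{-k} = w s_b w^{-1}$ with $a\neq b$ and an explicit $w$.

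The natural shape of such identities is Garside-theoretic. Conjugation by $\Delta$ is trivial on $\calh_4$ because the diagram has no symmetry. Conjugation by a Coxeter element permutes a sequence of positive "rotated" generators. A third power or fifth power of $\epsilon_{15}$, or the second or fifth power of $\epsilon_{10}$, conjugates $s_a$ into something equal to a conjugate of $s_b$ by a short word. Once $\epsilon^k=1$, that becomes $s_a = w s_b w^{-1}$, which is an equality of two distinct conjugates, i.e.\ a collision. I then want to push this to an honest equality of two generators, or to a commuting relation between two generators joined by an odd edge, so that \Cref{L:gcdRelation} applies.

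For the final statement, let $Q$ be a finite non-cyclic quotient. The image $\bar\epsilon_{10}$ has finite order $r$. If $2\nmid r$, then $\epsilon_{10}^2$ generates the same cyclic subgroup as $\bar\epsilon_{10}$, so $\bar\epsilon_{10}$ is a power of $\bar\epsilon_{10}^2$ and vice versa. Hence $\bar\epsilon_{10}^{2}$ being trivial is not directly what follows; instead I argue as follows. Let $r'$ be the odd part of $r$. Then $\bar\epsilon_{10}^{\,r'}$ has $2$-power order, and if $2\nmid r$ this means $\bar\epsilon_{10}^{\,r'}=1$. Then $\bar\epsilon_{10}^{\,2}$ lies in $\langle \bar\epsilon_{10}^{\,2r'}\rangle$-type relations, and after adjusting the exponent $Q$ factors through $\calh_4/\langle\langle\epsilon_{10}^{2}\rangle\rangle$ up to replacing $2$ by a coprime multiple. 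The cleaner route is to phrase it as: $Q$ is a quotient of $\calh_4/\langle\langle\epsilon_{10}^{\gcd(r,10)}\rangle\rangle$. So $\gcd(r,10)=10$, hence $10\mid r$, and $\bar\epsilon_{10}^{\,r/10}$ has order $10$. The same argument gives $15\mid r$ for $\epsilon_{15}$.

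The main obstacle is the explicit word computations for the four maximal quotients, especially $\epsilon_{15}^{5}$ and $\epsilon_{10}^{5}$. There the conjugating words are long and the $m=5$ relation makes rewriting delicate. I would carry these out in the appendix notation, searching first for a $j$ with $\epsilon^{j} s_4 \epsilon^{-j}$ equal to a conjugate of $s_1$ by a word commuting with $s_3$. That immediately yields a commutation between $s_3$ and a conjugate of $s_4$, and hence a collapse.
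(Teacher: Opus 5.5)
Your outline for (1)--(3) is the paper's: quote Soroko for the orders, reduce (3) to the four quotients by $\epsilon_{10}^2,\epsilon_{10}^5,\epsilon_{15}^3,\epsilon_{15}^5$, then force a commutation between odd-adjacent generators. The trouble is the one step you argue in detail, the ``in particular''.

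\textbf{The final deduction is wrong as written.} The claim that $Q$ is a quotient of $\calh_4/\langle\langle\epsilon_{10}^{\gcd(r,10)}\rangle\rangle$ needs $\bar\epsilon_{10}^{10}=1$ in $Q$. Nothing gives you that: $\epsilon_{10}^{10}$ is central in $\calh_4$, not trivial. Your ``same argument'' for $\epsilon_{15}$ already fails for $Q=W(\calh_4)$. There the image of $\epsilon_{15}$ is a Coxeter element of order $h=30$, so $\gcd(30,15)=15$, yet $\bar\epsilon_{15}^{15}=w_0=-1\neq 1$.

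The missing idea is to use the centrality from (1)--(2) and pass to $Q/Z(Q)$. Write $d=\gcd(r,10)=ar+10b$. Then $\bar\epsilon_{10}^{d}=(\bar\epsilon_{10}^{10})^{b}\in Z(Q)$, so $Q/Z(Q)$ (not $Q$) is a quotient of $\calh_4/\langle\langle\epsilon_{10}^{d}\rangle\rangle$. Since $\calh_4^{\ab}\cong\Z$, a non-cyclic $Q$ is non-abelian, so $Q/Z(Q)$ is non-cyclic by \Cref{L:central-by-cyclic}. If $d\neq 10$ then $d\mid 2$ or $d\mid 5$, contradicting (3). Hence $10\mid r$ and $\bar\epsilon_{10}^{r/10}$ has order $10$; the same argument with $3,5,15$ handles $\epsilon_{15}$. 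The paper leaves this step implicit in the overview of \Cref{subsec:element-orders}. A smaller slip: $\epsilon_{15}^{15}=\Delta$, not $\Delta^2$. One has $c^{30}=\Delta^2$, and it is $\Delta$ that generates $Z(\calh_4)$.

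\textbf{Part (3) is a search plan, not a proof.} The explicit identities are the whole content of the paper's argument (its appendix), and you give none. Three of the four cases are short.
\begin{itemize}
\item For $\epsilon_{10}^2$: one has $\epsilon_{10}s_1\epsilon_{10}^{-1}=s_2$ and $\epsilon_{10}s_2\epsilon_{10}^{-1}=(s_1s_2)s_3(s_1s_2)^{-1}$. Modulo $\epsilon_{10}^2$ this gives $s_3=(s_1s_2)^{-1}s_1(s_1s_2)$, which commutes with $s_4$. Then \Cref{L:gcdRelation} gives $s_3=s_4$, and \Cref{lem:artin-gen-collapsing} finishes.
\item For $\epsilon_{15}^3$: one has $\epsilon_{15}s_2\epsilon_{15}^{-1}=s_3$, $\epsilon_{15}s_3\epsilon_{15}^{-1}=s_4$ and $\epsilon_{15}s_4\epsilon_{15}^{-1}=(s_1s_2s_3)s_4(s_1s_2s_3)^{-1}$. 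Modulo $\epsilon_{15}^3$ this gives $s_2=(s_1s_2s_3)s_4(s_1s_2s_3)^{-1}$. Conjugating by $s_4$ yields $s_3=(s_1s_2)^{-1}s_2(s_1s_2)$.
\item For fifth powers the relation to reduce is $\epsilon^{4}x\epsilon^{-4}=\epsilon^{-1}x\epsilon$. For $\epsilon_{15}$ and $x=s_2$ one finds $\epsilon_{15}^{4}s_2\epsilon_{15}^{-4}=(s_1s_2s_1)s_2(s_1s_2s_1)^{-1}=s_2^{-1}s_1s_2$. Conjugating by $\epsilon_{15}$, using $\epsilon_{15}s_1\epsilon_{15}^{-1}=(s_1s_2)s_1(s_1s_2)^{-1}$, yields $s_3=(s_2s_1s_2)s_1(s_2s_1s_2)^{-1}$, which again commutes with $s_4$.
\end{itemize}
Take care with the last case. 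The appendix's displayed relation for it puts $\epsilon_{15}^{3}s_2\epsilon_{15}^{-3}$ on the left. However, $\epsilon_{15}^5=1$ only yields $\epsilon_{15}^{3}s_3\epsilon_{15}^{-3}=\epsilon_{15}^{4}s_2\epsilon_{15}^{-4}$ there. The case $\epsilon_{10}^5$ is the one genuinely longer computation.
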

\begin{proof}
    The first and second parts again follow from \cite[Theorem~7]{Soroko2021}. Computations which demonstrate that $\calh_4/\langle\langle\epsilon_{10}^2\rangle\rangle$ and $\calh_4/\langle\langle\epsilon_{10}^5\rangle\rangle$ are cyclic are provided in appendix section \ref{apx:H4-order10}. Computations which demonstrate that $\calh_4/\langle\langle\epsilon_{15}^3\rangle\rangle$ and $\calh_4/\langle\langle\epsilon_{15}^5\rangle\rangle$ are cyclic are provided in appendix section \ref{apx:H4-order15}. The other two quotients must also be cyclic because each is a further quotients of one of these four.
\end{proof}

\section{The Artin groups \texorpdfstring{$\calf_4$}{F4}, \texorpdfstring{$\calh_3$}{H3}, \texorpdfstring{$\calh_4$}{H4}}\label{sec:f4}

\subsection{The Artin group \texorpdfstring{$\calf_4$}{F4}}

\begin{thm}\label{thm:F4quotient}
    The smallest non-abelian quotient of $\calf_4$ is $\sym(3)$. 
\end{thm}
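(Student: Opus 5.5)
Since $\sym(3)$ is the smallest non-abelian group, the plan is simply to exhibit a surjection $\calf_4\twoheadrightarrow\sym(3)$; minimality is then automatic.

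\textbf{Step 1: pass to a right-angled-like quotient.} In $\calf_4$ the generators $s_1,s_2$ are joined by an edge labelled $3$, as are $s_3,s_4$. The pair $s_2,s_3$ is joined by an edge labelled $4$, and all other pairs commute. By \Cref{L:divisbleEdge}, we may replace the label $4$ on the edge $s_2s_3$ by its divisor $2$. This gives a quotient $\calf_4\twoheadrightarrow \cala_2\times\cala_2$, where the first factor is generated by the images of $s_1,s_2$ and the second by the images of $s_3,s_4$. Equivalently, this is the retraction of \Cref{lem:parabolic-retraction} applied to the braid parabolic $\langle s_1,s_2\rangle$: the generators $s_3,s_4$ have even relations (of lengths $4$ and $2$) with $s_1$ and $s_2$.

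\textbf{Step 2: project and apply Kolay.} Project onto the first factor $\cala_2\cong Br_3$. By \Cref{kolay}, $Br_3$ surjects onto $\sym(3)$ via its canonical Coxeter quotient, with $s_1\mapsto(12)$ and $s_2\mapsto(23)$. The composite $\calf_4\to\sym(3)$ sends $s_3,s_4\mapsto 1$, and it is surjective.

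\textbf{Step 3: check the relations directly.} As a sanity check, verify the defining relations under the map $s_1\mapsto(12)$, $s_2\mapsto(23)$, $s_3,s_4\mapsto 1$:
\begin{itemize}
    \item the braid relation between $s_1$ and $s_2$ holds in $\sym(3)$;
    \item the relation $[s_2,s_3]_4=[s_3,s_2]_4$ becomes $(23)^2=(23)^2$;
    \item all remaining relations involve a generator sent to $1$, so they hold trivially.
\end{itemize}

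I do not expect a real obstacle here; the only care needed is to check that every relation involving $s_3$ or $s_4$ with $s_1$ or $s_2$ has even length, which is exactly what \Cref{lem:parabolic-retraction} requires. Finally, $\sym(3)$ is the smallest non-abelian group, since all groups of order at most $5$ are abelian. Hence it is the smallest non-abelian quotient, as claimed.
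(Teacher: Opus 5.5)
Your proof is correct, and it reaches the theorem by a slightly different route from the paper's.

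The paper factors through the Coxeter group. It invokes the structural identification $W(\calf_4)\cong 2^{1+4}:(\sym(3)\times\sym(3))$ from \cite{Wilson09} and projects onto one $\sym(3)$ factor. You instead use \Cref{L:divisbleEdge} to land in $\cala_2$, then take the Coxeter quotient of $\cala_2$. Equivalently, you use the even-label retraction of \Cref{lem:parabolic-retraction} onto $\langle s_1,s_2\rangle$.

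The two homomorphisms agree up to the diagram symmetry. Killing $s_3,s_4$ in $W(\calf_4)$ quotients by the normal subgroup $W(\cald_4)$ generated by the reflections of one root length. That is exactly a projection onto one of the $\sym(3)$ factors.

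What your version buys is a self-contained argument that can be checked by hand. Your Step 3 alone is a complete proof, with no appeal to the structure of $W(\calf_4)$. The paper's version makes visible that the $\sym(3)$ quotient factors through the Coxeter group. Yours does as well, since every generator goes to an element of order at most $2$.

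Two small quibbles. Citing \Cref{kolay} for the surjection $\cala_2\to\sym(3)$ is unnecessary, since this is just $W(\cala_2)\cong\sym(3)$. Also, the map to $\cala_2\times\cala_2$ is not itself the retraction of \Cref{lem:parabolic-retraction}. It is the composite of your Step 1 with the projection onto the first factor that coincides with it.
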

\begin{proof}
    The group $\sym(3)$ is the smallest non-abelian group, so it suffices to show that $\sym(3)$ is a quotient of $\calf_4$. The Coxeter group of $\calf_4$ is the group $2^{1+4} : (\sym(3) \times \sym(3))$ \cite{Wilson09}. One obtains a quotient $q: \calf_4 \rightarrow \sym(3)$ by composing the canonical projective homomorphism $\calf_4 \to (\sym(3) \times \sym(3))$ with a projection map to one of the coordinates of $(\sym(3) \times \sym(3))$.
\end{proof}

\begin{remark}
    The smallest non-solvable quotient of $\calf_4$ is $\alt(5)$. As in the theorem, it suffices to verify that $\alt(5)$ is a quotient. To obtain a quotient map, one first retracts the Artin group $\calf_4$ onto one of its $\cala_2$ standard parabolic subgroups. The group $\cala_2$ is isomorphic to $\Tilde{SL}_2(\mathbb{Z})$. One can then map onto $SL_2(\mathbb{Z})$ and further onto $\PSL_2(5) \cong \alt(5)$.
\end{remark}

\subsection{The Artin group \texorpdfstring{$\calh_3$}{H3}}

We construct two epimorphisms $\calh_3\to \alt(5)$.  The first factors through the Coxeter group $W(\calh_3)$ and sends a generator to an involution.  The second sends generators to $5$-cycles and can be given as follows
\[
\psi\colon \calh_3 \twoheadrightarrow \alt(5) \quad \text{by} \quad s_1 \mapsto (1\ 2\ 3\ 4\ 5) \quad s_2\mapsto (1\ 4\ 5\ 2\ 3) \quad s_3\mapsto (1\ 5\ 4\ 3\ 2).
\]
One easily checks that $\psi(s_1s_2s_1s_2s_1)=\psi(s_2s_1s_2s_1s_2)$, that $\psi(s_1s_3)=\psi(s_3s_1)$, and that $\psi(s_2s_3s_2)=\psi(s_2s_3s_2)$.
Note that the homomorphisms are identified by $\Aut(\alt(5))$ since the generators of $\calh_3$ are sent to elements of different orders.

% \begin{lemma}
%     The is no surjective homomorphism $G=\calh_3/\langle\langle s_1^3\rangle \rangle\to \alt(5)$.
% \end{lemma}
% \begin{proof}
%     A long tedious and direct computation (that we omit for brevity) shows that the relations in $\calh_3$ are not satisfied by any triple of $3$-cycles in $\alt(5)$. \yv{I think we need to add in a few more details than this.}
% \end{proof}

\begin{thm}\label{thm:H3}
    If $G$ is isomorphic to $\calh_3$, then the smallest non-abelian quotient of $G$ is $\alt(5)$.  Up to automorphisms of the image there are two such homomorphisms.
\end{thm}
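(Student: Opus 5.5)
We need to show that every non-abelian finite quotient of $\calh_3$ has order at least $60$, and then classify the quotients of order exactly $60$.

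\textbf{Step 1: order at least $60$.} Let $\varphi\colon\calh_3\to Q$ be a non-abelian finite quotient. By \Cref{lem:a4-solvable-cyclic}, $Q$ cannot be solvable, since $\calh_3^{\ab}\cong\Z$ and $\calh_3'$ is perfect by \Cref{thm:comm-perfect}. Every group of order less than $60$ is solvable, so $|Q|\ge 60$. If $|Q|=60$, then $Q$ is a non-solvable group of order $60$, hence $Q\cong\alt(5)$. Both epimorphisms constructed above realise this bound.

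\textbf{Step 2: where the generators go.} It remains to classify epimorphisms $\varphi\colon\calh_3\to\alt(5)$ up to $\Aut(\alt(5))\cong\sym(5)$.
\begin{itemize}
\item All generators are conjugate, because the edges have odd labels $3$ and $5$. So $\varphi(s_i)$ all have a common order $d\in\{2,3,5\}$ (the order is not $1$, since the image is non-trivial).
\item If $d=2$, then $\varphi$ factors through $W(\calh_3)\cong 2\times\alt(5)$. The epimorphisms $W(\calh_3)\to\alt(5)$ are unique up to automorphism, since the kernel must be the central $2$. This gives the first homomorphism.
\item If $d=3$, we rule it out. I would first try a structural argument. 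The relation $s_1s_3=s_3s_1$ forces $\varphi(s_1),\varphi(s_3)$ to be commuting $3$-cycles in $\alt(5)$. The centraliser of a $3$-cycle in $\alt(5)$ has order $3$, so $\varphi(s_3)=\varphi(s_1)^{\pm1}$.
  \begin{itemize}
  \item If $\varphi(s_3)=\varphi(s_1)$, the collapsing property (the lemma above for $\calh_3$) makes the image cyclic.
  \item If $\varphi(s_3)=\varphi(s_1)^{-1}$, then $\varphi(s_2)$ satisfies the length-$3$ relation with $a=\varphi(s_1)^{-1}$ and the length-$5$ relation with $\varphi(s_1)$. The subgroup $\langle a,\varphi(s_2)\rangle$ is a quotient of $\cala_2$ generated by $3$-cycles, so it is $\alt(4)$, cyclic, or $\alt(5)$. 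One checks whether $x=\varphi(s_1)$, $y=\varphi(s_2)$ can satisfy both $xyxyx=yxyxy$ and $x^{-1}yx^{-1}=yx^{-1}y$. If this gets messy, fall back on a finite check: up to conjugacy fix $x=(1\,2\,3)$ and run over the $20$ choices of $3$-cycle $y$.
  \end{itemize}
\item If $d=5$, the same commuting argument applies, since centralisers of $5$-cycles have order $5$. So $\varphi(s_3)=\varphi(s_1)^k$ with $k\in\{2,3,4\}$; the case $k=1$ collapses. The example above has $k=4$.
\end{itemize}

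\textbf{Step 3: the case $d=5$.} Fix $x=\varphi(s_1)=(1\,2\,3\,4\,5)$ up to conjugacy, and enumerate the $5$-cycles $y=\varphi(s_2)$ satisfying $xyxyx=yxyxy$ and $x^kyx^k=yx^ky$ for each $k\in\{2,3,4\}$. Exclude the solutions generating a cyclic group or a proper subgroup, namely $5{:}4\cap\alt(5)=Dh_5$. Then quotient by the normaliser of $\langle x\rangle$, which absorbs the residual automorphisms. The goal is to show a single orbit, represented by $\psi$.

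\textbf{Main obstacle.} This finite case analysis in the $5$-cycle case is the hardest part, in particular showing that $k=2,3$ give nothing, or that they are equivalent to $\psi$ under an outer automorphism. I would organise it via the observation that $\langle x,y\rangle$ with $[x,y]_5=[y,x]_5$ corresponds to a $(5,5,\cdot)$ triangle-type generation of $\alt(5)$. Concretely, I would compute the order of $xy$, which must satisfy the constraint coming from $(xy)^5$ being central, to cut the candidates down to a handful. The two resulting homomorphisms (involution type and $5$-cycle type) are distinguished by the orders of the generator images, so they are inequivalent.
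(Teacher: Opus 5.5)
Your outline follows the paper's route. Non-solvability of non-cyclic quotients gives $|Q|\ge 60$ and $Q\cong\alt(5)$; the images of the pairwise conjugate generators then have a common order $d\in\{2,3,5\}$, and $d=3$ is excluded by a finite check. The paper also leaves that $3$-cycle computation undone. It never argues that each of the two homomorphism types is unique up to $\Aut(\alt(5))$, so your treatment of $d=2$ (the kernel of $W(\calh_3)\cong 2\times\alt(5)\to\alt(5)$ must be the centre) and your attempt at $d=5$ go beyond it. As written, though, the two decisive checks are only planned, and your Step 3 has two errors.

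\textbf{The symmetry group in Step 3.} Once $x=\varphi(s_1)=(1\,2\,3\,4\,5)$ is fixed, the residual automorphisms are $C_{\sym(5)}(x)=\langle x\rangle$, not the normaliser of $\langle x\rangle$. The normaliser sends $x$ to other powers $x^j$, so it does not act on the solutions $y$ for this fixed $x$. The correct count is of $\langle x\rangle$-orbits.

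\textbf{The cases $k=2,3$.} These are not the obstacle. Since $s_3$ is conjugate to $s_1$ in $\calh_3$, $\varphi(s_3)$ is $\alt(5)$-conjugate to $x$, whereas $x^2$ and $x^3$ lie in the other $\alt(5)$-class of $5$-cycles. Also, $k$ is invariant under automorphisms of the image, so these could never be ``equivalent to $\psi$''.

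\textbf{Closing $d=5$ without enumeration.} By the previous point, $\varphi(s_3)=x^{-1}$, and $y=\varphi(s_2)$ lies in the class of $x$ outside $\langle x\rangle$. These ten elements form two free $\langle x\rangle$-orbits, represented by $y_0=\psi(s_2)=(1\,4\,5\,2\,3)$ and $y_0^{-1}$. Indeed, each of the other five Sylow $5$-subgroups meets the class in a pair $\{y_1,y_1^{-1}\}$, and $\langle x\rangle$ permutes those Sylows regularly. Now $(xy)^5=(xyxyx)^2$ is central in $\langle x,y\rangle=\alt(5)$, hence trivial, so $xy$ must have order $5$. But $xy_0^{-1}$ is a $3$-cycle, so only the orbit of $\psi$ survives.

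\textbf{Closing $d=3$.} The same idea works. In the non-collapsing case $\varphi(s_3)=x^{-1}$, surjectivity gives $\langle x,y\rangle=\alt(5)$. Then $(x^{-1}y)^3=(x^{-1}yx^{-1})^2$ is central, hence trivial, so $x^{-1}y$ has order dividing $3$. Yet two $3$-cycles generating $\alt(5)$ share exactly one point, and their product is a $5$-cycle — a contradiction.
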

\begin{proof}
    Every non-cyclic quotient of $\calh_3$ is non-solvable because $\calh_3'$ is perfect.  We have already exhibited two distinct quotients up to automorphisms of the image. The only other possible element order for an element in $\alt(5)$ is $3$.  A long tedious and direct computation (that we omit for brevity) shows that the relations in $\calh_3$ are not satisfied by any triple of $3$-cycles in $\alt(5)$.
\end{proof}

%%%%%%%%%%%%%%%%%%%%%%%%%%%%%%%%%%%%%%%%%%%%%%%%%%%%%%%%%%
\subsection{The Artin group \texorpdfstring{$\calh_4$}{H4}}\label{sec:h4}

In this section we will compute the smallest non-abelian quotient of $\calh_4$.  Throughout we will use \Cref{StandardFiniteGroupsNotation}. Recall that $\calh_4$ has cyclic abelianization, and its commutator subgroup is perfect by \cref{thm:comm-perfect}, so we are in the setting of Lemma \ref{lem:min_quotient_En}.

\begin{table}[h]
    \centering
    \begin{tabular}{|c|p{4.5cm}|p{2cm}|l|c|c|}
       \hline
       Family & Group & Order & $\Out(G)$ & 10s & 15s \\
       \hline
       \hline
       \multirow{3}{*}{Alternating} & $\alt(5)$  & 60 & $2$ & n & n\\
                                    & $\alt(6)$  & 360 & $2^2$ & n & n \\
                                    & $\alt(7)$  & 2,520 & $2$ & n & n \\
       \hline
      \multirow{7}{*}{$\texttt A_1$} &  {$\PSL_2(7)$} & {168} & 2 & n & n \\
                                    & $\PSL_2(8)$ & 504 & 3 & n & n \\
                                    & $\PSL_2(11)$ & 660 & 2 & n & n \\
                                    & $\PSL_2(13)$ & 1,092 & 2 & n & n \\
                                    & $\PSL_2(17)$ & 2,488 & 2 & n & n \\
                                    & $\PSL_2(19)$ & 3,420 & 2 & y & n \\
                                    & $\PSL_2(16)$ & 4,080 & 4 & n & y\\
       \hline
       \multirow{1}{*}{$\texttt A_2$} & $\PSL_3(3)$ & 5,616   & $2$ & n & n \\
       \hline
       \multirow{1}{*}{$^2\texttt A_2$} & $\SU_3(3)$  & 6,048   & $2$ & n & n \\
        \hline
    \end{tabular}
    \caption{Simple groups with order at most 7200.}
    \label{tab:H4simple}
\end{table}

% \begin{lemma}\label{lem:H4_7200}
%     A smallest non-cyclic quotient of $\calh_4$ has order 7200.
% \end{lemma}
% \begin{proof}
%     Element orders.
% \end{proof}

\begin{lemma}\label{lem:alt5sq.2}
    There is exactly one isomorphism class of groups $\alt(5)^2.2$ with the property that every proper quotient is cyclic.  The group is isomorphic to $\alt(5)\wr2$.
\end{lemma}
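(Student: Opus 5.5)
We need to classify groups $Q$ with a normal subgroup $N\cong\alt(5)^2$ and $Q/N\cong 2$ such that every proper quotient of $Q$ is cyclic. The plan is to show that the conjugation action of $Q$ on $N$ must swap the two factors, and then that the extension splits, so that $Q\cong\alt(5)\wr 2$.

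\emph{Step 1: the action swaps the factors.} The minimal normal subgroups of $N=\alt(5)\times\alt(5)$ are exactly the two factors $S_1,S_2$, so $Q$ permutes them. Suppose $Q$ normalised $S_1$. Then $S_1\trianglelefteq Q$ and $Q/S_1$ is a group of order $120$ containing a normal copy of $S_2N/N\cong\alt(5)$, which is non-abelian. Since $Q/S_1$ is a proper quotient, this contradicts the hypothesis. Hence any $t\in Q\smallsetminus N$ swaps $S_1$ and $S_2$, and the induced map $Q\to\Out(N)=\Out(\alt(5))\wr 2=2\wr 2$ has image meeting the swapping coset.

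\emph{Step 2: normalise the outer action.} Write the action of $t$ on $N$ as $(a,b)\mapsto(\beta(b),\alpha(a))$ with $\alpha,\beta\in\Aut(\alt(5))\cong\sym(5)$. Conjugating by the element $(\alpha,\mathrm{id})$ of $\Aut(N)$, equivalently relabelling $S_2$ via $\alpha$, we may assume the action is $(a,b)\mapsto(\gamma(b),a)$, where $\gamma=\beta\alpha$ is the automorphism induced by $t^2\in N$. Since $t^2=(x,y)\in N$ and $t$ commutes with $t^2$, applying the formula gives $x=\gamma(y)$ and $y=x$. Thus $\gamma$ is the inner automorphism by $x$. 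Replacing $t$ by $t\cdot(1,x^{-1})$ makes the action exactly the coordinate swap $(a,b)\mapsto(b,a)$.

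\emph{Step 3: the extension splits.} With $t$ acting as the swap, $t^2=(x,y)$ must centralise $N$, because conjugation by $t^2$ is trivial. Since $Z(N)=1$, this forces $t^2=1$. Hence $Q=N\rtimes\langle t\rangle\cong\alt(5)\wr 2$. An alternative for Steps 2–3 is to observe that $C_Q(N)$ is normal in $Q$ with trivial intersection with $N$; by minimality of $N$ (from \Cref{lem:min_quotient_En}, $\soc(Q)=N$), this gives $C_Q(N)=1$. Then $Q$ embeds in $\Aut(N)=\sym(5)\wr 2$ as $N$ together with a swapping element, and the computation above identifies it.

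\emph{Step 4: existence.} Finally, we verify that $\alt(5)\wr 2$ has the property. Its nontrivial normal subgroups meet $N$ nontrivially; otherwise they would have order $2$ and be central, but the swap has trivial centraliser in $N$. So they contain a factor, and hence, being invariant under the swap, contain all of $N$. The only proper quotients are therefore $1$ and $2$, both cyclic.

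The main obstacle is Step 2, specifically the bookkeeping showing that the twisting automorphisms can be absorbed. This reduces to the centreless and inner-automorphism argument given there, which I would write out carefully.
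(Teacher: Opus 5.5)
Your proof is correct, but it is organised differently from the paper's. The paper uses the correspondence between extensions of the centreless group $\alt(5)^2$ by $2$ and homomorphisms $2\to\Out(\alt(5)^2)\cong 2\wr 2$. It runs through the five involutions there (trivial, two symmetric, central, wreathing, diagonal) and exhibits a non-abelian proper quotient in the first three cases. It then shows the wreathing and diagonal extensions are isomorphic by an explicit conjugation by $(1\ 2)$ inside $\sym(10)$.

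Your Step 1 disposes of every non-swapping case at once: a normalised factor $S_1$ gives the non-abelian proper quotient $Q/S_1\supseteq\alpha(S_2)\cong\alt(5)$. Your Steps 2--3 then replace both the wreathing/diagonal case split and the permutation computation by a direct normalisation of a swapping element. Absorbing the possibly outer automorphism $\alpha$ is exactly what the paper's conjugation by $(1\ 2)$ achieves. The correction by $(1,x^{-1})$ together with $Z(N)=1$ then gives the splitting.

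What each route buys: yours needs neither $\Out(\alt(5)^2)$ nor extension theory, and it works verbatim for $S\wr 2$ with $S$ any non-abelian simple group. The paper's route additionally describes all five extensions. A minor presentational point in Step 2: the identity $\gamma=c_x$ comes from $t^2$ acting on $N$ as $(\gamma,\gamma)$. The fixed-point equations alone only give $x=y$ and $\gamma(x)=x$. You did state the former, so nothing is missing.

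One justification in Step 4 is wrong and should be fixed. The claim that \emph{the swap has trivial centraliser in $N$} is false, since the swap centralises the diagonal $\{(a,a)\}\cong\alt(5)$. The correct reason there is no central subgroup of order $2$ outside $N$ is the one from your Step 1: every element of $Q\smallsetminus N$ interchanges $S_1$ and $S_2$, so it cannot centralise $N$. Together with $Z(Q)\cap N\le Z(N)=1$ this gives $Z(Q)=1$, and the rest of your existence argument then goes through as written.
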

\begin{proof}
    We have that $\Out(\alt(5))^2\cong 2\wr2$.  Write an element as $(a,b;c)$ where $(a,b)\in2^2$ and $c\in 2$.  We have that $(1,0;0)$ and $(0,1;0)$ correspond to the non-trivial element in $\Out(\alt(5))\cong 2$ for each factor.  We call these outer classes \emph{symmetric}.  The non-trivial central element $(1,1;0)$ is a symmetric outer automorphism on each factor.  We call this outer class \emph{central}.  The element $(0,0;1)$ is the outer automorphism that permutes the two factors of $\alt(5)$.  We call this outer automorphism \emph{wreathing}.  The final involution in $2\wr2$ is the element $(1,1;1)$.  We call this outer class \emph{diagonal}.
    
    Since $Z(\alt(5)^2))=1$, an equivalence class of group extensions
    \[\begin{tikzcd}
        1 \ar[r] & \alt(5)^2 \ar[r] & Q \ar[r] & 2 \ar[r] & 1
    \end{tikzcd}\]
    is given by a homomorphism $\phi\colon 2\to \Out(\alt(5)^2)$, see \cite[Corollary IV.6.8]{Brown1982}.  Note that different equivalence classes of group extensions may give rise to isomorphic groups.  The group $2\wr2$ contains $5$ elements of order $2$.   Let $\phi\colon 2\to\Out(\alt(5)^2)$ and let $Q_\phi$ denote the resulting extension group.  We analyse each possible extension in turn.  We call $\phi$ trivial, symmetric, central, wreathing, or diagonal if the non-trivial element of $2$ is mapped to an element with the corresponding property.

    Suppose that $\phi$ is trivial.  In this case, $Q_\phi\cong \alt(5)^2\times 2$. This clearly has a non-cyclic proper quotient, so we may rule it out.

    Suppose that $\phi$ is symmetric.  Clearly the resulting extension group is isomorphic to $\alt(5)\times\sym(5)$.  This evidently has a non-cyclic proper quotient so we may rule it out.

    Suppose that $\phi$ is central.  We claim that either tautological $\alt(5)$ subgroup is normal.  For i=1,2 write $(\sigma_i,\tau_i;x_i)\in Q_\phi$ with $(\sigma_i,\tau_i)\in\alt(5)^2$ and $x_i\in 2$ acts diagonally.  The group multiplication is then given by a semidirect product
    \[(\sigma_1,\tau_1;x_1)(\sigma_2,\tau_2;x_2)=(\sigma_1x_1(\sigma_2),\tau_1x_1(\tau_2);x_1x_2)\]
    and an inverse of $(\sigma,\tau;x)$ is given by $(x(\sigma^{-1},x(\tau^{-1}),x)$.
    For $(\mu,1;0)\in \alt(5)\times\{1\}$ we have
    \begin{align*}
    (\sigma,\tau;x)(\mu,1;0)(\sigma,\tau;x)^{-1}&=(\sigma x(\mu),\tau,x)(\sigma^{-1},x(\tau^{-1}),x)\\
    &=(\sigma x(\mu)x(\sigma^{-1}),\tau x(x(\tau^{-1})),xx)\\
    &=(\sigma x(\mu)x(\sigma^{-1}),1;0)
    \end{align*}
    which is contained in $\alt(5)\times\{1\}$.  A similar argument also shows that $\{1\}\times\alt(5)$ is normal.  In either case, the quotient by a normal $\alt(5)$ subgroup has shape $\alt(5).2$ and is therefore non-abelian.

    The two remaining cases are when $\phi$ is wreathing or diagonal.  Clearly in either case the normal closure of any $\alt(5)$ subgroup is the subgroup $\alt(5)^2$.  Thus, one readily checks that every proper quotient of $Q_\phi$ is cyclic.

    Let $Q_1\cong \alt(5)\wr 2$ and let $Q_2$ denote the group given by $Q_\phi$ where $\phi$ is diagonal.  It remains to show that $Q_1\cong Q_2$.  

    We have that $Q_2$ is a subgroup of $\Aut(\alt(5)^2)\cong \sym(5)\wr 2$.  In particular we may view $Q_2$ as a subgroup of $\sym(10)$.  Indeed, we may generate $Q_2$ by the two copies of $\alt(5)$, one on $\{1,\dots,5\}$ and the other on $\{6,\dots,9,0\}$ and the element 
    \[x=(1\ 2)(6\ 7)(1\ 6)(2\ 7)(3\ 8)(4\ 9)(5\ 0)=(1\ 7)(2\ 6)(3\ 8)(4\ 9)(5\ 0).\]
    But the latter element is $\sym(5)\wr 2$ conjugate by $y=(1\ 2)$ to $z=(1\ 6)(2\ 7)(3\ 8)(4\ 9)(5\ 0)$.  The element $(1\ 2)$ normalises $\alt(5)^2$ so the group $Q_2$ is conjugate by $y$ to $\langle \alt(5)^2,z\rangle$.  But this later group is exactly $\alt(5)\wr 2<\sym(10)$ as required.
\end{proof}

% \begin{lemma}
%     \begin{enumerate}
%         \item $\cali_2(5)/\langle\langle s_1^3\rangle \rangle \cong 3\times \SL_2(5)$;
%         \item the smallest non-abelian quotient of $\cali_2(5)/\langle\langle s_1^5\rangle \rangle$ is $\alt(5)$.
%     \end{enumerate}
% \end{lemma}

\begin{thm}\label{thm:H4}
    The smallest non-cyclic quotient of $\calh_4$ is $W(\calh_4)/Z(\calh_4)$, and it is unique up to automorphisms of the image.
\end{thm}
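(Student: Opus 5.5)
The plan is to combine \Cref{lem:min_quotient_En}, the element-order constraints of \Cref{lem:H4_elem_orders}, and the table of small simple groups. First, $W(\calh_4)/Z(W(\calh_4))$ has order $14400/2=7200$ and is a non-cyclic quotient of $\calh_4$; this gives the upper bound. It is in fact isomorphic to $(\alt(5)\times\alt(5)).2$, since $W(\calh_4)\cong (\SL_2(5)\circ\SL_2(5)).2$. Now let $Q$ be a smallest non-cyclic quotient, so $|Q|\le 7200$. Since $\calh_4^{\ab}\cong\Z$ and $\calh_4'$ is perfect by \Cref{thm:comm-perfect}, \Cref{lem:min_quotient_En} gives $\soc(Q)=S^n$ with $Q/S^n$ cyclic. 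Moreover, \Cref{lem:H4_elem_orders} gives elements of order $10$ and $15$ in $Q$, so $30\mid |Q|$ and $Q$ must contain elements of both orders.

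Next, split by $n$.

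\textbf{Case $n=1$.} Here $Q$ is almost simple with $S\le Q\le\Aut(S)$ and $|S|\le 7200$, so $S$ appears in \Cref{tab:H4simple}. For each $S$ in the table, $|\Out(S)|\le 4$. We check, via the ATLAS, that no group between $S$ and $\Aut(S)$ contains elements of both orders $10$ and $15$. The table already shows this for $S$ itself. For extensions, only a few cases need a short check:
\begin{itemize}
\item $\PSL_2(19).2$ has no element of order $15$, since $3,5\nmid$ the relevant tori orders.
\item $\PSL_2(16).4$ has no element of order $10$.
\item $\sym(5),\sym(6),\sym(7),\PGL_2(q)$ have no elements of order $15$, except $\sym(7)$, which has none of order $15$ either, since $3+5=8>7$.
\item $\alt(6).2^2$ and similar groups fail by order considerations.
\end{itemize}
Hence $n=1$ is impossible.

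\textbf{Case $n\ge2$.} We need $60^n\le|S|^n\le 7200$, which forces $n=2$ and $S=\alt(5)$. Then $|Q|=3600k$ with $k\mid|\Out(\alt(5)^2)|=8$, and since $n\mid k$ and $|Q|\le7200$, we get $k=2$. By \Cref{lem:alt5sq.2}, $Q\cong\alt(5)\wr 2$. This has order $7200$ and so equals $W(\calh_4)/Z$ abstractly. In particular, every smallest non-cyclic quotient has order exactly $7200$ and is isomorphic to $W(\calh_4)/Z(W(\calh_4))$.

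\textbf{Uniqueness up to automorphisms of the image.} Take an epimorphism $\varphi\colon\calh_4\to Q=\alt(5)\wr2$. The generators $s_i$ are all conjugate, and they map outside $\alt(5)^2$, since otherwise the image lies in the socle. So each $\varphi(s_i)$ lies in the outer coset. Consider the parabolic $\calh_3=\langle s_1,s_2,s_3\rangle$; it commutes with nothing needed here, but $\langle s_1,s_2\rangle\cong\cali_2(5)$ commutes with $s_4$. Elements of the outer coset have the form $(a,b)\tau$, and their centralisers are diagonal-type subgroups isomorphic to $\alt(5)\times 2$ or smaller. So $\varphi(\langle s_1,s_2\rangle)$ lies in $C_Q(\varphi(s_4))$, which is roughly $\alt(5)\times2$.

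The plan is to use this centraliser together with \Cref{thm:I2} and \Cref{thm:H3} to show that $\varphi(s_i)^2=1$. Then $\varphi$ factors through $W(\calh_4)$, and hence through $W/Z$ because the image is centreless. Since $W(\calh_4)/Z\cong Q$ has order $7200$, the induced map is an isomorphism, and uniqueness follows.

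I expect the main obstacle to be this last step: ruling out generators mapping to elements of order $4$, $6$, or $10$ in the outer coset. These elements square into the diagonal $\alt(5)$. I would try first to show that $\varphi(s_i)^2$ must be trivial by examining the image of $\calh_3$, which must land in a proper subgroup, and then applying the collapsing-set lemma to exclude the remaining cases.
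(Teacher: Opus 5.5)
The classification half of your argument follows the paper's route: \Cref{lem:min_quotient_En}, the orders $10$ and $15$ from \Cref{lem:H4_elem_orders}, and \Cref{lem:alt5sq.2} for socle $\alpha(5)^2$. The gap is the uniqueness statement, which you leave as a plan. The centraliser you would use is described only as ``roughly $\alt(5)\times 2$'', and that is its shape only when $\varphi(s_4)$ is an involution.

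The missing computation is short. Write $Q=(\alt(5)\times\alt(5)):\langle\tau\rangle$.
\begin{itemize}
\item Every element of the outer coset is conjugate to $g_c=(c,1)\tau$ for some $c\in\alt(5)$, with $g_c^2=(c,c)$.
\item Its centraliser is $C_Q(g_c)=\langle g_c\rangle\cdot\{(x,x): x\in C_{\alt(5)}(c)\}$.
\item For $c\neq 1$, i.e.\ $g_c$ of order $4$, $6$ or $10$, this group is \emph{abelian} of order $8$, $6$ or $10$.
\end{itemize}
So if $\varphi(s_4)$ is not an involution, $\varphi(s_1)$ and $\varphi(s_2)$ commute. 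Since $m_{12}=5$, \Cref{L:gcdRelation} forces $\varphi(s_1)=\varphi(s_2)$, and \Cref{lem:artin-gen-collapsing} then makes the image cyclic, a contradiction.

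Hence every $\varphi(s_i)$ is an involution and $\varphi$ factors through $W(\calh_4)$. The centre of $W(\calh_4)$ maps to $Z(Q)=1$, and $W(\calh_4)/Z\to Q$ is then an isomorphism by comparing orders. This gives uniqueness up to $\Aut(Q)$. The paper settles this step by a MAGMA enumeration of epimorphisms $\calh_4\to Q$. Your centraliser approach, completed this way, is computer-free and needs neither $\calh_3$ nor \Cref{thm:I2}.

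In the case $n=1$ you also assert something false. $\PSL_2(16).2$, and hence $\PSL_2(16).4$, \emph{does} contain elements of order $10$: the field involution $x\mapsto x^4$ centralises $\PSL_2(4)\cong\alt(5)$. And $\PSL_2(16)$ itself has elements of order $15$. So your claim that no group between $S$ and $\Aut(S)$ has elements of both orders fails for $S=\PSL_2(16)$.

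These extensions must instead be excluded by the bound $|Q|\le 7200$, as the paper does:
\begin{itemize}
\item for $3600<|S|\le 7200$, only $Q=S$ is possible, and such $S$ has no element of order $10$;
\item for $|S|\le 3600$, $\Aut(S)$ has no element of order $15$.
\end{itemize}

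Two smaller points. The table you rely on omits $\PSL_2(23)$ (order $6072$); this is harmless, since $5\nmid|\PSL_2(23)|$. For $\PSL_2(19).2$, the correct reason is that element orders divide $18$, $19$ or $20$, none of which is divisible by $15$. It is not that $3$ and $5$ fail to divide the torus orders; they do divide $18$ and $20$ respectively.
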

\begin{proof}
     By Lemma \ref{lem:min_quotient_En}, either $Q$ is almost simple, or it is a subgroup of $\Aut(S^n)$ for some simple group $S$ and $n >1$. For all $S$ and $n$ except $\alt(5)^2$, $\Aut(S^n) > |W(\calh_4)|$, so $Q$ is either almost simple or a subgroup of $\Aut(\alt(5)^2)$. We first apply element order considerations to rule out almost simple groups of order at most $7200 = |W(\calh_4)|$.  
     
     The simple groups $Q$ with order at most 3600 are $\alt(5)$, $\PSL_2(7)$, $\alt(6)$, $\PSL_2(8)$, $\PSL_2(11)$, $\PSL_2(13)$, $\PSL_2(17)$, $\alt(7)$, $\PSL_2(19)$.  One easily verifies via the ATLAS that $\Aut(Q)$ does not contain an element of order 15 for any of these.  The simple groups $Q$ with order between 3600 and 7200 are $\PSL_2(16)$, $\PSL_3(3)$, $\PSU_3(3)$, and $\PSL_2(23)$.  In each of these cases, after consulting the ATLAS, $Q$ contains has no element of order $10$.  Note we need not check $\Aut(Q)$ for these since $2|Q|>7200$.
    
    We are now left with groups of the shape $\alt(5)^2.2$.  By \Cref{lem:alt5sq.2}, there is exactly one group of this shape which does not admit a proper non-cyclic quotient, namely $\alt(5)\wr2$, so necessarily this group must be isomorphic $W=W(\calh_4)/Z(\calh_4)$ as required.  One easily checks the quotient is unique up to automorphisms of the image using MAGMA, see \Cref{Code:6.2}.
\end{proof}

% \begin{table}[]
%     \centering
%     \begin{tabular}{|c|c|c|c|c|c|}
%     \hline
%     Name & Order & Class size & Centraliser shape & Centraliser size\\ 
%     \hline 
%  1   & 1  & 1   & $\alt(5)\wr 2$ & 7200  \\
%  $2_A$   & 2  & 30  & $2^2\times \alt(5)$ & 240\\
%  $2_B$   & 2  & 60  & $2\times \alt(5)$   & 120 \\
%  $2C$   & 2  & 225 & $2^2\wr 2$  & 8 \\
% $3_A$   & 3  & 40  &  $\GL_2(4)$ &  180  \\
%  $3_B$   & 3  & 400 & $3\times \sym(3)$  &36  \\
%  $4$   & 4  & 900 & $2\times 4$  \\
% $5_A$ & 5  & 24     & $5\times\alt(5)$   \\
% $5_A'$  & 5  & 24  & $5\times\alt(5)$     \\
%  $5_B$  & 5  & 144 & $5\times D_5$    \\
%  $5_B'$  & 5  & 144 & $5\times D_5$  \\
%  $5_C$   & 5  & 288 & $5^2$    \\
% $6_A$   & 6  & 600 & $2\times 6$    \\
%  $6_B$   & 6  & 1200&  $6$   \\
%  $10_A$ & 10 & 360 & $2\times 10$  \\
% $10_A'$ & 10 & 360 & $2\times 10$ \\
% $10_B$ & 10 & 720 & 10  \\
% $10_B'$ & 10 & 720 & 10  \\
% $15$ & 15 & 480 & 15  \\
%  $15'$ & 15 & 480 & 15 \\ 
%  \hline
%     \end{tabular}
%     \caption{Conjugacy classes in $\alt(5)\wr2$}
%     \label{tab:ccls_in_a5wr2}
% \end{table}

%%%%%%%%%%%%%%%%%%%%%%%%%%%%%%%%%%%%%%%%%%%%%%%%%%%%%%%%%%
\section{The Artin group \texorpdfstring{$\cale_6$}{E6}}\label{sec:e6}
In this section, we will compute the smallest non-abelian quotient of $\cale_6$.  Throughout, we will use \Cref{StandardFiniteGroupsNotation}.

\subsection{The groups}
Our proof will use the classification of finite simple groups. We recall the finite simple groups smaller than $W(\cale_6)$ in the table below.
\begin{table}[h]
    \centering
    \begin{tabular}{|c|p{4.5cm}|p{2cm}|p{3cm}|}
       \hline
       Family & Group & Order & $\Out(G)$ \\
       \hline
       \hline
       \multirow{5}{*}{Alternating} & $\alt(5)$  & 60 & $2$ \\
                                    & $\alt(6)$  & 360 & $2^2$ \\
                                    & $\alt(7)$  & 2,520 & $2$ \\
                                    & $\alt(8)$  & 20,160 & $2$ \\
                                    %& $\alt(9)$  & 181,440 & $2$ \\
       \hline
      {$\texttt A_1$} &  {$\PSL_2(q)$  $q\in\{$7, 8, 11, 13, 17, 19, 16, 23, 25, 27, 29, 31, 32, 37, 41, 43$\}$} & {$\frac{1}{d}q(q^2-1)$} where $d=\gcd(2,q-1)$ & $d\times k$ where $d=\gcd(2,q-1)$ and $q=p^k$ for $p$ a prime  \\
       \hline
       \multirow{2}{*}{$\texttt A_2$} & $\PSL_3(3)$ & 5,616   & $2$ \\
                                      & $\PSL_3(4)$ & 20,160  &  $2\times\sym(3)$ \\
                                      %& $\PSL_3(5)$ & 372,000 & $2$ \\
       \hline
       \multirow{1}{*}{$^2\texttt A_2$} & $\SU_3(3)$  & 6,048   & $2$ \\
                                        %& $\SU_3(4)$  & 62,400  & $4$ \\ 
                                        %& $\PSU_3(5)$ & 126,000 & $\sym(3)$ \\
       \hline
       \multirow{1}{*}{$\texttt C_2$} & $\PSp_4(3) \cong \SU_4(2)$ & 25,920 & $2$  \\
                                    %& $\Sp_4(4)$  & 979,200 & 4  \\
       \hline
       \multirow{1}{*}{Exceptional} & $^2\texttt B_2(8)$ & 29,120 & $3$ \\
       \hline
       \multirow{1}{*}{Sporadic} & $M_{11}$ & 7,920 & 1 \\
                                 %& $M_{12}$ & 95,040 & $2$ \\ 
                                 %& $M_{22}$ & 443,520 & $2$ \\
                                 %& $J_1$    & 175,560 & 1 \\
                                 %& $J_2$    & 604,800 & 2 \\
    \hline
    \end{tabular}
    \caption{Simple groups with order at most 51,840.}
    \label{tab:E6simple}
\end{table}

\begin{lemma}\label{lem:E6_hitlist}
    A smallest non-abelian quotient of $\cale_6$ has socle isomorphic to one of the following groups
    \begin{enumerate} %[label=\roman*)]
        \item $\alt(n)$ for $n\in\{5,6,7,8\}$;
        \item $\PSL_2(q)$ for $q\in\{7,8,11,13,17,19,16,23,25,27,29,31,37,41,43\}$;
        \item $\PSL_3(3)$, $\PSL(3,4)$;
        \item $\PSU_3(3)$;
        \item $\PSp_4(3)$;
        \item $^2\mathtt B_2(8)$;
        \item $M_{11}$;
        \item $\alt(5)^2$;
        \item $\PSL_2(7)^2$.
    \end{enumerate}
\end{lemma}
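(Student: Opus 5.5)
We will combine \Cref{lem:min_quotient_En} with an order bound and the classification of finite simple groups. The group $\cale_6$ has cyclic abelianisation, and by \Cref{thm:comm-perfect} its commutator subgroup is perfect. Hence \Cref{lem:min_quotient_En} applies: a smallest non-abelian (equivalently, non-cyclic) quotient $Q$ fits into $1\to S^n\to Q\to k\to 1$ with $\soc(Q)=S^n$ and $S$ non-abelian simple. Since $W(\cale_6)/Z(W(\cale_6))\cong W(\cale_6)$ (the centre is trivial) is a non-cyclic quotient of order $51{,}840$, we get $|Q|\le 51{,}840$. In particular $|S|^n\le 51{,}840$.

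For $n=1$, the socle $S$ is a non-abelian simple group of order at most $51{,}840$. By the classification these are exactly the groups listed in \Cref{tab:E6simple}; we confirm the list against the ATLAS and the known orders of $\PSL_2(q)$. This gives items (1)--(7), except that $\PSL_2(32)$ appears in the table but not in the lemma. Its order is $32736$ and $\Out=5$, so the only candidate almost simple groups are $\PSL_2(32)$ and $\PSL_2(32){:}5$, the latter too large. We would exclude $\PSL_2(32)$ using \Cref{lem:E6_elemOrders}: any finite non-cyclic quotient contains elements of orders $8$, $9$ and $12$. The element orders of $\PSL_2(32)$ divide $2$, $31$ or $33$, so it has no element of order $8$. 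We should also double check $\PSL_3(4)$ and the others against this test; they remain in the list as stated, so we will only remove what is strictly needed.

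For $n\ge 2$, we need $|S|^n\le 51{,}840$. Since $\sqrt{51840}\approx 227.7$, the group $S$ must have order at most $227$, which leaves $\alt(5)$ (order $60$) and $\PSL_2(7)$ (order $168$). For $n\ge3$ we would need $|S|\le \sqrt[3]{51840}\approx 37$, and no such non-abelian simple group exists. So the only possibilities with $n>1$ are $\alt(5)^2$ and $\PSL_2(7)^2$, giving items (8) and (9).

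The main obstacle is the bookkeeping: making sure the list of simple groups of order up to $51{,}840$ is complete. This means checking all $\PSL_2(q)$ with $q(q^2-1)/\gcd(2,q-1)\le 51840$, which is roughly $q\le 47$, together with the small classical, exceptional and sporadic groups. We also have to justify the omission of $\PSL_2(32)$ through the element-order argument, or else its absence is simply an oversight.
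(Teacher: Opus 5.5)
Your proposal is correct and follows the same route as the paper: apply \Cref{lem:min_quotient_En} (using the cyclic abelianisation and the perfect commutator subgroup), bound $|\soc(Q)|$ by $|W(\cale_6)| = 51{,}840$, and read the candidates off \Cref{tab:E6simple}. You are more careful than the paper in two places. First, the paper never spells out the $n\ge 2$ count. Second, its one-line proof does not justify dropping $\PSL_2(32)$, which has order $32{,}736$ and appears in the table but not in the statement. Your fix is valid: $\PSL_2(32){:}5$ exceeds the bound, and $\PSL_2(32)$ has element orders in $\{1,2,3,11,31,33\}$, so \Cref{lem:E6_elemOrders} excludes it. The paper's later \Cref{lem:E6_aut_various_PSL2q_murder} also treats $q=32$ directly.
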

\begin{proof}
    Recall that $\cale_6^{ab}$ is cyclic. The subgroup $[\cale_6 , \cale_6]$ is perfect by \cref{thm:comm-perfect}, so Lemma \ref{lem:min_quotient_En} applies.

    The associated Coxeter group $W(\cale_6)$ is isomorphic to $\mathrm{SO}_5(3)$ which has order $51,840$. The non-cyclic simple groups with order less than $51,840$ are listed in Table \ref{tab:E6simple}.
\end{proof}

\subsection{Excluding alternating groups}

\begin{lemma}\label{lem:E6_aut_alt}
    Let $k\leq 2$ and $n\leq8$.  If $\alpha\colon\cale_6\to\Aut(\alt(n)^k)\wr2$ is a homomorphism, then $\alpha$ has cyclic image.
\end{lemma}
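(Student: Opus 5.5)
The plan is to restrict $\alpha$ to the standard parabolic subgroup $P=\langle s_1,s_3,s_4,s_5,s_6\rangle\cong\cala_4$ and control its image with \Cref{lem:A4toSym6}, \Cref{lem:a4-solvable-cyclic} and the element–order lemma \Cref{lem:E6_elemOrders}. By \Cref{lem:E8_non_cyclic_E6}, since the Artin generators of $\cale_6$ form a collapsing set, it suffices to show that $\alpha|_P$ has cyclic image. Write $G=\Aut(\alt(n)^k)\wr 2$. The wreathing factor contributes a homomorphism $G\to 2$ (composed with $\alpha$ this is cyclic), with kernel $K=\Aut(\alt(n)^k)^2$. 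Since $\cale_6'$ is perfect, the image of $\cale_6'$ lies in the perfect part; more usefully, $\alpha(P')$ lies in $K$ because $P'$ is perfect by \Cref{thm:comm-perfect} and $G/K$ is abelian. Then $\Aut(\alt(n)^k)=\Aut(\alt(n))\wr\sym(k)$ with $k\le 2$, and for $n\neq 6$ we have $\Aut(\alt(n))=\sym(n)$. Projecting successively to the direct factors and killing the solvable tops ($\sym(k)$ and the outer automorphism groups), a non-cyclic image of $P$ must yield a non-cyclic homomorphism $\cala_4\to\sym(n)$ (or $\to\Aut(\alt(6))$) for some $n\le 8$. Here \Cref{lem:a4-solvable-cyclic} is used to see that each solvable layer can be peeled off. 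By Kolay's theorem \Cref{kolay}, any non-cyclic image of $\cala_4$ has order at least $120$ and, for $n\le 6$, it must be $\sym(5)$ via the Coxeter quotient, by \Cref{lem:A4toSym6}.

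The key step is then to show that the whole of $\cale_6$ cannot follow. Arguing on each coordinate projection $\pi\colon\cale_6\to \sym(n)$ (or $\Aut(\alt(n))$), which is non-cyclic whenever $\alpha$ is, the image has order at most $8!\cdot 2$ or so. By \Cref{lem:E6_elemOrders}, any non-cyclic finite quotient contains elements of orders $8$, $9$ and $12$. Thus the image of $\cale_6$ in a single factor $\Aut(\alt(n))$ with $n\le 8$ must contain an element of order $9$. In $\sym(n)$ with $n\le 8$ there is none, since order $9$ needs a $9$-cycle. In $\Aut(\alt(6))$ there are no elements of order $9$ either. Hence every coordinate projection to an $\Aut(\alt(n))$ factor is cyclic. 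It follows that $\alpha(\cale_6')$, which is perfect, lies in the kernel of all these projections, and that kernel is trivial up to the solvable top. So $\alpha(\cale_6')$ is a perfect subgroup of a solvable group, hence trivial, and $\alpha$ has cyclic image.

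The main obstacle is the bookkeeping when passing from $G$ to the single factors. The element of order $9$ lives in $\alpha(\cale_6)$, not necessarily in a factor, and a quotient $Q\le G$ could be a subdirect product whose projections are smaller. To handle this I would argue directly on $\alpha(\cale_6')$. It is perfect, so it lies in $\alt(n)^{2k}$, the socle part, since $G/\alt(n)^{2k}$ is solvable, and its projection to each $\alt(n)$ coordinate is a perfect quotient of $\cale_6'$. Then I would apply the order-$9$ argument to $\cale_6\to\sym(n)$ built by extending each projection via the conjugation action of $\alpha(s_1)$; this extension is where care is needed. A fallback is to use the order-$9$ element directly: its cube has nontrivial image by the cyclicity of $\cale_6/\langle\langle\epsilon_9^3\rangle\rangle$, so some power of $\alpha(\epsilon_9)$ of order $9$ exists in $G$. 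A $3$-element of order $9$ in $G$ requires order $9$ in $\sym(8)^4\rtimes(2\wr2\wr2)$ or $\Aut(\alt(6))^4\rtimes\cdots$. The top is a $2$-group, so the element lies in $\Aut(\alt(n))^{2k}$, and such an element has some coordinate of order $9$, which is impossible. This fallback is cleaner and I would lead with it.
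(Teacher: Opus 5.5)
Your fallback argument, which you say you would lead with, is exactly the paper's proof: $\sym(k)$, $\Out(\alt(n))$ and the wreathing $2$ are all $2$-groups, so an element of order $9$ in $\Aut(\alt(n)^k)\wr 2$ would lie in the base $\Aut(\alt(n))^{2k}$ and have a coordinate of order $9$ in $\Aut(\alt(n))$ with $n\le 8$, which is impossible, contradicting \Cref{lem:E6_elemOrders}; the $\cala_4$ and coordinate-projection material before it is unnecessary and can be dropped. The one fix is to take the existence of an element of order $9$ directly from the final clause of \Cref{lem:E6_elemOrders}, because nontriviality of $\alpha(\epsilon_9)^3$ by itself only rules out orders of $\alpha(\epsilon_9)$ dividing $3$ (order $6$, for instance, is not excluded) and so does not produce an element of order $9$.
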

\begin{proof}
The group $\Out(\alt(n))$ is a $2$-group for each $n$; it is isomorphic to $2$ for $n\neq6$ and $2^2$ for $n=6$.  Since $\alt(n)$ for $n\leq 8$ does not contain an element of order $9$, neither does $\Aut(\alt(n))$ nor $\Aut(\alt(n)^2)\wr2\cong (\Aut(\alt(n))\wr2)\wr2$.  We conclude by \cref{lem:E6_elemOrders}. %Thus, the element $\epsilon_9$ in \cref{lem:E6_elemOrders} must be mapped by $\alpha$ to an element of order $3$ or a trivial element, but again by \cref{lem:E6_elemOrders}, this implies that the image of $\alpha$ is cyclic as required.
\end{proof}

\subsection{Excluding groups in the family \texorpdfstring{$\mathtt A_n$}{An}}
Here we give a uniform way of inductively excluding quotients isomorphic to subgroups of finite linear groups. 

\begin{thm}~\cite[Theorem~2.2.8]{DeFranceschiLiebeckOBrien2025}\label{thm:GLnq_centraliser}
    Let $x\in\GL_n(q)$ have minimal polynomial $f_1(t)^{e_1}\dots f_h(t)^{e_h}$ with $f_i\in\FF_q[t]$ distinct and irreducible of degree $d_i$.  Let the Jordan form of $x$ be
    \[\bigoplus_{i=1}^h \left( \bigoplus_{l_{i,1}}B_{\lambda_{i,1}}\oplus\dots\oplus \bigoplus_{l_{i,k_i}}B_{\lambda_{i,k_i}} \right)\]
    where $\lambda_{i,1}<\dots < \lambda_{i,k_i}=e_i$ for each $i$.  Then $C_G(x)\cong U\rtimes R$ where
    \[R\cong \prod_{i=1}^h\prod_{j=1}^{k_i}\GL_{l_{i,j}}(q^{d_i})\]
    and $U=q^m$ where
    \[m=\sum_{i=1}^hd_i\left(2\sum_{a<b}\lambda_{ia}\right).\]
\end{thm}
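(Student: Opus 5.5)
The statement is the classical description of centralisers in $\GL_n(q)$. I would reduce it to the structure of $V=\FF_q^n$ as an $\FF_q[t]$-module via $x$. The centraliser $C_G(x)$ is exactly the group of units of $\mathrm{End}_{\FF_q[t]}(V)$. The primary decomposition $V=\bigoplus_i V_i$, with $V_i=\ker f_i(x)^{e_i}$, is preserved by every endomorphism commuting with $x$, since each $V_i$ is a polynomial-in-$x$ image. This gives
\[ C_G(x)\cong\prod_{i=1}^h \mathrm{Aut}_{\FF_q[t]}(V_i). \]
It therefore suffices to treat a single irreducible factor $f=f_i$ of degree $d=d_i$. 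In that case $V_i$ is a module over the local ring $\FF_q[t]/(f^{e})$. Its structure is $\bigoplus_j (\FF_q[t]/(f^{\lambda_j}))^{l_j}$, which is read off from the Jordan form.

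Next I would pass to a simpler local ring. Since $\FF_q$ is perfect, $f$ is separable, and Hensel's lemma identifies $\FF_q[t]/(f^e)$ with the truncated polynomial ring $\FF_{q^d}[u]/(u^e)$. The module then becomes a direct sum of $l_j$ copies of $\FF_{q^d}[u]/(u^{\lambda_j})$. The group in question is the automorphism group of a finite module over a truncated polynomial ring, i.e. the centraliser of a nilpotent element of partition shape $(\lambda_j^{l_j})$ in $\GL$ over $\FF_{q^d}$.

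For this unipotent/nilpotent-type case I would argue as follows.

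1. Write the endomorphism ring as block matrices with blocks $\mathrm{Hom}(\FF_{q^d}[u]/(u^{\lambda_a}),\FF_{q^d}[u]/(u^{\lambda_b}))$. Each such block is a free $\FF_{q^d}$-space of dimension $\min(\lambda_a,\lambda_b)$.
2. Reduce modulo the Jacobson radical. The quotient is $\prod_j M_{l_j}(\FF_{q^d})$, using the diagonal blocks with $a=b$ taken mod $u$.
3. Lift units. This gives a surjection onto $R=\prod_j\GL_{l_j}(q^d)$, with kernel $1+J$, a $p$-group $U$. The extension splits by the block-diagonal embedding of constant matrices.
4. Count $|U|=|J|=|\mathrm{End}|/|\mathrm{End}/J|$.

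A routine dimension count then gives the exponent $m$. Summing $\min(\lambda_a,\lambda_b)$ over ordered pairs of Jordan blocks and subtracting $\sum_j l_j^2$ leaves $d$ times the sum over pairs of distinct part sizes. Here the formula must be interpreted with multiplicities, as in \cite{DeFranceschiLiebeckOBrien2025}. The main obstacle I expect is exactly this bookkeeping: matching the exponent to the stated expression for $m$ with its indexing of $\lambda_{ia}$ and multiplicities. I would resolve it by checking small cases, such as a single Jordan block, and otherwise defer to the cited source for the precise normalisation.

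Finally, I would take the product over $i$. This multiplies the $R$-factors together and adds the exponents of the $U$-factors, which gives $C_G(x)\cong U\rtimes R$ as claimed.
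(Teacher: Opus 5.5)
Your structural argument is correct, and it is the standard one. $C_G(x)$ is the unit group of $\mathrm{End}_{\FF_q[t]}(V)$, and it factors over the primary components. Each component is a module over $\FF_{q^d}[u]/(u^{e})$. Units surject onto $R=\prod_j\GL_{l_j}(q^d)$ with kernel $1+J$, a normal $p$-subgroup, and the constant block-diagonal matrices give a complement. The paper gives no proof (it only cites De Franceschi--Liebeck--O'Brien), so this part is a valid self-contained argument for $C_G(x)\cong U\rtimes R$. Here $U$ must be read as a $p$-group of order $q^m$; it is in general not elementary abelian, or even abelian.

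The step that fails is the exponent. List the blocks $\mu_1,\dots,\mu_r$ of one primary component with multiplicity. Your count gives $|U|=q^{d(\sum_{r,s}\min(\mu_r,\mu_s)-\sum_j l_j^2)}$. Splitting ordered pairs by block size gives $\sum_{r,s}\min(\mu_r,\mu_s)=\sum_j l_j^2\lambda_j+2\sum_{a<b}l_al_b\lambda_a$, with $a<b$ indexing the distinct sizes. Hence
\[
m=\sum_{i=1}^h d_i\Bigl(\sum_{j=1}^{k_i}(\lambda_{i,j}-1)\,l_{i,j}^2+2\sum_{a<b}\lambda_{i,a}\,l_{i,a}\,l_{i,b}\Bigr).
\]
Your assertion that what remains is ``the sum over pairs of distinct part sizes'' is false: pairs of equal-size blocks contribute $l_j^2(\lambda_j-1)$, which is positive as soon as $\lambda_j\geq 2$.

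Deferring to the source for a normalisation cannot fix this, because the printed exponent $\sum_i d_i\,2\sum_{a<b}\lambda_{ia}$ is wrong for every non-semisimple $x$. The check you propose already exposes it. For the unipotent block $x=I+N\in\GL_2(q)$ one has $k_1=1$, so the printed formula gives $m=0$ and $|C_G(x)|=q-1$. But $C_G(x)=\{aI+bN:a\neq 0\}$ has order $q(q-1)$. For $x$ of Jordan type $J_1\oplus J_2$ in $\GL_3(q)$, the true exponent is $3$ (with $U$ non-abelian), against the printed $2$. So your argument proves the statement with the corrected $m$ above, and that is what you should conclude. None of this affects the paper: \Cref{lem:induct_on_Anq} only uses that $U$ is a normal $p$-subgroup with $C_G(x)/U\cong R$.
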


\begin{corollary}\label{lem:induct_on_Anq}
   Let $(G_{m-2},G_{m})\in\{(\cala_4,\cale_6),(\cala_5,\cale_7),(\cale_6,\cale_8),(\cala_n,\cala_{n+2})\ n\geq 4\}$.  If $G_{m}$ admits a non-cyclic homomorphism to $\GL_n(q)$, then $G_{m-2}$ admits a non-cyclic homomorphism to $\GL_{n-1}(q^d)$ for some $d\leq n$.
\end{corollary}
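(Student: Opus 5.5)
Let $\alpha\colon G_m\to\GL_n(q)$ be a homomorphism with non-cyclic image, and let $s$ denote the last generator $s_n$ of $\cale_m$ (respectively $s_{n+2}$ of $\cala_{n+2}$). In each pair the parabolic subgroup $G_{m-2}$ is chosen so that it commutes with $s$: for $\cale_6$ it is $\langle s_1,s_2,s_3,s_4\rangle\cong\cala_4$, for $\cale_7$ it is $\langle s_1,\dots,s_5\rangle$ minus $s_2$, giving $\cala_5$ via $\langle s_1,s_3,s_4,s_5,s_2\rangle$-type choice, for $\cale_8$ it is $\langle s_1,\dots,s_6\rangle\cong\cale_6$, and for $\cala_{n+2}$ it is $\langle s_1,\dots,s_n\rangle$. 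So $\alpha(G_{m-2})\leqslant C_{\GL_n(q)}(x)$ where $x=\alpha(s)$. I will then apply \Cref{thm:GLnq_centraliser} to describe this centraliser as $U\rtimes R$, with $U$ a $p$-group and $R=\prod_{i,j}\GL_{l_{i,j}}(q^{d_i})$.

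First I check that $\alpha|_{G_{m-2}}$ is non-cyclic. Since $G_m$ is odd-labelled with collapsing Artin generators (\Cref{lem:artin-gen-collapsing} for $\cale_m$, \Cref{L:braidGenCollapse} for braid groups), \Cref{lem:E8_non_cyclic_E6} shows that a cyclic restriction to the irreducible parabolic $G_{m-2}$ would force $\alpha$ to be cyclic. Next I check that $x$ is not a scalar. If it were, it would be central in the image, and since $s$ satisfies an odd Artin relation with a neighbour, \Cref{L:gcdRelation} would make that neighbour's image equal to $x$, and collapsing would make $\alpha$ cyclic, exactly as in \Cref{lem:centraliser-proper-subgp}. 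Hence $x$ is not scalar, so every block satisfies $l_{i,j}d_i\le n$ with $l_{i,j}\le n-1$, and in fact $d_i\le n$.

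Now I pass from $C(x)$ to $R$. The unipotent radical $U$ is normal with quotient $R$, and $U$ is a $p$-group, hence solvable. If the composite $G_{m-2}\to C(x)\to R$ had cyclic image, the image of $G_{m-2}$ would be solvable, and so cyclic by \Cref{lem:a4-solvable-cyclic}; this is a contradiction, since $\cala_4,\cala_5,\cale_6,\cala_n$ ($n\ge4$) all have perfect commutator subgroups. So $G_{m-2}\to R=\prod\GL_{l_{i,j}}(q^{d_i})$ is non-cyclic. If every projection to a factor were cyclic, the image would be abelian and hence cyclic after all, again contradicting \Cref{lem:a4-solvable-cyclic}. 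Therefore some projection $G_{m-2}\to\GL_{l}(q^{d})$ is non-cyclic, with $l\le n-1$.

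Finally I embed $\GL_l(q^d)$ into $\GL_{n-1}(q^d)$ block-diagonally. A non-cyclic image stays non-cyclic under an injective map, which gives the conclusion with $d\le n$. The main obstacle is the non-scalar step, which is what guarantees $l\le n-1$. A secondary care point is the degenerate case $d_i>1$, where $l d_i\le n$ gives $l\le n/2\le n-1$ automatically. With the centraliser description in hand, the rest is bookkeeping.
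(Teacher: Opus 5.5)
Your proposal is correct and follows the paper's proof. You take a parabolic $G_{m-2}$ commuting with a leaf generator $s$, so that $\alpha(G_{m-2})$ lies in $C_{\GL_n(q)}(x)=U\rtimes R$ with $x=\alpha(s)$, described by \Cref{thm:GLnq_centraliser}. You rule out $x$ being scalar using the odd Artin relation with a neighbour together with collapsing. You then find a factor $\GL_{l}(q^{d})$ of $R$, with $l\le n-1$, on which $G_{m-2}$ has non-cyclic image. Your use of the solvability of $U$ and \Cref{lem:a4-solvable-cyclic} to pass first to $R$ and then to a single factor justifies a step the paper skips with ``in particular''.

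One correction for $\cale_7$. The generators commuting with $s_7$ span $\langle s_1,\dots,s_5\rangle$, which is of type $\cald_5$, and deleting $s_2$ from it leaves only $\cala_4$. So take $s=s_1$ instead: its centraliser contains the parabolic $\langle s_2,s_4,s_5,s_6,s_7\rangle\cong\cala_5$, and $s_1$ has the odd neighbour $s_3$. With that choice the rest of your argument goes through unchanged.
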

\begin{proof}
    Note that the centraliser of $s_{m}$ contains a group isomorphic to $G_{m-2}$. Suppose $\alpha\colon G_m\to \GL_n(q)$ has non-cyclic image.  Let the Jordan form of $\alpha(s_{m})$ be
    \[\bigoplus_{i=1}^h \left( \bigoplus_{l_{i,1}}B_{\lambda_{i,1}}\oplus\dots\oplus \bigoplus_{l_{i,k_i}}B_{\lambda_{i,k_i}} \right)\]
    where $\lambda_{i,1}<\dots < \lambda_{i,k_i}=e_i$ for each $i$.
    Now, by \Cref{thm:GLnq_centraliser} we have that $\alpha(G_{m-2})$ is contained in a subgroup of shape $U\rtimes R$, where $R$ is a product of groups isomorphic to $\prod_{i=1}^h\prod_{j=1}^{k_i}\GL_{l_{i,j}}(q^{d_i})$.  Moreover, $\alpha(G_{m-2})$ cannot be cyclic by \cref{lem:E8_non_cyclic_E6}.  In particular, there must be some $\pi_{i,j}\colon U\rtimes R \twoheadrightarrow \GL_{l_{i,j}}(q^{d_i})$ such that $\pi(\alpha(G_{m-2}))$ is non-cyclic.  Now, if some $l_{i,j}=n$, then $h=1$, $i=j=1$, and $\lambda_{1,1}=e_1=d_1=n$.  From this we conclude that $\alpha(s_m)$ must be central.  Since $s_m$ satisfies an odd length Artin relation with $s_{m-1}$, this implies the image of $\alpha$ is cyclic by \cref{lem:artin-gen-collapsing}.  We conclude that $l_{i,j}<n$ as required.
\end{proof}

\begin{corollary}\label{lem:induct_on_PGLnq}
   Let $(G_m,G_{m+2})\in\{(\cala_4,\cala_6),(\cala_5,\cala_7),(\cale_6,\cale_8),(\cala_n,\cala_{n+2})\text{ for } n\geq 4\}$. If $G_{m+2}$ admits a non-cyclic homomorphism to $\PGL_n(q)$, then $G_{m}$ admits a non-cyclic homomorphism to $\PGL_{n-1}(q^d)$ for some $d\leq n$.
\end{corollary}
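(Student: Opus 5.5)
We would argue exactly as in \Cref{lem:induct_on_Anq}, but first lift the projective problem to a linear one. Given a non-cyclic homomorphism $\alpha\colon G_{m+2}\to\PGL_n(q)$, we look at the image of the distinguished generator $s_{m+2}$, whose centraliser in $G_{m+2}$ contains the standard parabolic copy of $G_m$. Choose a preimage $x\in\GL_n(q)$ of $\alpha(s_{m+2})$. Every $g\in\alpha(G_m)$ commutes with $\alpha(s_{m+2})$ in $\PGL_n(q)$, so any lift $\tilde g$ satisfies $\tilde g x\tilde g^{-1}=\lambda_g x$ for some scalar $\lambda_g\in\FF_q^\times$. The map $g\mapsto\lambda_g$ is a homomorphism $\alpha(G_m)\to\FF_q^\times$, which is abelian. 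Since $G_m$ (a braid group on at least five strands, or $\cale_6$) has cyclic abelianisation and perfect commutator subgroup (\Cref{thm:comm-perfect}), the kernel $K$ of this homomorphism still contains the image of $G_m'$.

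The main obstacle is that $\alpha(G_m)$ itself need not centralise $x$ in $\GL_n(q)$. We would handle this by replacing $G_m$ with its commutator subgroup, or equivalently by noting that $g\mapsto\lambda_g$ factors through the cyclic group $G_m^{\ab}$. For the parabolic pairs with $m+2$ even one can also pick out a generator $s_{j}$ of $G_m$ (for instance, one of those commuting with $s_{m+2}$) and argue that $\lambda_{s_j}$ is trivial. That would follow because $s_j$ is conjugate within $G_m$ to a generator $s_k$ forming an odd Artin relation with it, and the scalar action is invariant under conjugation; however, this only shows that all generators of $G_m$ have the same $\lambda$. The cleaner route, which we would try first, is as follows. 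Let $\lambda=\lambda_{s_j}$, of order $r$ dividing $q-1$. Then $x$ is conjugate to $\lambda x$, so the eigenvalue multiset of $x$ over $\bar\FF_q$ is invariant under multiplication by $\lambda$. Replacing $x$ by a suitable polynomial expression, for example passing to the induced action on the $\langle\lambda\rangle$-orbit decomposition, reduces the problem to the case $\lambda=1$. If this proves messy, we fall back to working in $\PGL_n(q)$ with the preimage of $C_{\PGL_n(q)}(\alpha(s_{m+2}))$, which is an extension of $C_{\GL_n(q)}(x)$ by a cyclic group of order dividing $q-1$. Because every solvable quotient of $G_m$ is cyclic (\Cref{lem:a4-solvable-cyclic}), non-cyclicity of $\alpha(G_m)$ survives passage to $C_{\GL_n(q)}(x)/Z$, up to a cyclic extension.

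Once $\alpha(G_m)$, modulo cyclic pieces, lies in the image of $C_{\GL_n(q)}(x)\cong U\rtimes R$ from \Cref{thm:GLnq_centraliser}, we proceed as in \Cref{lem:induct_on_Anq}. The group $U$ is a $p$-group, hence solvable, so by \Cref{lem:a4-solvable-cyclic} some projection to a factor $\GL_{l_{i,j}}(q^{d_i})$, with $d_i\le n$, has non-cyclic image of $G_m$; composing further to $\PGL_{l_{i,j}}(q^{d_i})$ keeps it non-cyclic, because a central extension of a cyclic group is abelian (\Cref{L:central-by-cyclic}) and abelian images are cyclic. Finally, we need $l_{i,j}<n$. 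If $l_{i,j}=n$, then $x$ is scalar, so $\alpha(s_{m+2})$ is trivial in $\PGL_n(q)$. Since $s_{m+2}$ has an odd Artin relation with a neighbour, \Cref{L:gcdRelation} and \Cref{lem:artin-gen-collapsing} (or \Cref{L:braidGenCollapse} for braid groups) then force $\alpha$ to be cyclic, a contradiction. Embedding $\PGL_{l}(q^d)$ into $\PGL_{n-1}(q^d)$ in block form then gives the required map.
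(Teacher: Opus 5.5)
You correctly identify what the paper's one-line proof (``an easy consequence of \Cref{lem:induct_on_Anq}'') passes over. A lift $\tilde g$ of $g\in\alpha(G_m)$ only satisfies $\tilde g x\tilde g^{-1}=\lambda_g x$, so $\alpha(G_m)$ need not lie in $C_{\GL_n(q)}(x)/Z$. But the proposal never closes this case, and neither of your remedies works.

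Since $x$ is conjugate to $\lambda x$, its eigenvalues form a union of $\langle\lambda\rangle$-orbits, so $r=\mathrm{ord}(\lambda)$ divides $\gcd(n,q-1)$. The natural replacement $x^r$ is centralised by all lifts, but $x^r$ can be scalar. Then $C_{\GL_n(q)}(x^r)=\GL_n(q)$ gives no drop in dimension and no contradiction; it only says $\alpha(s_{m+2})^r=1$.

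The fallback does not produce a homomorphism of $G_m$. An element with $\lambda_g\neq 1$ sends the generalised $\mu$-eigenspace of $x$ to the $\lambda_g^{-1}\mu$-eigenspace. It therefore permutes the factors $\GL_{l_{i,j}}(q^{d_i})$ of $R$, so $\alpha(G_m)$ has no projection onto a single factor. All you obtain is a non-cyclic homomorphism of the index-$r$ subgroup $\ker(\lambda\circ\alpha|_{G_m})$, or of $G_m'$. ``Non-cyclic up to a cyclic extension'' is not what the statement asks for.

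The case $\lambda\neq 1$ genuinely occurs. Compose $\cala_6\to\sym(7)$ with a spin representation $\sym(7)\to\PGL_N(q)$, $q$ odd. In the double cover, lifts of disjoint transpositions anticommute. So every generator of $\cala_4=\langle s_1,\dots,s_4\rangle$ has $\lambda=-1$ against a lift $x$ of $\alpha(s_6)$. Moreover $x^2$ is scalar, lifts of the odd elements of $\alpha(\cala_4)\cong\sym(5)$ swap the two eigenspaces of $x$ over $\overline{\FF}_q$, and only $\alt(5)$ lies in $C_{\GL_N(q)}(x)/Z$.

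When $\gcd(n,q-1)=1$, $\lambda=1$ is forced and your argument goes through, up to the last point below. For $n=2$ the issue can be bypassed, because the preimage of $C_{\PGL_2(q)}(\bar x)$ is abelian-by-cyclic, hence solvable. In general you still need an argument for $x$ semisimple with a single $\langle\lambda\rangle$-orbit of eigenvalues.

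Finally, the last step is wrong as written. The block embedding $\GL_l\to\GL_{n-1}$ does not send scalars to scalars, so it induces no map $\PGL_l(q^d)\to\PGL_{n-1}(q^d)$. Even when $\lambda=1$, $G_m$ maps only to $C_{\GL_n(q)}(x)/Z$, so what you actually get is a non-cyclic map to $\PGL_l(q^d)$ for some $l\le n-1$.
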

\begin{proof}
    This is an easy consequence of \Cref{lem:induct_on_Anq}.
\end{proof}

\begin{remark}
    One could in principle formulate similar results in two ways.  Firstly, with the homomorphism domain being a braid group.  Secondly, with target other finite classical groups using the results of \cite{DeFranceschiLiebeckOBrien2025}.
\end{remark}

% \katie{commented out a "to possibly add later" corollary noted by Sam, feel free to add it back if there's time}
%\begin{corollary}
%    Maybe a result about gln quotients of braid groups, tbd later
%\end{corollary}

\begin{lemma}\label{lem:E6_PSL2q_murder}
    If $\alpha\colon \cale_6 \to \PGL_2(q)$ is a homomorphism, then $\alpha$ has cyclic image.
\end{lemma}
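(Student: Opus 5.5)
Since $\cale_6$ has cyclic abelianisation and perfect commutator subgroup (\Cref{thm:comm-perfect}), \Cref{lem:a4-solvable-cyclic} shows that any homomorphism to a solvable group has cyclic image. So I may assume $\alpha(\cale_6)$ is a non-solvable subgroup of $\PGL_2(q)$, and the plan is to rule this out using Dickson's classification of subgroups of $\PGL_2(q)$.

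By Dickson, a subgroup of $\PGL_2(q)$ is one of the following: a subgroup of a Borel subgroup $q:(q-1)$, which is solvable; a dihedral group, which is solvable; a copy of $\alt(4)$ or $\sym(4)$, which is solvable; a copy of $\alt(5)$; or a copy of $\PSL_2(q_0)$ or $\PGL_2(q_0)$ for a subfield $\FF_{q_0}\subseteq\FF_q$. Hence the non-solvable image is either $\alt(5)$ or a group $H$ with $\PSL_2(q_0)\leq H\leq\PGL_2(q_0)$ and $q_0\geq 4$. In every case the image embeds in some $\PGL_2(q_0)$ (using $\alt(5)\cong\PSL_2(4)$).

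Now use element orders. By \Cref{lem:E6_elemOrders}, any non-cyclic finite quotient of $\cale_6$ contains elements of orders $8$, $9$ and $12$. In $\PGL_2(q_0)$ every element of order coprime to $p$ lies in a cyclic torus of order $q_0-1$ or $q_0+1$, and every $p$-element has order $p$. So the order-$9$ and order-$12$ elements force $9 \mid q_0\pm1$ (or $p=3$, but then order $9$ is impossible since $p$-elements have order $p$). Likewise they force $12\mid q_0\pm 1$; here the element of order $12$ cannot involve $p$, since an element of mixed order $p\cdot m$ with $m>1$ does not exist in $\PGL_2$ except in characteristic $2$, where $12$ is again impossible. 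Similarly $8\mid q_0\pm1$. Because $q_0-1$ and $q_0+1$ have gcd dividing $2$, the odd part $9$ lies entirely in one of them, and the $4$- and $8$-parts must lie in a single one too. Moreover, $3\mid 12$ and $9$ cannot divide both $q_0-1$ and $q_0+1$, so the order-$12$ element sits in the same torus as the order-$9$ element. This alone does not give a contradiction: for example $q_0=71$ satisfies $72\mid q_0+1$. So element orders alone are insufficient.

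The main obstacle is therefore general $q$, and for it I would switch to the centraliser argument of \Cref{lem:induct_on_PGLnq}. Take $n=2$ and lift to $\GL_2$. The standard parabolic $\cala_4=\langle s_1,s_3,s_2,s_4\rangle$ commutes with $s_6$, and by \Cref{lem:E8_non_cyclic_E6} its image is non-cyclic. It therefore lies in the centraliser of the non-central element $\alpha(s_6)$ (which is non-central by \Cref{lem:centraliser-proper-subgp}). In $\PGL_2(q)$ such a centraliser is contained in a torus normaliser, i.e.\ a dihedral group, or in a Borel subgroup. Both are solvable. Then \Cref{lem:a4-solvable-cyclic} applied to $\cala_4$ forces $\alpha(\cala_4)$ to be cyclic, a contradiction.

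The one delicate point is the claim about centralisers in $\PGL_2(q)$. An involution's centraliser is $Dh_{q\pm1}$, an element of order $p$ has centraliser an elementary abelian $p$-group (times nothing further), and a semisimple element has centraliser a torus or torus normaliser. This is standard, and it makes the element-order analysis unnecessary.
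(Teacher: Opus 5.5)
Your final argument is correct and is essentially the paper's proof: the image of the parabolic $\cala_4=\langle s_1,s_3,s_2,s_4\rangle$ is non-cyclic by \Cref{lem:E8_non_cyclic_E6}, it lies in the centraliser of the non-trivial element $\alpha(s_6)$, that centraliser is solvable, and \Cref{lem:a4-solvable-cyclic} then gives the contradiction. The only difference is that you check directly that centralisers of non-trivial elements of $\PGL_2(q)$ are solvable, including the dihedral centralisers of involutions that arise because lifts need only be conjugate up to $\pm1$, whereas the paper cites \Cref{lem:induct_on_PGLnq}; your opening Dickson and element-order discussion is unnecessary, as you yourself note.
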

\begin{proof}
By \Cref{lem:induct_on_PGLnq}, it suffices to check that every homomorphism $\cala_4\to\PGL_1(q^d)$ for every $d\geq 1$ has cyclic image. The result then follows from the fact that $\PGL_1(q^d)$ is solvable and \cref{lem:a4-solvable-cyclic}.
\end{proof}

\begin{remark}
Note that $\Aut(\PSL_2(q))\cong\PGL_2(q):\Aut(\FF_q)$.  It thus follows from the previous lemma that the only groups in the family $\mathtt A_1$ we need to rule out are $\Aut(\PSL_2(q))$ when $\FF_q$ admits a non-trivial automorphism. This occurs exactly when $q$ is one of $8,16,27$, or $32$.
\end{remark}

\begin{lemma}\label{lem:E6_aut_various_PSL2q_murder}
	If $\alpha\colon \cale_6 \to \Aut(\PSL_2(q))$ with $q=8,16,27,32$ is a homomorphism, then $\alpha$ has cyclic image.
\end{lemma}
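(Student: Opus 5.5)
We use the structure $\Aut(\PSL_2(q))\cong\PGL_2(q):\Aut(\FF_q)$, where $\Aut(\FF_q)$ is cyclic of order $k$ with $q=p^k$; so $k=3,4,3,5$ for $q=8,16,27,32$. Let $\alpha\colon\cale_6\to\Aut(\PSL_2(q))$ be a homomorphism and let $\pi\colon\Aut(\PSL_2(q))\to\Aut(\FF_q)$ be the quotient with kernel $\PGL_2(q)$. The composite $\pi\circ\alpha$ has abelian image, so by \Cref{L:braidCollapse} (or simply because all generators of $\cale_6$ are conjugate) it sends every generator $s_i$ to a common element $\phi\in\Aut(\FF_q)$. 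Hence $\alpha(\cale_6')\leqslant\PGL_2(q)$.

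\textbf{Restricting to the commutator subgroup.} The image $\alpha(\cale_6')$ is a perfect subgroup of $\PGL_2(q)$, since $\cale_6'$ is perfect by \Cref{thm:comm-perfect}. I would show it is trivial in two ways and use whichever is cleaner.

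The first route is by subgroup structure. The only non-trivial perfect subgroups of $\PGL_2(q)$ for these $q$ are $\PSL_2(q_0)$ for subfields $\FF_{q_0}\subseteq\FF_q$, together with possibly $\alt(5)$ in the relevant characteristics (Dickson). So if $\alpha(\cale_6')\neq 1$, then $\alpha(\cale_6)$ is a group of shape $P.\,c$ with $P$ one of these and $c$ cyclic of order dividing $k\le5$. Orders of elements in such groups are small: they divide $p$, $(q\pm1)$, or products of these with $c$. \Cref{lem:E6_elemOrders} forces elements of orders $8$, $9$ and $12$, and I would check in each of the four cases that these orders cannot all occur. For example, $\Aut(\PSL_2(8))=\PSL_2(8):3$ has element orders $1,2,3,6,7,9$ and no element of order $8$. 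In $\Aut(\PSL_2(16))$ the orders $8$ and $12$ must be checked, and I expect the absence of $9$ to be enough there. In $\Aut(\PSL_2(27))$ and $\Aut(\PSL_2(32))$ I would exclude the order $8$ or $12$ similarly from the ATLAS.

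The second route is more conceptual and avoids the order check. Write $\alpha(s_i)=g_i\phi$ with $g_i\in\PGL_2(q)$. The subgroup $\langle s_1,s_2,s_3,s_4\rangle\cong\cala_4$ centralises $s_6$. If $\phi$ is trivial, then $\alpha$ lands in $\PGL_2(q)$ and we are done by \Cref{lem:E6_PSL2q_murder}. Otherwise the centraliser of $\alpha(s_6)$, an element of $\PGL_2(q)\phi$ of outer type, is contained in a group of the form $\PGL_2(q_0)\times\langle\cdot\rangle$ or smaller, with $q_0<q$ a subfield, by Lang--Steinberg. Then $\cala_4$ maps into this centraliser, and by \Cref{prop:braidParabolic} together with \Cref{lem:E8_non_cyclic_E6} its image would need order at least $4!$ and be non-solvable. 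This places an $\alt(5)$ or $\sym(5)$ image inside $\PGL_2(q_0)$, which is too small or of the wrong structure when $q_0\in\{2,4,3,2\}$.

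\textbf{Main obstacle.} The hardest step is the centraliser of a field-automorphism coset element in the second route. For this reason I would primarily rely on the element-order route via \Cref{lem:E6_elemOrders}, verified against the ATLAS character tables of $\PSL_2(q).k$. Once a required order is shown to be absent, \Cref{lem:E6_elemOrders} gives that $\alpha$ has cyclic image.
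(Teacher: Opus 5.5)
Your proposal is correct. For $q=8,16,32$ it is the paper's argument: a non-cyclic image would contain elements of orders $8$, $9$ and $12$ by \Cref{lem:E6_elemOrders}, and $\Aut(\PSL_2(q))$ lacks one of them. The paper uses $12$ for $q=8$, $9$ for $q=16$, and $9$ or $12$ for $q=32$; your $8$ works equally well for $q=8$ and $q=32$.

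The genuine difference is $q=27$. The paper calls this case ``more involved'' and drops element orders. Instead it uses \cref{lem:centraliser-proper-subgp} to make $\alpha(\cala_4)$ a proper subgroup of $\PSL_2(27).k$. It then notes that the maximal subgroups are soluble or equal to $\PSL_2(27)$, which centralises nothing, and bootstraps from $k=2,3$ to $k=6$.

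Your element-order route does succeed for $q=27$, but only via the order $8$, so you should say so explicitly rather than ``$8$ or $12$''. The group $\Aut(\PSL_2(27))\cong\PGL_2(27){:}3$ does contain elements of orders $9$ and $12$: these are outer elements whose cubes correspond, by Lang--Steinberg, to elements of order $3$ or $4$ in $\PGL_2(3)\cong\sym(4)$. It has no element of order $8$, however. Every $2$-element lies in the normal subgroup $\PGL_2(27)$ of index $3$, and element orders there divide $26$ or $28$ or equal $3$.

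So your approach gives a uniform and shorter proof of all four cases. For $q=27$ it relies on the order-$8$ clause of \Cref{lem:E6_elemOrders}, i.e.\ the appendix computation for $\epsilon_8$. The paper's $q=27$ argument is longer but avoids that clause, and it rehearses the centraliser technique reused later for $\cale_8$.

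Two smaller points.

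\emph{The Dickson detour.} It is unnecessary: you only need element orders in $\Aut(\PSL_2(q))$ itself. Its intermediate claim that $\alpha(\cale_6)$ has shape $P.c$ with $c\mid k$ is also inaccurate; for $q=27$ the image can be $\PSL_2(27).6$.

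\emph{The second route.} It is too loose as written. For $q=16$ with $\phi$ of order $2$ you get $q_0=4$, and $\PGL_2(4)\cong\alt(5)$ is not ``too small''. Moreover, the centraliser of an outer element is a cyclic extension of a centraliser in $\PGL_2(q_0)$, not a subgroup of $\PGL_2(q_0)\times\langle\cdot\rangle$. Making it rigorous would need Kolay's theorem and more care.

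Neither point affects the first route, which suffices on its own.
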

\begin{proof}
Let $G=\Aut(\PSL_2(q))$ and $Q=\PSL_2(q)$. We can resolve three cases with element order considerations.
If $q=8$, then $G$ does not contain an element of order 12 \cite[pg.~6]{ATLAS}.
If $q=16$, then $G$ does not admit an element of order 9 \cite[pg.~12]{ATLAS}.
If $q=32$, then $G$ does not admit an element of order 9 or 12 \cite[pg.~29]{ATLAS}.
In any of the these cases, we conclude by \cref{lem:E6_elemOrders}.

The case $q=27$ is more involved.  In this case, $G=\PSL_2(27).6$. We claim that every homomorphism $\alpha\colon \cale_6\to Q.k$ for $k=2,3$ is cyclic.  Indeed, the image of $\cala_4$ must be a proper subgroup of $Q.k$ by \cref{lem:centraliser-proper-subgp}. By \cite[Page~18]{ATLAS}, the maximal subgroups of $Q.k$ are soluble groups of shapes $3^3:26$, $D_{56}$, $D_{52}$, $\sym(4)$, $3^3:13:3$, $28:6$, $26:6$, $\sym(4)\times 3$, and the non-soluble group $Q$.  

The image of $\cala_4$ cannot be one of the soluble groups because this would force its image to be cyclic by \cref{lem:a4-solvable-cyclic}, and the image of $\alpha$ would then be cyclic by \cref{lem:E8_non_cyclic_E6}. Thus, the image must be $Q$. Observe, however, that $\alpha(\cala_4)$ centralises $\alpha(s_1)$, and $Q$ does not centralise any non-trivial elements of $Q.k$. It follows that $Q$ cannot equal $\alpha(\cala_4)$. Since every maximal subgroup of $Q$ is solvable, we conclude that $\alpha$ has cyclic image.  

We now repeat the same argument for $G$. The maximal subgroups of $G$ are the soluble groups of shape $3^3:26:3$, $28:6$, $26:6$, $\sym(4)\times3$ as well as $Q.2$ and $Q.3$.  By the same reasoning as before, the image of $\cala_4$ cannot be $Q.2$ or $Q.3$ or $Q$. Since every other possible image is soluble, we conclude that $\alpha$ has cyclic image.
\end{proof}

\begin{lemma}\label{lem:E6_PSL33_murder}
    Let $Q=\PSL_3(3)$.  If $\alpha\colon\cale_8\to\Aut(Q)$ is a homomorphism, then $\alpha$ has cyclic image.
\end{lemma}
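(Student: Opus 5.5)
The plan is to use the element-order obstruction of \cref{lem:E6_elemOrders} and then pass from $\cale_8$ to its standard parabolic $\cale_6$. If the statement is read with $\cale_6$ in place of $\cale_8$, the second step is simply omitted.

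First I record the structure of $\Aut(Q)$. Since $\Out(\PSL_3(3))\cong 2$, generated by the graph (inverse-transpose) automorphism, we have $\Aut(Q)=\PSL_3(3):2$, of order $11{,}232$. I then read off the element orders from the ATLAS.
\begin{itemize}
\item The element orders of $\PSL_3(3)$ are $1,2,3,4,6,8,13$. A Sylow $3$-subgroup is the unitriangular group $U_3(3)$, which is extraspecial of order $27$ and exponent $3$, so $Q$ has no element of order $9$.
\item Elements of $\Aut(Q)\setminus Q$ have even order, and their squares lie in $Q$. Hence an element of order $9$ in $\Aut(Q)$ would have to lie in $Q$, which is impossible.
\end{itemize}
So $\Aut(Q)$ contains no element of order $9$. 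I would confirm this against the ATLAS class list for $\PSL_3(3).2$, page 13, whose outer classes have orders $2,4,6,8,12$.

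Now let $\alpha\colon\cale_6\to\Aut(Q)$ be a homomorphism, and replace $\Aut(Q)$ by the image of $\alpha$. If the image were non-cyclic, it would be a finite non-cyclic quotient of $\cale_6$. By \cref{lem:E6_elemOrders} it would then contain an element of order $9$, contradicting the previous paragraph. Hence $\alpha$ has cyclic image.

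For $\cale_8$, let $\alpha\colon\cale_8\to\Aut(Q)$ and restrict $\alpha$ to the standard parabolic subgroup $\langle s_1,\dots,s_6\rangle\cong\cale_6$. By the previous paragraph this restriction has cyclic image. The Artin generators of $\cale_8$ form a collapsing set by \cref{lem:artin-gen-collapsing}, the diagram is odd-labelled, and the parabolic is irreducible. So \cref{lem:E8_non_cyclic_E6} applies and shows that $\alpha$ itself has cyclic image.

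The only step needing care is the element-order check: that the outer coset of $\PSL_3(3):2$ contributes no elements of order $9$. The parity argument above handles this without consulting tables. If instead one wanted to argue via \cref{lem:induct_on_PGLnq}, that route would not suffice on its own, because the graph automorphism does not lie in $\PGL_3(3)=\PSL_3(3)$. This is why I use element orders.
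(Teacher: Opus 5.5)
Your proof is correct, but it follows a different route from the paper's.

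\textbf{The paper's route.} It uses element orders only to show that the image is not contained in $Q$, via the absence of elements of order $12$ in $\PSL_3(3)$. It then argues structurally:
\begin{enumerate}
\item $\alpha(\cala_4)$ centralises $\alpha(s_1)$, so it is a proper subgroup of $\Aut(Q)$, because $\Aut(Q)$ has trivial centre.
\item $\alpha(\cala_4)$ cannot equal $Q$, since $Q$ centralises no non-trivial element of $\Aut(Q)$.
\item Every other maximal subgroup of $\PSL_3(3):2$ is soluble, so \cref{lem:a4-solvable-cyclic} and \cref{lem:E8_non_cyclic_E6} finish the argument.
\end{enumerate}

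\textbf{Your route.} You observe that $\Aut(Q)$ has no elements of order $9$ at all. A Sylow $3$-subgroup is unitriangular of exponent $3$, and elements of the outer coset have even order. So \cref{lem:E6_elemOrders} applies to all of $\Aut(Q)$ at once. This is the same device the paper itself uses for $\Aut(\SU_3(3))$ and $\Aut({}^2\mathtt B_2(8))$ in \cref{lem:E6_order_murder}.

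\textbf{Comparison.} Your argument is shorter and needs no maximal-subgroup tables. It also avoids a point the paper's write-up skips: $\alpha(\cala_4)$ might be a proper subgroup of $Q$ itself, rather than lying in another maximal subgroup of $\Aut(Q)$. This is harmless, because all maximal subgroups of $\PSL_3(3)$ are soluble, but the paper does not say so. The paper's centraliser argument is the more robust template, since it still works when the target does contain the forbidden element orders. That is why the paper uses it throughout.

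\textbf{The $\cale_6$ versus $\cale_8$ reading.} You were right to handle both readings, passing to $\cale_8$ via \cref{lem:E8_non_cyclic_E6}. The paper's proof uses $\cala_4\le C(s_1)$, and its later citations (the proof of \cref{thm:E6} and \cref{lem:E8_PSL39_murder}) apply the lemma to $\cale_6$. So the lemma is really about $\cale_6$.
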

\begin{proof}
    We first note that $\alpha$ cannot have image contained in $Q$ because $Q$ does not contain an element of order $12$  \cite[Page~13]{ATLAS}.  We now consider the image of $\cala_4$ which centralises $s_1$.  Since $G=\Aut(\PSL_3(3))$ has trivial centre and $\alpha(s_1)$ is non-trivial, we must have that $\alpha(\cala_4)$ is a proper subgroup of $G$. Thus, $\alpha(\cala_4)$ is contained in a maximal subgroup. This maximal subgroup cannot be $Q$ because $Q$ does not centralise any non-trivial elements. The remaining maximal subgroups of $G$ are soluble, so we conclude by \cref{lem:a4-solvable-cyclic}.
\end{proof}

\subsection{Excluding other groups}

\begin{lemma}\label{lem:E6_order_murder}
    Let $Q =\PSL_3(4)$, $\SU_3(3)$, $^2\mathtt B_2(8)$, or $M_{11}$, and let $\alpha\colon\cale_6/Z(\cale_6) \to \Aut(Q)$ be a homomorphism. The image $\alpha(\cale_6 /Z(\cale_6))$ is cyclic.
\end{lemma}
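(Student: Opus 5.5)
The plan is to exclude each $Q$ by element orders. Composing with the quotient map $\cale_6\to\cale_6/Z(\cale_6)$, any such $\alpha$ gives a homomorphism $\bar\alpha\colon\cale_6\to\Aut(Q)$. Suppose the image is non-cyclic. Its image is then a finite non-cyclic quotient of $\cale_6$, so by \Cref{lem:E6_elemOrders} it contains elements of orders $8$, $9$ and $12$. Since this image is a subgroup of $\Aut(Q)$, it suffices to show that for each of the four groups, $\Aut(Q)$ has no element of order $8$, or none of order $9$, or none of order $12$.

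I would then consult the ATLAS for the element orders in each automorphism group.

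\begin{itemize}
\item $Q=M_{11}$: here $\Aut(Q)=Q$ since $\Out(M_{11})=1$. The element orders of $M_{11}$ are $1,2,3,4,5,6,8,11$, so there is no element of order $9$ or $12$.
\item $Q={}^2\mathtt B_2(8)$, of order $29{,}120=2^6\cdot5\cdot7\cdot13$: here $\Aut(Q)=Q:3$ has order divisible by $3$ only once. Hence its Sylow $3$-subgroups have order $3$, and there is no element of order $9$.
\item $Q=\SU_3(3)$: here $\Aut(Q)=\SU_3(3):2\cong G_2(2)$. Its $3$-elements have orders $3$ and $6$ but not $9$ (ATLAS, p.~14), which excludes this case.
\item $Q=\PSL_3(4)$: here $\Aut(Q)=\PSL_3(4):(2\times\sym(3))$. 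Its Sylow $3$-subgroup has order $9$, coming from $3^2$ in $\PSL_3(4)$ together with the field/diagonal part. I expect the ATLAS (p.~23) to show that no element of order $9$ occurs in any extension $\PSL_3(4).3$ or $\PSL_3(4).6$, the orders arising being at most $3$, $6$, $7$, $15$, $21$, and so on.
\end{itemize}

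In each case some order among $8$, $9$, $12$ is missing from $\Aut(Q)$, and \Cref{lem:E6_elemOrders} then forces $\bar\alpha$, and hence $\alpha$, to have cyclic image.

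The main obstacle is the $\PSL_3(4)$ case. Its outer automorphism group has order $12$, so in principle an element of order $9$ could arise as an outer element cubing into an inner element of order $3$. I would check the ATLAS class lists of $\PSL_3(4).3$ and $\PSL_3(4).6$ carefully. If an element of order $9$ does exist, I would instead use order $8$: in $\PSL_3(4)$ and its extensions the $2$-elements have order at most $4$, with outer $2$-elements giving order $8$ only in some extensions. Failing that, I would argue as in \Cref{lem:E6_PSL33_murder}, using the centraliser of $\alpha(s_1)$ and maximal subgroups to show that the image of $\cala_4$ is soluble.
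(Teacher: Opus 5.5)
Your proof is the paper's argument. In each case $\Aut(Q)$ has no element of order $9$ (or of order $12$, for $M_{11}$), which contradicts \Cref{lem:E6_elemOrders}, and your ATLAS facts for $M_{11}$, ${}^2\mathtt B_2(8)$ and $\SU_3(3)$ are the ones the paper uses. The paper's written proof never treats $\PSL_3(4)$, and your expectation there is correct: $\Out(\PSL_3(4))\cong 2\times\sym(3)$ has a unique subgroup of order $3$, so every $3$-element of $\Aut(\PSL_3(4))$ lies in $\PGL_3(4)$, whose Sylow $3$-subgroup has order $27$ (not $9$) and is $\He_3(3)$ of exponent $3$, as the paper itself records in \Cref{lem:E7-non-A1-murder}. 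Directly, an element of $\GL_3(4)$ of order $9$ has eigenvalues $\zeta,\zeta^4,\zeta^{16}$ for a primitive ninth root of unity $\zeta\in\FF_{64}$, so its cube is the scalar $\zeta^3\in\FF_4$. So order $9$ settles the case, whereas your order-$8$ fallback would have failed, since $\PSL_3(4)$ extended by the field automorphism does contain elements of order $8$.
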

\begin{proof}
    Using the Atlas, one can verify that the groups $\Aut(\SU_3(3)) \cong G_2(2)$ and $\Aut({}^2\mathtt B_2(8)) \cong {}^2\mathtt B_2(8).3$ have no elements of order 9 \cite[~pg. 14, ~pg. 28]{ATLAS}, and $\Aut(M_{11}) \cong M_{11}$ has no elements of order 12 \cite[~pg. 18]{ATLAS}. The statement then follows from Lemma \ref{lem:E6_elemOrders}. 
\end{proof}

\begin{lemma}\label{lem:E6_PSp43_murder}
    If $\alpha\colon \cale_6 \to \PSp_4(3):2 $ is a homomorphism, then it has cyclic image.
\end{lemma}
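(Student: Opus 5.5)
The plan follows the pattern of \Cref{lem:E6_PSL33_murder} and \Cref{lem:E6_aut_various_PSL2q_murder}: control the image of the parabolic subgroup $\cala_4\cong\langle s_1,s_3,s_2,s_4\rangle$, which centralises $s_6$. Write $G=\PSp_4(3):2$ and $Q=\PSp_4(3)$. Suppose $\alpha\colon\cale_6\to G$ has non-cyclic image. By \cref{lem:E8_non_cyclic_E6}, $\alpha(\cala_4)$ is non-cyclic, and by \cref{lem:a4-solvable-cyclic} it is then non-soluble. By \cref{lem:centraliser-proper-subgp}, $\alpha(\cala_4)$ is a proper subgroup of $\alpha(\cale_6)$ and lies in $C_G(\alpha(s_6))$, with $\alpha(s_6)\neq 1$ and $\alpha(s_6)$ not central.

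The first step is to use the ATLAS to list the centralisers of non-trivial elements of $G$ and $Q$, and to discard those that are soluble. I expect the non-soluble centralisers to be exactly the involution centralisers of shape $2\times\sym(6)$, $2\times\alt(6)$ and related groups, together with the order-$3$ centralisers involving $3\times\PSL_2(\cdot)$ or $\alt(5)$-sections. For each surviving centraliser $C$, I will apply \Cref{lem:A4toSym6}, or Kolay's \Cref{kolay} together with \Cref{prop:braidParabolic}, to $\alpha(\cala_4)$. Since $|\alpha(\cala_4)|\le|C|$, the non-soluble image must be $\sym(5)$ and must factor through $W(\cala_4)$. In particular, $\alpha(s_1)$ is then an involution. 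Because all Artin generators of $\cale_6$ are conjugate, every $\alpha(s_i)$ is an involution, and so $\alpha$ factors through $W(\cale_6)$.

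The second step is to analyse homomorphisms $W(\cale_6)\to G$ that are non-cyclic. The group $W(\cale_6)$ has the simple normal subgroup $W^+\cong Q$ of index $2$, so its only normal subgroups are $1$, $W^+$ and $W(\cale_6)$. Hence any such homomorphism is injective, and since $|W(\cale_6)|=|G|$ it is an isomorphism. The remaining task is therefore to rule out this surjective case using the wording of the statement. This step is the main obstacle, since $W(\cale_6)\cong\PSp_4(3):2$ abstractly. I would first check which extension $\PSp_4(3):2$ the paper intends, namely the one appearing in \Cref{lem:min_quotient_En} as a candidate $\Aut(S)$ \emph{other} than the Coxeter quotient. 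I would also check whether the lemma is to be applied only to images strictly smaller than $|W(\cale_6)|$. If neither reading removes the surjection, I would record that the argument proves every non-cyclic homomorphism is this Coxeter surjection. That is exactly what the subsequent proof of the $\cale_6$ case requires, and it is the form in which I would feed the lemma back into \Cref{lem:E6_hitlist}.
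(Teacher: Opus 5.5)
Your diagnosis is right: the statement is false as written. The Coxeter quotient $\cale_6\to W(\cale_6)$ is a non-cyclic homomorphism onto $\PSU_4(2):2\cong\PSp_4(3):2$ (\Cref{thm:E6} and the remark after it). No choice of extension rescues the lemma. Since $\Out(\PSp_4(3))\cong 2$, the group $\Aut(\PSp_4(3))\cong W(\cale_6)$ is the only almost simple group of that shape. Nor is the lemma meant only for images of order below $|W(\cale_6)|$: the proof of \Cref{thm:E6} invokes it to conclude that the image equals $\PSU_4(2):2$.

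The paper's own argument proves only this corrected statement. It takes $\PSp_4(3)\le\alpha(\cale_6)$ from \Cref{lem:min_quotient_En}. It then rules out $\alpha(\cale_6)=\PSp_4(3)$, because $\PSp_4(3)$ has no element of order $8$ (the element $\epsilon_8$ of \Cref{lem:E6_elemOrders}). So its step ``thus it suffices to show there is no non-cyclic homomorphism to $\PSp_4(3)$'' yields only that a non-cyclic image is all of $\PSp_4(3):2$. That is exactly how the lemma is used in \Cref{thm:E6}.

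Your route is genuinely different and proves more. By working in $C_G(\alpha(s_6))\supseteq\alpha(\cala_4)$ and then using that the normal subgroups of $W(\cale_6)$ are $1$, $W^+$ and $W(\cale_6)$, you show every non-cyclic $\alpha$ is the Coxeter surjection up to an automorphism of the image. That is the uniqueness half of \Cref{thm:E6}, whereas the paper's one-line order-$8$ check gives only surjectivity.

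Your first step can be made case-free.
\begin{itemize}
\item Every non-soluble subgroup of $W(\cale_6)$ has order dividing $2^7\cdot3^4\cdot5$, so by Burnside's $p^aq^b$ theorem its order is divisible by $5$.
\item An element of order $5$ has centraliser of order $10$, generated by itself and a commuting reflection.
\item Hence the only non-trivial elements with non-soluble centraliser are the reflections, with centraliser $2\times\sym(6)$.
\end{itemize}
So $\alpha(s_6)$ is forced to be a reflection, and $\alpha$ factors through $W(\cale_6)$ at once. The order-$3$ centralisers you anticipate do not occur. You also do not need the inference ``$|\alpha(\cala_4)|\le|C|$ forces $\sym(5)$'', which is too loose as stated: order alone does not exclude, for example, $2\times\sym(5)$.
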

\begin{proof}
    Recall that we must have $\PSp_4(3) \leq \alpha(\cale_6)$ by Lemma \ref{lem:min_quotient_En}. Thus it suffices to show that there is no non-cyclic homomorphism $\alpha\colon\cale_6 \to \PSp_4(3)$. The group $\PSp_4(3)$ does not have any elements of order 8 \cite[~pg. 27]{ATLAS}. Thus we conclude by \cref{lem:E6_elemOrders}.
\end{proof}

\begin{lemma}\label{lem:E6_PSL27_2_murder}
    If $\alpha\colon \cale_6 \to \Aut(\PSL_2(7)^2)\wr2 $ is a homomorphism, then is has cyclic image.
\end{lemma}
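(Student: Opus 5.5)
The plan is an element-order argument, the same one used in \Cref{lem:E6_aut_alt}. By \Cref{lem:E6_elemOrders}, any non-cyclic finite quotient of $\cale_6$ contains elements of order $8$, $9$ and $12$. Since $\alpha(\cale_6)$ is a quotient of $\cale_6$, it suffices to show that the target group $G=\Aut(\PSL_2(7)^2)\wr 2$ has no element of order $9$. Then the image has no element of order $9$, so it is cyclic.

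\textbf{Step 1: $|G|$ is not divisible by $9$.} Recall $|\PSL_2(7)|=168=2^3\cdot 3\cdot 7$ and $\Out(\PSL_2(7))\cong 2$, so $\Aut(\PSL_2(7))\cong \PGL_2(7)$ has order $336=2^4\cdot3\cdot7$. It follows that
\[\Aut(\PSL_2(7)^2)\cong \PGL_2(7)\wr 2,\]
and hence $G\cong(\PGL_2(7)\wr2)\wr2$. This group is a $2$-group extension of $\PGL_2(7)^4$. Its order is $336^4\cdot 2^3$, so its $3$-part is $3^4$. That alone does not exclude an element of order $9$, so a finer argument is needed.

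\textbf{Step 2: the $3$-part is elementary abelian.} Let $g\in G$ and write $N=\PGL_2(7)^4$. The quotient $G/N$ is a $2$-group, so if $g$ has $3$-power order then its image in $G/N$ is trivial, and $g\in N$. An element of $N$ has order equal to the lcm of the orders of its coordinates. Each $\PGL_2(7)$ has Sylow $3$-subgroup of order $3$, so every $3$-element of $\PGL_2(7)$ has order $1$ or $3$. Therefore every $3$-element of $N$, and hence of $G$, has order dividing $3$.

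\textbf{Step 3: conclude.} By Step 2, $G$ has no element of order $9$. The same check can also be phrased directly: the element $\alpha(\epsilon_9)$ would have to have order divisible by $9$, which is impossible. Hence $\alpha$ has cyclic image by \Cref{lem:E6_elemOrders}.

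The only step that needs care is justifying $\Aut(\PSL_2(7)^2)\cong\Aut(\PSL_2(7))\wr2$. This is the standard description of the automorphism group of a direct power of a non-abelian simple group. It is also used implicitly in \Cref{lem:E6_aut_alt}, so I expect no real obstacle.
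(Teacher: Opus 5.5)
Your proposal is correct and follows the paper's proof: both show that $(\Aut(\PSL_2(7))\wr 2)\wr 2$ has no element of order $9$ and then apply \Cref{lem:E6_elemOrders}. Your Step 2 makes explicit what the paper only asserts with ``it follows that''. A $3$-element must lie in the base group $\PGL_2(7)^4$ because the head is a $2$-group, and each of its coordinates has order dividing $3$.
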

\begin{proof}
    By \cite[pg.3]{ATLAS}, the group $\Aut(\PSL_2(7))\cong \PSL_2(7).2$ has no elements of order $9$.  It follows that $\Aut(\PSL_2(7)^2)\cong (\PSL_2(7).2)\wr 2$ has no elements of order $9$.  Therefore, $\Aut(\PSL_2(7)^2)\wr2$ also has no elements of order 9.  Thus we conclude by \cref{lem:E6_elemOrders}.
\end{proof}

\subsection{Completing the proof}

\begin{thm}\label{thm:E6}
    The smallest non-abelian quotient of $\cale_6$ is isomorphic to $\PSU_4(2):2\cong W({\cale_6})$.  This quotient is unique up to an automorphism of the image.
\end{thm}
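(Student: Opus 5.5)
Let $Q$ be a smallest non-abelian quotient of $\cale_6$. Existence is clear: the canonical Coxeter quotient $\cale_6\twoheadrightarrow W(\cale_6)\cong \PSU_4(2):2$ is non-abelian of order $51{,}840$, so $|Q|\leq 51{,}840$. Since $\cale_6^{\ab}\cong\Z$ and $\cale_6'$ is perfect (\Cref{thm:comm-perfect}), \Cref{lem:min_quotient_En} applies: $\soc(Q)=S^n$ with $Q/S^n$ cyclic. If $n=1$, then $Q$ is almost simple; if $n>1$, then $Q\leq S^n\wr k$. Running over the simple groups of order at most $51{,}840$ gives \Cref{lem:E6_hitlist}. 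The plan is to eliminate every entry of that list except $\PSU_4(2)\cong\PSp_4(3)$, which is itself the socle of $W(\cale_6)$.

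The elimination uses the lemmas just proved.
\begin{enumerate}
\item Alternating socles $\alt(5),\dots,\alt(8)$, together with the product socle $\alt(5)^2$, are ruled out by \Cref{lem:E6_aut_alt}: the relevant overgroups contain no element of order $9$, contradicting \Cref{lem:E6_elemOrders}.
\item Socles $\PSL_2(q)$ are handled in two steps. Since $\PSL_2(q):\langle\text{diagonal}\rangle=\PGL_2(q)$, \Cref{lem:E6_PSL2q_murder} covers every $Q$ lying in $\PGL_2(q)$. Any remaining $Q$ lies in $\Aut(\PSL_2(q))$ for $q\in\{8,16,27,32\}$, and these are covered by \Cref{lem:E6_aut_various_PSL2q_murder}.
\item $\PSL_3(3)$ is ruled out by the argument of \Cref{lem:E6_PSL33_murder}, which is stated for $\cale_8$. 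I would re-run it verbatim for $\cale_6$: there is no element of order $12$, the parabolic $\cala_4$ centralises $s_6$, and the maximal subgroups are soluble.
\item $\PSL_3(4)$, $\PSU_3(3)$, ${}^2\mathtt B_2(8)$ and $M_{11}$ are ruled out by \Cref{lem:E6_order_murder}. One point needs care here: a smallest quotient of $\cale_6$ has trivial centre, so it factors through $\cale_6/Z(\cale_6)$.
\item $\PSL_2(7)^2$ is ruled out by \Cref{lem:E6_PSL27_2_murder}.
\end{enumerate}

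This leaves $\soc(Q)=\PSp_4(3)$, so $Q$ is $\PSp_4(3)$ or $\PSp_4(3):2$. Here \Cref{lem:E6_PSp43_murder} must be handled carefully, because it cannot be used as stated: it would also rule out the genuine quotient. What its proof actually shows is only that $\PSp_4(3)$ itself is not a quotient, using \Cref{lem:E6_elemOrders}(order $8$). So I would use it in that weaker form and conclude $Q\cong\PSp_4(3):2$, which has order $51{,}840=|W(\cale_6)|$. Hence the Coxeter quotient is a smallest one, and every smallest quotient is isomorphic to $\PSU_4(2):2\cong W(\cale_6)$.

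Uniqueness up to automorphisms of the image is the main obstacle. The approach is to classify epimorphisms $\alpha\colon\cale_6\to G=\PSp_4(3):2$. The image of $s_6$ centralises $\alpha(\cala_4)$, where $\cala_4=\langle s_1,s_2,s_3,s_4\rangle$, and that image is non-cyclic by \Cref{lem:E8_non_cyclic_E6}. By \Cref{lem:centraliser-proper-subgp} it is a proper subgroup, and since $\cala_4'$ is perfect it is non-soluble. I would then go through the classes of $G$ from the ATLAS and keep those whose centraliser has a non-soluble subgroup that is a non-cyclic quotient of $\cala_4$. By \Cref{kolay} such a quotient has order at least $120$, with $\sym(5)$ as the only option of that size, and \Cref{lem:A4toSym6}-style arguments handle the others. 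I expect this to force $\alpha(s_6)$ to be a reflection-class involution with centraliser $2\times\sym(6)$. Then $\alpha|_{\cala_4}$ factors through $W(\cala_4)$, so the generators $s_1,\dots,s_4$ map to involutions. All generators are conjugate, so every $s_i$ maps to an involution and $\alpha$ factors through $W(\cale_6)$. Since the orders are equal, $\alpha$ is then an isomorphism $W(\cale_6)\to G$ composed with the canonical map, which gives uniqueness up to $\Aut(G)$. The centraliser analysis, in particular excluding elements of order $3$ and the non-reflection involution classes, is the delicate part. I would fall back on a MAGMA check of epimorphisms, as was done for $\calh_4$.
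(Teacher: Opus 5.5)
\paragraph{Minimality and the $q=25$ gap.} Your proof that a smallest quotient is $\PSp_4(3):2$ is the paper's elimination over \Cref{lem:E6_hitlist}, and your three caveats are right:
\begin{itemize}
\item \Cref{lem:E6_PSL33_murder} is really a statement about $\cale_6$.
\item \Cref{lem:E6_PSp43_murder}, as stated, would exclude $W(\cale_6)$ itself. It is only proved, and only used, in the weak form ``$\PSp_4(3)$ is not a quotient''.
\item A smallest quotient has trivial centre, because $Q/Z(Q)$ would be a smaller non-cyclic, hence non-abelian, quotient.
\end{itemize}
One case slips through, both in your step (2) and in the paper's remark. $\PSL_2(25)$ also has a field automorphism, and $|\Aut(\PSL_2(25))|=31{,}200<51{,}840$. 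So this case is covered neither by \Cref{lem:E6_PSL2q_murder} nor by \Cref{lem:E6_aut_various_PSL2q_murder}. It is disposed of by \Cref{lem:E6_elemOrders}: since $31{,}200=2^5\cdot 3\cdot 5^2\cdot 13$, the group has no element of order $9$.

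\paragraph{Uniqueness: a different route.} The paper places $\alpha(\cala_4)$ in a maximal subgroup of $G$ and discards the soluble ones. It then treats $2^4:\sym(5)$ and $\sym(6)\times 2$ using \Cref{kolay} and \Cref{lem:A4toSym6}. You instead identify the conjugacy class of $\alpha(s_6)$ from its centraliser.

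The step you flag as delicate closes without MAGMA:
\begin{itemize}
\item $\alpha(\cala_4)$ is non-cyclic, so it is non-soluble by \Cref{lem:a4-solvable-cyclic}.
\item Since $|G|=2^7\cdot 3^4\cdot 5$, Burnside's $p^aq^b$ theorem forces $5$ to divide $|\alpha(\cala_4)|$, and hence $|C_G(\alpha(s_6))|$.
\item In $G\cong W(\cale_6)$, the only non-identity classes with centraliser order divisible by $5$ are the reflections, with centraliser $2\times\sym(6)$ of order $1440$, and the elements of order $5$ and $10$, with centraliser order $10<120$.
\item The other involution classes and all classes of elements of order $3$ have $\{2,3\}$-groups as centralisers.
\end{itemize}
So $\alpha(s_6)$ is a reflection. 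Since all Artin generators are conjugate, every $s_i$ maps to an involution and $\alpha$ factors through $W(\cale_6)$. Your detour through $\alpha|_{\cala_4}$ factoring through $W(\cala_4)$ is unnecessary.

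\paragraph{What each route buys.} Your route determines the order of $\alpha(s_6)$ directly. In the paper's $2^4:\sym(5)$ case, \Cref{kolay} only controls the image in the $\sym(5)$ quotient. Passing from there to ``$\alpha(s_1)$ has order $2$'' needs an extra argument, since a lift of a transposition to $2^4:\sym(5)$ can have order $4$. The paper's route, in exchange, needs only the list of maximal subgroups of $G$ rather than the full table of centraliser orders.
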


\begin{remark}
    Note that $\PSU_4(2):2\cong \PGO^-_6(2)\cong\PSp_4(3):2$.
\end{remark}

\begin{proof}[Proof of \Cref{thm:E6}]
    Let $P$ be the smallest non-abelian quotient of $\cale_6$. By \cref{lem:E6_hitlist}, it suffices to consider subgroups of $\Aut(Q)$ where $Q$ is one of the $9$ classes of groups listed in the statement of the lemma. \cref{lem:E6_aut_alt} excludes the possibility that $G < \Aut(\alt(n)^2)\wr2$, which encompasses classes $(1)$ and $(5)$ in the list. Lemmas \ref{lem:E6_PSL2q_murder} and \ref{lem:E6_aut_various_PSL2q_murder} shows that $G$ cannot be a subgroup of $\Aut(\PSL_2(q))$, eliminating class $(2)$. Lemmas \ref{lem:E6_order_murder}, \ref{lem:E6_PSL33_murder}, and \ref{lem:E6_PSL27_2_murder} eliminate classes $(2)$, $(3)$, $(4)$ $(6)$, $(7)$, and (9). Thus $P$ must be a subgroup of $\Aut(\PSU_4(2)) \cong \PSU_4(2): 2$. By \cref{lem:E6_PSp43_murder}, the image must be all of $\PSU_4(2): 2$.

    We now prove uniqueness. Let $G=\PSU_4(2):2$ and $Q=\PSU_4(2)$.  It suffices to show that in any homomorphism $\alpha\colon\cale_6\to G$ with non-cyclic image, the image of $s_1$ has order $2$. If $\alpha(s_1)$ has order 2, then $\alpha$ must factor through the Coxeter quotient map.
    
    Note that such a quotient induces a non-trivial homomorphism of $\cala_4$. Because $\cala_4$ centralises $s_1$, the image of $\cala_4$ must be a proper subgroup of $G$ by \cref{lem:centraliser-proper-subgp}. This subgroup cannot be $Q$ because $Q$ does not centralise any non-trivial elements of $G$, so it must be contained in a maximal proper subgroup of $G$ or $Q$. 
    
    By \cite[Page 26]{ATLAS}, the possible candidates are $2^4:\sym(5)$, $\sym(6)\times2$, $3^{1+2}_+:2\sym(4)$, $3^3:(\sym(4)\times2)$, and $(2\cdot \alt(4)^2.2).2$.  Lemmas \ref{lem:a4-solvable-cyclic} and \cref{lem:E8_non_cyclic_E6} allow us to eliminate any solvable groups from this list, leaving only $2^4:\sym(5)$ and $\sym(6)\times2$. 
    
    By \cref{kolay}, every homomorphism $\cale_4\to \sym(5)$ is the composite of the Coxeter quotient map and an an automorphism of $\sym(5)$.  In particular, if the maximal proper subgroup is $2^4:\sym(5)$, then the image of $s_1$ must have order $2$.  If $\cala_4$ instead had image contained in $\sym(6)\times 2$, then by \Cref{lem:A4toSym6}, either the image of $\cala_4$ is cyclic or the image of $\cala_4$ is $\sym(5)$ and the homomorpism factors through $W(\cala_4)$.  It follows that either the entire quotient is cyclic by \cref{lem:E8_non_cyclic_E6} or $\alpha(s_1)$ has order $2$ as required.
\end{proof}

\section{The Artin group \texorpdfstring{$\cale_7$}{E7}}\label{sec:e7}
In this section we will compute the smallest non-abelian quotient of $\cale_7$.  Throughout we will use \Cref{StandardFiniteGroupsNotation}.

\subsection{The groups}

% \begin{lemma}\label{lem:E7_all_small_simple}
%     The finite simple groups in \Cref{tab:E7simple} are exactly the non-cyclic simple groups with order less than $|W_{\cale_7}/Z(W_{\cale_7})|=1,451,520$.
% \end{lemma}
% \katie{Is this necessary? I don't think we list this as a lemma in the other cases; it's just looking up orders in the atlas}
% \begin{proof}
%     The orders of finite simple groups are given in the Atlas of finite simple groups \cite{ATLAS}. We examine each category of non-cyclic finite simple group. For $n > 9$, $|\text{Alt}(n)| \geq 1814400 > 1451520$. For the 26 sporadic groups, the orders are listed in Table 1 of the Atlas of finite simple groups, and one can simply examine the orders. %This comparison can be hastened by noting the prime factorization $1451520 = 2^9 \times  3^4 \times 5 \times 7$. 
%     The Tits group has order $2^{11} \times 3^3 \times 5^2 \times 13$, which is greater than $1451520 = 2^9 \times  3^4 \times 5 \times 7$. For the Chevalley and twisted Chevalley groups, the orders are listed in Table 6 of the Atlas. 
% \end{proof}

\begin{lemma}\label{lem:E7_hitlist}
    It suffices to exclude subgroups of the following groups:
    \begin{enumerate}
        \item $\Aut(Q)$ for $Q$ in \Cref{tab:E7simple};
        \item $\Aut(Q)\wr 2$ for $Q$ one of $\PSL_2(7)^2$, $\PSL_2(8)^2$, $\PSL_2(11)^2$, and $\PSL_2(13)^2$;
        \item $\Aut(\alt(5)^3)\wr6$; and
        \item $\Aut(\alt(6)^2)\wr 2$.
    \end{enumerate}
\end{lemma}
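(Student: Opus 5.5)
The plan is to run the $\cale_6$ argument (\Cref{lem:E6_hitlist}) again, now with the bound $|W(\cale_7)/Z(W(\cale_7))| = 1{,}451{,}520 = 2^9\cdot 3^4\cdot 5\cdot 7$. Recall that $\cale_7^{\ab}\cong\Z$ and that $\cale_7'$ is perfect by \cref{thm:comm-perfect}. So \Cref{lem:min_quotient_En} applies: a smallest non-cyclic quotient $P$ satisfies $\soc(P)=S^n$ for a non-abelian simple group $S$, and $P/S^n$ is cyclic of order $k$. Moreover, $P$ is almost simple if $n=1$, and $P\leqslant S^n\wr k$ with $n\mid k$ if $n>1$. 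The statement therefore reduces to an order count. We need all pairs $(S,n)$ with $|S|^n\le 1{,}451{,}520$, and for each pair we must exhibit an ambient group from the list containing every possible $P$.

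\textbf{Case $n=1$.} Here $P$ is almost simple, so $P\leqslant\Aut(S)$ with $|S|\le 1{,}451{,}520$. Using the ATLAS order tables, I would list every non-abelian simple group below this bound. These are the alternating groups $\alt(5),\dots,\alt(9)$; the groups $\PSL_2(q)$ for the appropriate range of $q$; the small groups $\PSL_3(q)$, $\PSU_3(q)$, $\PSp_4(q)$ and $\PSU_4(2)$; the groups $^2\mathtt B_2(8)$ and $\mathtt G_2(2)'$; and the sporadic groups $M_{11},M_{12},M_{22},J_1,J_2$. This list is \Cref{tab:E7simple}, which gives item (1). The only real work is checking that the table is complete; it is a routine but error-prone pass through the ATLAS families.

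\textbf{Case $n\ge 2$.} The smallest $|S|$ is $60$. Since $60^4=12{,}960{,}000$ exceeds the bound, we get $n\le 3$.
\begin{itemize}
\item For $n=3$ we need $|S|^3\le 1{,}451{,}520$, so $|S|\le 113$, which forces $S=\alt(5)$. Since $3\mid k$ and $k$ divides $|\Out(\alt(5)^3)|=|2\wr\sym(3)|=48$, the head is cyclic of order $3$ or $6$. Hence $P\leqslant\Aut(\alt(5)^3)\wr 6$ (taken loosely, as in the $\cale_6$ section, as an overgroup), giving item (3).
\item For $n=2$ we need $|S|\le\sqrt{1{,}451{,}520}\approx 1204.8$. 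This gives $S\in\{\alt(5),\PSL_2(7),\alt(6),\PSL_2(8),\PSL_2(11),\PSL_2(13)\}$; note that $|\PSL_2(17)|=2448$ is too large. These $S$ produce items (2) and (4), with $P\leqslant\Aut(S^2)\wr 2$ or inside $\Aut(S)\wr 2$.
\end{itemize}

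The one wrinkle is $S=\alt(5)$ with $n=2$, which does not appear in items (2) or (4). I expect to absorb it into item (3), since $\alt(5)^2.2$ embeds in $\Aut(\alt(5)^3)\wr 6$ in the loose overgroup sense. Alternatively, it can be disposed of immediately: such a $P$ has order at most $14{,}400$ and contains no element of order $9$ (by \Cref{lem:alt5sq.2}, $P\cong\alt(5)\wr 2$), and this contradicts \cref{lem:E7_elemOrders}. I would add this remark so that the list is genuinely exhaustive. Apart from this bookkeeping, the main obstacle is confirming the completeness of the simple-group list below $1{,}451{,}520$.
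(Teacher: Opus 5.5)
Your proposal is correct and is essentially the paper's argument. It uses the same steps: \Cref{lem:min_quotient_En}, the bound $|S|^n\le 1{,}451{,}520$, the list in \Cref{tab:E7simple}, and an embedding step for which the paper simply cites Kaloujnine--Krasner. As you feared, the table is not quite complete: it omits $\PSL_2(49)$, of order $58{,}800$, which is harmless since \Cref{lem:E7_comm_PSL2q} covers every $q$.

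Your catch that socle $\alt(5)^2$ is missing from items (2)--(4) is also right; the proof of \Cref{thm:E7} quietly lists it. Absorbing it via $P\leqslant\Aut(\alt(5)^2)=\sym(5)\wr 2\leqslant\Aut(\alt(5)^3)$ works. Note, however, that $k=4$ is also allowed there, so \Cref{lem:alt5sq.2} alone does not give $P\cong\alt(5)\wr 2$; your order-$9$ argument still goes through, since the Sylow $3$-subgroup is $3^2$. Also, for $n=3$ the restriction $k\in\{3,6\}$ comes from $60^3k\le 1{,}451{,}520$, not from $3\mid k\mid 48$ alone.
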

\begin{proof}
   We stress that these conditions are sufficient but in the case where the socle is a direct product they are overkill.  By \Cref{lem:min_quotient_En}, it suffices to consider groups of the shape $S^n.k$ for some simple group $S$ and where  $|S^n| \leq |W(\cale_7) / Z(W(\cale_7))|$ and $k$ divides $|\Out(S^n)|$. All simple groups $S$ with $|S| \leq |W(\cale_7) / Z(W(\cale_7))|$ are listed in Table \ref{tab:E7simple}. The table also lists their orders, and the remainder of the statement follows from the universal embedding theorem \cite{KrasnerKaloujnine1951}.
\end{proof}

\begin{table}[h]
    \centering
    \begin{tabular}{|c|p{4.5cm}|r|p{2cm}|}
       \hline
       Family & Group & Order & $\Out(G)$ \\
       \hline
       \hline
       \multirow{5}{*}{Alternating} & $\alt(5)$  & 60 & $2$ \\
                                    & $\alt(6)$  & 360 & $2^2$ \\
                                    & $\alt(7)$  & 2,520 & $2$ \\
                                    & $\alt(8)\cong \PSL_4(2)$  & 20,160 & $2$ \\
                                    & $\alt(9)$  & 181,440 & $2$ \\
       \hline
      {$\texttt A_1$} &  {$\PSL_2(q)$  $q\in\{$7, 8, 11, 13, 17, 19, 16, 23, 25, 27, 29, 31, 32, 37, 41, 43, 47, 53, 59, 61, 67, 71, 73, 79, 64, 81, 83, 89, 97, 101, 103, 107, 109, 113, 121, 125, 127, 131, 137, 139$\}$} & {$\frac{1}{\gcd(2,q-1)}q(q^2-1)$} & $d\times k$ where $d=\gcd(2,q-1)$ and $q=p^k$ for $p$ a prime  \\
       \hline
       \multirow{3}{*}{$\texttt A_2$} & $\PSL_3(3)$ & 5,616   & $2$ \\
                                      & $\PSL_3(4)$ & 20,160  &  $2\times\sym(3)$ \\
                                      & $\PSL_3(5)$ & 372,000 & $2$ \\
       \hline
       \multirow{3}{*}{$^2\texttt A_2$} & $\SU_3(3)$  & 6,048   & $2$ \\
                                        & $\SU_3(4)$  & 62,400  & $4$ \\ 
                                        & $\PSU_3(5)$ & 126,000 & $\sym(3)$ \\
       \hline
       \multirow{2}*{$\texttt C_2$} & $\PSp_4(3) \cong \SU_4(2)$ & 25,920 & $2$  \\
                                    & $\Sp_4(4)$  & 979,200 & 4  \\
       \hline
       Exceptional & $^2\texttt B_2(8)$ & 29,120 & $3$ \\
       \hline
       \multirow{5}{*}{Sporadic} & $M_{11}$ & 7,920 & 1 \\
                                 & $M_{12}$ & 95,040 & $2$ \\ 
                                 & $M_{22}$ & 443,520 & $2$ \\
                                 & $J_1$    & 175,560 & 1 \\
                                 & $J_2$    & 604,800 & 2 \\
    \hline
    \end{tabular}
    \caption{Simple groups with order less than 1,451,520.}
    \label{tab:E7simple}
\end{table}

\subsection{Excluding the family \texorpdfstring{$\mathtt{A}_1$}{A1}}
To exclude groups in the family $\mathtt A_1$, we will work with the commutator subgroup $\cale_7'$ of $\cale_7$.  As $\cale_7'$ is a perfect group, it will suffice to rule out the simple groups $\PSL_2(q)$ rather than the groups of shape $\PSL_2(q)^n.k$.

\begin{lemma}\label{lem:E7_comm_PSL2q}
    Let $Q\cong \PSL_2(q)$ where $q=p^k$ with $p$ a prime.  If $\alpha\colon \cale_7'\to Q$ is a homomorphism, then $\alpha$ has trivial image.
\end{lemma}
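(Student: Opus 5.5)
Since $\cale_7'$ is perfect and every proper subgroup of $\PSL_2(q)$ is either solvable or one of a small list of non-solvable groups ($\alt(5)$, $\PSL_2(q_0)$, $\PGL_2(q_0)$ for subfields $\FF_{q_0}\subseteq\FF_q$), I would argue by induction on $|Q|$: a homomorphism from a perfect group to a solvable group is trivial. It therefore suffices to show that $\cale_7'$ has no surjection onto any $\PSL_2(q)$, including $\alt(5)\cong\PSL_2(4)$. Given a non-trivial $\alpha$, its image is perfect and non-trivial. By Dickson's classification of subgroups of $\PSL_2(q)$, that image is either $\alt(5)$ or $\PSL_2(q_0)$ for some subfield. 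Renaming, we may assume $\alpha$ is surjective onto some $\PSL_2(q)$.

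The key tool is the commutative totally symmetric set $T=\{s_2s_1^{-1},s_5s_1^{-1},s_7s_1^{-1}\}$ of size $3$ in $\cale_7'$ from \Cref{lem:E7_comm_TSS}.
\begin{enumerate}
\item By \Cref{lem:TSS_collapse}, $\alpha(T)$ is either a single point or three distinct pairwise commuting elements that are permuted arbitrarily by conjugation in $Q$.
\item In the injective case, the three elements generate an abelian subgroup $A\le Q$. In $\PSL_2(q)$, the centraliser of a non-trivial element is cyclic, dihedral, or (for unipotent elements) elementary abelian $p$-group-by-small. Inside a cyclic or dihedral abelian subgroup, or an elementary abelian subgroup of order at most $4$ in odd characteristic, one cannot realise all of $\sym(3)$ on three distinct elements by conjugation. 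The normaliser of a cyclic maximal torus acts on it only by inversion, which gives at most an order-$2$ action.
\item The remaining subcases need separate care: the Klein four-group $2^2$, whose normaliser is $\alt(4)$ or $\sym(4)$ and can induce $\sym(3)$; and unipotent subgroups in characteristic $p$, where the normaliser is a Borel subgroup $q:\frac{q-1}{d}$ acting by scalars. For the unipotent case, a single element of the torus acts as a fixed scalar, so it cannot swap two of the elements while fixing the third unless they are proportional in a way that forces a contradiction. I expect this to need care.
\item The $2^2$ case is the main obstacle: three commuting involutions permuted by $\sym(3)$ do occur in $\PSL_2(q)$. To rule it out, I would use more structure, such as a second TSS, or the fact that $s_2s_1^{-1}$ and $s_5s_1^{-1}$ lie in a braid-like configuration with $q_4=s_4s_1^{-1}$, which satisfies braid relations with $q_2$ and $q_5$ in the presentation of \Cref{thm:commutator-type-E-presentation}. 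Involutions $x,y$ satisfying $xzx=zxz$ and $yzy=zyz$ with the same $z$ force $z$ to have order $3$ and $x,y$ conjugate under $z$. Combining this with $xy=yx$ in $\PSL_2(q)$, I would aim to show that $x=y$.
\end{enumerate}

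In the collapsed case, $\alpha(q_2)=\alpha(q_5)=\alpha(q_7)$. The relations $q_iq_{i+1}q_i=q_{i+1}q_iq_{i+1}$ and the commutations among the $q_i$ should then propagate via \Cref{L:gcdRelation}, as in \Cref{lem:artin-gen-collapsing}, to force all the $q_\ell$ to have equal image. Here $q_5$ and $q_7$ both braid with $q_6$; $q_5$ commutes with $q_7$; and $q_2$ commutes with $q_5,q_6,q_7$ while braiding with $q_4$. From $\alpha(q_5)=\alpha(q_7)$ and the braid relation with $q_6$, we get $\alpha(q_6)$ braiding with and commuting with that element, so it equals it. Continuing, all $\alpha(q_\ell)$ coincide and form some element $c$. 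The relations $p_0q_j=q_jp_1$ and $p_1q_j=q_jp_0^{-1}p_1$ then express $\alpha(p_1)=c^{-1}\alpha(p_0)c$ and give further constraints. The relations involving $b$ reduce the image to a group generated by $c$ and $\alpha(p_0)$ satisfying explicit relations. I would check that these force $c=1$ and then that the image is abelian, hence trivial by perfectness.
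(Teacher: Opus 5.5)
Your route differs from the paper's in bookkeeping. You classify the abelian group $\langle\alpha(T)\rangle$ as cyclic, unipotent, or Klein four and study its normaliser. The paper instead takes the setwise stabiliser $H$ of $\alpha(X)$, uses $H\twoheadrightarrow\sym(3)$, and runs through Dickson's list. Both routes funnel into the same hard case, and that is where your proposal has a genuine gap.

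If $q\equiv\pm1\pmod 8$, the three involutions of a Klein four subgroup $V\le\PSL_2(q)$ really do form a commutative totally symmetric set, since all involutions are conjugate and $N(V)\cong\sym(4)$. So this case cannot be excluded by group theory inside $\PSL_2(q)$; your step 2 says otherwise, and step 4 is the correct assessment.

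The relations you propose in step 4 cannot exclude it either. In $\sym(4)\le\PSL_2(q)$, take $x=(1\,2)$, $y=(3\,4)$, $z=(2\,3)$. These satisfy $xy=yx$, $xzx=zxz$, $yzy=zyz$ and $x\neq y$, and $\{x,y,xy\}$ is exactly such a totally symmetric set. Hence ``$\alpha(q_4)$ braids with the commuting elements $\alpha(q_2)$ and $\alpha(q_5)$'' can never force $\alpha(q_2)=\alpha(q_5)$. A smaller slip: $z$ is conjugate to $x$, hence an involution; it is $xz$ that has order $3$.

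The missing input is the commutation $q_4q_7=q_7q_4$ from the presentation, together with $\alpha(q_7)=xy$. The latter holds because three distinct commuting involutions in $\PSL_2(q)$, $q$ odd, are the non-identity elements of a Klein four group. Writing $x=\alpha(q_2)$, $y=\alpha(q_5)$, $z=\alpha(q_4)$ (an involution), you get
\[
xy=z(xy)z=(zxz)(zyz)=(xzx)(yzy)=x\bigl(z(xy)z\bigr)y=x(xy)y=1,
\]
so $x=y$, contradicting injectivity on $T$. This is the job done by $c_4$ in the paper's $\sym(4)$ case.

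With this fix the rest of your plan works:
\begin{itemize}
\item \emph{Unipotent case.} It is cleanest to note that the conjugating elements normalise $\langle\alpha(T)\rangle\le U$, hence lie in the Borel subgroup $N(U)$. Since $U$ acts trivially on $\langle\alpha(T)\rangle$, the action on $\alpha(T)$ factors through the cyclic group $N(U)/U$, which cannot surject onto $\sym(3)$.
\item \emph{Collapsed case.} This also goes through, and your version is more explicit than the paper, which passes directly from the collapse lemma to ``cyclic''. Once all $\alpha(q_\ell)=c$, the relation $p_0q_j=q_jp_1$ gives $\alpha(p_1)=c^{-1}\alpha(p_0)c$. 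Substituting into $p_1q_4p_1^{-1}=q_4^{-1}b$ with $b=p_0q_4p_0^{-1}$ yields $c^{-1}\alpha(b)c=c^{-1}\alpha(b)$, so $c=1$. Then $p_1q_j=q_jp_0^{-1}p_1$ forces $\alpha(p_0)=1$, and $\alpha$ is trivial.
\end{itemize}
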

\begin{proof}
    Let $X$ be the commutative totally symmetric set in \Cref{lem:E7_comm_TSS}.  
    We aim to understand $H=\Stab_Q(\alpha(X))$. Since $H$ is the stabiliser of a commutative totally symmetric set of size 3, we see that $H$ admits a surjection to $\sym(3)$.  We now proceed as in \cite[Lemma~4]{Caplinger-Kordek20}.  By \cite[\S2.1]{King2005} \footnote{This reference is secondary; the canonical reference is \cite{Dickson1958}, but the first proofs may be found in \cite{Wiman1900,Moore1903})}, subgroups of $\PSL_2(q)$ are as follows:
    \begin{enumerate}
        \item elementary abelian $p$-groups;
        \item cyclic groups of order $\ell$ where $\ell|(q\pm 1)/d$ where $d=\gcd(q-1,2$;
        \item dihedral groups of order $2\ell$ with $\ell$ as in (2);
        \item $\alt(4)$ if $p>2$ or if $p=2$ and $k\equiv0\pmod{2}$;
        \item $\sym(4)$ if $q^2-1\equiv0\pmod{16}$;
        \item $\alt(5)$ if $p=5$ or $q^2-1\pmod{5}$;
        \item $p^m:t$ where $t|(p^m-1)$ and $(q-1)/d$, note that in this case either the subgroup is dihedral as in case (3), or it is contained in the stabiliser of a point $\mathbf{P}^1_q$; 
        \item $\PSL_2(p^{k_0})$ where $k_0|k$;
        \item $\PGL_2(p^{k_0})$ where $2k_0|k$.
    \end{enumerate}

    Our goal now is to rule out the above cases for $H$.  Since cases 1, 2, 4, 6, 8, and 9 do not admit $\sym(3)$ as an epimorphic image, we can already rule them out. By \cite[Theorem 3.3]{KordekLiPartin2021}, Dihedral groups do not contain a totally symmetric set of size $3$, so we may rule out case $3$.

    We now rule out case 5 where $H=\sym(4)$.  In this case the totally symmetric of size three is exactly the kernel of the projection $\sym(4)\twoheadrightarrow \sym(3)$ and so is contained in a Klein $4$ group $K$.  Write $c_i$ for the image of $s_1s_i^{-1}$  so $K=\{\id, c_2, c_5, c_7\}$.  We have that $c_ic_j=c_\ell$ for any distinct $i,j,\ell\in \{2,5,7\}$.  Now,
    \[c_4c_7 c_4c_5=c_4(c_7c_5)c_4 = c_4 c_2 c_4= c_2 c_4 c_2=(c_7c_5)c_4(c_7c_5)=c_7 c_4  c_7 c_5^2= c_4 c_7 c_4 c_5^2.\]
    Hence $c_5=1$, which would imply the homomorphism $\alpha$ is trivial.
    
    The remaining case is $H\cong p^m:t$ with $k\leq m$ and $t|q-1$, which was not treated by Caplinger--Kordek.  In this case, $H$ is a subgroup of $M$, the stabiliser of a point in $\mathbf P^1_q$.  In particular, $H$ is a subgroup of a maximal subgroup $M$ isomorphic to a subgroup of the affine general linear group $\AGL_1(q)\cong \FF_q\rtimes(\FF_q^\times)$.   
    
    The subgroup $H$ normalises $\langle X\rangle$, and elements of $X$ commute, so $X$ must be contained in the maximal non-trivial elementary abelian normal subgroup of $M$ isomorphic to $\FF_q$.  This implies that the homomorphism of $H\to\sym(X)$ induced by conjugation on $X$ must factor through a cyclic group of order $q-1$, contradicting the assumption that $X$ is totally symmetric of size $3$.

    It follows from the above analysis that $\alpha$ cannot be injective on $X$.  
    By \Cref{lem:TSS_collapse}, %\Cref{lem:E7_TSS_murder}, 
    the image of $\alpha$ must be cyclic. Since $\cale_7'$ is perfect, the image of $\alpha$ is in fact trivial.
\end{proof}

\begin{corollary}\label{cor:aut-psl2-e7-murder}
    Let $Q\cong \PSL_2(q)^\ell.k$ for some $k,\ell\geq 1$.  If $\alpha\colon \cale_7\to Q$ is a homomorphism, then $\alpha$ has cyclic image.
\end{corollary}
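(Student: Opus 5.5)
We will reduce the statement to \Cref{lem:E7_comm_PSL2q} by restricting $\alpha$ to the commutator subgroup $\cale_7'$. Since the image of $\cale_7'$ is perfect, the structure of $Q$ should force this image to be trivial. Once $\alpha(\cale_7')=1$, the map $\alpha$ factors through $\cale_7^{\ab}\cong\Z$, and so its image is cyclic.

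Write $N\cong\PSL_2(q)^\ell$ for the normal subgroup of $Q$ with $Q/N$ of order $k$. We may suppose $q\geq 4$, since otherwise $\PSL_2(q)$ is solvable and \cref{lem:a4-solvable-cyclic} already gives the conclusion. The first step is to show that $\alpha(\cale_7')\leq N$. Because $\cale_7'$ is perfect, its image in $Q/N$ is perfect. We interpret the notation $N.k$ as meaning that $Q/N$ is cyclic of order $k$; as in \Cref{lem:min_quotient_En}, the top is cyclic, or in the intended application a subgroup of $\Out(S^\ell)$ that is solvable. In either case a perfect subgroup of a solvable group is trivial, so $\alpha(\cale_7')\leq N$.

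The second step uses the projections $\pi_i\colon N\to\PSL_2(q)$, for $i=1,\dots,\ell$. Each composite $\pi_i\circ\alpha|_{\cale_7'}\colon\cale_7'\to\PSL_2(q)$ is trivial by \Cref{lem:E7_comm_PSL2q}. Since $N$ embeds in $\prod_i\PSL_2(q)$ via the $\pi_i$, it follows that $\alpha(\cale_7')=1$. Hence $\alpha$ factors through $\cale_7/\cale_7'\cong\Z$, and its image is cyclic.

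The only delicate point is the first step, where the top $Q/N$ must be solvable. If the notation permits a non-solvable top, such as $\sym(\ell)$ permuting the factors, the argument needs more care. In that case I would instead take the perfect image $P=\alpha(\cale_7')$ and pass to $P\cap N$. Each projection of $P\cap N$ lies in a normal subgroup of the image, and \Cref{lem:E7_comm_PSL2q} applied to $P\cap N$ forces it to be trivial, so $P$ embeds into $Q/N$. I would then control $P$ through the structure of $Q/N$, which in the cases arising from \Cref{lem:E7_hitlist} is solvable: it is a subgroup of $\Out(\PSL_2(q))\wr 2$ or $2\wr6$-type groups.
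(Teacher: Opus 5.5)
Your proof is correct and follows the paper's argument: restrict $\alpha$ to the perfect group $\cale_7'$, kill it factor by factor with \Cref{lem:E7_comm_PSL2q}, and factor through $\cale_7^{\ab}\cong\Z$. You also make explicit the step $\alpha(\cale_7')\leq\PSL_2(q)^\ell$, which the paper leaves implicit and which holds because $k$ denotes a cyclic group in the paper's notation.

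Your final paragraph is therefore unnecessary and best dropped. Its claim that \Cref{lem:E7_comm_PSL2q} kills $P\cap N$ is unjustified, since that lemma concerns maps defined on all of $\cale_7'$ rather than on a finite-index subgroup. Moreover, with a non-solvable top such as $\sym(\ell)$ the conclusion actually fails, for example by letting $W(\cale_7)$ permute the $\ell$ factors.
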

\begin{proof}
    The homomorphism $\alpha$ induces a map $\cale_7'\to \PSL_2(q)$.  \Cref{lem:E7_comm_PSL2q} then implies that the image $\alpha(\cale_7')$ is trivial.  In particular, $\alpha$ factors through $\cale_7^\ab \cong \mathbb{Z}$, concluding the proof.
\end{proof}

\subsection{\texorpdfstring{$3^2$}{3\^2}s and 7s}

\begin{lemma}\label{lem:E7-non-A1-murder}
     Let be $Q$ be a group in \Cref{lem:E7_hitlist} such that $Q\not\cong\alt(9)$ or a group in the family $\mathtt{A}_1$ or $\mathtt A_1\times \mathtt A_1$.  If $\alpha\colon \cale_7\to \Aut(Q)$ is a homomorphism, then $\alpha$ has cyclic image. 
\end{lemma}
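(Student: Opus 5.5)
The title of this subsection signals the method: elements of order $9$ (the $3^2$) and of order $7$. By \Cref{lem:E7_elemOrders}, every finite non-cyclic quotient of $\cale_7$ contains elements of order $7$ and of order $9$. So if $\alpha\colon\cale_7\to\Aut(Q)$ had non-cyclic image, then $\alpha(\cale_7)$, and hence $\Aut(Q)$, would contain both. It therefore suffices to check, group by group, that $\Aut(Q)$ lacks an element of order $9$ or lacks an element of order $7$. Here $Q$ runs over the remaining groups of \Cref{lem:E7_hitlist}. For the wreath-type cases (3) and (4) of \Cref{lem:E7_hitlist}, I read the statement as covering the ambient groups $\Aut(\alt(5)^3)\wr 6$ and $\Aut(\alt(6)^2)\wr2$.

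For the simple groups $Q$ from \Cref{tab:E7simple}, excluding $\alt(9)$ and the family $\mathtt A_1$, I would consult the ATLAS entries for $\Aut(Q)$.
\begin{itemize}
\item Orders $|\Aut(Q)|$ coprime to $7$, so there is no element of order $7$: $\alt(5)$, $\alt(6)$, $\PSL_3(3)$, $\SU_3(4)$, $\PSU_3(5)$, $M_{11}$, $M_{12}$, $\Sp_4(4)$, $^2\mathtt B_2(8)$ (since $|{}^2\mathtt B_2(8).3|=2^6\cdot3\cdot5\cdot7\cdot13$, this one needs separate treatment below), and $\PSp_4(3).2$.
\item $\alt(7)$, $\alt(8)$, $\PSL_3(4)$, $\SU_3(3)$, $J_2$ have no element of order $9$ in $\Aut(Q)$. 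For $\sym(7)$ and $\sym(8)$ this is clear, since a $9$-cycle needs $9$ points. For $\PSL_3(4).D_{12}$, $G_2(2)$ and $J_2.2$ I would read it off the ATLAS class lists.
\item $^2\mathtt B_2(8).3$ has an element of order $7$, and its $3$-part is $3$, so it has no element of order $9$.
\item For $M_{22}.2$, $J_1$ and $\PSL_3(5).2$ I would check the absence of order-$9$ elements (or of order $7$) in the ATLAS. $|J_1|$ has $3$-part $3$, and $|M_{22}.2|$ has $3$-part $9$ with no element of order $9$. For $\PSL_3(5).2$, $|\PSL_3(5)|=2^5\cdot3\cdot5^3\cdot31$ has no factor of $7$.
\end{itemize}

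For the direct-product socles, an element of order $9$ in $\Aut(S^n)\wr k$ (or in $\Aut(S^n).k$) projects to a power lying in the base $\Aut(S)^{n'}$. When the degree of the wreathing is coprime to $3$, or when $\Aut(S)$ has no order-$3$ elements, the element of order $9$ must come from $\Aut(S)$ itself. For case (4), $\Aut(\alt(6)^2)\wr 2$ is a $\{2,3,5\}$-group, since $|\Aut(\alt(6))|=1440=2^5 3^2 5$. So it has no element of order $7$, and this disposes of (4) immediately. Case (3) is the same: $|\Aut(\alt(5)^3)\wr6|$ has prime divisors only $2,3,5$, so there is no $7$-element. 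The direct products of $\mathtt A_1$ type are excluded from the statement, so they need no treatment here.

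Having shown that each relevant $\Aut(Q)$ misses one of the two orders, I conclude by \Cref{lem:E7_elemOrders} that $\alpha$ has cyclic image.

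The main obstacle is bookkeeping rather than any conceptual step: one has to get the ATLAS data right for the few cases where $7$ does divide the order, namely $\alt(7)$, $\alt(8)$, $\PSL_3(4)$, $\SU_3(3)$, $^2\mathtt B_2(8)$, $M_{22}$, $J_1$ and $J_2$. For these I must verify that no element of order $9$ exists in the full automorphism group, not merely in $Q$. The outer automorphism of order $3$ in $^2\mathtt B_2(8).3$ and the $\sym(3)$ part of $\Out(\PSL_3(4))$ are the delicate points, since an outer $3$-element can cube to an inner $3$-element. I would resolve these first by checking the ATLAS power maps for the outer classes.
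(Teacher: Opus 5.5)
Your strategy is the paper's. A non-cyclic image would be a finite non-cyclic quotient of $\cale_7$, so by \Cref{lem:E7_elemOrders} it would contain elements of orders $7$ and $9$, and it suffices to show each $\Aut(Q)$ lacks one of these orders.

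\textbf{The gap.} One entry in your case check is false: $\PSU_3(5)$ does not have order coprime to $7$. In fact $|\PSU_3(5)|=126{,}000=2^4\cdot 3^2\cdot 5^3\cdot 7$, and $\PSU_3(5)$ even contains $\alt(7)$. So $\Aut(\PSU_3(5))$ has elements of order $7$, and your argument for this group fails. You must instead rule out elements of order $9$ in $\Aut(\PSU_3(5))\cong\PSU_3(5).\sym(3)$. This is exactly the delicate situation you flagged for $\PSL_3(4)$ and ${}^2\mathtt B_2(8)$, but it is missing from your list. $\Out(\PSU_3(5))$ contains a diagonal automorphism of order $3$, and $\PGU_3(5)=\PSU_3(5).3$ has $3$-part $3^3$. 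So an outer $3$-element cubing to a non-trivial inner $3$-element is a priori possible.

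\textbf{The repair.} Since $3\mid 5+1$, a Sylow $3$-subgroup of $\mathrm{GU}_3(5)$ has order $81$ and is $3\wr 3$, sitting inside the monomial subgroup $6^3:\sym(3)$. Any element $(x,y,z)\sigma$ with $\sigma$ a $3$-cycle cubes to the scalar $(xyz,xyz,xyz)$. Modulo the scalars, a Sylow $3$-subgroup of $\PGU_3(5)$ is therefore $3^{1+2}_+\cong\He_3(3)$, which has exponent $3$. The remaining field automorphism has order $2$, so it contributes no new $3$-elements. Hence $\Aut(\PSU_3(5))$ has no element of order $9$, as the ATLAS class lists confirm. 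This is how the paper handles $\PSU_3(5)$, via its Sylow $3$-subgroups.

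Move this case to the order-$9$ side, and take ${}^2\mathtt B_2(8)$ out of your first list (you already note it does not belong there). With those changes your case analysis is complete.
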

\begin{proof}
    We first deal with groups of type (1) in \Cref{lem:E7_hitlist}.  Let $Q$ be one of $\alt(5)$, $\alt(6)$, $\PSL_3(3)$, $\PSL_3(5)$, $\SU(3,4)$, $\PSp_4(3)$, $\Sp_4(4)$, $M_{11}$, or $M_{12}$. Then $|Q|$ is not divisible by $7$. Since $|\Out(Q)|=2,3,2,2,4,2,4,1,2$ respectively, we see that $|\Aut(Q)|$ is not divisible by $7$. Consequently, $\Aut(Q)$ does not contain any elements of order $7$. \Cref{lem:E7_elemOrders} then implies that the image of $\alpha$ is cyclic.  %It follows that $\alpha(\cale_7)=1$ by .  Hence $\alpha$ is trivial.

  The group $^2\mathtt B_2(8)$ does not have order divisible by $3$, so it cannot have elements of order 9. In the remaining cases, we will examine the Sylow 3-subgroups, which can be found in \cite{ATLAS}. Since $\Out(^2\mathtt B_2(8))\cong 3$, $\Aut(^2\mathtt B_2(8))$ has its $3$-Sylow subgroups isomorphic to $3$. A 3-Sylow subgroup of $\Aut(J_1)$ is isomorphic to $3$.  A 3-Sylow subgroup of $\Aut(Q)$ for $Q$ one of $\alt(7)$ or $\alt(8)$ is isomorphic to $3^2$. A 3-Sylow subgroup of $\Aut(Q)$ for $Q$ one of $\PSL_3(4)$, $\SU_3(3)$, $M_{22}$, $J_2$ is isomorphic to $\He_3(3)$.  A 3-Sylow subgroup of $\PSU_3(5)$ is isomorphic to $3^2$.  The automorphism group of $\PSU_3(5)$ contains $\PU_3(5)$ as an index $2$ subgroup and a 3-Sylow subgroup of $\PSU_3(5)$ is isomorphic to $\He_3(3)$.  Hence, a 3-Sylow subgroup of $\Aut(\PSU_3(5))$ is isomorphic to $\He_3(3)$.  
    
    Since neither $3$, $3^2$, nor $\He_3(3)$ has elements of order $9$, \Cref{lem:E7_elemOrders} implies that $\alpha$ has cyclic image. This proves the result for all groups in the hypothesis of type (1).

    We now deal with types (3) and (4).  The groups $\Aut(\alt(5))\wr6$ and $\Aut(\alt(6)^2)\cong (\sym(6).2)\wr 2$ do not have an element of order $7$. We again conclude by \cref{lem:E7_elemOrders}.   
\end{proof}

\subsection{The alternating group of degree 9}

% The centraliser of $\alpha(s_1)$ contains $\alpha(W_{\cala_5})$ which by \Cref{kolay} must have order at least $6!=720$.  The only conjugacy class of $\alt(9)$ not containing elements of order $2$ with centraliser of size bigger than $720$ is the conjugacy class of $3$-cycles of shape $(1\ 2 \ 3)$.  This has centraliser isomorphic to $3\times \alt(6)$.  But $\alt(6)$ is not a quotient of $\cala_5$.  Thus $\alpha(\cala_5)=1$ and hence $\alpha(\cale_6)=1$.

\begin{lemma}\label{lem:E7_alt9-murder}
    If $\alpha\colon \cale_7 \to \sym(9) =\Aut(\alt(9))$ is a homomorphism, then $\alpha$ has cyclic image.
\end{lemma}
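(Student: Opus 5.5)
The plan is to study the centraliser of $\alpha(s_1)$ in $\sym(9)$. Element orders will not help here: $\sym(9)$ contains $9$-cycles and $7$-cycles, so \Cref{lem:E7_elemOrders} gives no obstruction. Suppose $\alpha$ has non-cyclic image. Then $\alpha(s_1)$ is non-trivial, and by the Remark on odd-labelled graphs all Artin generators have the same order. The generator $s_1$ commutes with the standard parabolic subgroup $P=\langle s_2,s_4,s_5,s_6,s_7\rangle$, which is of type $\cala_5\cong Br_6$. By \Cref{lem:E8_non_cyclic_E6} (the generators of $\cale_7$ form a collapsing set), $\alpha(P)$ is non-cyclic. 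By \Cref{lem:a4-solvable-cyclic}, $\alpha(P)$ is moreover non-solvable, and by \Cref{kolay} it has order at least $720$. Also $\alpha(P)\leqslant C_{\sym(9)}(\alpha(s_1))$.

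Next I use the cycle type of $x=\alpha(s_1)$. If $x$ has $m_k$ cycles of length $k$ (including fixed points, $k=1$), then $C_{\sym(9)}(x)\cong\prod_k (k\wr\sym(m_k))$. I project $\alpha(P)$ onto each factor. If every projection were cyclic, $\alpha(P)$ would be abelian, hence cyclic by \Cref{L:braidCollapse}, which is a contradiction. So some projection is non-cyclic, and by \Cref{lem:a4-solvable-cyclic} it is non-solvable. A factor $k\wr\sym(m_k)$ is solvable unless $m_k\geq 5$. Since $k\geq 2$ with $m_k\geq 5$ would need at least $10$ points, we must have $k=1$: $x$ fixes $m\geq 5$ points, and $m\leq 7$ because $x\neq 1$. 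Thus we get a non-cyclic homomorphism $Br_6\to\sym(m)$ with $m\leq 7$.

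The key step, and the main obstacle, is to pin down this homomorphism. I would invoke the classical theorem of Artin and Lin on homomorphisms $Br_n\to\sym(k)$ with $k<2n$: any such homomorphism is cyclic or conjugate to the standard one, up to the usual exceptions, none of which occur for $n=6$ and $k\leq 7$. Alternatively, I would adapt the argument of \Cref{lem:A4toSym6} by induction, using the centraliser in $\sym(7)$ of the image of $s_7$. Either way, the images of the generators of $P$ are involutions (transpositions, up to the outer automorphism of $\sym(6)$). Hence $\alpha(s_2)$ has order $2$, and therefore every generator, including $s_1$, maps to an involution.

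It then follows that $\alpha$ factors through the Coxeter group $W(\cale_7)\cong 2\times\Sp_6(2)$. Since $\Sp_6(2)$ is simple and has no faithful permutation representation of degree at most $9$ (its minimal degree is $28$), its image in $\sym(9)$ is trivial. So the image of $\alpha$ has order at most $2$, and in particular is cyclic. This contradicts our assumption that the image of $\alpha$ is non-cyclic.
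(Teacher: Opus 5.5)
Your strategy is sound and uses the same ingredients as the paper: the image of the $\cala_5$ parabolic $\langle s_2,s_4,s_5,s_6,s_7\rangle$ inside $C_{\sym(9)}(\alpha(s_1))$, Kolay-type control of its symmetric images, and the $W(\cale_7)$ argument. However, one inference does not follow as written.

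The classification of non-cyclic maps $Br_6\to\sym(m_1)$ only tells you that the \emph{projection} of $\alpha(s_2)$ to the factor $\sym(m_1)$ of $C_{\sym(9)}(x)$ is an involution. But $\alpha(s_2)$ also has components in the other factors. For instance, if $x$ is a $3$-cycle then $C_{\sym(9)}(x)=\langle x\rangle\times\sym(6)$. In that case $\alpha(s_2)$ could a priori be an involution times a nontrivial power of $x$, which has order $6$. So ``Hence $\alpha(s_2)$ has order $2$'' is a non sequitur.

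The repair is short. Since $|\sym(5)|=120<720$, you in fact have $m_1\geq 6$. So $x$ moves at most three points, and is therefore a transposition or a $3$-cycle. The element $\alpha(s_2)$ has the same order as $x$. That order is even, because the projection of $\alpha(s_2)$ is a nontrivial involution. Hence $x$ is a transposition. In the $3$-cycle case this is a direct contradiction, which is exactly how the paper finishes.

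With this observation, the step you flag as the main obstacle largely disappears, and you do not need Artin--Lin at all:
\begin{itemize}
\item For $m_1=7$, the element $x$ is already a transposition.
\item For $m_1=6$, the projection is a non-abelian quotient of $\cala_5$ inside $\sym(6)$ of order at least $720$. It is therefore all of $\sym(6)$, and \Cref{kolay} gives uniqueness up to $\Aut(\sym(6))$.
\end{itemize}
This is how the paper organises the argument. It first rules out $\alpha(s_1)$ of order $2$ via $W(\cale_7)$. It then uses $|C_{\sym(9)}(\alpha(s_1))|\geq 720$ to force a $3$-cycle, and applies Kolay to the $\sym(6)$ factor. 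Your wreath-product/solvability reduction is a perfectly good substitute for that centraliser-size count, and your endgame via $\Sp_6(2)$ not embedding in $\sym(9)$ is fine.

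A small correction: the exceptional case in Artin's classification occurs precisely for $n=6$, namely the standard map twisted by the outer automorphism of $\sym(6)$. It is harmless here, since it also sends generators to involutions, as your parenthetical acknowledges.
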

\begin{proof}
    First, observe that since $\cala_5$ is contained in the centraliser of $s_1$, the image of $s_1$ must have a centraliser with size at least $720$. Otherwise, the image of $\cala_5$ is cyclic by \cref{kolay}, and the image of $\alpha$ is cyclic by \cref{lem:E8_non_cyclic_E6}. Furthermore, the image of $s_1$ must have order larger than 2. If its image has order 2, then the homomorphism would factor through $W(\cale_7)$, and this is impossible because no non-abelian subgroup of $\sym(9)$ is a quotient of $W(\cale_7)$. 
    
    The only conjugacy class of $\alt(9)$ of elements of order greater than $2$ with centraliser of size bigger than $720$ is the conjugacy class of $3$-cycles of shape $(1\ 2 \ 3)$. This has centraliser isomorphic to $3\times \sym(6)$.  Suppose for contradiction that $\alpha(s_1)= (a\ b\ c)$ and $\alpha$ has non-cyclic image. Then $\alpha(\cala_{\{2,4,5,6,7\}})$ is exactly equal to $\sym([9]\setminus\{a,b,c\})$. By \Cref{kolay}, this quotient is unique up to an automorphism of $\sym(6)$ and is induced by imposing the relation $s_2^2$.  Since $s_1$ is conjugate to $s_2$, the element $s_1$ must satisfy $s_1^3=1$ and $s_1^2=1$, implying it is trivial.  Hence, $\alpha$ must have cyclic image.
\end{proof}

\subsection{Completing the proof}

\begin{thm}\label{thm:E7}
    The smallest non-cyclic quotient of $\cale_7$ is isomorphic to $\Sp_6(2)\cong W({\cale_7})/Z(W(\cale_7))$.  Any such quotient factors though the quotient to $W(\cale_7)$.  In particular, the quotient is unique up to an automorphism of the image.
\end{thm}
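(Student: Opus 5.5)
Let $P$ be a smallest non-cyclic quotient of $\cale_7$. Since $\cale_7^\ab\cong\Z$ is cyclic and $\cale_7'$ is perfect (\Cref{thm:comm-perfect}), \Cref{lem:min_quotient_En} applies, and $|P|\le |W(\cale_7)/Z(W(\cale_7))|=1{,}451{,}520$ because $\Sp_6(2)$ is a non-cyclic quotient. By \Cref{lem:E7_hitlist}, $P$ is a subgroup of one of the listed overgroups unless $\soc(P)\cong\Sp_6(2)$. The candidates are eliminated as follows. \Cref{cor:aut-psl2-e7-murder} removes every group whose socle is a power of some $\PSL_2(q)$; this covers the whole family $\mathtt A_1$, case (2) of \Cref{lem:E7_hitlist}, and also $\alt(5)^3$, $\alt(6)^2$ via $\alt(5)\cong\PSL_2(4)$ and $\alt(6)\cong\PSL_2(9)$. \Cref{lem:E7-non-A1-murder} removes the remaining simple groups of \Cref{tab:E7simple} other than $\alt(9)$, using the elements of order $7$ and $9$ from \Cref{lem:E7_elemOrders}. \Cref{lem:E7_alt9-murder} removes $\sym(9)$. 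I would also check that \Cref{tab:E7simple} omits no simple group of order below $1{,}451{,}520$, except $\Sp_6(2)$ itself, whose order equals the bound. Hence $\soc(P)\cong\Sp_6(2)$. Since $\Out(\Sp_6(2))=1$, this gives $P\cong\Sp_6(2)$.

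For the factorisation statement, let $\alpha\colon\cale_7\to\Sp_6(2)$ be a non-cyclic homomorphism. It suffices to show that $\alpha(s_1)$ is an involution. All generators are conjugate, so each would then satisfy $s_i^2=1$ and $\alpha$ would factor through $W(\cale_7)$. Uniqueness up to $\Aut(\Sp_6(2))=\Sp_6(2)$ then follows: $W(\cale_7)\cong 2\times\Sp_6(2)$, and the generators map to reflections, i.e.\ to the transvection class, so the image in the $\Sp_6(2)$ factor is forced.

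To show $\alpha(s_1)$ is an involution, I mimic the $\cale_6$ uniqueness argument. The parabolic $\cala_5=\langle s_2,s_4,s_5,s_6,s_7\rangle$ centralises $s_1$. By \Cref{lem:centraliser-proper-subgp}, $\alpha(\cala_5)$ is a proper subgroup of $\Sp_6(2)$. It is non-cyclic by \Cref{lem:E8_non_cyclic_E6}, so by \Cref{kolay} it has order at least $720$. Moreover it lies in $C(\alpha(s_1))$, which is a proper centraliser since the centre is trivial. I would go through the maximal subgroups of $\Sp_6(2)$ in the ATLAS: $U_4(2):2$, $\sym(8)$, $2^5:\sym(6)$, $U_3(3):2$, $2^6:L_3(2)$, $2.[2^6]:(\sym(3)\times\sym(3))$, $\sym(3)\times\sym(6)$, $L_2(8):3$. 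Equivalently, I would list the centralisers of non-identity elements of order at least $720$; only classes of involutions and elements of order $3$ have such large centralisers. By \Cref{lem:a4-solvable-cyclic} together with \Cref{kolay}, the non-cyclic image of $\cala_5$ contains a section $\sym(6)$ and is in fact $\sym(6)$ via the Coxeter map, in which case $\alpha(s_2)^2=1$ and we are done.

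The main obstacle is the element of order $3$ case. There the centraliser of $\alpha(s_1)$ is of shape $3\times\sym(6)$ for the $3A$ class, or is soluble-by-small for the other order-$3$ classes. For $3\times\sym(6)$, I would argue exactly as in \Cref{lem:E7_alt9-murder}: $\alpha(\cala_5)$ must surject onto $\sym(6)$, so by \Cref{kolay} the map $\cala_5\to\sym(6)$ is the Coxeter quotient twisted by an automorphism, hence $\alpha(s_2)$ has order $2$. Since $s_2$ is conjugate to $s_1$, $\alpha(s_1)$ would then have order both $2$ and $3$, a contradiction. This is the step I expect to need careful ATLAS class and centraliser data. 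It also requires care about the outer automorphism of $\sym(6)$, which sends transpositions to triple transpositions and so still gives involutions.
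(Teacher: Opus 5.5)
Your proposal is correct and follows essentially the same route as the paper: you exclude every smaller socle using \Cref{lem:E7_hitlist}, \Cref{cor:aut-psl2-e7-murder}, \Cref{lem:E7-non-A1-murder} and \Cref{lem:E7_alt9-murder}; your use of $\alt(5)\cong\PSL_2(4)$ and $\alt(6)\cong\PSL_2(9)$ for the product cases is a harmless variant of the paper's order-$7$ argument. For uniqueness you then force $\alpha(s_1)$ into class $3_A$ with centraliser $3\times\sym(6)$ and apply \Cref{kolay}, exactly as the paper does. One small precision: in $3\times\sym(6)$, \Cref{kolay} only forces the $\sym(6)$-coordinate of $\alpha(s_2)$ to be an involution, so $\alpha(s_2)$ has order $2$ or $6$ rather than necessarily $2$; either even order still contradicts $\alpha(s_1)$, which is conjugate to $\alpha(s_2)$, having order $3$.
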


\begin{remark}
	Note that $\Sp_6(2)\cong\PSp_6(2)\cong\GO_7(2)\cong\PSO_7(2)$.
\end{remark}

\begin{proof}
    Let $Q$ be a finite non-cyclic quotient of $\cale_7$, and suppose $|Q| \leq |\Sp_6(2)| = 1451520$. By Lemma \ref{lem:E7_hitlist}, $Q$ is a subgroup of $\Aut(G)$ where $G$ is one of the groups in Table \ref{tab:E7simple}, or $Q$ is a subgroup of $\Aut(G^2)\wr2$ where $G$ is one of  $\PSL_2(7)^2$, $\PSL_2(8)^2$, $\PSL_2(11)^2$, $\PSL_2(13)^2$, $\PSL(5)^2$, $\alt(5)^2$ or $\alt(6)^2$, or $Q$ is a subgroup of $\Aut(\alt(5))^3\wr6$.

    By Corollary \ref{cor:aut-psl2-e7-murder}, the image of $\cale_7$ under any homomorphism to $\mathrm{Aut}(\mathrm{PSL}_2(q)^k)$ for $k \geq 1$ is cyclic. Lemma \ref{lem:E7-non-A1-murder} rules out all the remaining possibilities except $G \cong \mathrm{Alt}(9)$, and the case $G \cong \mathrm{Alt}(9)$ is ruled out by Lemma \ref{lem:E7_alt9-murder}. Noting that $W({\cale_7})$ is a quotient of $\cale_7$ and that $\Sp_6(2)$ is non-abelian completes the proof that $\Sp_6(2)$ is the smallest possible non-cyclic quotient.

    We now prove that the homomorphism is unique up to automorphisms of the image.  As in the case of $\cale_6$, we show that any homomorphism $\alpha\colon\cale_7\to\Sp_6(2)$ sending a generator to an element of order other than $2$ must be cyclic.  We first claim that $\alpha(s_1)$ must be contained in the conjugacy class $3_A$ in the ATLAS \cite[Page~47]{ATLAS}.  Indeed, this follows from the Orbit-Stabiliser Theorem: the centraliser of $\alpha(s_1)$ contains $\alpha(\cala_5)$. This must be noncylic by \cref{lem:E8_non_cyclic_E6}, so it must have order at least $720$ by \cref{kolay}, and $3_A$ is the only class with a centraliser of order at least $720$.
    
    By the information on \cite[Page~46]{ATLAS}, the normaliser of the subgroup generated by an element in the class $3_A$ is isomorphic to $\sym(3)\times\sym(6)$.  It follows that $\alpha(\cala_5)$ must surject onto $\sym(6)$.  But by \cref{kolay}, any such homomorphism factors through $W(\cala_5)$.  In particular, if $s_1$ has order other than $2$, then $\alpha$ has cyclic image.
\end{proof}

\section{The Artin group \texorpdfstring{$\cale_8$}{E8}}\label{sec:e8}
In this section we will compute the smallest non-abelian quotient of $\cale_8$.  Throughout we will use \Cref{StandardFiniteGroupsNotation}.

\subsection{The groups}
\Cref{tab:E8simple} recalls the finite simple groups of order at least $1,451,520$ but below $348,364,800$ which are not in the family $\mathtt A_1$.

\begin{table}[h]
    \centering
    \begin{tabular}{|c|l|r|r|c|c|}
       \hline
       Family & Group & Order & $\Out(G)$ & 7s & 9s \\
       \hline
       \hline
       \multirow{3}{*}{Alternating} & $\alt(10)$  & 1,814,400 & $2$  & y & y\\
                                    & $\alt(11)$  & 19,958,400 & $2$ & y & y \\
                                    & $\alt(12)$  & 239,500,800 & 2 & y & y\\
       \hline
       \multirow{5}{*}{$\mathtt A_2$} & $\PSL_3(7)$ & 1,876,896 & $\sym(3)$ & y & n\\
                                      &  $\PSL_3(8)$ & 16,482,816 & 6 & y & y\\
                                      &  $\PSL_3(9)$ & 42,456,960 & $2^2$& y & y\\
                                      &  $\PSL_3(11)$ & 212,427,600 & 2 & y & n\\
                                      & $\PSL_3(13)$ & 270,178,272 & $\sym(3)$ & y & n\\
        \hline
        \multirow{4}{*}{$^2\mathtt A_2$} & $\PSU_3(8)$  & 5,515,776 &$3\times\sym(3)$ & y & y\\
                                         &  $\SU_3(7)$  & 5,663,616 & 2 & y & n\\
                                        &   $\SU_3(9)$  & 42,573,600 & 4 & n & y\\
                                        &   $\PSU_3(11)$ & 70,915,680 & $\sym(3)$ & n & n \\
        \hline
        \multirow{2}{*}{$\mathtt C_2$} &  $\PSp_4(5)$ & 4,680,000 & 2 & n & n\\
                                      &  $\PSp_4(7)\cong \POmega_{5}(7)$ & 138,297,600 & 2 & y & n\\
        \hline
        $\mathtt C_3$ & $\PSp_6(2)$ & 1451520 & 1 & y & y \\
        \hline
        $\mathtt A_3$           & $\PSL_4(3)$ & 6,065,280 & $2^2$ & n & n \\
        \hline
        $^2\mathtt A_3$           & $\PSU_4(3)\cong O_6^-(3)$ & 3,265,920 & $D_8$ & y & y\\
        \hline
        $\mathtt A_4$           & $\PSL_5(2)$ & 9,999,360 & 2 & y & n\\
        \hline
        $^2\mathtt A_4$           & $\PSU_5(2)$ & 13,685,760 & 2 & n & y \\
        \hline
        $\mathtt D_4$           & $\POmega_4^+(2)$ & 174,182,400 & 6 & n & n \\
        \hline
        $^2\mathtt D_4$           & $\POmega_4^-(2)$ & 197,406,720 & 2 & y & y \\
        \hline	
        \multirow{5}{*}{Exceptional} & ${}^2\mathtt B_2(32)$  & 32,537,600 & 5 & n & n\\
                                     & ${}^2\mathtt F_4'(2)$   & 17,971,200 & 2 & n & n\\
                                     & $\mathtt G_2(3)$       & 4,245,696 & 2 & y & y\\
                                     & $^3\mathtt D_4(2)$     & 211,341,312 & 3 & y & y\\
                                     & $\mathtt G_2(4)$       & 251,596,800 & 2 & n & y\\
        \hline
        \multirow{4}{*}{Sporadic}  & $M_{23}$ & 10,200,960 & 1 & y & n\\
                                   & $HS$     & 44,352,000 & 2 & y & n\\
                                   & $J_3$    & 50,232,960 & 2 & n & y\\
                                   & $M_{24}$ & 244,823,040 & 2 & y & y\\
        \hline
    \end{tabular}
    \caption{Simple groups of order at least 1,451,520 and less than 348,364,800 not in the family $\mathtt A_1$.}
    \label{tab:E8simple}
\end{table}

\begin{lemma}\label{lem:E8_hitlist}
    It suffices to exclude subgroups of the following groups:
    \begin{enumerate}
        \item $\Aut(Q)$ for any group $Q$ appearing in \Cref{tab:E7simple};
        \item $\Aut(Q)$ for any group $Q$ appearing in \Cref{tab:E8simple};
        \item $\Aut(\alt(5)^5)\wr k$ for $k$ the order of an element of $\sym(5)\wr\sym(5)$;
        \item $\Aut(\alt(6)^3)\wr k$ for $k$ the order of an element of $\sym(6).2\wr\sym(3)$;
        \item $\Aut(Q^2)\wr |\Out(Q)|$ for $Q$ one of $M_11$, $\PSL_3(3)^2$, $\PSU_3(3)^2$;
        \item $\PSL_2(q)^n.k$ for any $n,k\leq 12$ and any $q$.
    \end{enumerate}
\end{lemma}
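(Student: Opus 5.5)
The plan mirrors the proofs of \Cref{lem:E6_hitlist} and \Cref{lem:E7_hitlist}. By \Cref{thm:comm-perfect}, $\cale_8^{\ab}\cong\Z$ is cyclic and $\cale_8'$ is perfect, so \Cref{lem:min_quotient_En} applies. Hence a smallest non-cyclic quotient $P$ has the shape $S^n.k$ with $\soc(P)=S^n$ for a non-abelian simple group $S$. If $n=1$, then $P$ is almost simple. If $n>1$, then $k$ divides $|\Out(S^n)|$ and $P$ embeds in $S^n\wr k$ by the Kaloujinine--Krasner theorem. Since $W(\cale_8)/Z(W(\cale_8))$ is a non-cyclic quotient of order $348{,}364{,}800$, we may assume $|S^n|\le|P|\le 348{,}364{,}800$. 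The task is therefore to list every pair $(S,n)$ satisfying this bound and place the resulting $P$ into one of the six families in the statement.

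For $n=1$, $P\le\Aut(S)$ with $|S|<348{,}364{,}800$. There are three subcases. If $|S|<1{,}451{,}520$, then $S$ appears in \Cref{tab:E7simple}, giving case (1). If $1{,}451{,}520\le|S|<348{,}364{,}800$ and $S$ is not in the family $\mathtt A_1$, then $S$ appears in \Cref{tab:E8simple}, giving case (2). If $S\cong\PSL_2(q)$, then $P=\PSL_2(q).k$ with $k\le|\Out(\PSL_2(q))|$; this lands in case (6) provided $k\le 12$. That holds whenever $|\Out|=\gcd(2,q-1)\cdot f$ with $q=p^f$ and $\PSL_2(q)$ of the given order. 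The largest relevant $f$ is for $q=2^{9}$ (order about $1.3\times 10^8$), giving $|\Out|=9$, while $q=2^{10}$ is already too large. So $k\le 12$ in all cases.

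For $n\ge2$ we need $|S|^n\le 348{,}364{,}800$, and I will enumerate the possibilities using the orders in the tables.
\begin{itemize}
\item $S=\alt(5)$ allows $n\le4$, since $60^4=12{,}960{,}000$ and $60^5\approx7.8\times10^8$ is too large. I will nevertheless place these under case (3) with the generous bound $n\le5$. The acting cyclic group $k$ embeds in $\Out(\alt(5)^n)=2\wr\sym(n)\le\sym(5)\wr\sym(5)$, so $k$ is the order of an element there.
\item $S=\alt(6)$ allows $n\le3$, since $360^3\approx 4.7\times 10^7$. Here $\Out(\alt(6)^n)=2^2\wr\sym(n)$, which embeds in $\sym(6).2\wr\sym(3)$, giving case (4).
\item For $S$ of order between $5{,}616$ and $18{,}000$ only $n=2$ is possible. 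This covers $\PSL_3(3)$, $\SU_3(3)$ and $M_{11}$, giving case (5). The other simple groups in this range are of type $\mathtt A_1$ or were listed earlier.
\item Every other $S$ with $|S|^2$ below the bound is some $\PSL_2(q)$. Powers $\PSL_2(q)^n$ with $n\le 12$ fall into case (6), since $168^4\approx8\times10^8$ already forces small $n$ for $\PSL_2(7)$ and larger $q$ force even smaller $n$.
\end{itemize}
In each of these cases, $k$ divides $|\Out(S^n)|$ and is the order of an element of $\Out(S)\wr\sym(n)$. This justifies the bound $k\le12$ in case (6).

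The main obstacle is not conceptual but bookkeeping. The enumeration of simple groups below $348{,}364{,}800$ must be complete, which I will check against the order tables in the ATLAS by Lie rank and field size, as was done for \Cref{tab:E7simple}. The cutoffs for $n$ and the bounds on $k$ must also be verified carefully, especially for $\alt(5)^n$ and for $\PSL_2(q)$ with a large field automorphism group. As in \Cref{lem:E7_hitlist}, the list is deliberately generous, so it is sufficient and need not be sharp.
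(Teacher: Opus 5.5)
Your strategy is the paper's own: \Cref{lem:min_quotient_En}, the bound $|S^n|\le 348{,}364{,}800$, the two tables, and Kaloujnine--Krasner. The paper only asserts the enumeration, whereas you try to carry it out, and one case slips through.

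The gap is your claim that every remaining $S$ with $|S|^2\le 348{,}364{,}800$ is some $\PSL_2(q)$. This is false for $S=\alt(7)$. Its order $2520$ lies between $|\alt(6)|$ and $|\PSL_3(3)|$, outside every range you treat, and $2520^2=6{,}350{,}400$ is well under the bound. So socles $\alt(7)^2$ must be considered, with $k\in\{2,4\}$ coming from an element of $\Out(\alt(7)^2)\cong 2\wr 2$ that swaps the factors. For example, $\alt(7)\wr 2$ has order $12{,}700{,}800$ and all of its proper quotients are cyclic. This is the only socle of this kind you missed: $\alt(8)$ and $\PSL_3(4)$ already have $|S|^2>348{,}364{,}800$, and every $S$ with $|S|^3$ below the bound is alternating or of type $\mathtt A_1$.

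The omission cannot be fixed by assigning $\alt(7)^2$ to one of (1)--(6), because none of those families contains it:
\begin{itemize}
\item every $\Aut(Q)$ in (1) has order below $6{,}350{,}400$;
\item no $|\Aut(Q)|$ in (2) is divisible by $3^4\cdot 5^2\cdot 7^2$;
\item the groups in (3)--(5) have order prime to $7$, or, in the case of $\PSU_3(3)$ with $\Aut(\PSU_3(3))\cong\mathtt G_2(2)$, prime to $5$;
\item (6) would force $\alt(7)$ into some factor $\PSL_2(q)$, which has no $\alt(7)$ subgroup.
\end{itemize}
So the list in the statement is itself incomplete. Your proof can only close after adding $\Aut(\alt(7)^2)\cong\sym(7)\wr 2$ as a further family.

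That extra family is harmless. A Sylow $3$-subgroup of $\sym(7)\wr 2$ is elementary abelian of order $3^4$, so it has no element of order $9$, and \Cref{lem:E8_7s_9s} excludes it. This is exactly how the paper disposes of $\alt(7)^2$ inside the proof of \Cref{lem:E8_products_murder}, even though it is not listed here.

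A smaller slip: the largest value of $|\Out(\PSL_2(q))|$ in range is $12$, attained at $q=3^6=729$ (where $|\PSL_2(729)|=193{,}709{,}880$), not $9$ at $q=2^9$. Your conclusion $k\le 12$ survives, but only just.
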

\begin{proof}
    We stress that these conditions are sufficient but in the case where the socle is either $\PSL_2(q)$ or a direct product, they are stronger than necessary.  By \Cref{lem:min_quotient_En}, it suffices to consider groups of the shape $S^n.k$ for some simple group $S$ and where  $|S^n| \leq |W(\cale_8) / Z(W(\cale_8))|$ and $k$ divides $|\Out(S^n)|$. All simple groups $S$ with $|S| \leq |W(\cale_8) / Z(W(\cale_8))|$ are listed in \Cref{tab:E7simple} and \Cref{tab:E8simple}. The tables also lists their orders, and the remainder of the statement follows from the universal embedding theorem \cite{KrasnerKaloujnine1951}.
\end{proof}

\begin{remark}\label{rm:e8-coxeter-quotient}
    The Coxeter group $W(\cale_8)$ modulo its centre has a simple subgroup of index $2$. As such, to show that a homomorphism $\alpha: \cale_8 \rightarrow Q$ to a particular group $Q$ is cyclic, it will often suffice to show that it must factor through the Coxeter group.
\end{remark}

\subsection{\texorpdfstring{$3^2$}{3\^2}s and 7s}

\begin{lemma}\label{lem:E8_7s_9s}
    A non-cyclic quotient of $\cale_8$ contains elements of order $7$ and $9$.
\end{lemma}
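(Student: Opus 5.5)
The plan is to reduce to the $\cale_7$ case via the standard parabolic subgroup $\cale_7\cong\langle s_1,\dots,s_7\rangle\leqslant\cale_8$. Let $\alpha\colon\cale_8\to Q$ be a homomorphism with non-cyclic finite image. We first check that $\cale_8$ satisfies the hypotheses of \Cref{lem:E8_non_cyclic_E6}. Its defining graph is odd-labelled, and by the corollary to \Cref{lem:artin-gen-collapsing} its Artin generators form a collapsing set. The subgroup $\langle s_1,\dots,s_7\rangle$ is an irreducible standard parabolic subgroup of type $\cale_7$ on more than one vertex. Hence, by \Cref{lem:E8_non_cyclic_E6}, the restriction $\alpha|_{\cale_7}$ has non-cyclic image, since otherwise $\alpha$ itself would be cyclic.

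Now $\alpha(\cale_7)$ is a finite non-cyclic quotient of $\cale_7$, namely its image in $Q$. By \Cref{lem:E7_elemOrders}, it contains elements of order $7$ and of order $9$. Concretely, these are suitable powers of $\alpha(\epsilon_7)$ and $\alpha(\epsilon_9)$, where $\epsilon_7,\epsilon_9$ are the words in $s_1,\dots,s_7$ given there. These elements lie in $\alpha(\cale_7)\leqslant Q$, so $Q$ contains elements of order $7$ and $9$.

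The only point needing care is that \Cref{lem:E7_elemOrders} is stated for quotients of $\cale_7$. Here $\alpha(\cale_7)$ is precisely such a quotient, via the epimorphism $\cale_7\to\alpha(\cale_7)$, so nothing further is required. I also need to confirm the underlying divisibility argument: the normal closures of $\epsilon_7$ and $\epsilon_9^3$ in $\cale_7$ give cyclic quotients. So if $\alpha(\epsilon_9)$ had order coprime to $3$, or of order dividing $3$, then $\alpha(\epsilon_9^3)$ would generate the same normal closure as $\alpha(\epsilon_9)$ or be trivial, and either way $\alpha(\cale_7)$ would be cyclic. This is already packaged in \Cref{lem:E7_elemOrders}, so I expect no real obstacle beyond verifying that \Cref{lem:E8_non_cyclic_E6} applies to this parabolic subgroup.
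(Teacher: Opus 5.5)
Your proof is correct and is exactly the paper's argument. You restrict to the standard parabolic $\cale_7$, use \Cref{lem:E8_non_cyclic_E6} to see that its image is non-cyclic, and apply \Cref{lem:E7_elemOrders} to that image.

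Your closing aside is not needed for the proof, but it is wrong as written. An order such as $6$ is neither coprime to $3$ nor a divisor of $3$, so your two cases miss it. Also, cyclicity of $\cale_7/\langle\langle\epsilon_9\rangle\rangle$ says nothing about $\alpha(\cale_7)$ when $\alpha(\epsilon_9)\neq 1$. The actual mechanism is that $\epsilon_9^9$ is central in $\cale_7$. So if $9\nmid |\alpha(\epsilon_9)|$, then $\alpha(\epsilon_9)^3$ is central in the image. The appendix computation shows that $\epsilon_9^3$ commuting with $s_6$ already forces $s_7=s_4$, and hence the image is cyclic.
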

\begin{proof}
    Let $\alpha\colon\cale_8\to Q$ be a finite quotient with $Q$ non-cyclic.  The homomorphism $\alpha$ induces a homomorphism $\beta\colon \cale_7\to Q$.  The image $P=\im(\beta)$ is a finite quotient of $\cale_7$, and it is non-cyclic by \cref{lem:E8_non_cyclic_E6}. By \Cref{lem:E7_elemOrders} we see that $P$ contains elements of order 7 and 9.  Whence, the lemma.
\end{proof}

\begin{lemma}\label{lem:E8-7s-9s-murder}
    Let $Q$ be one of $\PSL_3(7)$, $\PSL_3(11)$, $\PSL_3(13)$, $\SU_3(7)$, $\SU_3(9)$, $\PSU_3(11)$, $\PSp_4(5)$, $\PSp_4(7)$, $\PSL_4(3)$, $\GL_5(2)$, $\PSU_5(2)$, $^2\mathtt B_2(32)$, $^2\mathtt F_4(2)$, $\mathtt G_2(4)$, $M_{23}$, $HS$, or $J_3$.  If $\alpha\colon\cale_8\to\Aut(Q)$ is a homomorphism, then $\alpha$ has cyclic image.
\end{lemma}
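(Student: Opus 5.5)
The plan is to use element orders alone. \Cref{lem:E8_7s_9s} says every non-cyclic quotient of $\cale_8$ contains elements of order $7$ and of order $9$. It therefore suffices to show that for each listed $Q$, the group $\Aut(Q)$ has no element of order $7$ or no element of order $9$. Then the image $\alpha(\cale_8)\leqslant\Aut(Q)$ cannot be non-cyclic.

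The work is a case check against the ATLAS, using the columns ``7s'' and ``9s'' of \Cref{tab:E8simple}. These record whether $Q$ itself has elements of order $7$ and $9$. Every $Q$ in the statement has an ``n'' in at least one column: $\PSL_3(7)$, $\PSL_3(11)$, $\PSL_3(13)$, $\SU_3(7)$, $\PSp_4(7)$, $\PSL_5(2)$, $M_{23}$ and $HS$ lack $9$s; $\SU_3(9)$, $\PSU_5(2)$, $\mathtt G_2(4)$ and $J_3$ lack $7$s; and $\PSU_3(11)$, $\PSp_4(5)$, $\PSL_4(3)$, ${}^2\mathtt B_2(32)$ and ${}^2\mathtt F_4'(2)$ lack both.

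The task is then to pass from $Q$ to $\Aut(Q)=Q.\Out(Q)$. I would argue by primes.

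\textbf{Order $7$.} In every case where $Q$ has no element of order $7$, the group $|\Out(Q)|$ is coprime to $7$. By the table, $\Out(Q)$ is one of $2$, $4$, $2^2$, $5$ or $\sym(3)$. In the relevant cases $7\nmid|Q|$ as well, because a Sylow $7$-subgroup would contain an element of order $7$. So $7\nmid|\Aut(Q)|$.

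\textbf{Order $9$.} Here one must exclude an element $x\in\Aut(Q)\setminus Q$ with $x^9=1$ and $x^3\in Q$ of order $3$.
\begin{itemize}
\item When $3\nmid|\Out(Q)|$, any element of order $9$ lies in a Sylow $3$-subgroup of $Q$. So it suffices that $Q$ has no elements of order $9$.
\item When $3$ divides $|\Out(Q)|$, namely for $\PSL_3(7)$, $\PSL_3(13)$ and $\PSU_3(11)$ with $\Out\cong\sym(3)$, the outer part of order $3$ is diagonal. For $\PSL_3(7)$ and $\PSL_3(13)$, I would read the power maps of $Q.3$ from the ATLAS to see directly that no $9$s occur.
\item For $\PSU_3(11)$ this case is not needed, since $\Aut(Q)$ already has no $7$s: $7\nmid|\PSU_3(11)|=70{,}915{,}680$.
\end{itemize}

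Two items in the statement should be read as follows. $\GL_5(2)=\PSL_5(2)$, and ${}^2\mathtt F_4(2)$ is $\Aut$ of the Tits group; for the latter the $7$-argument suffices, since $7\nmid|{}^2\mathtt F_4'(2)|$.

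The main obstacle is the $9$-case for $\PSL_3(7)$ and $\PSL_3(13)$, where a diagonal outer automorphism of order $3$ could a priori cube into a $3$-element of $Q$. I would first try to settle it from the ATLAS class lists of $\PSL_3(q).3$. Failing that, I would use the order-$7$ elements instead: a centraliser or Sylow argument, or, as a last resort, a MAGMA check as in \Cref{Code:6.2}.
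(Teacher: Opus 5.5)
Your strategy is the paper's. You apply \Cref{lem:E8_7s_9s} and check, group by group, that $\Aut(Q)$ lacks elements of order $7$ or of order $9$, using the ATLAS plus a computer check for $\PSL_3(13)$. The reduction from $Q$ to $\Aut(Q)$ via $|\Out(Q)|$ is fine, and most cases go through as you describe. However, one case fails as written, and the $\PSL_3(q)$ cases are left open.

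\textbf{The case $\mathtt G_2(4)$ is wrong.} You list $\mathtt G_2(4)$ among the groups with no elements of order $7$ and then assert $7\nmid|Q|$. This is false. We have $|\mathtt G_2(4)|=4^6(4^6-1)(4^2-1)=2^{12}\cdot3^3\cdot5^2\cdot7\cdot13$; the number $251{,}596{,}800$ in \Cref{tab:E8simple} is itself divisible by $7$. So $\mathtt G_2(4)$ does contain elements of order $7$, for instance in its maximal subgroup $J_2$. The table's 7s/9s entries for $\mathtt G_2(4)$ are the wrong way round, and the paper's one-line proof relies on the same entries.

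The lemma still holds for this $Q$, but you must argue via order $9$. In characteristic $2$ an element of order $9$ is semisimple. The maximal tori of $\mathtt G_2(4)$ are $3^2$, $5^2$, $15$, $21$, $13$, all of $3$-exponent $3$. Equivalently, a Sylow $3$-subgroup of $\mathtt G_2(4)$ has order $27$, hence is a Sylow $3$-subgroup of $J_2$, which has no elements of order $9$. Since $|\Out(\mathtt G_2(4))|=2$, any element of order $9$ in $\Aut(\mathtt G_2(4))$ would lie in $\mathtt G_2(4)$. So there are none, and \Cref{lem:E8_7s_9s} applies.

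A related, harmless slip: $\PSL_4(3)$ does have elements of order $9$ (images of a regular unipotent element). So it does not ``lack both'', but you only use $7\nmid|\Aut(\PSL_4(3))|$ there.

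\textbf{The $9$-case for $\PSL_3(7)$ and $\PSL_3(13)$.} You defer these to the ATLAS or MAGMA, which is what the paper does for $\PSL_3(13)$. Your alternative fallback via order-$7$ elements cannot work, since $7$ divides both $|\PSL_3(7)|$ and $|\PSL_3(13)|$ (note $13^2-1=168$).

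A short hand argument closes this. For $q\in\{7,13\}$ we have $3\mid q-1$ but $9\nmid q-1$. So a Sylow $3$-subgroup of $\GL_3(q)$ is the monomial group $3\wr 3$. Its elements are $d$ or $d\sigma^{\pm1}$, with $d$ diagonal with cube-root-of-unity entries and $\sigma$ a $3$-cycle permutation matrix. Then $d^3=1$ and $(d\sigma^{\pm1})^3=\det(d)\,I$ is scalar. Hence Sylow $3$-subgroups of $\PGL_3(q)$ have exponent $3$. As $q$ is prime, $\Aut(\PSL_3(q))=\PGL_3(q):2$, and every element of odd order lies in $\PGL_3(q)$. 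Therefore $\Aut(\PSL_3(q))$ has no elements of order $9$. This settles your ``main obstacle'' by hand and replaces the MAGMA computation.
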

\begin{proof}
    Table \ref{tab:E8simple} lists whether each group contains elements of order 7 and 9. With one exception, this information for $Q$ and $\Aut(Q)$ can be found in \cite{ATLAS}. If the group does not contain an element of order 7 and an element of order 9, we conclude by \cref{lem:E8_7s_9s}. %If the order of a group is not divisible by both $7$ and $9$, then it cannot have elements of orders $7$ and $9$ by Lagrange's theorem. For elements of order $7$, the converse is also true: if $7$ divides the order of the group, then there is an element of order $7$ by the first Sylow theorem. For elements of order $9$, the converse is not necessarily true, but the existence of elements of order $9$ in the remaining cases is classified by the ATLAS.  
    The element order information for $Q\cong \PSL_3(13)$ is not in the ATLAS, so we verify there are no elements of order 9 in $\Aut(Q)$ via MAGMA; see \Cref{code:E8_7s_9s}.
\end{proof}

\subsection{Alternating groups}

\begin{lemma}\label{lem:E8-sym12-murder}
    Any homomorphism $\cale_8\to \sym(12)$ has cyclic image.
\end{lemma}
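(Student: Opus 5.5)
The idea is to adapt the centraliser argument of \Cref{lem:E7_alt9-murder}: look at the image of $s_8$ and use that $\cale_6=\langle s_1,\dots,s_6\rangle$ centralises it. Suppose $\alpha\colon\cale_8\to\sym(12)$ has non-cyclic image.

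\emph{Step 1: the image of $s_8$.} Since all generators are conjugate, $\alpha(s_8)\neq 1$. If $\alpha(s_8)$ had order $2$, then $\alpha$ would factor through $W(\cale_8)$. By \Cref{rm:e8-coxeter-quotient}, a non-cyclic quotient of $W(\cale_8)$ then involves the simple group $\POmega_8^+(2)$. That group has no faithful permutation representation of degree $\le 12$ (its minimal degree is $120$), which is a contradiction. So $\alpha(s_8)$ has order at least $3$.

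\emph{Step 2: only a $3$-cycle survives.} By \cref{lem:E8_non_cyclic_E6}, $\alpha(\cale_6)$ is non-cyclic. By \Cref{thm:E6} it therefore has order at least $|W(\cale_6)|=51{,}840$. It lies in $C_{\sym(12)}(\alpha(s_8))$, whose order is $\prod_i i^{m_i}m_i!$ for cycle type $\prod i^{m_i}$. Running through the non-involution cycle types in $\sym(12)$, the only centralisers of order $\ge 51{,}840$ are:
\begin{itemize}
\item $3\times\sym(9)$, for a $3$-cycle;
\item $4\times\sym(8)$, for a $4$-cycle.
\end{itemize}
For example, $3^2$ gives $12{,}960$, a $5$-cycle gives $25{,}200$, and $2\cdot3$ gives $30{,}240$. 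In either surviving case, the projection of $\alpha(\cale_6)$ to the symmetric factor must be non-cyclic. Otherwise $\alpha(\cale_6)$ would be abelian, hence cyclic, because $\cale_6^{\ab}$ is cyclic. This projection is a non-cyclic quotient of $\cale_6$. Since $|\sym(8)|=40{,}320<51{,}840$, the $4$-cycle case is impossible. Hence $\alpha(s_8)=(a\ b\ c)$, and projecting gives a non-cyclic homomorphism $\beta\colon\cale_6\to\sym(9)=\sym([12]\setminus\{a,b,c\})$ whose image has index at most $7$. Every subgroup of $\sym(9)$ of index less than $9$ contains $\alt(9)$, so the image of $\beta$ contains $\alt(9)$.

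\emph{Step 3: contradiction via element orders.} Each $s_i$ with $i\le 6$ is conjugate in $\cale_8$ to $s_8$. So $\alpha(s_i)$ is a $3$-cycle commuting with $(a\ b\ c)$, and it is therefore either $(a\ b\ c)^{\pm1}$ or a $3$-cycle on the other nine points. Consequently $\beta(s_i)$ is trivial or a $3$-cycle. If some $\beta(s_i)$ is trivial, then all are, since the generators of $\cale_6$ are conjugate, and $\beta$ would be trivial. Hence $\beta(\cale_6)$ is generated by $3$-cycles and lies in $\alt(9)$. But $\alt(9)$ has no element of order $8$, because an element of order $8$ in $\sym(9)$ must contain an $8$-cycle and is therefore odd. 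This contradicts \Cref{lem:E6_elemOrders}, which says a non-cyclic finite quotient of $\cale_6$ has elements of order $8$.

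The main obstacle is Step 1's claim that no non-cyclic quotient of $W(\cale_8)$ embeds in $\sym(12)$. I will handle it by the minimal-degree fact above, or alternatively by \Cref{lem:E8_7s_9s} together with the orders of subgroups of $\sym(12)$. The cycle-type enumeration in Step 2 is routine bookkeeping.
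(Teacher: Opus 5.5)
Your proof is correct. Up to the reduction to $\beta\colon\cale_6\to\sym(9)$ it follows the paper's skeleton:
\begin{enumerate}
\item rule out involutions via the Coxeter quotient;
\item use that the non-cyclic group $\alpha(\cale_6)$ centralises $\alpha(s_8)$ to leave only $3$-cycles and $4$-cycles;
\item discard $4$-cycles because $|\sym(8)|<51{,}840$.
\end{enumerate}

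The endgame is where you differ. The paper applies the centraliser trick a second time. It notes that $\beta(\cala_4)$, with $\cala_4=\langle s_2,s_4,s_5,s_6\rangle$, centralises the $3$-cycle $\beta(s_1)$, so it projects to $\sym(6)$. There \Cref{lem:A4toSym6} forces either cyclic image or generators of order $2$, contradicting order $3$. You instead observe that $\beta(\cale_6)$ is generated by $3$-cycles, so it lies in $\alt(9)$, which has no element of order $8$; this contradicts \Cref{lem:E6_elemOrders}. Since \Cref{thm:E6} already depends on \Cref{lem:E6_elemOrders}, your ending costs nothing extra. It removes the second centraliser layer and the need for \Cref{lem:A4toSym6} here.

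Your write-up is also more careful than the paper's in three places:
\begin{itemize}
\item You justify that the projections to the symmetric factors are non-cyclic; the paper only asserts this.
\item You justify that each $\beta(s_i)$ is a $3$-cycle; again the paper only asserts it.
\item Your enumeration includes cycle type $2\cdot 3$, whose centraliser has order $30{,}240$. The paper's claim that the third largest centraliser is $25{,}200$ overlooks this, harmlessly, since $30{,}240<51{,}840$.
\end{itemize}

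Two minor remarks. First, the step ``index at most $7$, so the image contains $\alt(9)$'' is never used. Second, Step 1 is most easily settled by Lagrange. A non-cyclic image of $\cale_8$ is non-solvable, so a non-cyclic quotient of $W(\cale_8)$ has $\POmega_8^+(2)$ as a composition factor and hence order divisible by $2^{12}$. Only $2^{10}$ divides $12!$. This avoids passing from a section of a subgroup of $\sym(12)$ to a small-degree permutation representation of $\POmega_8^+(2)$, which the image $W(\cale_8)$ itself would otherwise require, since $\POmega_8^+(2)$ is not a subgroup of $W(\cale_8)$.
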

\begin{proof}
        Let $\alpha\colon\cale_8\to \sym(12)$ be a homomorphism  We aim to constrain the possible images of the generator $s_8$.  The element $\alpha(s_8)$ cannot have order $2$ because this would imply that $\alpha$ factors through the quotient to $W(\cale_8)$, and $\sym(12)$ is not a quotient of $W(\cale_8)$; see \cref{rm:e8-coxeter-quotient}.

        The group $\sym(12)$ has $77$ conjugacy classes. Of these, $70$ contain non-trivial elements with order distinct from $2$.   
        The third smallest conjugacy class of these $70$ is the class of $5$-cycles $(a\ b\ c\ d\ e)$ consisting of $\binom{12}{5}\times 4!=19008$ elements. By the Orbit-Stabiliser Theorem, the third largest centraliser of a conjugacy class of non-trivial elements of $\sym(12)$ with order distinct from $2$ is $25,200$.  
        This is smaller than the smallest non-abelian quotient of $\cale_6$.  It follows that if $\alpha$ had non-cyclic image, then $\alpha(s_8)$ must be contained in the one of the two smallest conjugacy classes of non-trivial elements of order not equal to $2$.  
        
        The second smallest conjugacy class of the $70$ classes is the class of $4$-cycles $(a\ b\ c\ d)$ consisting of ${{12}\choose{4}}\times 3!=2970$ elements.  The centraliser of such an element has order $161,280$ and is isomorphic to $4\times \sym(8)$.  In particular, we obtain a homomorphism $\cale_6\to \sym(8)$.  Since $|\sym(8)|=8!=40,320$ is smaller than $51,840$, $\alpha(\cale_6)$ must have cyclic image by \cref{thm:E6}. \cref{lem:E8_non_cyclic_E6} then implies that $\alpha(\cale_8)$ itself is cyclic. %$\cale_6$ must have cyclic image in $\sym(8)$ and so $\alpha(\cale_8)$ must have cyclic image in $\sym(11)$.
        
        The smallest conjugacy class of the 70 classes is the class of $3$-cycles $(a\ b\ c)$ consisting of ${{12}\choose{4}}\times 2!=440$ elements.  The centraliser of such an element has order 1,088,640 and is isomorphic to $3\times\sym(9)$.  In particular, we obtain a homomorphism $\beta\colon\cale_6\to \sym(9)$ such that every Artin generator is mapped to a cycle of shape $3^1$. 
        
        The centraliser of $\beta(s_1)$ must contain $\beta(\cala_4)$ where $\cala_4$ is generated by $s_2,s_4,s_5,s_6$.  The centraliser of a cycle of shape $3^1$ on $\sym(9)$ is isomorphic to $3\times\sym(6)$, so we obtain an induced homomorphism $\theta\colon \cala_4\to \sym(6)$.  By \Cref{lem:A4toSym6}, this either factors through Coxeter group $W(\cala_4)$, implying that $\beta(s_2)$ has order $2$ and contradicting the fact it has order $3$, or the homomorphism $\beta$ has cyclic image.  We conclude that $\alpha$ has cyclic image as required.
\end{proof}

\subsection{Classical groups}

We begin by observing the folloiwing special case of \cref{lem:centraliser-proper-subgp}.
\begin{lemma}\label{lem:E8_proper_E6_subgroup}
    Let $Q$ be a non-trivial group with trivial centre, and let $\alpha\colon \cale_8 \to Q$ be a homomorphism. The image $\alpha(\cale_6)$ of the standard parabolic subgroup of type $\cale_6$ must be a proper subgroup of $Q$.
\end{lemma}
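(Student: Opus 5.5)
The plan is to reduce the statement to \cref{lem:centraliser-proper-subgp}, applied with $A_\Gamma=\cale_8$ and the generator $s_8$. The standard parabolic subgroup of type $\cale_6$ here is $\langle s_1,\dots,s_6\rangle$, generated by $\Gamma-\st(s_8)$, since $\lk(s_8)=\{s_7\}$. So the first step is to check that this parabolic is exactly the one generated by $\Gamma-\st(s_8)$, and therefore commutes elementwise with $s_8$.

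The main step is to argue directly, without first assuming the quotient is non-cyclic. Suppose $\alpha(\cale_6)=Q$. Since $\cale_6$ commutes with $s_8$, the element $\alpha(s_8)$ centralises $\alpha(\cale_6)=Q$, so $\alpha(s_8)\in Z(Q)=1$. Now $s_7$ and $s_8$ satisfy an odd Artin relation, so every conjugate of $s_8$ is killed, and in particular $\alpha(s_7)=1$.

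Because $\cale_8$ is odd-labelled and its graph is connected, all Artin generators are conjugate. Hence $\alpha$ is trivial, and then $Q=\alpha(\cale_6)$ is trivial. This contradicts the hypothesis that $Q$ is non-trivial.

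The only subtlety, and the expected obstacle, is the interpretation of the statement. As written, $\alpha$ need not be surjective, so \cref{lem:centraliser-proper-subgp}, which assumes a non-cyclic quotient, does not apply verbatim. The direct argument above avoids this: it shows that $\alpha(\cale_6)=Q$ forces $\alpha$ to be trivial, regardless of whether $\alpha$ is surjective. If $\alpha$ is trivial, then $\alpha(\cale_6)=1\neq Q$, so $\alpha(\cale_6)$ is proper in every case.
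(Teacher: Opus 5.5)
Your proof is correct. It shares the paper's key observation: the $\cale_6$ parabolic $\langle s_1,\dots,s_6\rangle$ commutes with $s_8$, so if its image were all of $Q$, then $\alpha(s_8)$ would be central. Where you differ is in how you finish.

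The paper's proof stops at this observation and cites \cref{lem:centraliser-proper-subgp}. That lemma uses only that $\alpha(s_8)$ is central. It then applies \cref{L:gcdRelation} to $s_8$ and its neighbour $s_7$ to force $\alpha(s_7)=\alpha(s_8)$. Finally it invokes the collapsing-set property (\cref{lem:artin-gen-collapsing}) to conclude that the image is cyclic, contradicting the non-cyclic hypothesis.

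You instead use the trivial-centre hypothesis at full strength to get $\alpha(s_8)=1$. Conjugacy of all generators (the graph is odd-labelled and connected) then kills $\alpha$ outright. This needs neither \cref{L:gcdRelation} nor the collapsing-set machinery.

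You are also right that \cref{lem:centraliser-proper-subgp} is stated for non-cyclic quotients. The paper's one-line reduction therefore leaves a small bridge implicit. If $\alpha$ is not surjective, the conclusion is immediate. If $\alpha$ is surjective, then $Q$, being non-trivial with trivial centre, is non-abelian and hence non-cyclic, so the lemma applies.

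Your version handles both cases at once and is fully self-contained. The paper's route buys generality: \cref{lem:centraliser-proper-subgp} applies to any non-cyclic quotient of such an Artin group, centreless or not, and to the star of every generator.
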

\begin{proof}
    The standard parabolic subgroup of type $\cale_6$ is contained in $\Gamma - \text{St}(s_8)$. The result then follows from \cref{lem:centraliser-proper-subgp}.
\end{proof}
%\begin{proof}
%    Recall that the subgroup $\cale_6 < \cale_8$ centralizes the Artin generator $s_8$. Since all Artin generators are conjugate, if $\alpha(s_8)$ is trivial, then the entire subgroup $\alpha(\cale_8)$ is trivial. If it is non-trivial, then we cannot have $\alpha(\cale_6) \cong Q$ because this would require $\alpha(s_8) \in Z(\alpha(\cale_6))$, and $Q$ has trivial centre. 
%\end{proof}
% \katie{I left this in to avoid having to reiterate that $\cale_6$ commutes with $s_8$ every time it arises, but we can change that. I also don't think we need the trivial centre assumption.}

\begin{lemma}\label{lem:E8_PSL38-murder}
    If $\alpha\colon\cale_8\to \Aut(\PSL_3(8))$ is a homomorphism, then $\alpha$ has cyclic image.
\end{lemma}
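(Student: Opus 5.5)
We follow the pattern of \Cref{lem:E6_aut_various_PSL2q_murder} and \Cref{lem:E6_PSL33_murder}, using the $\cale_6$ parabolic subgroup centralising $s_8$. Set $Q=\PSL_3(8)$ and $G=\Aut(Q)\cong Q.6$ (here $\Out(Q)\cong 6$, generated by field automorphisms of order $3$ and the graph automorphism). We assume $\alpha$ has non-cyclic image and aim for a contradiction.

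\emph{Step 1: the image of $\cale_6$ is non-cyclic and proper.} By \Cref{lem:E8_non_cyclic_E6}, $\alpha(\cale_6)$ is non-cyclic. Otherwise $\alpha$ would be cyclic. The standard parabolic $\cale_6=\langle s_1,\dots,s_6\rangle$ centralises $s_8$, and $\alpha(s_8)\neq 1$. Every subgroup $Q\le H\le G$ has trivial centraliser in $G$, since $C_G(Q)=1$ for an almost simple group. Hence $\alpha(\cale_6)$ is a proper subgroup of $G$ that does not contain $Q$. This is the same reasoning as in \Cref{lem:E8_proper_E6_subgroup}.

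\emph{Step 2: reduce to small subgroups.} By \Cref{thm:E6}, a non-cyclic image of $\cale_6$ has order at least $|W(\cale_6)|=51,840$ and has a non-cyclic quotient. In particular it is non-solvable by \Cref{lem:a4-solvable-cyclic}, and it has a non-abelian composition factor $S$. The smallest non-cyclic quotient of $\alpha(\cale_6)$ is itself a quotient of $\cale_6$, so its order is at least $51,840$. Its socle is a product of simple groups coming from the hitlist in \Cref{lem:E6_hitlist}. By the earlier $\cale_6$ lemmas, every such candidate other than $W(\cale_6)$ is excluded. So $W(\cale_6)\cong\PSp_4(3):2$ must be a section of $\alpha(\cale_6)\le G$, and $|\PSp_4(3)|=25,920=2^6\cdot3^4\cdot5$ must divide $|G|$.

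\emph{Step 3: a divisibility contradiction.} We have $|Q|=8^3(8^2-1)(8^3-1)/\gcd(3,7)=2^9\cdot 3^2\cdot 7^2\cdot 73$, so $|G|=2^{10}\cdot3^3\cdot7^2\cdot73$. This is divisible by neither $5$ nor $3^4$. So $\PSp_4(3)$ cannot be a section of $G$, which contradicts Step 2. Hence $\alpha$ has cyclic image.

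The cleanest route is therefore Lagrange applied to $\alpha(\cale_6)$ together with \Cref{thm:E6}. The key point to justify carefully is that a non-cyclic finite image of $\cale_6$ surjects onto $W(\cale_6)$. This holds because any finite non-cyclic quotient $P$ of $\cale_6$ has a minimal non-cyclic quotient, which by \Cref{thm:E6} and its uniqueness argument must be $W(\cale_6)$, so $|W(\cale_6)|$ divides $|P|$. I expect this to be the main obstacle. If the minimal-quotient argument only yields ``smallest'' rather than ``every minimal'', I would fall back on element orders. By \Cref{lem:E6_elemOrders} the image must contain an element of order $8$. Elements of $Q$ have orders dividing $2,4,7,9,63,73$ or similar, and elements of $G\setminus Q$ come from field and graph automorphisms. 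I would check in the ATLAS (or with MAGMA) whether $\Aut(\PSL_3(8))$ has elements of order $8$ or $12$. I also note that $5\nmid|G|$, which could be combined with a $\cala_4\le\cale_6$ argument: the image of $\cala_4$ is non-cyclic by \Cref{lem:E8_non_cyclic_E6} and so has order at least $120$ by \Cref{kolay}. That image is a quotient surjecting onto $\sym(5)$, so its order is divisible by $5$, which is impossible in $G$. This last argument is the most robust, and I would lead with it.
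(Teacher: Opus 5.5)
\textbf{There is a genuine gap.} Each of your three routes rests on a step that fails.

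\emph{Step 2.} This needs every non-cyclic finite image of $\cale_6$ to have $W(\cale_6)$ as a section. That does not follow from \Cref{thm:E6}, which only identifies the \emph{smallest} non-cyclic quotient and the uniqueness of maps onto it. A larger image can have minimal non-cyclic quotients that do not appear on the list of \Cref{lem:E6_hitlist} at all.

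\emph{The $\sym(5)$ fallback.} This has the same flaw, and the claim you use there is false. By Coxeter's computation of truncated braid groups, $\cala_4/\langle\langle s_1^3\rangle\rangle$ is the Shephard--Todd group $G_{32}\cong 3\times\Sp_4(3)$. So $\cala_4$ surjects onto the perfect group $\Sp_4(3)$, which has no quotient isomorphic to $\sym(5)$. \Cref{kolay} gives you $|\alpha(\cala_4)|\geq 120$, not a surjection onto $\sym(5)$.

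\emph{The element-order fallback.} This fails too. $\Aut(\PSL_3(8))$ contains the maximal subgroup $\PGL_2(7)\times 3$, which has elements of orders $8$ and $12$. Also $\PSL_3(8)$ itself has elements of order $9$.

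\textbf{How to repair it.} Your divisibility idea is right; you only need that every non-cyclic finite image of $\cala_4$ has order divisible by $5$. This has a short proof that does not use Kolay.
\begin{itemize}
\item Take an $\cala_4$ standard parabolic subgroup of $\cale_8$ with generators $\sigma_1,\dots,\sigma_4$ in path order, and put $\delta=\sigma_1\sigma_2\sigma_3\sigma_4$. Then $\delta\sigma_i\delta^{-1}=\sigma_{i+1}$ for $i\le 3$, and $\delta^5=\Delta_{\cala_4}^2$ is central in $\cala_4$.
\item Since $5\nmid|\Aut(\PSL_3(8))|=2^{10}\cdot 3^3\cdot 7^2\cdot 73$ (your computation is correct), the order of $\alpha(\delta)$ is prime to $5$. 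Hence $\alpha(\delta)$ is a power of $\alpha(\delta)^5$ and is central in $\alpha(\cala_4)$.
\item It follows that $\alpha(\sigma_1)=\alpha(\sigma_2)=\alpha(\sigma_3)=\alpha(\sigma_4)$, so $\alpha(\cala_4)$ is cyclic. Then $\alpha$ is cyclic by \Cref{lem:E8_non_cyclic_E6}.
\end{itemize}

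\textbf{Comparison with the paper.} Once repaired, your route is genuinely different from the paper's. The paper uses \Cref{lem:E8_proper_E6_subgroup}, treats images inside $\PSL_3(8)$ via \Cref{lem:induct_on_PGLnq} and \Cref{lem:E6_PSL2q_murder}, and then goes through the ATLAS list of maximal subgroups of $\PSL_3(8).6$, comparing their orders with $|W(\cale_6)|$. The repaired argument avoids all of that. It also shows more generally that every homomorphism from $\cale_8$ to a finite group of order prime to $5$ has cyclic image.
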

\begin{proof}
    Let $Q=\PSL_3(8)$ and recall that $\Out(Q)=6$ from \Cref{tab:E8simple}.  In particular, $G=\Aut(\PSL_3(8))$ is a group of shape $\PSL_3(8).6$.  Let $\alpha\colon \cale_8\to G$ be a homomorphism. The image of $\cale_6$ under $\alpha$ must be a proper subgroup of $G$ by \cref{lem:E8_proper_E6_subgroup}.

   By \Cref{lem:induct_on_PGLnq} if the image of $\alpha$ is non-cyclic and contained in $Q$, then $\cale_6$ admits a non-cyclic homomorphism to $\PGL_2(8^n)$ for some $n\in\{1,2,3\}$.  We claim we may take $n=1$.  One could prove this by examining the minimal polynomial of an element of $\GL_3(q)$; instead, we observe from \cite[p.~73]{ATLAS} that $Q$ has no subgroups isomorphic to $\PSL_2(64)$ or $\PSL_2(512)$.  By \cref{lem:E6_PSL2q_murder}, every homomorphism $\cale_6\to\PSL_2(8)$ has cyclic image.  Thus if $\alpha$ is non-cyclic, $\alpha(\cale_6)$ must not be contained in $Q$.

    Consulting \cite[p.~73]{ATLAS}, we see that a proper maximal subgroup of $G$ is isomorphic to one of the following
    \begin{enumerate}
        \item $2^{3+6}:7^2:6$, $73:18$, $7^2:(6\times \sym(3))$,
        \item $(D_{14}\times\PSL_2(8)):3$, 
        \item $\PSL_2(7):2\times 3$,
        \item $Q.2$ and $Q.3$.
    \end{enumerate}
    The maximal subgroups of $Q.2$ and $Q.3$ are isomorphic to subgroups of the maximal subgroups of $Q.6$ with extra addition of the subgroup $Q$.  The subgroups $Q$, $Q.2$, and $Q.3$ do not centralise any non-trivial elements, so the image of $\cale_6$ cannot be equal to one of these.  Thus, $\alpha(\cale_6)$ must be contained in a group of type (1), (2), or (3).  The groups of type (1) are solvable, so any homomorphism $\cale_6\to M$ with $M$ of type (1) has cyclic image by \cref{lem:a4-solvable-cyclic}.  A group of type (2) has order at most 42336, and a group of type (3) has order 1008.  Both of these are smaller than the smallest non-cyclic quotient of $\cale_6$, so we conclude by \cref{thm:E6} and \cref{lem:E8_non_cyclic_E6}. 
\end{proof}

\begin{lemma}\label{lem:E8_PSL39_murder}
    If $\alpha\colon\cale_8\to \Aut(\PSL_3(9))$ is a homomorphism, then $\alpha$ has cyclic image.
\end{lemma}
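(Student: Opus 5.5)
I plan to mirror the proof of \Cref{lem:E8_PSL38-murder}. Put $Q=\PSL_3(9)$ and $G=\Aut(Q)$; from \Cref{tab:E8simple} we have $\Out(Q)\cong 2^2$, so $G$ has shape $\PSL_3(9).2^2$. Let $\alpha\colon\cale_8\to G$ be a homomorphism and suppose, for contradiction, that its image is non-cyclic. The argument then has two steps.

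\textbf{Step 1: the image of $\cale_6$ is not contained in $Q$.} The group $G$ has trivial centre, so by \Cref{lem:E8_proper_E6_subgroup} the image $\alpha(\cale_6)$ is a proper subgroup of $G$. Suppose $\alpha(\cale_8)\leq Q\leq\PGL_3(9)$. Then \Cref{lem:induct_on_PGLnq} gives a non-cyclic homomorphism $\cale_6\to\PGL_2(9^d)$ for some $d\leq 3$. This contradicts \Cref{lem:E6_PSL2q_murder}, which holds for every $q$. So $\alpha(\cale_8)\not\leq Q$. The same reasoning applies to the image of $\cale_6$: if $\alpha(\cale_6)\leq Q$, then $\alpha(\cale_6)$ is either cyclic or excluded by the linear-group argument. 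In the cyclic case, \Cref{lem:E8_non_cyclic_E6} forces $\alpha(\cale_8)$ to be cyclic.

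\textbf{Step 2: go through the maximal subgroups.} Next I will list, using the ATLAS entry for $\PSL_3(9)$, the maximal subgroups of $G$ and of the intermediate extensions $Q.2_i$. I will discard three kinds of candidate for $\alpha(\cale_6)$.
\begin{itemize}
\item Subgroups with no centraliser. The image $\alpha(\cale_6)$ centralises $\alpha(s_8)$, which is non-trivial. The almost simple subgroups $Q$ and $Q.2_i$ centralise no non-trivial element of $G$, so $\alpha(\cale_6)$ cannot be one of them.
\item Solvable subgroups, such as the parabolics $3^{2+4}:\dots$ and the torus normalisers. These are excluded by \Cref{lem:a4-solvable-cyclic} together with \Cref{lem:E8_non_cyclic_E6}.
\item Non-solvable maximal subgroups of order less than $51{,}840$, such as those built on $\PSL_2(9)$, $\PSU_3(3)$ or $\PSL_3(3)$. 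These are excluded by \Cref{thm:E6} together with \Cref{lem:E8_non_cyclic_E6}, since the smallest non-cyclic quotient of $\cale_6$ has order $51{,}840$.
\end{itemize}

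\textbf{Main obstacle.} The difficulty is that the ATLAS maximal-subgroup list for $\PSL_3(9)$ and its extensions must be read carefully. Some maximal subgroups, for example $\PSU_3(3).2$, or subfield and parabolic-type groups of large order, may exceed $51{,}840$ or may not be solvable. For any such group $M$, I would apply the relevant earlier lemma (\Cref{lem:E6_order_murder} or \Cref{lem:E6_PSL33_murder}) to $\alpha(\cale_6)\leq M$. If that is not enough, I would argue by element orders instead: by \Cref{lem:E6_elemOrders}, $\alpha(\cale_6)$ needs elements of orders $8$, $9$ and $12$, and I would check against the ATLAS whether $M$ contains them. If the printed data turn out to be incomplete, the fallback is a MAGMA verification, as was done in \Cref{code:E8_7s_9s}.

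Once every maximal subgroup is excluded, $\alpha(\cale_6)$ must be cyclic. Then \Cref{lem:E8_non_cyclic_E6} shows that $\alpha$ has cyclic image, contradicting our assumption and proving the lemma.
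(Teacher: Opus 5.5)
Your overall strategy matches the paper's: use that $\alpha(\cale_6)$ centralises $\alpha(s_8)\neq 1$, then push $\alpha(\cale_6)$ into a maximal subgroup. But Step 2 fails at exactly the case that matters.

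\textbf{The gap.} You file ``the parabolics'' under the solvable maximal subgroups. The maximal parabolic subgroups of $Q=\PSL_3(9)=\mathrm{SL}_3(9)$ are the point and line stabilisers $3^4:\GL_2(9)$, of order $466{,}560$. They contain $\mathrm{SL}_2(9)\cong 2.\alt(6)$, so they are not solvable, and they are much larger than $51{,}840$. They are maximal in $Q$, and they extend to maximal subgroups $3^4:\GL_2(9):2$ of the field-automorphism extension $Q.2$. The solvable group $3^{2+4}:8^2$ you seem to have in mind is the Borel subgroup, which becomes maximal only as a novelty once a graph automorphism is present.

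Consequently:
\begin{itemize}
\item neither \Cref{lem:a4-solvable-cyclic} nor the order bound from \Cref{thm:E6} excludes $\alpha(\cale_6)\leq 3^4:\GL_2(9)(:2)$;
\item \Cref{lem:E6_order_murder} and \Cref{lem:E6_PSL33_murder} concern unrelated groups and do not help.
\end{itemize}
Nothing in Step 1 rules this case out. Step 1 only shows $\alpha(\cale_8)\not\leq Q$. Your remark that $\alpha(\cale_6)\leq Q$ is ``excluded by the linear-group argument'' is not what \Cref{lem:induct_on_PGLnq} provides: that corollary is about non-cyclic maps of $\cale_8$, and says nothing about where $\alpha(\cale_6)$ sits. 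So your ``main obstacle'' paragraph leaves the decisive case as a to-do rather than an argument.

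\textbf{How the paper closes it.} The paper passes to a quotient with solvable kernel. Every non-solvable maximal subgroup of $Q.2$ maps, with solvable kernel, onto a subgroup of $Q$, $\Aut(\PSU_3(3))$, $\Aut(\PSL_3(3))$ or $\Aut(\alt(6))$. For the parabolic this map is the Levi projection
\[
3^4:\GL_2(9)(:2)\to\PGL_2(9)(:2)\leq\Aut(\alt(6)),
\]
with kernel $3^4:8$. The composite on $\cale_6$ is cyclic by \Cref{lem:E6_aut_alt}. So $\alpha(\cale_6)$ is solvable, hence cyclic by \Cref{lem:a4-solvable-cyclic}, and \Cref{lem:E8_non_cyclic_E6} finishes.

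\textbf{A shorter route.} Your element-order fallback, if actually carried out, settles the whole lemma in one line.
\begin{itemize}
\item By the Jordan decomposition, every element of $\mathrm{SL}_3(9)$ has order whose $3$-part is at most $3$: a unipotent $u$ satisfies $(u-1)^3=0$, hence $u^3=1$ in characteristic $3$.
\item $|\Out(Q)|=4$ is prime to $3$, so every element of $\Aut(Q)$ of $3$-power order lies in $Q$.
\end{itemize}
Hence $\Aut(Q)$ has no element of order $9$, and \Cref{lem:E8_7s_9s} applies directly. This contradicts the entry in the 9s column of \Cref{tab:E8simple} for $\PSL_3(9)$, which appears to be a slip.
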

\begin{proof}
    Let $Q=\PSL_3(9)$ and recall that $\Out(Q)=2^2$.  Let $G=\Aut(\PSL_3(9))$, and note that because $\cale_8^{ab}$ is cyclic, any homomorphism $\cale_8\to G$ must be contained in an index $2$ subgroup of the shape $Q.2$.  By \cite[p.~78]{ATLAS}, any non-solvable maximal subgroup of an extension $Q.2$ admits an epimorphism with solvable kernel onto one of the following groups:
    \begin{enumerate}
        \item $Q$;
        \item $\Aut(\PSU_3(3))$;
        \item $\Aut(\PSL_3(3))$; or
        \item $\Aut(\alt(6))$.  
    \end{enumerate}
    Note that $\alt(6)\cong\PSL_2(9)$.  Lemmas \ref{lem:induct_on_PGLnq} and \ref{lem:E6_aut_alt} imply that that $\alpha$ cannot have a non-cyclic image contained in a group of type (1). If $\alpha$ has non-cyclic image, then $\alpha(\cale_6)$, which is a proper subgroup of $\alpha(\cale_8)$ by \Cref{lem:E8_proper_E6_subgroup}, must be contained in a group of type (1)-(4).  Now, $\alpha(\cale_6)$ cannot be equal to a group of type (1) since it does not centralise any non-trivial elements of $G$.  If $\alpha(\cale_6)$ was a proper subgroup of a group of type (1), then $\alpha(\cale_6)$ would admit a quotient into a subgroup of a group of type (2), (3), or (4).  Thus, we need to rule out the existence of homomorphisms $\cale_6\to M$ with non-cyclic image where $M$ is one types (2), (3), or (4).  This is handled by \Cref{lem:E6_order_murder}, \Cref{lem:E6_PSL33_murder}, and \Cref{lem:E6_aut_alt} respectively.
\end{proof}

\begin{lemma}\label{lem:E8_PSU38_murder}
    If $\alpha\colon\cale_8\to \Aut(\PSU_3(8))$ is a homomorphism, then $\alpha$ has cyclic image.
\end{lemma}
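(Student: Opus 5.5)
We follow the template of \Cref{lem:E8_PSL38-murder} and \Cref{lem:E8_PSL39_murder}. Let $Q=\PSU_3(8)$, so that by \Cref{tab:E8simple} the group $G=\Aut(Q)$ has shape $Q.(3\times\sym(3))$, of order $|Q|\cdot 18$. Suppose $\alpha\colon\cale_8\to G$ has non-cyclic image.

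\emph{Reduction to a cyclic extension.} Since $\cale_8^\ab\cong\Z$, the image $\alpha(\cale_8)$ maps onto a cyclic subgroup of $\Out(Q)\cong 3\times\sym(3)$. Moreover $\alpha(\cale_8')$ is perfect, hence lies in $Q$ or is trivial. So we may assume the image lies in a subgroup $Q.c$ with $c\in\{1,2,3,6\}$ cyclic.

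\emph{The parabolic $\cale_6$ is trapped in a small maximal subgroup.} By \Cref{lem:E8_proper_E6_subgroup}, $\alpha(\cale_6)$ is a proper subgroup centralising $\alpha(s_8)\neq 1$. It cannot equal or contain $Q$, because $C_G(Q)=1$. Hence $\alpha(\cale_6)$ lies in a maximal subgroup $M$ of some $Q.c$ with $M\not\supseteq Q$. I would read these from the ATLAS page for $\PSU_3(8)$. The maximal subgroups of $Q$ are, roughly:
\begin{itemize}
\item the Borel subgroup $2^{3+6}:21$;
\item $3\times \PSL_2(8)$ (the $\mathrm{GU}_2$-type subgroup);
\item the torus normaliser $(9\times 3):\sym(3)$ and $3^2:Q_8$ types;
\item $19:3$ or similar;
\item possibly $\PSU_3(2)$-type subfield subgroups.
\end{itemize}
The corresponding maximal subgroups of $Q.c$ are extensions of these by at most $6$ (or $18$).

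\emph{Eliminating each candidate $M$.} If $M$ is solvable, then $\alpha(\cale_6)$ is cyclic by \Cref{lem:a4-solvable-cyclic}, so $\alpha$ is cyclic by \Cref{lem:E8_non_cyclic_E6}. Every non-solvable candidate has a unique non-abelian composition factor $\PSL_2(8)$, with solvable normal subgroup and solvable quotient around it. In that case, composing with the map to the almost simple section gives a homomorphism $\cale_6\to\Aut(\PSL_2(8))$. Its image is cyclic by \Cref{lem:E6_aut_various_PSL2q_murder}. The kernel part is handled by perfectness: $\alpha(\cale_6')$ is perfect, and it maps trivially to the almost simple section, hence lies in a solvable group, hence is trivial. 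So $\alpha(\cale_6)$ is cyclic, and \Cref{lem:E8_non_cyclic_E6} finishes.

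Alternatively, any such $M$ has order at most $|3\times\PSL_2(8)|\cdot 18<51{,}840$, so \Cref{thm:E6} applies directly. That order comparison is the cleanest route and is the one I would try first.

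\emph{Main obstacle.} The delicate step is getting the list of maximal subgroups of $Q.c$ exactly right for every $c$, including novelties in the extensions. I would verify each order bound against $51{,}840$ and fall back on the composition-factor argument for any larger non-solvable novelty. If the ATLAS data is incomplete, a MAGMA check as in \Cref{code:E8_7s_9s} would settle it.
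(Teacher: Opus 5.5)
Your proposal is correct and follows essentially the paper's route. You trap $\alpha(\cale_6)$ in a proper subgroup of some $\PSU_3(8).c$ that cannot contain $Q=\PSU_3(8)$, because it centralises $\alpha(s_8)\neq 1$. You then dispose of solvable candidates via \Cref{lem:a4-solvable-cyclic}, and you kill the non-solvable ones, which only involve $3\times\PSL_2(8)$ and $\Aut(\PSL_2(8))$, with the order bound from \Cref{thm:E6} before finishing with \Cref{lem:E8_non_cyclic_E6}.

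Your uniform treatment of all $c$ is slightly cleaner than the paper's split into $Q$, $Q.2/Q.3$ and $Q.6$, and you handle the solvable kernel explicitly where the paper does not. Your worry about novelties also disappears: $\alpha(\cale_6)\cap Q$ is a proper subgroup of $Q$ with cyclic quotient of order at most $6$. So the only ATLAS fact needed is that $3\times\PSL_2(8)$ is the unique non-solvable maximal subgroup of $Q$.
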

\begin{proof}
    Let $Q=\PSU_3(8)$, and recall that $\Out(Q)=3\times\sym(3)$.  Note that any homomorphism $\cale_8\to \Aut(\PSU_3(8))$ must have image contained in a subgroup of shape Q.k where $k=2,3,6$.
    
   We first treat the case that the image of $\alpha$ is contained in $Q$. Observe that $\alpha(\cale_6)$ must be a proper subgroup of $Q$ by \cref{lem:E8_proper_E6_subgroup}. %The element $\alpha(s_8)$ must be non-trivial in any non-trivial quotient because the Artin generators are all conjugate, and $Q$ has trivial centre. 
   Thus $\alpha(\cale_6)$ must be contained in some proper maximal subgroup. The only non-soluble maximal proper subgroup of $Q$ is $3\times \PSL_2(8)$, and the order of $3\times \PSL_2(8)$ is $1,512$. This is smaller than the smallest non-cyclic quotient of $\cale_6$. \cref{thm:E6} and \cref{lem:E8_non_cyclic_E6} then imply $\alpha(\cale_8)$ is cyclic.
   
   Now, suppose $\alpha$ has image contained in a group of shape $Q.2$ or $Q.3$.  A non-soluble maximal subgroup of such a group admits a homomorphism to either $Q$ or to $\Aut(\PSL_2(8))$.  The subgroup $\alpha(\cale_6)$ cannot be $Q$ by the same argument as in the previous paragraph, so $\alpha(\cale_6)$ must admit a homomorphism to $\Aut(\PSL_2(8))$. The order of $\Aut(\PSL_2(8))$ is $1512$, so any homomorphism from $\cale_6$ to $\Aut(\PSL_2(8))$ is cyclic by \cref{thm:E6}. \cref{lem:E8_non_cyclic_E6} then implies that $\alpha(\cale_8)$ is cyclic.
   
   Finally, suppose $\alpha$ has image contained in a group of shape $Q.6$. The only possible images for $\alpha(\cale_6)$ which have not already been addressed are a group of shape $Q.2$ or $Q.3$. Subgroups of this form do not centralize any non-trivial elements, concluding the proof.
\end{proof}

\begin{lemma}\label{lem:E8_PSU43_murder}
    If $\alpha\colon\cale_8\to \Aut(\PSU_4(3))$ is a homomorphism, then $\alpha$ has cyclic image.
\end{lemma}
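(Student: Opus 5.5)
We follow the template of \Cref{lem:E8_PSL38-murder} and \Cref{lem:E8_PSU38_murder}. Let $Q=\PSU_4(3)$, so that $G=\Aut(Q)$ has shape $Q.D_8$ by \Cref{tab:E8simple}. Since $\cale_8^{\ab}$ is cyclic, $\alpha(\cale_8)$ lies in a subgroup $Q.C$ with $C$ a cyclic subgroup of $D_8$. Thus it suffices to treat $Q$, the extensions $Q.2$ (there are several classes, $Q.2_1,Q.2_2,Q.2_3$ in ATLAS notation) and $Q.4$. Suppose $\alpha$ has non-cyclic image. By \Cref{lem:E8_proper_E6_subgroup}, $\alpha(\cale_6)$ is a proper subgroup of $\alpha(\cale_8)$. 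Moreover, $\alpha(\cale_6)$ centralises the non-trivial element $\alpha(s_8)$. Since $Q$ and all its almost simple overgroups have trivial centraliser in $G$, $\alpha(\cale_6)$ cannot contain $Q$. Hence $\alpha(\cale_6)$ lies in a maximal subgroup $M$ of some $Q.C$ not containing $Q$, and by \Cref{lem:E8_non_cyclic_E6} the image $\alpha(\cale_6)$ is non-cyclic.

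Next we read off the maximal subgroups from \cite[p.~52]{ATLAS}. For $Q$ they are $3^4:\alt(6)$, $\PSU_4(2)$ (twice), $\alt(7)$ (several classes), $2^4:\alt(6)$, $3^{1+4}_+.2\sym(4)$, $\PSL_3(4)$, $\PSU_3(3)$, $\PSL_2(7)$ (some classes), $2\cdot(\alt(4)\times\alt(4)).4$, and $\alt(6)$. For the extensions $Q.C$ the maximal subgroups are the corresponding normalisers. Each has shape (soluble).$X$, with $X$ either trivial or $\Aut$ of a subgroup of one of $\alt(6)$, $\alt(7)$, $\PSL_3(4)$, $\PSU_3(3)$, $\PSL_2(7)$, $\PSU_4(2)$. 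We compose $\alpha|_{\cale_6}$ with the projection $M\to X$. The soluble kernel is harmless: if the image in $X$ is cyclic, then $\alpha(\cale_6)$ is soluble, hence cyclic by \cref{lem:a4-solvable-cyclic}, which contradicts the non-cyclicity established above. So we obtain a non-cyclic homomorphism from $\cale_6$ to $X$, and we rule out each $X$ in turn:
\begin{enumerate}
\item $X$ built from $\alt(6)$ or $\alt(7)$ is excluded by \Cref{lem:E6_aut_alt};
\item $\PSL_3(4)$ and $\PSU_3(3)$ are excluded by \Cref{lem:E6_order_murder};
\item $\PSL_2(7)$ is excluded by \Cref{lem:E6_PSL2q_murder}, or alternatively by order, since $|\Aut(\PSL_2(7))|<51840$ and \Cref{thm:E6} applies.
\end{enumerate}

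The remaining and hardest case is $X=\PSU_4(2)$ or $\PSU_4(2):2$. There a non-cyclic image genuinely exists, namely $W(\cale_6)\cong\PSU_4(2):2$. For $M\cap Q=\PSU_4(2)$ with $|M|\le 2|\PSU_4(2)|=51840$, \Cref{thm:E6} forces $\alpha(\cale_6)=M\cong W(\cale_6)$, so $\alpha|_{\cale_6}$ factors through the Coxeter quotient and $\alpha(s_8)$, being conjugate to $\alpha(s_1)$, is an involution. Then $\alpha$ factors through $W(\cale_8)$, and by \cref{rm:e8-coxeter-quotient} its image surjects onto the simple group $\POmega_8^+(2)$, of order $174{,}182{,}400$. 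Since $|\Aut(\PSU_4(3))|=26{,}127{,}360$ is smaller, this is impossible, and the image would be cyclic. The main risk in this step is checking carefully that $\PSU_4(2)$ is indeed the only relevant non-soluble section in every extension $Q.C$ (notably $Q.4$), and whether the ATLAS lists additional novelties such as $\sym(6)$ or $\PSL_3(4).2$ classes. We would handle any such novelty with the same three exclusions, and if needed confirm the maximal-subgroup list with MAGMA as in \Cref{code:E8_7s_9s}.
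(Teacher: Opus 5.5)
Your proposal is correct and follows essentially the paper's argument. You pass to $Q.C$ with $C$ cyclic, place $\alpha(\cale_6)$ in a maximal subgroup not containing $Q$ via \Cref{lem:E8_proper_E6_subgroup} and trivial centralisers, and exclude the non-soluble sections with \Cref{lem:E6_aut_alt}, \Cref{lem:E6_order_murder} and \Cref{thm:E6}. You then kill the $\PSU_4(2):2$ case by the uniqueness in \Cref{thm:E6}, which forces $\alpha$ through $W(\cale_8)$. The one difference is that the paper treats $\alpha(\cale_8)\le Q$ separately by a centraliser-order bound (every element of order $>2$ in $Q$ has centraliser of order at most $5832$), whereas you absorb that case into the uniform maximal-subgroup argument.
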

\begin{proof}
    Let $Q=\PSU_4(3)$, and recall $\Out(Q)=Dh_4$ which has order $8$.  If $\alpha$ factors through the canonical Coxeter quotient, it must be cyclic; see \cref{rm:e8-coxeter-quotient}. Otherwise, the image of an Artin generator has order larger than 2. By \cite[Page~54]{ATLAS}, the largest centraliser of a class of elements of order greater than $2$ in $Q$ has size 5832. This is smaller than the smallest non-cyclic quotient of $\cale_6$, so \Cref{lem:E8_non_cyclic_E6} implies that $\alpha(\cale_8)$ is cyclic if it lies in $Q$. 
    
    We now show that any homomorphism $\alpha\colon \cale_8\to Q.2$ has cyclic image.   According to the ATLAS \cite[Page~52]{ATLAS}, there are exactly three non-isomorphic almost simple groups of shape Q.2. We study the possible images of $\cale_6$ in a group of shape $Q.2$. By \cref{lem:E8_proper_E6_subgroup}, $\alpha(\cale_6)$ must be contained in a proper maximal subgroup of $Q.2$.  Furthermore, this subgroup cannot be $Q$ because $Q$ does not centralize any non-trivial element. %CANNOT BE $Q$ NOT CENTRAL.
    
    The remaining possibilities are a soluble group or one of $3^4:\alt(6)$, $\PSU_4(2)$, $\PSL_3(4)$, $\PSU_3(3)$, $2^4:\alt(6)$, $\alt(7)$, $\alt(6).2$, $\PSU_4(2):2$, $\PSL_3(4):2$, $\PSU_3(3)\times 2$, $\alt(6).2^2$, $3^4:\sym(6)$, $\PSU_4(2)\times2$, $2^4:\sym(6)$, $2^5:\alt(6)$, $\sym(7)$, $3^4:(\alt(6).2)$, $\PSU_3(3)$, and $(2^4\colon\alt(6)).2$). Note that by \cite[Pg. 300]{BrauerATLAS}, the group $2^4:\sym(6)$ is incorrectly listed in \cite{ATLAS} and should be replaced by a group of shape $(2^4\colon\alt(6)).2$ as we have done.  It follows that $\cale_6$ admits a homomorphism to $\Aut(L)$ for $L$ one of $\alt(6)$, $\alt(7)$, $\PSL_3(4)$, $\PSU_3(3)$, or $\PSU_4(2)$.  The only case where this is not immediate is $M=(2^4\colon\alt(6)).2$, where one observes first that the abelian factor of $\soc(M)$ is exactly $2^4$ and so one may pass to the quotient $\alt(6).2$.  The first two possibilities for $\Aut(L)$ are ruled out by Lemma \ref{lem:E6_aut_alt}, and the second two are ruled out by \Cref{lem:E6_order_murder}.
    
    The group $\PSU_4(2):2$ is a quotient of $\cale_6$, but by \cref{thm:E6}, every quotient map $\cale_6 \to \PSU_4(2):2$ differs from the standard one by an automorphism of the image.  Since all Artin generators are conjugate and thus have the same order in any quotient, this would imply the image of $\cale_8$ in $Q.2$ factors through $W(\cale_8)$.  This is only possible if $\alpha(\cale_8)$ is cyclic.
    
    A nearly identical argument handles the case of $Q.4$.  Since any other possible extension is by a soluble but not cyclic group, we conclude that every homomorphism $\alpha\colon \cale_8\to\Aut(\PSU_4(3))$ has cyclic image.
\end{proof}

\begin{lemma}\label{lem:E8_PSp62_murder}
    If $\alpha\colon\cale_8\to \Aut(\PSp_6(2))$ is a homomorphism, then $\alpha$ has cyclic image.
\end{lemma}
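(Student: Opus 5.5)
Since $\Out(\PSp_6(2))$ is trivial, we have $\Aut(\PSp_6(2))=\Sp_6(2)$, and the task is to show that every homomorphism $\alpha\colon\cale_8\to\Sp_6(2)$ has cyclic image. The plan mirrors \Cref{lem:E8_PSL38-murder} and \Cref{lem:E8_PSU43_murder}: control the image of the $\cale_6$ parabolic (or of the $\cale_7$ parabolic) through its centraliser relation with $s_8$.

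\emph{Step 1: $\alpha$ is not surjective.} Suppose $\alpha$ were surjective. The image $\alpha(\cale_6)$ centralises $\alpha(s_8)$, and $\Sp_6(2)$ has trivial centre. By \Cref{lem:E8_proper_E6_subgroup}, $\alpha(\cale_6)$ is therefore contained in a proper maximal subgroup $M$ of $\Sp_6(2)$. By the ATLAS, the maximal subgroups of $\Sp_6(2)$ are $\PSU_4(2):2$, $\sym(8)$, $2^5:\sym(6)$, $\PSU_3(3):2$, $2^6:\PSL_3(2)$, $2.[2^6]:(\sym(3)\times\sym(3))$, $\sym(3)\times\sym(6)$ and $\PSL_2(8):3$.

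\emph{Step 2: identify the image of $\cale_6$.} We go through the maximal subgroups $M$ as follows, in each case using \Cref{thm:E6} together with \Cref{lem:E8_non_cyclic_E6}.
\begin{itemize}
\item Every $M$ of order less than $51{,}840$ forces $\alpha(\cale_6)$ to be cyclic, and hence $\alpha$ to be cyclic. This covers $\sym(8)$, $2^5:\sym(6)$, $\PSU_3(3):2$, $2^6:\PSL_3(2)$, the $2$-local subgroup, $\sym(3)\times\sym(6)$ and $\PSL_2(8):3$.
\item The only remaining case is $M=\PSU_4(2):2$, which has order exactly $51{,}840$. Here $\alpha(\cale_6)=\PSU_4(2):2$. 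By the uniqueness part of \Cref{thm:E6}, this map factors through $W(\cale_6)$, so $\alpha(s_1)$ has order $2$.
\end{itemize}

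\emph{Step 3: conclude.} Since all Artin generators of $\cale_8$ are conjugate, every generator then has order $2$, so $\alpha$ factors through $W(\cale_8)$. The group $W(\cale_8)/Z$ has a simple subgroup of index $2$ of order $174{,}182{,}400$, which is larger than $|\Sp_6(2)|$, so by \cref{rm:e8-coxeter-quotient} the only quotients of $W(\cale_8)$ of that size are cyclic. Hence $\alpha$ is cyclic, contradicting surjectivity. It follows that $\alpha$ is not surjective, and its image lies in a maximal subgroup $M$.

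\emph{Step 4: the non-surjective case.} Now apply the same case analysis to $\alpha(\cale_8)\le M$ directly. Every $M$ other than $\PSU_4(2):2$ has order less than $|W(\cale_6)|$, hence no non-cyclic $\cale_6$-image, and so $\alpha$ is cyclic by \Cref{lem:E8_non_cyclic_E6}. If $M=\PSU_4(2):2$, then $\alpha(\cale_6)$ is a proper subgroup of $M$ by \Cref{lem:centraliser-proper-subgp}, since $M$ has trivial centre. Being a proper subgroup, it has order less than $51{,}840$ and is therefore cyclic. Hence $\alpha$ is cyclic.

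\emph{Expected obstacle.} The main difficulty is verifying the list of maximal subgroups and their orders. If the $2$-local subgroup $2.[2^6]:(\sym(3)\times\sym(3))$ has order $4608$ and the others are as listed, everything is below $51{,}840$ except $\PSU_4(2):2$. I expect this check to go through.
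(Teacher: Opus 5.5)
Your proof is correct, but it takes a different route from the paper's.

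\textbf{Your route.} You follow the maximal-subgroup template of \Cref{lem:E8_PSL38-murder} and \Cref{lem:E8_PSU43_murder}. You place $\alpha(\cale_6)$ in a maximal subgroup of $\Sp_6(2)$ and dispose of everything of order below $51{,}840$ by \Cref{thm:E6}. You then handle $\PSU_4(2):2$ via the uniqueness clause of \Cref{thm:E6}, together with the fact that $W(\cale_8)$ has no non-abelian quotient this small.

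\textbf{The paper's route.} The paper uses element centralisers instead. Since $\alpha(\cale_6)\le C(\alpha(s_8))$, and every element of order greater than $2$ in $\Sp_6(2)$ has centraliser far smaller than $51{,}840$, the group $\alpha(\cale_6)$ is cyclic, hence so is $\alpha$. The involution case is sent to \cref{rm:e8-coxeter-quotient}. The paper quotes $648$ as the largest such centraliser. The correct value is $|3\times\sym(6)|=2160$, for the class $3_A$, as used in the proof of \Cref{thm:E7}; this does not affect the argument.

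\textbf{What each buys.} The centraliser route needs only one column of ATLAS data and no uniqueness statement. In fact every non-identity element of $\Sp_6(2)$ has centraliser of order at most $23{,}040$ (the transvection centraliser $2^5:\sym(6)$), so even the separate order-$2$ case is unnecessary. Your route needs the full maximal-subgroup list and the uniqueness part of \Cref{thm:E6}. In exchange, it still works when some element centraliser is large, which is why the paper uses it in those other two lemmas.

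\textbf{Two small remarks.}
\begin{itemize}
\item Your surjective/non-surjective split is redundant. Always $\alpha(\cale_6)\neq\Sp_6(2)$, since otherwise $\alpha(s_8)$ is central, hence trivial, and $\alpha$ is trivial. So $\alpha(\cale_6)$ lies in some maximal $M$, and your Steps 2--3 finish without assuming surjectivity.
\item In Step 4, \Cref{lem:centraliser-proper-subgp} gives properness of $\alpha(\cale_6)$ in $\alpha(\cale_8)$, not in $M$, and it does not use that $M$ has trivial centre. This is still enough, since $\alpha(\cale_8)\le M$.
\end{itemize}
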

\begin{proof}
    Let $Q=\PSp_6(2)$ and recall $\Out(Q)=1$. By \cite[pg.~47]{ATLAS}, the largest centraliser of a conjugacy class of non-trivial elements of order larger than $2$ in $\PSp_6(2)$ has order 648. This is smaller than the smallest non-cyclic quotient of $\cale_6$, so $\alpha(\cale_6)$ is cyclic by \cref{thm:E6}, and $\alpha(\cale_8)$ is consequently cyclic by \cref{lem:E8_non_cyclic_E6}. If the image of $s_8$ has order 2, we instead conclude by \cref{rm:e8-coxeter-quotient}.
\end{proof}

\begin{lemma}
    If $\alpha\colon\cale_8\to \POmega_4^+(2)$ is a homomorphism, then $\alpha$ has cyclic image.
\end{lemma}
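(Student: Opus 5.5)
The group $\POmega_8^+(2)$ (listed in the table as $\POmega_4^+(2)$ with $\Out=6$ and order $174{,}182{,}400$) has no elements of order $7$ or $9$ according to \Cref{tab:E8simple}. I will therefore run the same element-order argument used in \Cref{lem:E8-7s-9s-murder}. Suppose $\alpha\colon\cale_8\to\POmega_8^+(2)$ has non-cyclic image. By \Cref{lem:E8_7s_9s}, the image $\alpha(\cale_8)$ is a non-cyclic finite quotient of $\cale_8$, so it contains elements of order $7$ and of order $9$. These are then elements of $\POmega_8^+(2)$ itself, so it suffices to show that $\POmega_8^+(2)$ lacks one of these orders.

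The key step is to verify this from the ATLAS. The order is $|\POmega_8^+(2)| = 2^{12}\cdot 3^5\cdot 5^2\cdot 7$, so $7$ divides the order, and the relevant missing order must be $9$. I would consult the power maps in \cite[p.~85]{ATLAS}. The $3$-elements of $\POmega_8^+(2)$ fall into the classes $3A$–$3E$, and there are no classes of order $9$. Equivalently, a Sylow $3$-subgroup is $3^{1+4}{:}3$ or similar of exponent $3$ coming from the $3$-part of the Weyl group $W(\cale_8)$ structure. So there is no element of order $9$, which contradicts \Cref{lem:E8_7s_9s}. Hence $\alpha$ has cyclic image.

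The main obstacle is the correctness of the table entry ``n'' for $9$s. If elements of order $9$ did exist (the $3$-rank is $4$, and the torus $\GL_1(8)\times\dots$ does not embed in $\Omega_8^+(2)$ via $q^3-1=7$, so order $9$ would need $q^3+1=9$ from $\GU$-type tori), then I would fall back on the centraliser method used for $\PSU_4(3)$ and $\PSp_6(2)$. In that approach the image of $s_8$ would have order greater than $2$ by \cref{rm:e8-coxeter-quotient}. Its centraliser would contain $\alpha(\cale_6)$, which is non-cyclic by \cref{lem:E8_non_cyclic_E6}, and so has order at least $51{,}840$ by \Cref{thm:E6}. One would then check in the ATLAS which classes of elements of order greater than $2$ have centralisers this large (likely only $3A$, with centraliser $3\times\PSU_4(3)$). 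In that case the image of $\cale_6$ would land in $\PSU_4(3)$ or a relative, and this is handled by the arguments of \Cref{lem:E8_PSU43_murder} together with \Cref{lem:E6_order_murder}. Indeed, this fallback is the likely genuine difficulty, since $\Omega_8^+(2)\supset \Omega_6^-(2)\times \Omega_2^-(2)$ and $\GU_3(2)$-type tori can produce order-$9$ elements. So I would verify the order-$9$ question carefully (possibly via MAGMA) before committing, and otherwise use the centraliser argument.
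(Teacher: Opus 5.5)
Your main argument rests on a false fact. $\POmega_8^+(2)$ does contain elements of order $9$ (and of order $7$), so \Cref{lem:E8_7s_9s} gives no contradiction. Concretely, $\alt(9)\le\POmega_8^+(2)$ via its action on the $8$-dimensional even-weight submodule of $\FF_2^9$, and a $9$-cycle has order $9$. So the Sylow $3$-subgroup contains $3\wr 3$ and has exponent $9$, not $3$.

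More decisively, no $7$-and-$9$ argument can ever exclude this group. $W(\cale_8)/Z(W(\cale_8))\cong\POmega_8^+(2).2$ is itself a non-cyclic quotient of $\cale_8$, so by \Cref{lem:E8_7s_9s} it has elements of order $7$ and $9$. Being of odd order, these lie in the index-two subgroup $\POmega_8^+(2)$. The ``n'' entries in that row of \Cref{tab:E8simple} are simply wrong, as you already noticed for $7$. This is why the paper omits this group from \Cref{lem:E8-7s-9s-murder} and treats it separately. Your fallback is therefore not optional.

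The fallback is the right strategy and is the route the paper takes, but you have misidentified the key ATLAS input. The centraliser cannot be $3\times\PSU_4(3)$: $3^6$ divides $|\PSU_4(3)|$, while only $3^5$ divides $|\POmega_8^+(2)|$. In any case \Cref{lem:E8_PSU43_murder} concerns maps out of $\cale_8$, not $\cale_6$.

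The correct fact is this. The only classes of elements of order greater than $2$ with centraliser of order at least $51{,}840$ are $3A,3B,3C$, which are permuted by triality. Each has centraliser $3\times\PSU_4(2)$ of order $77{,}760$, which is index $2$ in the normaliser $(3\times\PSU_4(2)){:}2$.

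With this the proof closes as follows.
\begin{enumerate}
\item If $\alpha(s_8)$ has order $2$, conclude by \Cref{rm:e8-coxeter-quotient}.
\item Otherwise $\alpha(\cale_6)\le C(\alpha(s_8))\cong 3\times\PSU_4(2)$.
\item The projection $\cale_6\to\PSU_4(2)$ has cyclic image, since $|\PSU_4(2)|=25{,}920<51{,}840$ (use \Cref{thm:E6}, or \Cref{lem:E6_PSp43_murder} since $\PSU_4(2)\cong\PSp_4(3)$ has no elements of order $8$).
\item Hence $\alpha(\cale_6)$ is abelian, so cyclic because $\cale_6^{\ab}\cong\mathbb{Z}$.
\item \Cref{lem:E8_non_cyclic_E6} then shows $\alpha$ has cyclic image.
\end{enumerate}
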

\begin{proof}
	Let $Q=\POmega_4^+(2)$.  By \cite[Page~86]{ATLAS}, every conjugacy class of non-trivial elements of order greater than 2 in $Q$ has centraliser smaller than the smallest non-cyclic quotient of $\cale_6$ except for three classes of order 3 elements: classes $3_A$, $3_B$, and $3_C$ in the ATLAS. Each of these three classes of order 3 elements has centraliser of order $77,760$, and the ATLAS prints their normalisers as $3\times\PSU_4(2)):2$ of order $155,520$.  It follows that the centraliser must be index $2$ and isomorphic to $3\times\PSU_4(2)$.  The map $\alpha$ cannot restrict to a non-cyclic map $\cale_6 \rightarrow \PSU_4(2)$ by \cref{lem:E6_order_murder}, and any homomorphism from $\cale_6$ to an abelian group is cyclic. The entire homomorphism $\alpha$ then has cyclic image by either \cref{lem:E8_non_cyclic_E6} or \cref{rm:e8-coxeter-quotient}. %This implies that $\cale_6$ admits a homomorphism to $\PSU_4(2)$, but by \Cref{lem:E6_order_murder}, any such homomorphism has cyclic image. 
\end{proof}

\begin{lemma}
    If $\alpha\colon\cale_8\to \POmega_4^-(2)$ is a homomorphism, then $\alpha$ has cyclic image.
\end{lemma}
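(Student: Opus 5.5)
The plan is to copy the template of the preceding lemma for $\POmega_4^+(2)$, i.e.\ centraliser size considerations combined with the known $\cale_6$ results. Write $Q=\POmega_4^-(2)$, which in the ATLAS is $O_8^-(2)$, of order $197{,}406{,}720$. Let $\alpha\colon\cale_8\to Q$ be a homomorphism. If $\alpha(s_8)$ has order $2$, then $\alpha$ factors through $W(\cale_8)$. Since $|Q|$ is smaller than $|W(\cale_8)/Z(W(\cale_8))|$, and the only non-abelian quotients of $W(\cale_8)$ have order at least that of its simple index-two subgroup, the image is cyclic by \cref{rm:e8-coxeter-quotient}. If $\alpha(s_8)$ is trivial, the image is trivial. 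So we may assume $\alpha(s_8)$ has order greater than $2$.

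The standard parabolic $\cale_6=\langle s_1,\dots,s_6\rangle$ commutes with $s_8$, so $\alpha(\cale_6)\le C_Q(\alpha(s_8))$. By \cref{lem:E8_non_cyclic_E6}, if $\alpha$ is non-cyclic then $\alpha(\cale_6)$ is non-cyclic. By \cref{thm:E6} this forces $|C_Q(\alpha(s_8))|\ge 51{,}840$, and moreover $C_Q(\alpha(s_8))$ must map a subgroup onto $\PSU_4(2):2$ or something larger. The key step is then to consult the ATLAS page for $O_8^-(2)$ and list the conjugacy classes of elements of order greater than $2$ whose centraliser has order at least $51{,}840$. I expect only classes of elements of order $3$ survive, with centralisers of shape $3\times\PSU_4(2)$ or similar (normalisers $(3\times \PSU_4(2)):2$ or $\sym(3)\times\PSU_4(2)$), and possibly a class with centraliser $3\times \PSU_4(2)$-ish of the form $3\times U_4(2)$ as in the $+$ case. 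The order $5$ and larger classes should have centralisers far below $51{,}840$.

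For each surviving class, we argue as in the $\POmega_4^+(2)$ lemma. The centraliser is $3\times L$ with $L$ of order less than $51{,}840\cdot 2$. Projecting $\alpha(\cale_6)$ to $L$ gives a homomorphism $\cale_6\to L$. If $L\cong\PSU_4(2)$, this is cyclic by \cref{lem:E6_PSp43_murder}, since $\PSU_4(2)$ has no elements of order $8$ (\cref{lem:E6_elemOrders}). The projection to the abelian factor $3$ is cyclic. Because $\cale_6$ has cyclic abelianisation and perfect commutator subgroup, an image in $3\times L$ with both projections cyclic must be cyclic. Then \cref{lem:E8_non_cyclic_E6} finishes.

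The main obstacle is reading the ATLAS correctly: identifying exactly which classes have large centralisers and their precise structure. If a surviving class had centraliser $3\times\PSU_4(2):2$ (rather than index-two), we would instead need the uniqueness part of \cref{thm:E6}. That uniqueness forces $\alpha(s_1)$, hence every Artin generator, to have order $2$, contradicting $\alpha(s_8)$ having order $3$. Should the ATLAS data be ambiguous, the fallback is a MAGMA centraliser computation as in \Cref{code:E8_7s_9s}.
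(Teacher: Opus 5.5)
Your strategy is the paper's: handle $\alpha(s_8)$ of order $2$ via the Coxeter quotient, then trap a non-cyclic $\alpha(\cale_6)$ in $C_Q(\alpha(s_8))$ and read centraliser orders off the ATLAS. The gap is that the one piece of data the argument rests on is guessed wrongly, so the branches you actually carry out never apply.

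\textbf{The centraliser.} A centraliser of shape $3\times\PSU_4(2)$, or $3\times\PSU_4(2){:}2$, cannot occur in $Q$ at all. Its order is divisible by $3^5$, whereas $|Q|=2^{12}\cdot 3^4\cdot 5\cdot 7\cdot 17$. The correct picture is as follows: the only class of elements of order greater than $2$ with centraliser of order at least $51{,}840$ is $3_A$, with $C_Q(g)\cong 3\times\alt(8)$ of order $60{,}480$ and normaliser $(3\times\alt(8)){:}2$. Structurally, such a $g$ has a $2$-dimensional minus-type commutator space, so its fixed space is $6$-dimensional of plus type. Hence $\Omega_6^+(2)\cong\alt(8)$ replaces the $\Omega_6^-(2)$-piece $\PSU_4(2)$ from the plus-type case you were copying.

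With this correction your projection argument goes through and is exactly the paper's. The projection $\cale_6\to\alt(8)$ is cyclic, either by \cref{lem:E6_aut_alt} (there are no elements of order $9$) or simply because $|\alt(8)|=20{,}160<51{,}840$ together with \cref{thm:E6}. So $\alpha(\cale_6)$ lies in an abelian group and is cyclic, and \cref{lem:E8_non_cyclic_E6} finishes. Your $\PSU_4(2)$ branch and your appeal to the uniqueness part of \cref{thm:E6} are vacuous here.

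\textbf{The order-$2$ case.} Your justification does not close as stated. Knowing that non-abelian quotients of $W(\cale_8)$ have order at least that of the simple index-two subgroup of $W(\cale_8)/Z(W(\cale_8))$ only gives a bound of $174{,}182{,}400$, which is smaller than $|Q|=197{,}406{,}720$. You can fix this in either of two ways:
\begin{itemize}
\item use that the only non-abelian quotients of $W(\cale_8)$ are $W(\cale_8)$ and $W(\cale_8)/Z(W(\cale_8))$, both of order at least $348{,}364{,}800>|Q|$; or
\item use Lagrange: that simple group has order divisible by $3^5$, so it cannot embed in $Q$.
\end{itemize}
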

\begin{proof}
    Let $Q=\POmega_4^-(2)$ and let $\alpha\colon\cale_8\to Q$ be a homomorphism.  By \cite[Page~88]{ATLAS} and the Orbit-Stabiliser Theorem, the only conjugacy class of elements with order larger than 2 and with centraliser larger than the smallest non-cyclic quotient of $\cale_6$ is the class $3_A$, whose centraliser has order 60480.  The ATLAS lists the normaliser of a cyclic subgroup generated by an element in the class $3_A$ as $(3\times\alt(8)):2$ of order 120,960.  It follows the centraliser of such an element has index 2 and is isomorphic to $3\times\alt(8)$.  By \cref{lem:E6_aut_alt} every homomorphism $\cale_6\to\alt(8)$ has cyclic image, so $\alpha$ must have cyclic image by \cref{lem:E8_non_cyclic_E6} or \cref{rm:e8-coxeter-quotient}.
\end{proof}
%CENTRALISER E6

\subsection{Exceptional and sporadic groups}

\begin{lemma}\label{lem:E8_G23_murder}
    If $\alpha \colon \cale_8 \to \Aut(\mathtt G_2(3))$ is a homomorphism, then $\alpha$ has cyclic image.
\end{lemma}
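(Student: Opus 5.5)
We follow the pattern of the earlier classical-group lemmas for $\cale_8$: we control the image of the standard parabolic $\cale_6=\langle s_1,\dots,s_6\rangle$, which centralises $s_8$.

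Let $Q=\mathtt G_2(3)$. Then $\Out(Q)\cong 2$ and $G=\Aut(Q)\cong Q{:}2$, and $Q$ has order $4{,}245{,}696$. Suppose $\alpha$ has non-cyclic image.

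\emph{Step 1: the image of $s_8$ has order at least $3$.} By \cref{rm:e8-coxeter-quotient} we may assume $\alpha(s_8)$ does not have order $2$. If it did, $\alpha$ would factor through $W(\cale_8)$, whose non-abelian quotients are too large: $W(\cale_8)/Z$ has order far exceeding $|G|$, and its simple index-$2$ subgroup does not embed in $G$.

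\emph{Step 2: bound the centraliser of $\alpha(s_8)$.} By \cref{lem:E8_non_cyclic_E6}, $\alpha(\cale_6)$ is non-cyclic. Hence by \cref{thm:E6} it has order at least $51{,}840$, and it lies in $C_G(\alpha(s_8))$. We consult the ATLAS class list for $\mathtt G_2(3)$ and $\mathtt G_2(3){:}2$. Centraliser orders of elements of order $\ge 3$ are at most $|C(3_A)|=|C(3_B)|=2\cdot 3^6=1458$-ish, with the largest involving classes of order $3$ of shape $3^{1+2}$ or $3^2.$ groups. We expect every one of these to be well below $51{,}840$; even the centralisers of involutions ($2\times \PSL_2(3)\ldots$, order at most $576$ in $Q$, at most $1152$ in $G$) are too small.

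\emph{Step 3: conclude.} Since Step 2 shows every relevant centraliser is too small, we obtain a contradiction. Then $\alpha(\cale_6)$ is cyclic, and \cref{lem:E8_non_cyclic_E6} gives that $\alpha$ has cyclic image.

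\emph{Fallback.} The main obstacle is verifying the centraliser orders from the ATLAS, including the outer classes of $Q{:}2$. If some centraliser turns out to be large, we instead argue via maximal subgroups. \Cref{lem:E8_proper_E6_subgroup} forces $\alpha(\cale_6)$ into a proper maximal subgroup of $Q$ or $Q{:}2$. These are $\PSU_3(3){:}2$, $\PSL_3(3){:}2$, $\PSL_2(8){:}3$, $2^3\cdot\PSL_3(2)$, $\PSL_2(13)$, and soluble groups, together with their $.2$ versions. We then apply \Cref{lem:E6_order_murder}, \Cref{lem:E6_PSL33_murder}, \Cref{lem:E6_PSL2q_murder}, \Cref{lem:E6_aut_various_PSL2q_murder}, and \cref{lem:a4-solvable-cyclic}. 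For $2^3\cdot\PSL_3(2)$ we pass to $\PSL_3(2)\cong\PSL_2(7)$.
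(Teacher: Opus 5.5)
Your argument is sound, but your main line handles $\Aut(Q)=Q{:}2$ differently from the paper. The paper uses the centraliser count only inside $Q=\mathtt G_2(3)$. For $Q{:}2$ it switches to the maximal subgroups and the $\cale_6$ exclusion lemmas, which is exactly your fallback. You instead apply the count in all of $G$. That works, but your figures in Step~2 are wrong. By the ATLAS, the smallest non-identity class of $\mathtt G_2(3)$ has size $728$ (classes $3A$ and $3B$). So the largest centraliser of a non-identity element has order $4{,}245{,}696/728=5832=2^3\cdot 3^6$, not $1458$. This is still below $51{,}840$, so nothing breaks.

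The outer classes, which you leave as an expectation, need no lookup at all. If $x\in G$ has order at least $3$, then $x^2$ is a non-identity element of $Q$ because $[G:Q]=2$. Hence
\[
|C_G(x)|\le |C_G(x^2)|\le 2\,|C_Q(x^2)|\le 11{,}664<51{,}840 .
\]
Your aside on involutions is irrelevant after Step~1. Its bound $1152$ is also false for the outer involutions, which centralise a copy of ${}^2\mathtt G_2(3)\cong\PSL_2(8){:}3$.

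With this index-$2$ observation, your proof needs only the single ATLAS datum $728$, together with \Cref{thm:E6} and \cref{lem:E8_non_cyclic_E6}, and the fallback becomes unnecessary. The paper's route instead depends on the maximal-subgroup structure of $\mathtt G_2(3){:}2$ and on several separate $\cale_6$ exclusion lemmas. Yours is the shorter argument here.
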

\begin{proof}
    Let $Q = G_2(3)$ and recall $G=\Aut(Q) = \mathtt G_2(3).2$. If the order of $\alpha(s_8)$ is 2, then we conclude by \cref{rm:e8-coxeter-quotient}. Otherwise, by \cite[Page~61]{ATLAS}, the smallest conjugacy class of elements of order larger than 2 in $Q$ is of size $728$. By the Orbit-Stabiliser theorem, the centraliser of an element in this conjugacy class must have order $5832$ in $Q$. This is smaller than the smallest non-cyclic quotient of $\cale_6$, so if $\alpha(\cale_8)$ lies in $Q$, it is cyclic by \cref{thm:E6} and \cref{lem:E8_non_cyclic_E6}.   
    
    Now, consider a homomorphism $\alpha\colon \cale_8\to G$. The subgroup $\alpha(\cale_6)$ is proper by \cref{lem:E8_proper_E6_subgroup}, so the image of $\cale_6$ must be contained in either $Q$ or a subgroup a group of shape $\PSU_3(3):2$, $\PSL_3(3):2$, $\PSL_2(8):3\times 2$,  $2^3\cdot\PSL_3(2):2$, $\PSL_2(13):2$, or a soluble group.  In each case, we obtain a homomorphism from $\cale_6$ to one of $\Aut(L)$ where $L\in\{\PSU_3(3),\PSL_3(2),\PSL_2(8),\PSL_2(13)\}$. Homomorphisms to the latter three automorphism groups have cyclic image by Lemmas \ref{lem:E6_PSL2q_murder},  \ref{lem:E6_aut_various_PSL2q_murder}, and \ref{lem:E6_order_murder}. The entire homomorphism $\alpha$ must then have cyclic image by \cref{lem:E8_non_cyclic_E6}.
\end{proof}

\begin{lemma}
    If $\alpha\colon\cale_8\to {}^3\mathtt D_4(2)$ is a homomorphism, then $\alpha$ has cyclic image.
\end{lemma}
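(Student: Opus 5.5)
We follow the template of \Cref{lem:E8_G23_murder} and \Cref{lem:E8_PSU43_murder}. Let $Q={}^3\mathtt D_4(2)$, of order $211{,}341{,}312$, and let $\alpha\colon\cale_8\to Q$ be a homomorphism. Since all Artin generators of $\cale_8$ are conjugate, they have a common image order. If $\alpha(s_8)$ has order $2$, then $\alpha$ factors through $W(\cale_8)$. We then conclude by \cref{rm:e8-coxeter-quotient}, because the only non-abelian quotients of $W(\cale_8)$ have order at least $|W(\cale_8)|/2$, and these are not subgroups of $Q$. Indeed, $|W(\cale_8)/Z|=348{,}364{,}800>|Q|$, and the index-$2$ simple subgroup $\POmega_8^+(2)$ has order $174{,}182{,}400$, which does not divide $|Q|$ because $5\nmid|Q|$. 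So we may assume $\alpha(s_8)$ has order greater than $2$.

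Next, the parabolic $\cale_6=\langle s_1,\dots,s_6\rangle$ centralises $s_8$, so $\alpha(\cale_6)\le C_Q(\alpha(s_8))$. By \cref{lem:E8_non_cyclic_E6}, if $\alpha$ is non-cyclic then $\alpha(\cale_6)$ is a non-cyclic quotient of $\cale_6$. By \cref{thm:E6}, it then has order at least $51{,}840$ and, if equal, is isomorphic to $\PSU_4(2):2$. We will read the conjugacy classes of $Q$ from the ATLAS page for ${}^3\mathtt D_4(2)$ and, using Orbit--Stabiliser, list the classes of elements of order greater than $2$ whose centraliser has order at least $51{,}840$. We expect that essentially none qualify, or at most a class of order-$3$ elements. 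For instance, the $3A$ class should have centraliser $3\times \PSL_2(8)$, of order $1{,}512$, and the $3B$ class should have centraliser $3\times \PSL_3(2)$ or similar, again small. If every such centraliser is too small, \cref{thm:E6} forces $\alpha(\cale_6)$ to be cyclic, and \cref{lem:E8_non_cyclic_E6} finishes the proof.

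The one step we regard as an obstacle is the case where some order-$3$ class has a centraliser of order at least $51{,}840$. In that case we would identify the centraliser structurally from the ATLAS normaliser data, as in the $\POmega_8^-(2)$ lemma: it has the form $3\times L$ or $(3\times L).2$ with $L$ an almost-simple group of type $\mathtt A_2$ or $^2\mathtt A_2$ over a small field, or $\PSL_2(8)$. Projecting $\alpha(\cale_6)$ into $\Aut(L)$ then lets us apply \cref{lem:E6_PSL2q_murder}, \cref{lem:E6_aut_various_PSL2q_murder}, \cref{lem:E6_PSL33_murder} or \cref{lem:E6_order_murder}. Since $\Out(Q)=3$ and $\cale_8$ maps into $\Aut(Q)$ only through the listed cases, no further extensions need to be treated here. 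We conclude that $\alpha$ has cyclic image in every case.
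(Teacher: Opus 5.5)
Your proposal is correct and follows the paper's argument. The paper likewise cites the ATLAS to bound the centraliser of the image of an Artin generator of order greater than $2$: the largest such centraliser in ${}^3\mathtt D_4(2)$ has order $3{,}584<51{,}840$, so your order-$3$ contingency never arises. It then concludes by \cref{thm:E6} and \cref{lem:E8_non_cyclic_E6}, or by \cref{rm:e8-coxeter-quotient} in the order-$2$ case. Your explicit comparison $|W(\cale_8)/Z(W(\cale_8))|>|{}^3\mathtt D_4(2)|$ usefully spells out that order-$2$ case, which the paper leaves to the remark.
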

\begin{proof}
  Let $G={}^3\mathtt D_4(2)$ and $\alpha\colon \cale_8\to G$ be a homomorphism.  By \cite[Page~90]{ATLAS}, the largest centraliser of an element of order larger than $2$ has order $3,584$.  This is smaller than the smallest non-cyclic quotient of $\cale_6$, so we conclude that $\alpha$ must have cyclic image by \cref{thm:E6} and \cref{lem:E8_non_cyclic_E6} or \cref{rm:e8-coxeter-quotient}. 
\end{proof}

\begin{lemma}
    If $\alpha\colon\cale_8\to M_{24}$ is a homomorphism, then $\alpha$ has cyclic image.
\end{lemma}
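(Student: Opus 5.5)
We follow the template of the other sporadic and exceptional cases: constrain the image of the generator $s_8$ via its centraliser, which contains $\alpha(\cale_6)$. Let $\alpha\colon\cale_8\to M_{24}$ be a homomorphism. If $\alpha(s_8)$ has order $2$, then all Artin generators map to involutions and $\alpha$ factors through $W(\cale_8)$. Since $M_{24}$ is simple and not a quotient of $W(\cale_8)$, \cref{rm:e8-coxeter-quotient} shows the image is cyclic. We may therefore assume $\alpha(s_8)$ is non-trivial of order greater than $2$. Note also that $\Out(M_{24})=1$, so it suffices to treat $M_{24}$ itself.

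\textbf{Centraliser sizes.} The parabolic $\cale_6=\langle s_1,\dots,s_6\rangle$ commutes with $s_8$, so $\alpha(\cale_6)\leq C_{M_{24}}(\alpha(s_8))$. By \cref{lem:E8_non_cyclic_E6}, if $\alpha$ is non-cyclic then $\alpha(\cale_6)$ is non-cyclic. By \cref{thm:E6}, this forces $|C_{M_{24}}(\alpha(s_8))|\geq 51{,}840$. From the ATLAS (page 96) we would read off centraliser orders of elements of order $>2$ in $M_{24}$. The classes $3A$ and $3B$ have centralisers of orders $1{,}080$ and $504$, and all classes of order $4,5,6,7,\dots$ have centralisers much smaller still. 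So no element of order greater than $2$ has a centraliser of order at least $51{,}840$. It follows that $\alpha(\cale_6)$ is cyclic, and hence $\alpha$ is cyclic by \cref{lem:E8_non_cyclic_E6}.

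\textbf{Main obstacle.} The only real work is the accurate ATLAS lookup of centraliser orders. The two involution classes $2A$ and $2B$ have large centralisers, but these are already excluded by the Coxeter-factoring step, so they cause no difficulty. I expect the non-involution maximum to be $|C(3A)|=1{,}080$, well below $51{,}840$. If some class unexpectedly had a large centraliser, the fallback is to identify that centraliser from its normaliser as listed in the ATLAS. One would then rule out non-cyclic maps from $\cale_6$ into it, using the earlier lemmas \cref{lem:E6_aut_alt}, \cref{lem:E6_PSL2q_murder} and \cref{lem:E6_order_murder}, as was done for $\POmega_8^{\pm}(2)$.
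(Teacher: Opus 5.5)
Your proposal is correct and is essentially the paper's proof: the paper also notes that the largest centraliser in $M_{24}$ of an element of order greater than $2$ has order $1{,}080 < 51{,}840$, and concludes via \cref{thm:E6} and \cref{lem:E8_non_cyclic_E6}, with \cref{rm:e8-coxeter-quotient} handling the case where generators map to involutions. One small tightening in that case: the image could be a proper subgroup of $M_{24}$, so the right justification is that every non-cyclic quotient of $W(\cale_8)$ has order at least $|W(\cale_8)/Z(W(\cale_8))| > |M_{24}|$, not just that $M_{24}$ itself is not a quotient.
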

\begin{proof}
  Let $G=M_{24}$ and $\alpha\colon \cale_8\to G$ be a homomorphism.  By \cite[Page~96]{ATLAS}, the largest centraliser of an element of order larger than $2$ has order $1,080$.  This is smaller than the smallest non-cyclic quotient of $\cale_6$, so $\alpha$ must have cyclic image by \cref{thm:E6} and \cref{lem:E8_non_cyclic_E6} or \cref{rm:e8-coxeter-quotient}.
\end{proof}

\subsection{Products}

\begin{lemma}\label{lem:E8_products_murder}
    Let $Q$ be one of 
    \begin{enumerate}
        \item $\Aut(\alt(5)^5)\wr k$ for $k$ the order of an element of $\sym(5)\wr\sym(5)$;
        \item $\Aut(\alt(6)^3)\wr k$ for $k$ the order of an element of $\sym(6).2\wr\sym(3)$; or
        \item $\Aut(Q^2)\wr |\Out(Q)|$ for $Q$ one of $M_{11}^2$, $\PSL_3(3)^2$, or $\PSU_3(3)^2$;
    \end{enumerate}
    If $\alpha\colon \cale_8\to \Aut(Q)$ is a homomorphism, then $\alpha$ has cyclic image.
\end{lemma}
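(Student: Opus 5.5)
We treat the three families by element orders, since \Cref{lem:E8_7s_9s} says any non-cyclic image of $\cale_8$ contains elements of order $7$ and of order $9$. The plan is to show that in each case one of these two orders does not occur in $\Aut(Q)$. If so, the image of $\alpha$ is a finite quotient of $\cale_8$ with no element of order $7$ or with no element of order $9$, and \Cref{lem:E8_7s_9s} forces it to be cyclic.

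\textbf{Case (1): $\alt(5)^5$.} Here $\Aut(\alt(5)^5)\cong \sym(5)\wr\sym(5)$, and the extra wreathing by $k$ only adds a further permutation action on copies. Write the ambient group as a subgroup of $\sym(5)\wr H$, where $H$ is a group of permutations of a bounded number of copies built from $\sym(5)$ and cyclic groups of order $k$. The orders $k$ in question are orders of elements of $\sym(5)\wr\sym(5)$, and $7$ does not divide $|\sym(5)\wr\sym(5)|=120^5\cdot 120$. So I will argue that $7$ divides neither $|\sym(5)|^{m}$ nor $|H|$. I expect the main bookkeeping obstacle to be making precise that the wreathing degree $k$ is coprime to $7$. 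This follows because $k$ is the order of an element of a group whose order is $2^a3^b5^c$. Hence there is no element of order $7$.

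\textbf{Case (2): $\alt(6)^3$.} The same argument applies. The group $\Aut(\alt(6)^3)\cong \Aut(\alt(6))\wr\sym(3)$ has order $1440^3\cdot 6$, and $k$ divides an element order of $\sym(6).2\wr\sym(3)$. All of these primes lie in $\{2,3,5\}$, so the group $\Aut(\alt(6)^3)\wr k$ has order prime to $7$. Again $\alpha$ is cyclic.

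\textbf{Case (3): $M_{11}$, $\PSL_3(3)$ and $\PSU_3(3)$.} For $M_{11}$, the group order $7920$ is prime to $7$, $\Out(M_{11})=1$, and so $\Aut(M_{11}^2)\wr 1=M_{11}\wr2$ has order prime to $7$. For $\PSL_3(3)$, the order $5616=2^4\cdot3^3\cdot13$ is prime to $7$, and $\Out=2$, so every group involved is a $\{2,3,13\}$-group and has no element of order $7$. For $\PSU_3(3)$, the order $6048$ is divisible by $7$, so I will instead use the prime $9$. By the ATLAS, $\Aut(\PSU_3(3))=\mathtt G_2(2)$ has no elements of order $9$; this was already used in \Cref{lem:E6_order_murder}. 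Its Sylow $3$-subgroup is of exponent $3$ (of order $27$). Then $\Aut(\PSU_3(3)^2)\wr2\cong(\mathtt G_2(2)\wr2)\wr2$ has Sylow $3$-subgroup $\mathtt G_2(2)_3\times\mathtt G_2(2)_3\times\dots$, a direct product of four copies. The wreathing contributes only $2$-parts, so this Sylow subgroup still has exponent $3$, and there is no element of order $9$.

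In all cases \Cref{lem:E8_7s_9s} gives the conclusion. The only delicate points are confirming the $3$-exponent claim for $\mathtt G_2(2)$ from the ATLAS, and reading the statement's $Q^2$ notation consistently.
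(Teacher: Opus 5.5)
Your proposal is correct and is essentially the paper's argument: Lagrange's theorem rules out elements of order $7$ in the $\alt(5)^5$, $\alt(6)^3$, $M_{11}^2$ and $\PSL_3(3)^2$ cases. For $\PSU_3(3)^2$, the Sylow $3$-subgroup of the wreath product is a direct product of exponent-$3$ groups for any $2$-power wreathing degree (e.g.\ the paper's $8=|\Out(\PSU_3(3)^2)|$), so there is no element of order $9$, and \Cref{lem:E8_7s_9s} concludes (the paper additionally disposes of an $\alt(7)^2$ case by the same no-$9$ argument, which the statement as written does not require).
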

\begin{proof}
We first observe the structure of $\Aut(Q)$ in several of these cases: $\Aut(\alt(5)^5)\cong \sym(5)\wr\sym(5)$ $\Aut(\alt(6)^3)\cong (\sym(6).2)\wr \sym(3)$, $\Aut(M_{11}^2)\cong M_{11}\wr2$, and $\Aut(\PSL_3(3)^2)\cong \PGL_3(3)\wr 2$.  None of these groups have order divisible by $7$, so they do not have any elements of order 7.  Any positive integer $k$ whose order divides both $\Out(Q^\ell)$ and $|Q^\ell|$ is not divisible by $7$.  Thus, the wreath product $\Aut(Q^\ell)\wr k$ does not contain any elements of order $7$.  The Universal Embedding Theorem \cite{KrasnerKaloujnine1951} implies every possible group of shape $Q^\ell.k$ is a subgroup of $\Aut(Q^\ell)\wr k$ and so cannot have an element of order 7.  Hence by \Cref{lem:E8_7s_9s}, every homomorphism from $\cale_8$ to one of these groups has cyclic image.
    
   Now, $|\Out(\alt(7)^2)|=8$.  Thus, any possible group of shape $\alt(7)^2.\ell$ must have $\ell$ a power of $2$ dividing 8.  The group $\Aut(\alt(7))\wr8\cong(\sym(7)\wr2)\wr8$ does not have an element of order 9. Indeed, a 3-Sylow subgroup is isomorphic to the direct product of sixteen copies $3^2$, i.e., it is isomorphic to $3^{32}$.  This is easily seen from the fact that a Sylow 3-subgroup of $\sym(7)$ is isomorphic to $3^2$.    In particular, $\Aut(\alt(7))\wr8$ has no elements of order $9$. By \Cref{lem:E8_7s_9s} every homomorphism from $\cale_8$ to $\Aut(\alt(7))\wr8$ must be cyclic.
    
    The group $\Aut(\PSU_3(3)^2\cong \PGU_3(3)\wr 2$ does not have an element of order $9$.  Indeed, a 3-Sylow subgroup is isomorphic to the direct product of 2 copies of $\He_3(3)$, the isomorphism class of the 3-Sylow subgroup of $\PSU_3(3)$, since $|\Out(\PSU_3(3)^2)|=8$.  We conclude as in the last paragraph that every homomorphism $\cale_8$ to $\Aut(\PSU_3(3)^2)\wr8$ must be cyclic.
\end{proof}

\begin{lemma}\label{lem:E8_PSL2qn_murder}
    Let $Q=\PSL_2(q)^n$.  If $n\leq 12$, then any homomorphism $\alpha\colon \cale_8\to \Aut(Q)$ has cyclic image.  If $n$ is arbitrary, then any homomorphism $\alpha\colon \cale_8\to Q.n$ has cyclic image.
\end{lemma}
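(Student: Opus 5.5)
The plan is to pass to the commutator subgroup, exactly as in \Cref{cor:aut-psl2-e7-murder}. Let $\alpha\colon\cale_8\to\Aut(Q)$ (respectively $Q.n$) be a homomorphism. Since $\cale_8'$ is perfect by \Cref{thm:comm-perfect}, $\alpha(\cale_8')$ is a perfect group. The first step is to show that $\alpha(\cale_8')$ lies in $\Aut(\PSL_2(q))^n\cap \alpha(\cale_8)$, i.e.\ inside the kernel of the permutation action on the $n$ factors. Write $\Aut(\PSL_2(q)^n)\cong \Aut(\PSL_2(q))\wr\sym(n)$. The image of $\cale_8'$ in $\sym(n)$ (respectively in the cyclic group $n$) is a perfect subgroup. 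In the $Q.n$ case the top is cyclic, so this image is trivial. In the $\Aut(Q)$ case with $n\le 12$ I must argue differently. The image of $\cale_8$ in $\sym(n)\le\sym(12)$ is cyclic by \Cref{lem:E8-sym12-murder}, so the image of $\cale_8'$ is again trivial. Hence in both cases $\alpha(\cale_8')\le \Aut(\PSL_2(q))^n$. Moreover, $\Out(\PSL_2(q))$ is soluble (it is $d\times k$), so perfectness forces $\alpha(\cale_8')\le \PSL_2(q)^n$.

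Next I project to each coordinate. This gives homomorphisms $\pi_i\circ\alpha\colon\cale_8'\to\PSL_2(q)$. It suffices to show each is trivial, for then $\alpha(\cale_8')=1$ and $\alpha$ factors through $\cale_8^\ab\cong\Z$, so it has cyclic image. To get triviality of a map $\beta\colon\cale_8'\to\PSL_2(q)$, I restrict to a copy of $\cale_7'$. The standard parabolic $\cale_7\le\cale_8$ satisfies $\cale_7'\le\cale_8'$, so \Cref{lem:E7_comm_PSL2q} gives $\beta(\cale_7')=1$.

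It remains to promote triviality on $\cale_7'$ to triviality on $\cale_8'$. This is the main obstacle. My approach would be to use the totally symmetric set $X=\{s_2s_1^{-1},s_5s_1^{-1},s_7s_1^{-1}\}\subseteq\cale_7'$ from \Cref{lem:E7_comm_TSS}, read inside $\cale_8$. The lemma was stated for general $n$, giving a commutative totally symmetric set of size $3$ in $\cale_8'$. I would then rerun the subgroup-of-$\PSL_2(q)$ case analysis of \Cref{lem:E7_comm_PSL2q} verbatim for $\cale_8'$. That argument only used a commutative totally symmetric set of size three, the Klein-four computation with $c_4$, and \Cref{lem:TSS_collapse}, and all of these are available in $\cale_8'$ via \Cref{lem:En_conjugate_commutators}. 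This gives that $\beta$ is non-injective on $X$, hence $\beta(X)$ is a single point. Then $\beta$ has cyclic image, so it is trivial since $\cale_8'$ is perfect.

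As an alternative shortcut, I could use the structure of the image directly. $\alpha(\cale_8)$ is generated by conjugates of $\alpha(s_1)$. Triviality on $\cale_7'$ forces $\alpha(s_i)=\alpha(s_j)$ for $i,j\le 7$, by taking $s_is_j^{-1}\in\cale_7'$. Then \Cref{lem:artin-gen-collapsing} collapses all generators of $\cale_8$, making the image cyclic at once. I expect this to be the cleanest route, and the only care needed is checking that $s_is_j^{-1}$ indeed lies in $\cale_7'$, which is clear from the length homomorphism.
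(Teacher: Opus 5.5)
Your shortcut route is correct and is essentially the paper's proof. You push the perfect image of the commutator subgroup into $\PSL_2(q)^n$, using \Cref{lem:E8-sym12-murder} for the permutation part when $n\le 12$, or the cyclic top directly in the $Q.n$ case (which the paper leaves implicit). You then kill every coordinate projection on $\cale_7'$ with \Cref{lem:E7_comm_PSL2q}, and collapse the generators, which is exactly what the paper does via \Cref{lem:E8_non_cyclic_E6}.

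The ``main obstacle'' in your third paragraph does not actually arise. Once $\alpha(\cale_7')=1$, you never need each coordinate map to vanish on $\cale_8'$. Rerunning the totally-symmetric-set analysis is also redundant, since $X\subseteq\cale_7'$ is already collapsed. So drop that paragraph in favour of the shortcut.
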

\begin{proof}
    First, we note that any homomorphism $\alpha\colon\cale_8 \to \PSL_2(q)$ induces a homomorphism $\cale_7'\to\PSL_2(q)$. By \cref{lem:E7_comm_PSL2q}, the image of $\cale_7'$ is trivial. This implies that $\alpha(\cale_7)$ factors through the abelianization of $\cale_7$, and is thus cyclic. By Lemma \ref{lem:E8_non_cyclic_E6}, this implies that $\alpha(\cale_8)$ is also cyclic.
    
    Next, we deal with the group $\Aut(\PSL_2(q)^n))\cong\Aut(\PSL_2(q))\wr\sym(n)$ with $n\leq 12$.  By \cref{lem:E8-sym12-murder}, the projection of the image of such a homomorphism to $\sym(n)$ must be cyclic. Let $C$ denote this cyclic group. The image $\alpha(\cale'_7) < \alpha(\cale_8')$ must be contained in the commutator subgroup of $\Aut(\PSL_2(q))\wr C$, which is a product of copies of $\PGL_2(q)$. Since $\cale_7'$ is perfect, it in fact must be contained in a product of copies of $\PSL_2(q)$. Projection onto any factor induces a homomorphism  $\cale_7'\to\PSL_2(q)$, and every such homomorphism has trivial image by \cref{lem:E7_comm_PSL2q}. Then $\alpha(\cale_7)$ must be cyclic, so \cref{lem:E8_non_cyclic_E6} implies $\alpha(\cale_8)$ is cyclic.   
    %Induces a homomorphism $\cale_7'\to\PSL_2(q)$, LEMMA says this is cyclic.  Now, we deal with a group $\Aut(\PSL_2(q)^n))\cong\PGL_2(q)\wr\sym(n)$.  By LEMMA, the projection of the image of such a homomorphism to $\sym(n)$ must be cyclic.  Hence we obtain an induced homomorphism $\cale_7'\to\PSL_2(q)$, LEMMA says this is cyclic.
\end{proof}

\subsection{Completing the proof}

\begin{thm}\label{thm:E8}
    The smallest non-abelian quotient of $\cale_8$ is isomorphic to $W(\cale_8) / Z(W(\cale_8))$. It is unique up to automorphisms of the image.
\end{thm}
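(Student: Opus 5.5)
The proof will assemble the exclusion lemmas of this section, following the template of \Cref{thm:E6} and \Cref{thm:E7}. Let $\alpha\colon\cale_8\to P$ be a smallest non-cyclic quotient, so $|P|\leq |W(\cale_8)/Z(W(\cale_8))| = 348{,}364{,}800$. This quotient exists because $W(\cale_8)/Z(W(\cale_8))\cong \POmega_8^+(2).2$ is a quotient of $\cale_8$ and is non-abelian. Since $\cale_8^{\ab}\cong\Z$ and $\cale_8'$ is perfect by \Cref{thm:comm-perfect}, \Cref{lem:min_quotient_En} applies. Hence $P$ is almost simple or of shape $S^n.k$ inside $S^n\wr k$, and \Cref{lem:E8_hitlist} reduces us to excluding non-cyclic images in the listed groups.

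I would go through the list class by class.
\begin{enumerate}
\item All socles $\PSL_2(q)^n$ are excluded by \Cref{lem:E8_PSL2qn_murder}.
\item The remaining groups of \Cref{tab:E7simple} are excluded by restriction to the parabolic $\cale_7$. A non-cyclic $\alpha$ restricts to a non-cyclic map on $\cale_7$ by \Cref{lem:E8_non_cyclic_E6}. Then \Cref{lem:E7-non-A1-murder} and \Cref{lem:E7_alt9-murder} exclude $\Aut(Q)$ for these $Q$, since they apply to any homomorphism, not only surjections.
\item The groups of \Cref{tab:E8simple} are handled by \Cref{lem:E8-7s-9s-murder} together with the individual lemmas for $\PSL_3(8)$, $\PSL_3(9)$, $\PSU_3(8)$, $\PSU_4(3)$, $\PSp_6(2)$, $\mathtt G_2(3)$, ${}^3\mathtt D_4(2)$ and $M_{24}$. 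The alternating groups $\alt(10),\alt(11),\alt(12)$ are excluded via \Cref{lem:E8-sym12-murder}, as their automorphism groups embed in $\sym(12)$.
\item The products are excluded by \Cref{lem:E8_products_murder}.
\end{enumerate}
Two items need care. For $\POmega_8^-(2)$ and ${}^3\mathtt D_4(2)$ the lemmas treat only $Q$, not $\Aut(Q)$. I would extend them by the same centraliser argument in $Q.2$ or $Q.3$, using that $\cale_8^{\ab}$ is cyclic to land in a cyclic extension of $Q$. For $\POmega_8^+(2)$ (listed as $\POmega_4^+(2)$), the only surviving possibility is $Q.2$ or $Q.k$ with $k\mid 6$. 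Since $|P|\leq|Q.2|$, $P$ must be $Q$ or $Q.2$; $Q$ itself is excluded by the lemma, leaving $P\cong Q.2\cong W(\cale_8)/Z(W(\cale_8))$ up to isomorphism among the extensions of that shape.

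For uniqueness I would argue as in \Cref{thm:E7}. It suffices to show that any non-cyclic $\alpha\colon\cale_8\to P=\POmega_8^+(2).2$ sends $s_8$ to an involution; then $\alpha$ factors through $W(\cale_8)$, hence through $W(\cale_8)/Z$, and is an isomorphism up to automorphism. Suppose instead $\alpha(s_8)$ has order greater than $2$. By \Cref{lem:E8_non_cyclic_E6} and \Cref{thm:E6}, its centraliser contains a non-cyclic image of $\cale_6$, of order at least $51{,}840$. From the ATLAS, the only such classes are the order-$3$ classes with centraliser $3\times\PSU_4(2)$, or its extension in $Q.2$. So $\alpha(\cale_6)$ maps non-cyclically into $\PSU_4(2)$ or $\PSU_4(2){:}2$. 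The first is excluded by \Cref{lem:E6_PSp43_murder}. In the second, \Cref{thm:E6} forces the generators to be involutions, contradicting that $\alpha(s_8)$ has order $3$, since all generators are conjugate.

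I expect the main obstacle to be this last centraliser check in $\POmega_8^+(2).2$, where the classes $3_A,3_B,3_C$ are fused or permuted by outer automorphisms and the centralisers in the extension must be read carefully from the ATLAS, possibly with a MAGMA verification. A secondary obstacle is confirming that the hit-list lemmas really cover the relevant automorphism groups and not just the simple groups.
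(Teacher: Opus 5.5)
Your proposal is correct and follows essentially the same route as the paper. Existence exhausts the list of \Cref{lem:E8_hitlist} with the same $\cale_7$-restriction and $\cale_8$-specific exclusion lemmas, and uniqueness uses $C(\alpha(s_8))\supseteq\alpha(\cale_6)$ to force $\alpha(s_8)$ into the order-$3$ classes and then contradicts \Cref{thm:E6}, exactly as the paper does; your explicit use of $|P|\le|Q.2|$ for $\Aut(\POmega_8^+(2))$ is slightly more careful than the paper's write-up. The extra work you plan for $\POmega_8^-(2)$ and ${}^3\mathtt D_4(2)$ is unnecessary: there $|Q.2|$ and $|Q.3|$ already exceed $|W(\cale_8)/Z(W(\cale_8))| = 348{,}364{,}800$, so only $Q$ itself needs to be excluded, which the existing lemmas do.
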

\begin{proof}
    %Let $Q$ denote a smallest non-abelian quotient of $\cale_8$. Since the abelianization of $\cale_8$ is cyclic and $\cale_8'$ is perfect, by Lemma \ref{lem:min_quotient_En}, it suffices to rule out the case where $Q$ is subgroup of $\Aut(S^n)$. We need only consider the cases where $|S^n| \leq |W(\cale_8) / Z(W(\cale_8))|$ because Lemma \ref{lem:min_quotient_En} also specifies that $S^n \cong \soc(Q) \leq Q$. The orders of all finite simple groups $S$ with order at most $|W(\cale_8)|$ are provided in tables \ref{tab:E7simple} and \ref{tab:E8simple}.
    We now exclude all groups listed in \Cref{lem:E8_hitlist}.  Notice that any homomorphism $\alpha: \cale_8 \rightarrow \Aut(S)$ induces a homomorphism $\cale_7 \rightarrow \Aut(S^n)$ by restricting to the standard parabolic subgroup of type $\cale_7$. Corollary \ref{cor:aut-psl2-e7-murder} and Lemma \ref{lem:E7-non-A1-murder} showed that any homomorphism from $\cale_7$ to the automorphism group of any of the groups in Lemma \ref{lem:E7_hitlist} has cyclic image, so the image of $\cale_8$ under any such homomorphism must also be cyclic by \cref{lem:E8_non_cyclic_E6}.  This handles case (1).

    Note that all of the groups in Table \ref{tab:E8simple} are large enough that $|S^2| > |W(\cale_8)|$, so in these cases, it suffices to consider only $\Aut(S)$. Lemma \ref{lem:E8-7s-9s-murder} rules out the possibility that the smallest non-abelian quotient $Q$ of $\cale_8$ is a subgroup of the automorphism group of any of the groups in Table \ref{tab:E8simple} except possibly $\alt(10)$, $\alt(11)$, $\PSL_3(8)$, $\PSL_3(9)$, $\PSU_3(8)$, $\PSp_6(2)$, $\PSU_4(3) \cong O_6^{-}(3)$, and $\mathtt G_2(3))$. 
    Lemma \ref{lem:E8-sym12-murder} eliminates the possibilities that $Q$ is a subgroup of either $\sym(12)=\Aut(\alt(12))$, $\Aut(\alt(11))$ or $\Aut(\alt(10))$, the latter two of which are subgroups of the former. Lemmas \ref{lem:E8_PSL38-murder}, \ref{lem:E8_PSL39_murder}, \ref{lem:E8_PSU38_murder}, \ref{lem:E8_PSp62_murder}, \ref{lem:E8_PSU43_murder}, and \ref{lem:E8_G23_murder} eliminate the possibilities that $Q$ is a subgroup of $\Aut(\PSL_3(8))$, $\Aut(\PSL_3(9))$, $\Aut(\PSU_3(8))$, $\Aut(\PSp_6(2))$, $\Aut(\PSU_4(3))$, or $\Aut(\mathtt G_2(3))$ respectively.   This handles case (2).

    The remaining cases correspond to the socle of the smallest non-abelian quotient $Q$ of $\cale_8$ being a direct product $S^n$. These are listed in \Cref{lem:E8_products_murder} except when $S\cong\PSL_2(q)$ which is given in Lemma \ref{lem:E8_PSL2qn_murder}.  The lemmas show any the image of $\cale_8$ under any homomorphism to one of these cases is cyclic.  This completes the proof that $\bar W=W(\cale_8) / Z(W(\cale_8))$ is the smallest non-abelian quotient of $\cale_8$.

    We now prove that every epimorphism $\cale_8\to \bar W$ factors through $W=W(\cale_8)$.  In particular, the quotient is unique up to automorphisms of the image.  We have that $\bar W\cong \POmega^+_4(2).2$ and as usual we will consider possible element orders of the image $s_8$ and the images of $\cale_6$ in it centraliser.  The only conjugacy classes with centraliser at least as large as $W(\cale_6)$ are those listed as $3_A$, $3_B$, and $3_C$ in the ATLAS \cite[pg~86]{ATLAS}.  In particular, if the epimorphism does not factor through the canonical Coxeter quotient (up to automorphisms), then the images $\bar s_1,\dots \bar s_8$ of $s_1,\dots,s_8$ must have order $3$ and be contained in one of $3_A$, $3_B$, or $3_C$.  The ATLAS lists these conjugacy classes as having normaliser isomorphic to $\sym(3)\times \SU_4(2):2$.  One deduces that the centraliser $C_{\bar W}(\bar s_8)$ must be of the shape $3\times \SU(4):2$ with $C_{\bar W}(\bar s_8)/\langle \bar s_8\rangle \cong \SU_4(2)\cong W(\cale_6)$.  But by \Cref{thm:E6}, every homomorphism with non-cyclic image $\beta\colon \cale_6 \to W(\cale_6)$ is equal to the canonical Coxeter homomorphism up to an automorphism.  But this forces $\overline s_1$ to have order $2$, contradicting that it had order $3$.  
\end{proof}

\section{Spherical Artin groups}\label{sec:spherical}

By combining previous theorems of this paper together with the work of Kolay, we have proven the first of our main theorems.

\begin{thmx}\label{thmx:spherical}
    Let $G$ be an irreducible spherical Artin group.  The following conclusions hold:
    \begin{enumerate}
        \item If $G$ is isomorphic to $\cala_{n-1}$, $\calb_n$, or $\cald_n$, for $n\geq 5$, then the smallest non-abelian quotient of $G$ is $\sym(n)$ and it is unique up to automorphisms of the image.
        \item If $G$ is isomorphic to one of $\cala_2$ $\cala_3$, $\calb_3$, $\calb_4$, $\cald_4$, $\calf_4$, or $\cali_2(m)$ with $4|m$, then the smallest non-abelian quotient of $G$ is $\sym(3)$.
        \item If $G$ is isomorphic to $\cali_2(m)$ with $4\not\mid m$ then the smallest non-abelian quotient of $G$ is $D_p$, where $p$ is the smallest odd prime divisor of $m$, and it is unique up to automorphisms of the image.
        \item If $G$ is isomorphic to $\calh_3$, then the smallest non-abelian quotient of $G$ is $\alt(5)$.  Up to automorphisms of the image there are two such homomorphisms.
        \item If $G$ is isomorphic to $\calh_4$, $\cale_6$, $\cale_7$, or $\cale_8$, then the smallest non-abelian quotient of $G$ is isomorphic to $W(G)/Z(W(G))$ and it is unique up to automorphisms of the image.
    \end{enumerate}
\end{thmx}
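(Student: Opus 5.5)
The theorem is an assembly of results already proved in the paper, so the plan is to go through the five items in turn, citing the relevant result for each. At two points a small gap has to be filled: the uniqueness claims in item (1) for $\calb_n$ and $\cald_n$, and the identification of $W(G)/Z(W(G))$ in item (5).

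For item (1), the case $\cala_{n-1}$ is exactly \Cref{kolay}. For $\calb_n$ with $n\geq 5$:
\begin{itemize}
\item the retraction $\calb_n\to\cala_{n-1}\to\sym(n)$ gives a quotient of order $n!$;
\item \Cref{thm:Cn} shows every nonabelian quotient has order at least $n!$, with equality only if the map factors through $W(\calb_n)$;
\item in the equality case, restricting to the parabolic $\cala_{n-1}$ gives a surjection onto $Q$ of order $n!$, so by \Cref{kolay} it is the standard map up to $\Aut(\sym(n))$;
\item the image of $s_1$ commutes with $\sym(n)$ minus one transposition and satisfies the $4$-relation with the image of $s_2$, which forces it to be trivial or to equal the image of $s_2$ (I expect this to be a short check), so the quotient is unique.
\end{itemize}
For $\cald_n$, \Cref{thm:Dn} gives the bound $n!$. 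The quotient identifying $s_1$ and $s_2$ and then mapping to $\sym(n)$ realises it. Uniqueness follows the same way by restricting to either $\cala_{n-1}$ parabolic.

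Item (2) needs only that $\sym(3)$ is the smallest nonabelian group, together with an explicit surjection in each case:
\begin{itemize}
\item \Cref{kolay} and \Cref{rk:a3-exceptional} for $\cala_2$ and $\cala_3$;
\item \Cref{L:CnExotic} for $\calb_3$ and $\calb_4$;
\item the $W(\cald_4)\cong 2^{1+4}:\sym(3)$ remark for $\cald_4$;
\item \Cref{thm:F4quotient} for $\calf_4$;
\item \Cref{L:I24exotic} for $\cali_2(m)$ with $4\mid m$.
\end{itemize}
Item (3) is \Cref{thm:I2}, including its uniqueness discussion. Item (4) is \Cref{thm:H3}.

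For item (5), I would cite \Cref{thm:H4}, \Cref{thm:E6}, \Cref{thm:E7} and \Cref{thm:E8}. The one thing to reconcile is that \Cref{thm:E6} states the answer as $W(\cale_6)\cong\PSU_4(2):2$, and $Z(W(\cale_6))$ is trivial since the longest element of $W(\cale_6)$ is not central. Hence $W(\cale_6)/Z(W(\cale_6))=W(\cale_6)$ and the statement matches. For $\calh_4$, $\cale_7$ and $\cale_8$ the cited theorems are already phrased with the quotient by the centre. Since $G^{\ab}\cong\Z$ in these cases, the words ``non-cyclic'' and ``non-abelian'' agree: an abelian quotient of $G$ is a quotient of $G^{\ab}$ and so is cyclic.

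The only step that is more than citation, and where I expect the main difficulty, is uniqueness for $\calb_n$. There one must rule out a smallest quotient in which $s_1$ maps outside the image of the braid parabolic, or to a central element. The Orbit-Stabiliser part of \Cref{prop:braidParabolic} excludes the first possibility. For the second, I would observe that $\sym(n)$ is centreless for $n\geq 3$, so a central image of $s_1$ must be trivial, and then the map factors through the retraction.
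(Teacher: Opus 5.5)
Your proposal is correct and is essentially the paper's argument: the paper proves this theorem simply by assembling \Cref{kolay} with the type-by-type theorems proved earlier, and your explicit uniqueness arguments for $\calb_n$ and $\cald_n$ (which \Cref{thm:Cn} and \Cref{thm:Dn} do not actually state), together with your remark that $Z(W(\cale_6))=1$, fill in what the paper leaves implicit. One tidy-up: in the $\calb_n$ check, the images of $s_3,\dots,s_n$ generate a point stabiliser $\sym(n-1)$ whose centraliser in $\sym(n)$ is trivial, so $\varphi(s_1)=1$ outright (the branch ``equal to the image of $s_2$'' cannot occur, and centrality of $\varphi(s_1)$ is not the relevant condition), while for $\cald_n$ the same computation with $\langle\varphi(s_2),\varphi(s_4),\dots,\varphi(s_n)\rangle\cong\sym(2)\times\sym(n-2)$ forces $\varphi(s_1)=\varphi(s_2)$.
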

\begin{proof}
Combine Theorems~\ref{kolay}, \ref{thm:I2}, \ref{thm:Cn}, \ref{thm:Dn}, \ref{thm:F4quotient}, \ref{thm:H3}, \ref{thm:H4}, \ref{thm:E6}, \ref{thm:E7}, and \ref{thm:E8}.
\end{proof}

\section{Affine Artin groups}\label{sec:affine}

\begin{thmx}\label{thmx:affine}
        Let $G$ be an irreducible affine Artin group.  The following conclusions hold:
    \begin{enumerate}
        \item If $G$ is isomorphic to $\widetilde\cala_{n-1}$, $\widetilde\calb_n$, $\widetilde\calc_n$ or $\widetilde\cald_n$, for $n\geq 5$, then the smallest non-abelian quotient of $G$ is $\sym(n)$.
        \item If $G$ is isomorphic to one of $\widetilde\cala_1$, $\widetilde\cala_2$ $\widetilde\cala_3$, $\widetilde\calb_2$, $\widetilde\calb_3$, $\widetilde\calb_4$, $\widetilde\cald_4$, $\widetilde\calf_4$, or $\widetilde\calg_2$, then the smallest non-abelian quotient of $G$ is $\sym(3)$.
        \item If $G$ is isomorphic to $\widetilde\cale_n$ for $n=6,7,8$ then the smallest non-abelian quotient of $G$ is isomorphic to $W(\cale_n)/Z(W(\cale_n))$.
    \end{enumerate}
\end{thmx}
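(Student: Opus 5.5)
For each affine type $\widetilde X$ I would prove matching bounds: an upper bound from an explicit quotient, and a lower bound from a standard parabolic subgroup that is a spherical Artin group, using \Cref{thmx:spherical}.

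\emph{Upper bounds.} For (1) and (2) I would build explicit quotients.
\begin{itemize}
\item For $\widetilde\cala_{n-1}$ with $n\geq 3$, send the cycle generators to the transpositions $(1\,2),\dots,(n-1\,n),(n\,1)$. Adjacent transpositions on the cycle satisfy the braid relation and non-adjacent ones commute, so this is a homomorphism onto $\sym(n)$. Equivalently, it is the finite Weyl group quotient of the affine Coxeter group.
\item For $\widetilde\cala_1$, map onto $\sym(3)$ by sending the two free generators to generators of $\sym(3)$.
\item For $\widetilde\calb_n$, $\widetilde\calc_n$ and $\widetilde\cald_n$, I would use that the affine Coxeter group $W(\widetilde X_n)=\Z^n\rtimes W(X_n)$ surjects onto $W(X_n)$. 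Composing with the surjections $W(\calb_n)\to\sym(n)$ and $W(\cald_n)\to\sym(n)$ (killing the sign subgroup $2^{n}$ or $2^{n-1}$) gives a quotient onto $\sym(n)$.
\item For the $\sym(3)$ cases, use the same affine-to-finite Coxeter projection composed with the known $\sym(3)$ quotients of $W(\cala_2)$, $W(\cala_3)$, $W(\calb_3)$, $W(\calb_4)$, $W(\cald_4)$, $W(\calf_4)$ (from \Cref{thmx:spherical}). For $\widetilde\calb_2=\widetilde\calc_2$ and $\widetilde\calg_2$, an edge with label $4$ or $6$ lets me use \Cref{L:divisbleEdge}, or the finite quotient $W(\calg_2)=Dh_6\to\sym(3)$.
\end{itemize}
Since $\sym(3)$ is the smallest non-abelian group, case (2) is then finished.

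\emph{Lower bounds for (1) and (3).} Each of $\widetilde\cala_{n-1}$, $\widetilde\calc_n$, $\widetilde\calb_n$, $\widetilde\cald_n$, $\widetilde\cale_m$ contains a standard parabolic subgroup of spherical type: $\cala_{n-1}$, $\calb_n$, $\cald_n$ (or $\calb_n$) and $\cale_m$ respectively, obtained by deleting the affine node. Given a non-abelian finite quotient $\varphi\colon G\to Q$, I would show that the restriction to this parabolic subgroup $P$ is non-cyclic. Then \Cref{thmx:spherical} gives $|Q|\geq|\varphi(P)|\geq n!$, or $|W(\cale_m)/Z(W(\cale_m))|$ in case (3).

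For the simply laced types $\widetilde\cala_{n-1}$ and $\widetilde\cale_m$, all edges are odd, so I would argue as in \Cref{lem:E8_non_cyclic_E6}. If $\varphi(P)$ is cyclic, then all generators of $P$ have a common image $\bar s$, by conjugacy inside $P$. The extra node $t$ commutes with some generator of $P$ and has an odd relation with another, so \Cref{L:gcdRelation} gives $\varphi(t)=\bar s$, and $Q$ is cyclic. The graph $\widetilde\cala_{n-1}$ is a cycle, so this needs $n\geq 4$ to find a generator commuting with $t$, which holds here.

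For $\widetilde\cald_n$ the same argument applies with $P=\cald_n$, because the extra node commutes with most of $P$ and has an odd relation with one node. For $\widetilde\calb_n$ and $\widetilde\calc_n$, I would instead take the braid parabolic $\cala_{n-1}$ and argue as in \Cref{L:cyclicAbelian}. Its generators collapse by \Cref{L:braidCollapse}, and each remaining node commutes with some braid generator, so $Q$ would be abelian. Kolay's bound then gives $|\varphi(\cala_{n-1})|\geq n!$.

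\emph{Main obstacle.} The step I expect to be hardest is the even-labelled cases $\widetilde\calb_n$ and $\widetilde\calc_n$. There the end nodes do not collapse automatically, so I must check carefully that cyclicity of the braid part forces $Q$ to be abelian and not merely metabelian. I would handle this by showing the end generators commute with the common image $\bar t$ (they commute with some braid generator), which makes $\bar t$ central. Then $Q/\langle\bar t\rangle$ is generated by the one or two end nodes. I would verify directly that these either commute, since they are non-adjacent in $\widetilde\calc_n$, or coincide modulo $\bar t$ by the relation with label $4$.
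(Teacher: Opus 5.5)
Your proposal is correct and has the same skeleton as the paper's proof. Existence comes from the projection $\widetilde\calx_n\to W(\calx_n)$, and the lower bound from a spherical standard parabolic $P$ with non-cyclic image plus \Cref{thmx:spherical} or \Cref{kolay}. Where you differ is in how you show $\varphi(P)$ is non-cyclic, and your way is more economical. For $\widetilde\cala_{n-1}$ and $\widetilde\cale_m$ the paper invokes the global collapsing property of the generators (\Cref{lem:artin-gen-collapsing} together with \Cref{lem:E8_non_cyclic_E6}). You only use that an abelian image of $P$ identifies the generators of $P$, followed by one application of \Cref{L:gcdRelation} to the extra node. The same two lines give $\widetilde\cald_n$ directly from the $\cald_n$ case of \Cref{thmx:spherical}. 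There the paper only says the proof is ``similar'', and the collapsing lemma could not be quoted anyway, since its hypothesis (4) fails at the forks. You are also right to treat $\widetilde\calb_2=\widetilde\calc_2$ separately: the paper's projection lands in $W(\calb_2)=Dh_4$, which has no $\sym(3)$ quotient.

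A few repairs are needed.
\begin{enumerate}
\item \Cref{L:divisbleEdge} alone does not produce $\sym(3)$ for $\widetilde\calc_2$, because divisors of $4$ only impose commutation or identification. Instead, kill one end node (all its relations are even) to retract onto an $\cali_2(4)$ parabolic, then apply the exotic map of \Cref{L:I24exotic}.
\item You never state the upper bound in (3). It comes from $\widetilde\cale_m\to W(\cale_m)\to W(\cale_m)/Z(W(\cale_m))$.
\item You should say explicitly that if $|Q|$ attains the bound, then $Q=\varphi(P)$ is a minimal non-abelian quotient of $P$. Its isomorphism type is then given by the spherical case.
\item Your ``main obstacle'' is not actually an obstacle. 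In both $\widetilde\calb_n$ and $\widetilde\calc_n$ the two nodes outside the braid parabolic are non-adjacent, so their images commute outright. Each also commutes with $\bar t$, so $Q$ is abelian. The label-$4$ relation holds automatically once an end node commutes with $\bar t$ and never forces a coincidence, so drop that alternative.
\end{enumerate}
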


\begin{proof}
    The group $\widetilde\cala_1$ is a rank $2$ free group, so its smallest non-abelian quotient is $\sym(3)$. Let $G=\widetilde\calx_n$ be an irreducible affine Artin group such that $G \neq \widetilde \cala_1$.  Now, $W(\widetilde\calx_n)\cong B_\calx\rtimes W(\calx_n)$ where $B_\calx$ is free abelian and acts by translations on $\EE^n$, see \cite[\S4.2 Proposition]{Humphreys1992}.  Thus, we have an epimorphism 
    $$\widetilde \calx_n\to W(\widetilde\calx_n) \to W(\calx_n).$$  
    This proves (2) noting $\calg_2\cong \cali_2(6)$.

    We now prove (1).  We first prove the result for $G=\widetilde \cala_{n-1}$ with $n\geq 5$.  In particular, by \Cref{thmx:spherical} the image of any $\cala_{n-1}$ in a finite non-abelian quotient of $G$ must either be cyclic or have size at least $n!$ with equality if the image is isomorphic to $\sym(n)$.  By \Cref{lem:artin-gen-collapsing}, the Artin generators of $G$ form a collapsing set. If the image of any such $\cala_{n-1}$ is cyclic, then the image of $G$ must be cyclic as well by Lemma \ref{lem:E8_non_cyclic_E6}. Hence the smallest non-abelian quotient is $\sym(n)$ as required. 
    % \thomas{This shows there's a lower bound of $n!$.  I guess it can be computed by hand or computer?}
    
    We next prove the result for $G=\widetilde\calb_n$ with $n\geq 5$.  Note that there are two parabolic $\cala_{n-1}$ subgroups of $G$ that together generate a parabolic $\cald_{n}$ subgroup. By \Cref{thmx:spherical}, in any quotient, these either have cyclic image or size at least $n!$, with equality if the image is isomorphic to $\sym(n)$.  We now proceed as in  \Cref{thm:Cn}.  If the image of either parabolic $\cala_{n-1}$ is cyclic, then the quotient collapses the entire $\cald_n$ to a cyclic group and the quotient must factor through $\langle \overline s_1, \overline s_2\ |\ [\overline s_1, \overline s_2]_4,\ [\overline s_1, \overline s_2] \rangle\cong \Z^2$ where $\bar{s_1}$ generates the image of the parabolic $\cald_n$ and $\bar{s_2}$ generates the image of the one remaining generator. In particular, the quotient of $G$ must be abelian.  Hence the smallest non-abelian quotient is $\sym(n)$ as required.  
    
    The proofs of $\widetilde\calc_n$ and $\widetilde \cald_n$ with $n\geq 5$ are similar.

    To prove (3) note that $\cale_n$ is naturally a subgroup of $\widetilde\cale_n$ for $n=6,7,8$.  By \Cref{lem:artin-gen-collapsing}, the Artin generators of $G=\widetilde\cale_n$ form a collapsing set.  In particular, by \Cref{thmx:spherical}, the image of $\cale_n$ in a finite non-abelian quotient of $G$ must have size at least $|W(\cale_n)/Z(W(\cale_n))|$ with equality if the image is isomorphic to $W(\cale_n)/Z(W(\cale_n))$.  Hence the theorem.
\end{proof}

\section{Profinite rigidity}\label{sec:profinite}
In this section we will prove \Cref{thmx:profinite}.  The arguments are of a distinctly different flavour to much of the rest of the paper and combine \Cref{thmx:spherical} with some standard techniques from profinite rigidity.  We the refer the reader to Reid's excellent survey \cite{Reid2015} which introduces all of the notions we will use below.  
 We denote by $\widehat G$ the profinite completion of a group $G$.  Note that by \cite{DixonFormanekPolandRibes1982}, if $G$ is finitely generated this contains the same information as the set of isomorphism classes of finite quotient groups of $G$.

\begin{lemma}\label{lem:goodness}
    The following hold:
    \begin{enumerate}
        \item \cite[Proposition 1]{Marin2012} Let $G$ be one of $\cala_n$, $\calb_n$, $\cald_n$, and $\cali_2(m)$.  Then $G$ is good the sense of Serre.  Namely, for every finite discrete $\widehat G$-module $M$, the natural map $H^k(\widehat G;M)\to H^k(G;M)$ is an isomorphism for all $k\geq 0$.
        \item Let $G$ and $H$ be irreducible spherical Artin groups of type $A$, $D$, or $I_2(2m+1)$.  If $\widehat G\cong \widehat H$, then $G\cong H$.
        \item
            \begin{enumerate}
                \item For $n\geq 1$ we have $\cd(\widehat \cala_n)=n$, 
                \item for $n\geq 3$ we have $\cd(\widehat\calb_n)=n$, 
                \item for $n\geq 4$ we have $\cd(\widehat\cald_n)=n$, 
                \item $\cd(\cali_2(m))=2$,
                \item $\cd(\widehat \calf_4)\geq 3$.
            \end{enumerate}
        \item Neither $\calb_3$ nor $\calb_4$ admits $W(\calf_4)$ as a quotient.
    \end{enumerate}
\end{lemma}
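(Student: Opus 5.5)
Part (1) is cited from \cite[Proposition 1]{Marin2012}, so nothing needs proving there. The remaining items I would prove in order (3), (2), (4), since (2) uses the cohomological dimensions from (3).

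For (3) I would use goodness together with the fact that each of these groups has a finite $K(G,1)$ whose dimension is the rank. This holds by the $K(\pi,1)$ conjecture for spherical Artin groups \cite{CharneyDavis95}, realised by the Salvetti complex \cite{Salvetti1987}. For a good group of type $F$ one has $\cd(\widehat G)\le\cd(G)$. Conversely, $H^{n}(G;\FF_p)\neq 0$ for a suitable prime $p$ transfers to $H^n(\widehat G;\FF_p)\neq 0$, which forces $\cd(\widehat G)=n$. To see the nonvanishing, note that $G$ contains $\Z^n$, generated by commuting products of generators in a maximal abelian parabolic-type subgroup (e.g.\ the Garside elements of a nested chain). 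Then $\cd_p(\widehat G)\ge\cd_p(\widehat{\Z^n})=n$, using that the profinite topology induces the full profinite topology on such a subgroup, by separability in these linear groups. For $\cali_2(m)$, which is centre-by-free, the same argument gives $2$. For $\calf_4$ I would use that it contains a parabolic $\cala_3$ (or $\calb_3$), which is good, together with separability of parabolics, to get $\cd(\widehat\calf_4)\ge 3$. Goodness of $\calf_4$ itself is not needed because only a lower bound is claimed.

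For (2), suppose $\widehat G\cong\widehat H$. The abelianisation detects the number of odd components, and the ranks $n$ agree by (3), so the remaining question is to distinguish $\cala_{n}$ from $\cald_{n}$ and from $\cali_2(2m+1)$ at the same $\cd$. Rank two gives only $\cali_2(2m+1)$ versus $\cala_2=\cali_2(3)$. Here the smallest non-abelian quotient $Dh_p$ from \Cref{thm:I2} does not determine $m$. I would instead use the centre: $\widehat{\cali_2(n)}/\widehat{Z}\cong\widehat{\Z/2*\Z/n}$ for $n$ odd, which is well defined since the centre of $\widehat G$ is the closure of $Z(G)$ for these central extensions of virtually free groups. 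The finite orders of torsion elements in the profinite completion of a free product then recover $n$. For $\cala_n$ versus $\cald_n$ with the same rank $n\ge5$, \Cref{thmx:spherical} gives smallest quotients $\sym(n+1)$ and $\sym(n)$, which differ. Small ranks ($n=4$, $\cala_4$ versus $\cald_4$) are separated because $\cald_4$ has $\sym(3)$ as a quotient while $\cala_4$ does not.

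For (4), I would argue that $\calb_n$ has abelianisation $\Z^2$, and its Coxeter quotient $W(\calb_n)$ has order $2^n n!$, which is $48$ and $384$ for $n=3,4$. Since $|W(\calf_4)|=1152$ is larger, a quotient is not ruled out by order alone, so I need structure. Suppose $\phi:\calb_n\to W(\calf_4)$ were surjective. The parabolic $\cala_{n-1}$ generated by $s_2,\dots,s_n$, together with the extra generator $s_1$ commuting with $s_3,\dots,s_n$, generates a group whose image is controlled. I would use $W(\calf_4)=2^{1+4}:(\sym(3)\times\sym(3))$: composing to $\sym(3)\times\sym(3)$ must be surjective, yet the images of $s_2,\dots,s_n$ are all conjugate and the relations force the image to be generated by two elements, one of which centralises most of the image. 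I would check this via a short case analysis, or by MAGMA in the style of the paper's appendix code. I expect (4) and the $\cali_2$ part of (2) to be the main obstacles. For (4) I would first try the direct computational check of homomorphisms $\calb_n\to W(\calf_4)$ on generator images up to conjugacy.
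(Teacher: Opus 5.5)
\textbf{Gap in the lower bound of (3), which your route to (2) inherits.}

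Parts (1) and (4) are fine. Your structural sketch for (4) is not yet an argument, but the fallback computer search for epimorphisms $\calb_n\to W(\calf_4)$ is exactly what the paper does. Your route to (2) differs from the paper, which simply cites Marin and Callegaro--Marin. You fix the rank by $\cd$, separate $\cala_n$ from $\cald_n$ by \Cref{thmx:spherical}, and recover $n$ for $\cali_2(n)$, $n$ odd, from $\widehat G/Z(\widehat G)\cong\widehat{\Z/2*\Z/n}$. More simply, $\cali_2(n)$ surjects onto $Dh_k$ exactly when $k\mid n$. This is a reasonable self-contained alternative, but it rests on (3).

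Your lower bound in (3) has two problems:
\begin{enumerate}
\item The nonvanishing $H^n(G;\FF_p)\neq0$ with trivial coefficients is false in general. $\cala_2$ and $\cali_2(m)$, $m$ odd, are torus-knot groups, so $H_1=\Z$, $H_2=0$, and $H^2(G;\FF_p)=0$ for every $p$.
\item The mechanism you actually use, $\cd_p(\widehat G)\geq\cd_p(\widehat{\Z^n})$, needs $\widehat{\Z^n}\to\widehat G$ to be injective. That is, the profinite topology of $G$ must induce the full profinite topology on $\Z^n$. This is the whole difficulty, and it does not follow from linearity. In $\mathrm{SL}_2(\Z[1/p])$ the unipotent $u$ is conjugate to $u^{p^2}$, so its image in every finite quotient has order prime to $p$, and $\langle u^p\rangle$ is not open in the induced topology. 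For braid groups one could prove the property from the iterated semidirect product structure of the pure braid group, but you give no argument, and nothing comparable is offered for $\cald_n$.
\end{enumerate}

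\textbf{The same problem in (e).} Separability of a subgroup $H$ does not give $\widehat H\hookrightarrow\widehat\calf_4$; you need every finite-index subgroup of $H$ to be separable. You give no reason why a parabolic of $\calf_4$ has this property. Also, $\calf_4$ has no parabolic of type $\cala_3$ at all: its rank-three parabolics are of type $\calb_3$ and $\cala_2\times\cala_1$.

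\textbf{How the paper avoids both issues.} For (a)--(d) it uses twisted finite coefficients. Let $PG=\ker(G\to W(G))$, the fundamental group of the aspherical hyperplane complement $X$. Then $H^n(X;\FF_p)\neq0$ for some $p$, and Shapiro's lemma plus goodness give $H^n(X;\FF_p)\cong H^n(G;\FF_p[W(G)])\cong H^n(\widehat G;\FF_p[W(G)])$. For (e) it uses the retraction $\calf_4\to\langle s_1,s_2,s_4\rangle\cong\cala_2\times\Z$ given by $s_3\mapsto s_4$. A retract automatically carries the full induced profinite topology, so $\widehat\cala_2\times\widehat\Z$ is a closed subgroup of $\widehat\calf_4$. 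It has cohomological dimension $3$ by (a) and the K\"unneth formula.
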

\begin{proof}
    (1) is as cited. (2) follows from applying (1) to the discussion after Question 3 in \cite{Marin2012} which in turn depends on \cite{CallegaroMarin2014} (note this reference is quite secondary for cohomology of the Artin groups we are interested so we also refer the reader to the classics \cite{Arnold1969,Deligne1972,Brieskorn1973,Goryunov1978,Vajushtejn1978}).

    We now prove (3).  Begin by letting $G$ be one of the groups in cases (a)-(d). In case (d), we let $n=2$.  
    Note that the goodness proved in (1) implies that $\cd(\widehat G)\leq n$, where $\cd(\widehat G)$ refers to the cohomological dimension of $\widehat G$; see \cite[Proposition~7.4]{Reid2015}.  
    Thus we need to exhibit some non-trivial cohomology in degree $n$ for some finite discrete $\widehat G$-module.  
    
    The kernel of the map $G\to W(G)$, denoted $PG$, is the fundamental group of an aspherical complexified hyperplane arrangement $X$ which has non-vanishing cohomology in degree $n$ \cite{Deligne1972,Brieskorn1973}. 
    Moreover, the $n$th integral cohomology of $X$ is non-zero \cite{Deligne1972,Brieskorn1973}.
    The Universal Coefficient Theorem implies $H^n(X;\FF_p)\neq 0$ for some prime $p$.   
    By asphericity of $X$, goodness of $G$ proved in (1), and several applications of Shapiro's Lemma \cite{Brown1982} we obtain isomorphisms
    $$H^n(X;\FF_p)\cong H^n(PG;\FF_p) \cong H^n(G;\FF_p[W(G)]) \cong H^n(\widehat G;\FF_p[W(G)]).$$
    Hence, $\cd(\widehat G)=n$ as required.

    We now treat case (e).  Since cohomological dimension is monotonic on subgroups, it suffices to exhibit a subgroup of $\widehat \calf_4$ which has cohomological dimension at least 3.  There is a retract 
    $$\alpha\colon \calf_4\twoheadrightarrow A=\langle s_1,s_2,s_4\rangle \cong \cala_2\times\Z$$ given by $s_3\mapsto s_4$ and fixing the other generators.  
    It follows from \cite[Lemma 2.2]{Minasyan2021} that $A$ has the full induced profinite topology, that is every finite index subgroup of $A$ is separable in $\calf_4$.  In particular, by \cite[Lemma~4.6]{Reid2015} we have $\widehat A\rightarrowtail \widehat \calf_4$.  That is, $\widehat A$ is a subgroup of $\widehat G$.  Moreover by (3)(a) and the K\"unneth formula we have $\cd(\widehat A)=3$, as required.

    The proof of (4) is a simple computation in MAGMA, see \Cref{Code:11.1}.
\end{proof}

\begin{thmx}\label{thmx:profinite}
    Let $G$ and $H$ be irreducible spherical Artin groups.  If $\widehat G\cong \widehat H$, then $G\cong H$.
\end{thmx}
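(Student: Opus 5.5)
Suppose $\widehat G\cong\widehat H$. The plan is to extract from this isomorphism enough invariants to separate all irreducible spherical types, and then to treat the few collisions that remain by ad hoc arguments. The first invariant is the set of finite quotients, which $\widehat G$ determines. In particular it determines the smallest non-abelian quotient, and by \Cref{thmx:spherical} this sorts the types into classes:
\begin{itemize}
\item $\sym(n)$: the types $\cala_{n-1},\calb_n,\cald_n$ with $n\geq5$;
\item $\sym(3)$: the types $\cala_2,\cala_3,\calb_3,\calb_4,\cald_4,\calf_4$ and $\cali_2(m)$ with $4\mid m$;
\item $Dh_p$: the types $\cali_2(m)$ with $4\nmid m$, for a given odd prime $p$;
\item $\alt(5)$: the type $\calh_3$;
\item $W(X)/Z(W(X))$: the types $\calh_4,\cale_6,\cale_7,\cale_8$.
\end{itemize}
The groups in the last list are pairwise non-isomorphic, and none of them is a symmetric or dihedral group (one checks this by comparing orders, e.g.\ $7200$ and $51840$ are not factorials). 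So each of $\calh_3,\calh_4,\cale_6,\cale_7,\cale_8$ is already determined. The abelianisation $\widehat G^{\ab}=\widehat{G^{\ab}}$ is also determined, and it gives the rank of $G^{\ab}$: rank $1$ for types $\cala,\cald$ and odd dihedral, rank $2$ for $\calb_n$, $\calf_4$ and even dihedral.

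Next I use cohomological dimension, which is determined by $\widehat G$ via \Cref{lem:goodness}(3). Within the $\sym(n)$ class, $\cala_{n-1}$ has $\cd=n-1$, while $\calb_n$ and $\cald_n$ both have $\cd=n$. Rank of the abelianisation then distinguishes $\calb_n$ (rank $2$) from $\cald_n$ (rank $1$). Alternatively, \Cref{lem:goodness}(2) separates $\cala$ from $\cald$ directly.

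For the dihedral groups, $\cd=2$ and the abelianisation rank is $1$ or $2$ according to the parity of $m$. To recover $m$ itself, I would look at the finite quotients $\cali_2(m)\twoheadrightarrow Dh_d$. By \Cref{L:divisbleEdge} and the argument of \Cref{thm:I2} (the central presentations of \Cref{L:I2-centralPresentation}), such a quotient with generators sent to reflections exists exactly when $d\mid m$. Hence $m$ should be the largest $d$ for which $Dh_d$ is a quotient generated by two involutions. Making this rigorous requires a little care, since I must rule out exotic dihedral quotients: the quotient of $\cali_2(m)$ by its centre is virtually free, so the bound on the order of the image of $xy$ should follow from the presentation. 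For odd $m$, \Cref{lem:goodness}(2) already applies.

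The $\sym(3)$ class is where I expect the main obstacle to lie.
\begin{itemize}
\item $\cd$ separates $\cali_2(m)$ (value $2$), $\cala_2$ ($2$), $\cala_3$ and $\calb_3$ ($3$), and $\calb_4$, $\cald_4$ ($4$), with $\calf_4$ having $\cd\geq3$.
\item $\cala_2$ and $\cali_2(4k)$ are separated by abelianisation rank ($1$ versus $2$). Different values of $4k$ are separated by the dihedral argument above.
\item $\cala_3$ and $\calb_3$ are separated by abelianisation rank, and so are $\calb_4$ and $\cald_4$.
\item For $\calf_4$ I would use the finite quotient $W(\calf_4)$. By \Cref{lem:goodness}(4) it is not a quotient of $\calb_3$ or $\calb_4$, and these are the only rank-$2$ candidates with $\cd\geq3$.
\end{itemize}
I would check that the upper bound $\cd(\widehat\calf_4)\leq4$ is not needed here, because the $W(\calf_4)$ criterion handles the comparison with $\calb_3$ and $\calb_4$ directly. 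Assembling these cases gives $G\cong H$.
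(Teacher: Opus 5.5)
Most of your case analysis is sound and close to the paper's: \Cref{thmx:spherical} for the exceptional types and for $\calb_n$ with $n\geq 5$, the abelianisation rank, $\cd(\widehat G)$ from \Cref{lem:goodness}(3), and \Cref{lem:goodness}(4) for $\calf_4$. \textbf{The gap is the step separating $\cali_2(4k)$ from $\cali_2(4k')$, which fails.}

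An isomorphism $\widehat G\cong\widehat H$ tells you only which abstract finite groups are quotients, not where the standard generators go. So the invariant you can actually use is the set of $d$ for which $Dh_d$ is a quotient. The condition ``generated by two involutions'' is a property of $Dh_d$ itself, not of the quotient map, so it adds nothing.

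When $4\mid m$, this set contains every $d\geq 1$. Send $x$ to a reflection $\sigma$ and $y$ to a rotation $\rho$ of order $d$. Then $xy\mapsto\sigma\rho$ and $yx\mapsto\rho\sigma$ are involutions, and $m/2$ is even, so $(xy)^{m/2}$ and $(yx)^{m/2}$ both map to $1$. This is just \Cref{L:I24exotic} for general $d$. So the exotic dihedral quotients you hoped to rule out exist for every $d$. Moreover, every $\cali_2(4k)$ has smallest non-abelian quotient $\sym(3)$, $b_1=2$ and $\cd=2$. Hence nothing in your proposal detects $k$. (For $4\nmid m$ your dihedral invariant does work: checking whether $x$ and $xy$ map to rotations or reflections shows that $Dh_d$ with $d\geq 3$ is a quotient exactly when $d\mid m$.)

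What is missing is an invariant that sees the label in the case $4\mid m$. The paper uses the cup product $H^1\times H^1\to H^2$ with $\Z/\ell\Z$ coefficients. By Landi's computation, for $\cali_2(2j)$ this product is $x\cup y=jz$, so it vanishes exactly when $\ell\mid j$. Goodness (\Cref{lem:goodness}(1)) transfers this to the profinite completion.

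If you prefer to stay with finite quotients, a group-theoretic version of the same idea works.
\begin{itemize}
\item Write $\cali_2(2j)=\langle x,r\mid [x,r^j]=1\rangle$ as in \Cref{L:I2-centralPresentation}.
\item Any epimorphism onto the unitriangular group $U_3(\Z/N\Z)$ sends $(x,r)$ to a generating pair. Its commutator then generates the centre, so it has order $N$.
\item Since $U_3(\Z/N\Z)$ has nilpotency class $2$, $[x,r^j]$ maps to $[x,r]^j$.
\item Hence $U_3(\Z/N\Z)$ is a quotient of $\cali_2(2j)$ if and only if $N\mid j$, and the largest such $N$ recovers $j$.
\end{itemize}
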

\begin{proof}
    First suppose that $b_1(G)=1$, where $b_1(G)=\dim_\QQ H^1(G;\QQ)$ refers to the first Betti number of $G$.
    By \cite[Proposition~3.2]{Reid2015}, $b_1(H)=1$.  
    In particular, $H$ is not $\cali_{2}(2m)$, $\calf_4$, or $\calb_n$ since these groups satisfy $b_1>1$.  
    The result follows for the exceptional Artin groups $\calh_3$, $\calh_4$, $\cale_6$, $\cale_7$, and $\cale_8$ by \Cref{thmx:spherical}.  The remaining cases with $b_1=1$ are all of type $A$, $D$, and $I_2$ and so the result follows from \Cref{lem:goodness}.

    Next, suppose $b_1(G)=2$.  By by \cite[Proposition~3.2]{Reid2015}, $b_1(H)=2$.  
    Thus, $H$ is either $\cali_{2}(2m)$, $\calf_4$, or $\calb_n$.  For $n\geq 5$ the group $\calb_n$ is distinguished by \Cref{thmx:spherical}.  The computations of cohomological dimension in \Cref{lem:goodness}(3) distinguish types $\calb_3$, $\calb_4$, and $\calf_4$ from $\cali_2(2m)$.   We distinguish $\calb_3$, and $\calb_4$ from $\calf_4$ by \Cref{lem:goodness}(4).  
    
    It remains to distinguish $\cali_{2}(2m)$ from $\cali_2(n)$ when $m\neq n$.  The idea to use cup products in the sequel is inspired by \cite[Proof of Theorem~4.12]{HLIPSV2025}.  By work of Landi \cite[Table II]{Landi2000}, we have 
    $$H^\ast(\cali_2(2m);\Z)=\Z[x,y,z]/(x^2,y^2,xy=mz)$$
    where $x,y$ have degree 1 and z has degree 2, so 
    $$H^n(\cali_2(2m));\Z\cong\begin{cases}
        \Z & n=0,2; \\
        \Z & n=1; \\
        0 & \text{otherwise.}
    \end{cases}$$
    By the Universal Coefficient Theorem we obtain 
    $$H^n(\cali_2(2m);\Z/k\Z)\cong\begin{cases}
        \Z/k\Z & n=0,2; \\
        (\Z/k\Z)^2 & n=1; \\
        0 & \text{otherwise.}
    \end{cases}$$
    A careful reading of \S5.1, Table I and Table II of \cite{Landi2000} and substituting the $\Z$ coefficients for $\Z/k$ coefficients, yields that 
    $$H^\ast(\cali_2(2m);\Z/k\Z))=(\Z/k\Z)[x,y,z]/(x^2,y^2,xy=mz).$$
    Note that \cite[Example~1]{Landi2000} is particularly instructive.  It follows that $m$ equals the smallest integer $k\geq 2$ such that $H^\ast(\cali_2(2m);\Z/k\Z))$ has no non-trivial cup products.  Since cup products are preserved by goodness see \cite[\S2.C.1]{HughesNg2026} (or \cite[Proposition~6]{KrophollerWilkes2016}), the same is true for $H^\ast(\widehat \cali_2(2m);\Z/k\Z))$.  Whence, the theorem.
\end{proof}

\begin{appendix}

\section{Quotient computations}\label{apx:computations}
 This section contains the computations of Section \ref{subsec:element-orders}. Our computations are always accomplished by applying one of a small handful of possible rearrangements of a positive word $w$ in the Artin group and by cancelling adjacent pairs of letters which consist of an Artin generator and its inverse. While none of these rearrangements are themselves difficult, it is somewhat tedious to work out which rearrangements should be applied in which order to obtain the desired equality. Rather than present lengthy line-by-line computations, we develop the following notation to record which sequence of rearrangements we used. We recommend that a reader interested in following through these steps does write them out line-by-line. We note that the sequences are not necessarily unique; we simply provide one effective option.

 \subsection{Notation}
 Throughout, let $S$ denote the collection of Artin generators, and let $w$ denote a positive word in this generating set. We will often equate the positive word $w = s_1 s_3 s_5 s_2 s_1$, for example, with the string of integers $13521$. When necessary, we will equate the inverse of the positive word $13521$ with the string $1^{-1} 2^{-1} 5^{-1} 3^{-1} 1^{-1}$. We will primarily use this to improve readability when a word is used as a subscript.

 We will be almost exclusively concerned with simplifying equations of the form $w s_i w^{-1} = v s_j v^{-1}$ or $w s_i w^{-1} = v^{-1} s_j v$. Because $w$ is a positive word, it will always be clear which generator should be the distinguished $s_j$; it is the unique generator which is adjacent to both a positive letter and an inverse. There are a few methods of rearranging one or both sides of the equation which will be used repeatedly; we classify them below.

 Throughout, we will adopt the convention that any subword of the form $s_i s_i^{-1}$ should be immediately deleted. We do not mark rearrangements of this kind. We are also using the Artin generator labellings of Figure \ref{fig:FiniteCoxeterGroups} throughout.

 The majority of our methods are \emph{sided operations}. These are rearrangements that are applied to a particular side of the equation, so either to $w s_i w^{-1}$ or to $v s_j v^{-1}$, but not both. The correct side of the equation will be denoted via an $RHS$ or $LHS$ preceding the relevant step except in the case where we have defined a variable $\epsilon$ and are attempting to reduce an expression of the form $\epsilon s_i \epsilon^{-1} := w s_i w^{-1}$. In this setting, it is clear that sided operations should be applied to the right hand side.
 \begin{itemize}
     \item \textbf{$M_{ij, \square}$:} Suppose that $w$ in the above equation contains a unique subword which is an alternating product $s_i s_j ...$ of length $m_{ij}$. To denote that one should apply the Artin relation of length $m_{ij}$ to this subword and to the corresponding subword of $w^{-1}$, we write $M_{ij}$. 
     
     If there are multiple such subwords, we write $M_{ij, n}$ to indicate that one should apply the Artin relation to the alternating product subword beginning at the $n^{th}$ appearance, when read from left to right, of the letter $s_i$ in $w$, but this is rarely necessary. To avoid confusion between the index of the Artin generator and the position, we write the position $n$ in Roman numerals when it is needed, e.g., $M_{12, IV}$ means that one should apply the Artin relation $m_{12}$ beginning at the 4th appearance of the letter $s_1$ in $w$ and working left to right, and at the 4th appearance of $s_1^{-1}$ in $w^{-1}$ working right to left.

     \begin{example} 
     Define $\epsilon = s_1 s_2 s_1$ in the Artin group $\cala_3$. To simplify $\epsilon s_2 \epsilon^{-1} := (s_1 s_2 s_1) s_2 (s_1^{-1} s_2^{-1} s_1^{-1})$, we can apply $M_{12}$. We do not need a position indicator, because there is only one alternating product of $s_1$ and $s_2$ of the appropriate length in $w$ and $w^{-1}$. Applying $M_{12}$ and cancelling the resulting adjacent $s_2$ and $s_2^{-1}$ yields the shorter expression $\epsilon s_2 \epsilon^{-1} = (s_2 s_1) s_2 (s_1^{-1} s_2^{-1})$.
    \end{example}

     \item \textbf{$\Tilde{M_{ij}}$:} Recall that Artin relations yield a conjugacy relationship in an obvious way: if two generators $s_i$ and $s_j$ satisfy a braid relation, then $(s_i s_j) s_i (s_j^{-1} s_i^{-1}) = s_j s_i s_j s_j^{-1} s_i^{-1} = s_j$. If they satisfy an Artin relation of a different length, then the same holds for $s_i s_j$ replaced with the alternating product $s_i s_j ...$ of length $m_{ij} -1$. This follows from applying the Artin relation to the initial subword and then cancelling inverses. We denote the replacement $s_i s_j ... s_i s_j^{-1} s_i^{-1}... = s_j$ with $\Tilde{M}_{ij}$. Notice that this is specifically a conjugacy relation, so it always occurs on the rightmost subword of $w$ and the leftmost subword of $w^{-1}$, and no position indicator is needed.

     \begin{example} 
     Define $\epsilon = s_3 s_1 s_2$ in the Artin group $\cala_4$. To reduce the expression $\epsilon s_1 \epsilon^{-1} := s_3 s_1 s_2 s_1 s_2^{-1} s_1^{-1} s_3^{-1}$, we apply $\Tilde{M}_{12}$. This replaces the interior subword $s_1 s_2 s_1 s_2^{-1} s_1^{-1}$ with $s_2$, yielding $\epsilon s_1 \epsilon^{-1} = s_3 s_2 s_3^{-1}$.
     \end{example}

    \smallskip

    \item \textbf{$S_{i,R/L, \square}$:} Often, we will want to move an Artin generator in $w$ as far to the left or right as possible via commuting relations. While this could be written as several $M$ operations, it is shorter to define a third operation: $S_{i, n, R/L}$ denotes that one should shift the $n$th appearance, from left to right, of Artin generator $s_i$ in $w$ as far to the right or left as possible in $w s_i w^{-1}$ via commuting relations with the adjacent letters. The precise opposite should occur with $w^{-1}$ so that the expressions for $w$ and $w^{-1}$ are still mirrored. If there is a unique instance of $s_i$ in $w$, we omit the position indicator $n$ and simply write $S_{i,R}$ or $S_{i,L}$. As before, the position indicator $n$, when it is needed, is given in Roman numerals to avoid confusion with the index $i$.

    If we are considering an expression of the form $w^{-1} s_i w$, we will write $S_{i^{-1}, R}$ to denote that one should shift the unique instance of $s_i^{-1}$ in $w^{-1}$ as far to the right as possible and the unique instance of $s_i$ in $w$ as far to the left as possible.
    
    \begin{example} Define $\epsilon = s_4 s_1^7$ in $\cala_4$. To simplify the expression $\epsilon s_2 \epsilon^{-1}$, we apply $S_{4 , R}$ to the equation $\epsilon s_2 \epsilon^{-1} := s_4 s_1 s_2 s_1^{-1} s_4^{-1}$ to obtain $\epsilon s_2 \epsilon^{-1} := s_1 s_4 s_2 s_4^{-1} s_1^{-1}$. If we were instead interested in $\epsilon^{-1} s_2 \epsilon$, we could instead apply $S_{4^{-1}, L}$ to $\epsilon^{-1} s_2 \epsilon := s_1^{-1} s_4^{-1} s_2 s_4 s_1$ to obtain $\epsilon s_2 \epsilon^{-1} := s_4^{-1} s_1^{-1} s_2 s_1 s_4$.
    \end{example}

    \smallskip

    \item \textbf{$X_{v}$:} Let $w = uv$ where $v$ is the longest suffix, i.e., subword read from right to left, of $w$ which commutes with $s_j$. We write $X_{v}$ to denote the replacement $w s_j w^{-1} = u s_j u^{-1}$. As with the shift operation $S$ and with $M$, the operation $X_{v}$ could be accomplished via successive applications of $\Tilde{M}_{ij}$ for each letter in $v$, but $X$ is significantly shorter. As with $\Tilde{M}_{ij}$, no position indicator is needed, since this always occurs in the middle of the expression.
    %If we are working with an expression $w s_j w^{-1}$ and $w = u v$ for some subwords $u$ and $v$ such that $v$ commutes with $s_j$,

    \begin{example} 
    Define $\epsilon = s_2 s_5 s_6$ in $\cala_6$. To simplify $\epsilon s_1 \epsilon^{-1} := s_2 s_5 s_6 s_1 s_6^{-1} s_5^{-1} s_2^{-1}$, we apply $X_{56}$ to denote the replacement $s_5 s_6 s_1 s_6^{-1} s_5^{-1} = s_1$, yielding $\epsilon s_1 \epsilon^{-1} := s_2 s_1 s_2^{-1}$.
    \end{example}
    %\yv{Would the notation $X_{j,v}$ be a bit clearer so that people know which generator $v$ commutes with?}
    %\katie{I'm worried it's going to be confusing if you have a long word $v$, since the comma in the subscript is quite small}
    %\yv{Is it right to interpret this as: $w = uv$ where $v$ is the longest subword of $w$ when read from right to left which commutes with $s_j$? That way there is no room for interpretation.}
    %\katie{I think that's true in every case that appears, I'll change it to that}
 \end{itemize}

We will also occasionally modify both sides of an equation of the form $w s_i w^{-1} = v s_j v^{-1}$ at the same time. These are \emph{unsided} operations.

\begin{itemize}
    \item $C_v$ denotes conjugating both sides of the equation by $v$. Here we say that conjugating $a$ by $v$ is replacing $a$ with $v a v^{-1}$. 

    \begin{example} 
    Given the equation $s_1 = s_4 s_1 s_4^{-1}$ in $\cala_5$, the operation $C_{4^{-1}}$ denotes replacing $s_1 = s_4 s_1 s_4^{-1}$ with $s_4^{-1} s_1 s_4 = s_1$.
    \end{example}
    
    \item $R_v$ denotes right-multiplying both sides of the equation by $v$.
    $L_v$ denotes left-multiplying both sides of the equation by $v$.
\end{itemize}
In all of these, $v$ is typically represented by its associated string of integers.

We give an example which combines several steps, and then the remainder of our computations are provided only via the shorthand notation. 

\begin{example} In the Artin group $\cale_6$, one can show that $(s_4 s_2 s_3 s_1 s_5 s_6) s_5  (s_6^{-1} s_5^{-1} s_1^{-1} s_3^{-1} s_2^{-1} s_4^{-1}) = s_6$ via the following sequence. 
\begin{center}
    $^{LHS} \Tilde{M}_{56} \rightarrow X_{4231}$
\end{center}
The first step, $\Tilde{M}_{56}$, is applied to the left-hand side, and replaces $(s_4 s_2 s_3 s_1 s_5 s_6) s_5  (s_6^{-1} s_5^{-1} s_1^{-1} s_3^{-1} s_2^{-1} s_4^{-1})$ with $(s_4 s_2 s_3 s_1) s_6  (s_1^{-1} s_3^{-1} s_2^{-1} s_4^{-1})$. 

The Artin generators $s_1$, $s_2$, $s_3$, and $s_4$ all commute with $s_6$, so the entire subword $(s_4 s_2 s_3 s_1)$ commutes with $s_6$. The second step, $X_{4231}$, denotes replacing $(s_4 s_2 s_3 s_1) s_6  (s_1^{-1} s_3^{-1} s_2^{-1} s_4^{-1})$ with $s_6$.
\end{example}

%\yv{Since we're defining $v$ as $w = uv$, shouldn't it really be $X_{\bar{4231}}$ where I'm using the bar to denote the inverse? Since you didn't define either $X$, $L$, or $C$ using numbers, I really don't know how to parse this.}
%\katie{It could be written that way to denote that we're replacing $w$ with $wv^{-1}$, but the point is just that you have an interior subword of the form $v s_i v^{-1}$ where $v$ commutes with $s_i$, and the $v$, $v^{-1}$ pair cancels itself out. I'm hoping this is clearer now with an example given earlier on.}

\begin{example} 
In the Artin group $\cale_6$, one reduces the expression 
 \begin{align*}
     \epsilon s_1 \epsilon^{-1} &:= (s_4 s_2 s_3 s_1 s_5 s_6) (s_4 s_2 s_3) s_1 (s_3^{-1} s_2^{-1} s_4^{-1})(s_6^{-1} s_5^{-1} s_1^{-1} s_3^{-1} s_2^{-1} s_4^{-1})
 \end{align*}
 as follows:
 %\yv{How did $s_3$ become $s_1$? I can't even get the equality in the first line here by just applying relations. I found the $\epsilon$'s below, and I'm assuming that $\epsilon^3$ means that your cubing epsilon, but maybe that's wrong? I also really can't parse the $X$ notation consistently, which means I can't actually check the computations. I think one place to start is to add in the extra subscript for $X$ so that I know where to look.} 
%\katie{I chose a bad example here; I had copied it from somewhere else in the code without realizing that I was implicitly using the fact that I had already found a reduced expression for $\epsilon_{12}^2 s_3 \epsilon_{12}^{-1}$. I think I've fixed it to be self-contained now.}

\begin{align*}
\epsilon  s_3 \epsilon^{-1} &= (s_4 s_2 s_3 s_1 s_5 s_6 s_4 s_2 s_3) s_1 (s_3^{-1} s_2^{-1} s_4^{-1}s_6^{-1} s_5^{-1} s_1^{-1} s_3^{-1} s_2^{-1} s_4^{-1})\\
        &= (s_4 s_2 s_3 s_1 s_5 s_4 s_2 s_3 s_6) s_1 (s_6^{-1} s_3^{-1} s_2^{-1} s_4^{-1} s_5^{-1} s_1^{-1} s_3^{-1} s_2^{-1} s_4^{-1})\\
        &= (s_4 s_2 s_3 s_1 s_5 s_4 s_2 s_3) s_1 (s_3^{-1} s_2^{-1} s_4^{-1} s_5^{-1} s_1^{-1} s_3^{-1} s_2^{-1} s_4^{-1})\\
        &= (s_4 s_2 s_3 s_5 s_4 s_2 s_1 s_3) s_1 (s_3^{-1} s_1^{-1}s_2^{-1} s_4^{-1} s_5^{-1} s_3^{-1} s_2^{-1} s_4^{-1})\\
        %&= (s_4 s_2 s_3 s_5 s_4 s_2 s_1 s_3) s_1 (s_3^{-1} s_1^{-1}s_2^{-1} s_4^{-1} s_5^{-1} s_3^{-1} s_2^{-1} s_4^{-1})\\
        &= (s_4 s_2 s_3 s_5 s_4 s_2) s_3 (s_2^{-1} s_4^{-1} s_5^{-1} s_3^{-1} s_2^{-1} s_4^{-1})\\
        &= (s_4 s_2 s_3 s_5 s_4) s_3 (s_4^{-1} s_5^{-1} s_3^{-1} s_2^{-1} s_4^{-1})\\
        &= (s_4 s_2 s_5 s_3 s_4) s_3 (s_4^{-1} s_3^{-1} s_5^{-1} s_2^{-1} s_4^{-1})\\
        &= (s_4 s_2 s_5) s_4 (s_5^{-1} s_2^{-1} s_4^{-1})\\
\end{align*}
In the shorthand, this is
$\epsilon s_3 \epsilon^{-1} := (s_4 s_2 s_3 s_1 s_5 s_6 s_4 s_2 s_3) s_1 (s_3^{-1} s_2^{-1} s_4^{-1} s_6^{-1} s_5^{-1} s_1^{-1} s_3^{-1} s_2^{-1} s_4^{-1})$
\begin{center} 
$S_{6,R} \rightarrow X_{6} \rightarrow S_{1, R} \rightarrow \Tilde{M}_{13} \rightarrow X_{2} 
\rightarrow S_{3,R} \rightarrow \Tilde{M}_{34}$ 
\end{center}
$\epsilon s_3\epsilon^{-1} = (s_4 s_2 s_5) s_4 (s_5^{-1} s_2^{-1} s_4^{-1})$.
\end{example}

We now proceed to the computations used in the paper.

\subsection{The group \texorpdfstring{$\cale_6$}{E6}}

Recall the following definitions of elements and their orders in $\cale_6 / Z(\cale_6)$, as given in \cite{Soroko2021}. 

\begin{itemize}
    \item $\epsilon_{12} = s_4 s_2 s_3 s_1 s_5 s_6$ has order 12.
    \item $\epsilon_9 = s_4 s_2 s_5 s_4 s_3 s_1 s_5 s_6$ has order 9.
    \item $\epsilon_8 = s_4 s_3 s_1 s_5 s_4 s_2 s_3 s_6 s_5$ has order 8.
\end{itemize}

\subsubsection{\texorpdfstring{The element $\epsilon_{12}$}{Order 12}}\label{apx:E6-order12}

We begin with $\epsilon_{12}$. In each row, it should be understood that when $|i| \neq 1$, the beginning expression for $\epsilon_{12}^i \cdot s_5 \cdot \epsilon_{12}^{-i}$ is the conjugate of the reduced expression for $\epsilon_{12}^{i-1} \cdot s_5 \cdot \epsilon_{12}^{-i+1}$ rather than the conjugate of $s_5$ by $i$ copies of the expression for $\epsilon_{12}$ given above.

To see that $\cale_6 \langle \langle \epsilon_{12}^6 \rangle \rangle$ is cyclic, we compute the following.

\begin{center}
\renewcommand{\arraystretch}{1.4}
    \begin{tabular}{| c | c | c |}
    \hline
    Starting expression & Ending expression & Path \\ 
    \hline
    $\epsilon_{12} \cdot s_3 \cdot \epsilon_{12}^{-1}$ &  $s_1$ & $X_{56} \rightarrow \Tilde{M}_{13} \rightarrow X_{42}$ \\  [2pt]
    \hline
    $\epsilon_{12}^2 \cdot s_3 \cdot \epsilon_{12}^{-2}$ & $(s_4 s_2 s_3) s_1 (s_3^{-1} s_2^{-1} s_4^{-1})$  & $X_{156}$ \\
    \hline
    $\epsilon_{12}^{3} \cdot s_3 \cdot \epsilon_{12}^{-3}$ & $(s_4 s_2 s_5) s_4 (s_5^{-1} s_2^{-1} s_4^{-1})$ & $S_{1,R} \rightarrow \Tilde{M}_{13} \rightarrow X_{2} \rightarrow S_{3,R} \rightarrow \Tilde{M}_{34}$\\
    \hline
    $\epsilon_{12}^{-1} \cdot s_3 \cdot \epsilon_{12}$ & $(s_6^{-1} s_5^{-1} s_2^{-1}) s_4 (s_2 s_5 s_6)$ & $S_{3^{-1}, R} \rightarrow \Tilde{M}_{34} \rightarrow S_{1^{-1}, R} \rightarrow X_{1^{-1}}$\\
    \hline
    $\epsilon_{12}^{-2} \cdot s_3 \cdot \epsilon_{12}^2$ & $(s_5^{-1} s_1^{-1} s_3^{-1}) s_4 (s_3 s_1 s_5)$ & $\begin{aligned}
        S_{6,II,R} \rightarrow M_{65, I} &\rightarrow S_{5^{-1}, II, R} \rightarrow \Tilde{M}_{24} \\\rightarrow X_{5^{-1}} \rightarrow &\Tilde{M}_{24} \rightarrow S_{6^{-1},R} \rightarrow X_{6^{-1}}
    \end{aligned}$\\
    \hline
    $\epsilon_{12}^{-3} \cdot s_3 \cdot \epsilon_{12}^3$ & $( s_1^{-1} s_3^{-1} s_2^{-1} s_4^{-1} s_1^{-1}) s_3 (s_1 s_4 s_2 s_3 s_1)$ & $\begin{aligned}
        S_{5^{-1},I,R} \rightarrow M_{54, I} \rightarrow &S_{4^{-1},II,R} \rightarrow \Tilde{M}_{34} \\\rightarrow S_{5^{-1},R} \rightarrow X_{5^{-1}} \rightarrow &S_{6^{-1}, R} \rightarrow X_{6^{-1}}
    \end{aligned}$\\
    \hline
\end{tabular}
\end{center}

 In any quotient where $\epsilon_{12}^6$ is trivial, we must have
    \begin{align*}
        (s_4 s_2 s_5) s_4 (s_5^{-1} s_2^{-1} s_4^{-1}) = ( s_1^{-1} s_3^{-1} s_2^{-1} s_4^{-1} s_1^{-1}) s_3 (s_1 s_4 s_2 s_3 s_1)
    \end{align*}
    
    This imposes the relation $s_5 = s_3$ via the following sequence of reductions.
    \begin{align*}
        C_{2^{-1}4^{-1}} \rightarrow^{RHS} S_{1^{-1}, I, L} \rightarrow M_{24} \rightarrow M_{34} &\rightarrow \Tilde{M}_{13} \rightarrow X_{4^{-1}} \rightarrow S_{1^{-1}, R} \rightarrow \Tilde{M}_{13} \rightarrow X_{2^{-1}}\\
        \rightarrow C_{4} &\rightarrow^{LHS} \Tilde{M}_{45}
    \end{align*}

    The quotient $\cale_6 / \langle \langle \epsilon_{12}^6 \rangle \rangle$ must then be cyclic by \cref{lem:artin-gen-collapsing}. %Furthermore, the quotients $\cale_6 / \langle \langle \epsilon_{12} \rangle \rangle$, $\cale_6 / \langle \langle \epsilon_{12}^2 \rangle \rangle$, and $\cale_6 / \langle \langle \epsilon_{12}^3 \rangle \rangle$ must also be cyclic because they are further quotients. 

%    In any quotient where $\epsilon_{12}^4$ is trivial, we must have 
%    \begin{align*}
%        (s_4 s_2 s_3) s_1 (s_3^{-1} s_2^{-1} s_4^{-1}) = s_5^{-1} s_1^{-1} s_3^{-1} s_4 (s_3 s_1 s_5)\text{.}
%    \end{align*}
%    This imposes the relation $s_5 s_4 = s_4 s_5$ via the following sequence of reductions.
%    \begin{align*}
%        C_{1} \rightarrow^{LHS} S_{1, R} \rightarrow \Tilde{M}_{13} \rightarrow X_2 \rightarrow^{RHS} S_{1, R} \rightarrow C_3 \rightarrow^{LHS} \Tilde{M}_{34} \rightarrow^{RHS} S_{3,R} \rightarrow L_5
%    \end{align*}
%    This implies that the images of $s_4$ and $s_5$ satisfy coprime length Artin relations in the quotient. \cref{L:gcdRelation} implies that they must have the same image in the quotient, and we conclude by \cref{lem:artin-gen-collapsing}.
    \hfill $\blackdiamond$

\subsubsection{\texorpdfstring{The element $\epsilon_{9}$}{Order 9}}\label{ap:E6-order9}

    Using the commuting relations between Artin generators, $\epsilon_9$ can be rearranged to the form $\epsilon_9 = s_4 s_2 s_5 s_4 s_5 s_6 s_3 s_1$. We compute the following using this form of $\epsilon_9$. In each row, it should be understood that when $|i| \neq 1$, the beginning expression for $\epsilon_{9}^i \cdot s_2 \cdot \epsilon_{9}^{-i}$ is the conjugate of the reduced expression for $\epsilon_{9}^{i-1} \cdot s_2 \cdot \epsilon_{9}^{-i+1}$ rather than the conjugate of $s_2$ by $i$ copies of the expression for $\epsilon_{9}$ given above.

    \begin{center}
    \renewcommand{\arraystretch}{1.4}
    \begin{tabular}{| c | c | c |}
    \hline
    Starting expression & Ending expression & Path \\ 
    \hline
    $\epsilon_9 s_2 \epsilon_9^{-1}$ & $s_5$  & $X_{5631} \rightarrow S_{2,R} \rightarrow \Tilde{M}_{24} \rightarrow \Tilde{M}_{45}$ \\  
    \hline
    $\epsilon_{9}^2 \cdot s_2 \cdot \epsilon_{9}^{-2}$ & $(s_4 s_5) s_6 (s_5^{-1} s_4^{-1})$ & $X_{31} \rightarrow \Tilde{M}_{56} \rightarrow S_{2,R} \rightarrow X_{24}$ \\
    \hline
    $\epsilon_{9}^{-1} \cdot s_2 \cdot \epsilon_{9}$ & $s_6^{-1} s_5 s_6$ &  $\Tilde{M}_{24} \rightarrow \Tilde{M}_{45} \rightarrow X_{5^{-1}} \rightarrow S_{6^{-1},L} \rightarrow X_{1^{-1}3^{-1}}$\\
    \hline
\end{tabular}
\end{center}

     In a quotient where $\epsilon_9^{3}$ is trivial, we must have $(s_4 s_5) s_6 (s_5^{-1} s_4^{-1}) = s_6^{-1} s_5 s_6$. This imposes the relation $s_4 s_5 = s_5 s_4$ via the following sequence of reductions.
     \begin{align*}
         C_{6} \rightarrow^{LHS} S_{4, L} \rightarrow \Tilde{M}_{56} \rightarrow R_{4}
     \end{align*}

     The relation $s_4 s_5 = s_5 s_4$ implies that $\cale_6 / \langle \langle \epsilon_{9}^3 \rangle \rangle$ is cyclic by \cref{L:gcdRelation} and \cref{lem:artin-gen-collapsing}. %This also shows that $\cale_6 / \langle \langle \epsilon_{9} \rangle \rangle$ is cyclic, since it is a further quotient.
\hfill $\blackdiamond$

\subsubsection{\texorpdfstring{The element $\epsilon_{8}$}{Order 8}}\label{ap:E6-order8}
    For $\epsilon_8$, we compute the following. In each row, it should be understood that when $|i| \neq 1$, the beginning expression for $\epsilon_{8}^i \cdot s_6 \cdot \epsilon_{8}^{-i}$ is the conjugate of the reduced expression for $\epsilon_{8}^{i-1} \cdot s_6 \cdot \epsilon_{8}^{-i+1}$ rather than the conjugate of $s_6$ by $i$ copies of the expression for $\epsilon_{8}$ given at the beginning of the section.

    \begin{center}
    \renewcommand{\arraystretch}{1.4}
    \begin{tabular}{| c | c | c |}
    \hline
    Starting expression & Ending expression & Path \\ 
    \hline
    $\epsilon_8 s_6 \epsilon_8^{-1}$ & $s_3$  & $\Tilde{M}_{56} \rightarrow X_{23} \rightarrow \Tilde{M}_{45} \rightarrow X_{1} \rightarrow \Tilde{M}_{34}$ \\  
    \hline
    $\epsilon_{8}^2 \cdot s_6 \cdot \epsilon_{8}^{-2}$ & $(s_4 s_3 s_1 s_5 s_4 ) s_3 (s_4^{-1} s_5^{-1} s_1^{-1} s_3^{-1} s_4^{-1})$  & $X_{2365}$ \\
    \hline
    $\epsilon_{8}^{-1} \cdot s_6 \cdot \epsilon_{8}$ & $(s_3^{-1} s_2^{-1}) s_4 ( s_2 s_3 )$ & $\begin{aligned}
        X_{1^{-1}3^{-1}4^{-1}} \rightarrow &S_{6^{-1},R} \rightarrow \Tilde{M}_{56} \rightarrow S_{5^{-1},R} \\
        &\rightarrow \Tilde{M}_{45}
    \end{aligned}$ \\
    \hline
    $\epsilon_8^{-2} \cdot s_6 \cdot \epsilon_8^{2}$ & $(s_5^{-1} s_6^{-1} s_3^{-1} s_2^{-1} s_4^{-1} s_5^{-1} s_4^{-1}) s_2 (s_4 s_5 s_4 s_2 s_3 s_6 s_5)$ & $M_{34} \rightarrow \Tilde{M}_{24} \rightarrow S_{1^{-1},R} \rightarrow X_{1^{-1}3^{-1}} $\\
    \hline
\end{tabular}
\end{center}

    In a quotient where $\epsilon_8^4$ is trivial, we must have the following relation.
    \begin{align*}
        (s_4 s_3 s_1 s_5 s_4 ) s_3 (s_4^{-1} s_5^{-1} s_1^{-1} s_3^{-1} s_4^{-1}) = (s_5^{-1} s_6^{-1} s_3^{-1} s_2^{-1} s_4^{-1} s_5^{-1} s_4^{-1}) s_2 (s_4 s_5 s_4 s_2 s_3 s_6 s_5)\text{.}
    \end{align*}
    This implies $s_6 = s_2$ via the following sequence of reductions. 
    \begin{align*}
        \text{ }^{RHS} S_{3^{-1},L} \rightarrow C_{3} \rightarrow^{LHS} M_{34} \rightarrow S_{4,II,R} \rightarrow M_{45} &\rightarrow X_5 \rightarrow S_{3,R} \rightarrow \Tilde{M}_{34} \rightarrow S_{4,R} \rightarrow \Tilde{M}_{45} \rightarrow X_{1} \\
        \rightarrow^{RHS} C_5 \rightarrow S_{2^{-1},L} \rightarrow C_{62} \rightarrow^{LHS} &X_2 \rightarrow^{RHS} M_{45} \rightarrow C_4 \rightarrow^{LHS} S_{4,R} \rightarrow \\X_4 \rightarrow C_5 \rightarrow^{LHS} &\Tilde{M}_{56} \rightarrow C_4 \rightarrow^{LHS} X_4 
    \end{align*}

    We conclude that $\cale_6 / \langle \langle \epsilon_{8}^4 \rangle \rangle$ is cyclic by \cref{lem:artin-gen-collapsing}. Consequently, $\cale_6 / \langle \langle \epsilon_{8}^2 \rangle \rangle$ and $\cale_6 / \langle \langle \epsilon_{8} \rangle \rangle$ are also cyclic.
    \hfill $\blackdiamond$
    
\subsection{The group \texorpdfstring{$\cale_7$}{E7}}
    Recall the following definitions of elements and their orders in $\cale_7 / Z(\cale_7)$, as given in \cite{Soroko2021}.

    \begin{itemize}
        \item $\epsilon_7 = s_4 s_2 s_7s_6 s_5 s_4 s_2 s_3 s_1$ has order $7$.
        \item $\epsilon_9= s_4 s_2 s_3 s_1 s_5 s_6 s_7$ has order $9$.
    \end{itemize}

    \subsubsection{\texorpdfstring{The element $\epsilon_7$}{Order 7}}\label{apx:E7-order7}

    The expression $\epsilon_7 s_3 \epsilon_7^{-1}$ can be reduced to $s_1$ via the following sequence of operations.
    \begin{align*}
        \Tilde{M}_{13} \rightarrow X_{4276542}
    \end{align*}

    We conclude that the quotient $\cale_7/\langle\langle\epsilon_7\rangle\rangle$ is cyclic by \cref{lem:artin-gen-collapsing}.
    \hfill $\blackdiamond$

    \subsubsection{\texorpdfstring{The element $\epsilon_9$}{Order 9}}\label{apx:E7-order9}

    Applying a rearrangement of commuting terms, $\epsilon_9 = s_4 s_5 s_6 s_7 s_2 s_3 s_1$. Using this expression, we compute the following. In each row, it should be understood that when $|i| \neq 1$, the beginning expression for $\epsilon_{9}^i \cdot s_6 \cdot \epsilon_{9}^{-i}$ is the conjugate of the reduced expression for $\epsilon_{9}^{i-1} \cdot s_6 \cdot \epsilon_{9}^{-i+1}$ rather than the conjugate of $s_6$ by $i$ copies of the expression for $\epsilon_{9}$ given above.

        \begin{center}
    {
    \renewcommand{\arraystretch}{1.4}
    \begin{tabular}{| c | c | c |}
    \hline
    Starting expression & Ending expression & Path \\ 
    \hline 
    $\epsilon_9 \cdot s_6 \cdot \epsilon_9^{-1}$ &  $s_7$ & $X_{231} \rightarrow \Tilde{M}_{67} \rightarrow X_{45}$ \\  [2pt]
    \hline
    $\epsilon_{9}^2 \cdot s_6 \cdot \epsilon_{9}^{-2}$ & $(s_4 s_5 s_6) s_7 (s_6^{-1} s_5^{-1} s_4^{-1})$  & $X_{7231}$ \\
    \hline
    $\epsilon_{9}^{-1} \cdot s_6 \cdot \epsilon_{9}$ & $s_5$ & $X_4 \rightarrow \Tilde{M}_{56} \rightarrow X_{1327}$ \\
    \hline
\end{tabular}}
\end{center}

    In any quotient where $\epsilon_9^3$ is trivial, we have the following relation.
    \begin{align*}
        s_4 s_5 s_6 s_7 s_6^{-1} s_5^{-1} s_4^{-1} = s_5
    \end{align*}
    This imposes the relation $s_7 = s_4$ via the following sequence of reductions.
    \begin{align*}
        C_{5^{-1}4^{-1}} \rightarrow^{RHS} \Tilde{M}_{45} \rightarrow C_{6^{-1}} \rightarrow^{RHS} X_{6^{-1}}
    \end{align*}
    As in the previous cases, we can conclude that $\cale_7 / \langle \langle \epsilon_9^3 \rangle \rangle$ is cyclic by \cref{lem:artin-gen-collapsing}
    \hfill $\blackdiamond$

\subsection{The group \texorpdfstring{$\calh_4$}{H4}}
Recall the following definitions of elements and their orders in $\calh_4 / Z(\calh_4)$, as given in \cite{Soroko2021}.
\begin{itemize}
    \item $\epsilon_{10} = s_1 s_2 s_1 s_2 s_3 s_4 $ has order $10$.
    \item $\epsilon_{15} = s_1 s_2 s_3 s_4$ has order $15$.
\end{itemize}

Before we proceed to the computations of the quotients, we highlight that because $s_1$ and $s_2$ satisfy an Artin relation of length 5 rather than length 3, it is no longer true that $(s_1 s_2) s_1 (s_2^{-1} s_1^{-1}) = s_1$. In $\calh_4$, we instead have the relation $(s_1 s_2 s_1 s_2) s_1 (s_2^{-1} s_1^{-1} s_2^{-1} s_1^{-1}) = s_1$. We also highlight that, because we are working in a graph with no valence 3 vertices, the set of indices which commute with one another is different than in the $\cale_n$ cases; see Figure \ref{fig:FiniteCoxeterGroups}.

\subsubsection{\texorpdfstring{The element $\epsilon_{10}$}{Order 10}}\label{apx:H4-order10}
We begin with $\epsilon_{10}$. We compute the following. In each row, it should be understood that when $|i| \neq 1$, the beginning expression for $\epsilon_{10}^i \cdot s_1 \cdot \epsilon_{10}^{-i}$ is the conjugate of the reduced expression for $\epsilon_{10}^{i-1} \cdot s_1 \cdot \epsilon_{10}^{-i+1}$ rather than the conjugate of $s_1$ by $i$ copies of the expression for $\epsilon_{10}$ given above.

\begin{center}
    {
    \renewcommand{\arraystretch}{1.4}
    \begin{tabular}{| c | c | c |}
    \hline
    Starting expression & Ending expression & Path \\ 
    \hline 
    $\epsilon_{10} s_1 \epsilon_{10}^{-1}$ & $s_2$  & $X_{34} \rightarrow \Tilde{M}_{12}$ \\  [2pt]
    \hline
     $\epsilon_{10}^{2} s_1 \epsilon_{10}^{-2}$ &  $(s_1 s_2) s_3 (s_2^{-1} s_1^{-1})$ & $X_4 \rightarrow \Tilde{M}_{23} \rightarrow X_1 $ \\
    \hline
     $\epsilon_{10}^{3} s_1 \epsilon_{10}^{-3}$ & $(s_2 s_1 s_2 s_3) s_4 (s_3^{-1} s_2^{-1} s_1^{-1} s_2^{-1})$ & $\begin{aligned}
         S_{1,III, L} \rightarrow &M_{12} \rightarrow M_{23} \rightarrow \\ \Tilde{M}_{34} \rightarrow &S_{3,L} \rightarrow X_{12} 
     \end{aligned}$ \\
    \hline
    $\epsilon_{10}^{4} s_1 \epsilon_{10}^{-4}$ &  $(s_1 s_2 s_1 s_3 s_2 s_1) s_2 (s_1^{-1} s_2^{-1} s_3^{-1} s_1^{-1} s_2^{-1} s_1^{-1})$  & $S_{4,R} \rightarrow \Tilde{M}_{34} \rightarrow M_{23} \rightarrow S_{3,II,R} \rightarrow \Tilde{M}_{23}$\\
    \hline
    $\epsilon_{10}^{-1} s_1 \epsilon_{10}$ & $(s_4^{-1} s_3^{-1} s_2^{-1} s_1^{-1} s_2^{-1}) s_1 (s_2 s_1 s_2 s_3 s_4)$ & $X_{1}$\\
    \hline
\end{tabular}}
\end{center}
 
In any quotient where $\epsilon_{10}^2$ is trivial, the following relation is satisfied.
\begin{align*}
    s_1 = (s_1 s_2) s_3 (s_2^{-1} s_1^{-1})
\end{align*}

Since $s_1$ commutes with $s_4$, the image of the right hand side must also commute with the image of $s_4$ in the quotient, yielding the following.
\begin{align*}
    s_4 = (s_1 s_2) s_3 (s_2^{-1} s_1^{-1}) s_4 (s_1 s_2) s_3^{-1} (s_2^{-1} s_1^{-1})
\end{align*}
This imposes the relation $s_4 s_3 = s_3 s_4$ via the following sequence of reductions.
\begin{align*}
    C_{2^{-1}{1^{-1}}} \rightarrow^{LHS} X_{2^{-1}1^{-1}} \rightarrow^{RHS} X_{2^{-1}1^{-1}} \rightarrow R_{3} 
\end{align*}

As in the previous cases, we conclude that the quotients $\calh_4 / \langle \langle \epsilon_{10} \rangle \rangle$ and $\calh_4 / \langle \langle \epsilon_{10}^2 \rangle \rangle$  are cyclic by Lemmas \ref{L:gcdRelation} and \ref{lem:artin-gen-collapsing}.
\hfill $\blackdiamond$

In any quotient where $\epsilon_{10}^5$ is trivial, the following relation is satisfied.
\begin{align*}
    (s_1 s_2 s_1 s_3 s_2 s_1) s_2 (s_1^{-1} s_2^{-1} s_3^{-1} s_1^{-1} s_2^{-1} s_1^{-1}) = (s_4^{-1} s_3^{-1} s_2^{-1} s_1^{-1} s_2^{-1}) s_1 (s_2 s_1 s_2 s_3 s_4)
\end{align*}
This imposes the relation $s_2 = s_4$ via the following sequence of reductions.
\begin{align*}
    C_{1}^{-1} \rightarrow^{RHS} S_{1^{-1},I,R} \rightarrow \Tilde{M}_{12} \rightarrow C_{2^{-1}} &\rightarrow^{RHS} S_{2,R} \rightarrow \Tilde{M}_{23} \rightarrow C_{3^{-1} 1^{-1}} \rightarrow^{RHS} S_{1^{-1},R} \rightarrow X_{1^{-1}} \\\rightarrow \Tilde{M}_{34}
    \rightarrow &C_{1^{-1}2^{-1}} \rightarrow^{RHS} X_{1^{-1}2^{-1}}
\end{align*}

We conclude that $\calh_4 / \langle \langle \epsilon_{10}^5 \rangle \rangle$ is cyclic by \cref{lem:artin-gen-collapsing}.
\hfill $\blackdiamond$

\subsubsection{\texorpdfstring{The element $\epsilon_{15}$}{Order 15}}\label{apx:H4-order15}

    For $\epsilon_{15}$, we compute the following. In each row, it should be understood that when $|i| \neq 1$, the beginning expression for $\epsilon_{15}^i \cdot s_2 \cdot \epsilon_{15}^{-i}$ is the conjugate of the reduced expression for $\epsilon_{15}^{i-1} \cdot s_2 \cdot \epsilon_{15}^{-i+1}$ rather than the conjugate of $s_2$ by $i$ copies of the expression for $\epsilon_{15}$ given above.
    
    \begin{center}
    {
    \renewcommand{\arraystretch}{1.4}
    \begin{tabular}{| c | c | c |}
    \hline
    Starting expression & Ending expression & Path \\ 
    \hline 
    $\epsilon_{15} \cdot s_2 \cdot \epsilon_{15}^{-1}$ & $s_3$  & $X_4 \rightarrow \Tilde{M}_{23} \rightarrow X_1$ \\  [2pt]
    \hline
    $\epsilon_{15}^{2} \cdot s_2 \cdot \epsilon_{15}^{-2}$ & $s_4$  & $\Tilde{M}_{34} \rightarrow X_{12}$ \\
    \hline
    $\epsilon_{15}^{3} \cdot s_2 \cdot \epsilon_{15}^{-3}$ & $(s_1 s_2 s_3) s_4 (s_3^{-1} s_2^{-1} s_1^{-1})$ & $X_4$ \\
    \hline
\end{tabular}}
\end{center}

In any quotient where $\epsilon_{15}^3$, the following relation is satisfied.
\begin{align*}
    s_2 = (s_1 s_2 s_3) s_4 (s_3^{-1} s_2^{-1} s_1^{-1})
\end{align*}
This imposes the relation $s_3 = ( s_2 s_1) s_2 ( s_1^{-1} s_2^{-1})$ via the following sequence of reductions.
\begin{align*}
    C_4 \rightarrow^{LHS} X_4 \rightarrow^{RHS} S_{4,R} \rightarrow \Tilde{M}_{34} \rightarrow C_{23} \rightarrow^{LHS} \Tilde{M}_{23} \rightarrow^{RHS} S_{3,R} \rightarrow \Tilde{M}_{23}
\end{align*}
The right hand side commutes with the image of $s_4$, so the image of $s_3$ commutes with the image of $s_4$. Since $s_3$ and $s_4$ also satisfy a braid relation, they are identified in the quotient by \ref{L:gcdRelation}. The quotient is then cyclic by \cref{lem:artin-gen-collapsing}.

In any quotient where $\epsilon_{15}^5$ is trivial, we must have $\epsilon_{15}^3 s_3 \epsilon_{15}^{-3} = \epsilon_{15}^{-2} s_3 \epsilon_{15}^2$. Applying our previous calculation of $\epsilon_{15}^3 s_3 \epsilon_{15}^{-3}$ and the unreduced form of $\epsilon_{15}^{-2} s_3 \epsilon_{15}^2$, the following relation is satisfied in the quotient.
\begin{align*}
    (s_1 s_2 s_3) s_4 (s_3^{-1} s_2^{-1} s_1^{-1}) = (s_4^{-1} s_3^{-1} s_2^{-1} s_1^{-1} s_4^{-1} s_3^{-1} s_2^{-1} s_1^{-1}) s_3 (s_1 s_2 s_3 s_4 s_1 s_2 s_3 s_4 )
\end{align*}
This imposes the relation $s_1 = s_2$ via the following sequence of reductions.
\begin{align*}
    C_4 \rightarrow^{LHS} S_{4,R} \Tilde{M}_{34} \rightarrow C_{3} \rightarrow^{LHS} S_{3,R} \rightarrow \Tilde{M}_{23} \rightarrow C_{412} \rightarrow^{LHS} S_{4,R} \rightarrow X_{4} \rightarrow^{RHS} X_{1} \rightarrow \Tilde{23} \rightarrow C_2 \rightarrow^{LHS} \Tilde{M}_{12}
\end{align*}

\cref{lem:artin-gen-collapsing} then shows that $\calh_4 / \langle \langle \epsilon_{15}^5 \rangle \rangle$ must be cyclic.
\hfill $\blackdiamond$

    \section{MAGMA code}

    \subsection{Code for Theorem~\ref{thm:H4}}\label{Code:6.2}
    \begin{verbatim}
H4:=Group<a,b,c,d | a*b*a*b*a=b*a*b*a*b, b*c*b=c*b*c, c*d*c=d*c*d, 
                    a*c=c*a, a*d=d*a, b*d=d*b>;
WH4:=PermutationGroup(quo<H4 | H4.1^2>);
Q:=WH4/centre(WH4);

Homomorphisms(H4, Q : Surjective:=true);
    \end{verbatim}
The code returns two homomorphisms both sending the generators to elements of order 2.  Hence the quotient factors through the Coxeter group as required.

\subsection{Code for Lemma~\ref{lem:E8_7s_9s}}\label{code:E8_7s_9s}
    \begin{verbatim}
G:=PermutationGroup(AutomorphismGroup(Group("PSL(3,13)")));
CCL:=ConjugacyClasses(G);
for x in CCL do x[1]; end for;
    \end{verbatim}
This code prints the orders of elements in $\Aut(\PSL_3(13))$.  The output shows there are no elements of order 9.

\subsection{Code for Lemma~\ref{lem:goodness}}\label{Code:11.1}
    \begin{verbatim}
F4:=Group<a,b,c,d | a*b*a=b*a*b,b*c*b*c=c*b*c*b, c*d*c=d*c*d, 
                    a*c=c*a, a*d=d*a, b*d=d*b, a^2, c^2>;
WF4:=PermutationGroup(F4);
B3:=Group<a,b,c | a*b*a*b=b*a*b*a, a*c=c*a, b*c*b=c*b*c >;
B4:=Group<a,b,c,d | a*b*a*b=b*a*b*a, a*c=c*a, a*d=d*a, 
                    b*c*b=c*b*c, b*d=d*b, c*d*c=d*c*d >;

Homomorphisms(B3, WF4 : Surjective:=true, Limit:=1);
Homomorphisms(B4, WF4 : Surjective:=true, Limit:=1);
    \end{verbatim}
The code returns the empty set in both cases.

\end{appendix}

\bibliographystyle{halpha}
\bibliography{refs.bib}

\end{document}